\documentclass[11pt]{amsart}

\usepackage{amsmath, amsthm, amssymb}

\usepackage[dvipsnames]{xcolor}
\definecolor{indigo}{rgb}{0.29, 0.0, 0.51} 
\usepackage[colorlinks, urlcolor=indigo, linkcolor=indigo, citecolor=indigo]{hyperref}
\definecolor{darkred}{RGB}{210, 0, 0}
\definecolor{darkblue}{RGB}{0, 10, 210}
\definecolor{darkgreen}{RGB}{0, 180, 0}
\usepackage[dvipsnames]{xcolor}

\usepackage{lmodern, enumerate}
\usepackage{palatino}
\usepackage[noabbrev,capitalize,nameinlink]{cleveref}
\usepackage[bigdelims,vvarbb]{newpxmath}    
\usepackage[scaled=0.95]{inconsolata}       
\usepackage[T1]{fontenc}                    

\usepackage{comment}
\usepackage{soul}

\usepackage{tikz-cd}

\usepackage[abs]{overpic}		  

\usepackage[hcentering, top = 1.5in, total={5.9in, 8.2in}]{geometry}  

\theoremstyle{plain}
\newtheorem{theorem}{Theorem}
\newtheorem{corollary}[theorem]{Corollary}
\newtheorem{proposition}[theorem]{Proposition}
\newtheorem{lemma}[theorem]{Lemma}
\newtheorem{question}[theorem]{Question}
\newtheorem{conjecture}[theorem]{Conjecture}

\newtheorem{mainthm}{Theorem}

\theoremstyle{definition}
\newtheorem{definition}[theorem]{Definition}
\newtheorem{definitiontheorem}[theorem]{Definition-Theorem}

\theoremstyle{remark}
\newtheorem{remark}[theorem]{Remark}

\numberwithin{theorem}{section}

\newcommand{\R}{\mathbb{R}}           
\newcommand{\Z}{\mathbb{Z}}           
\newcommand{\C}{\mathbb{C}}           

\makeatletter
\newcommand*\bigcdot{\mathpalette\bigcdot@{0.6}}
\newcommand*\bigcdot@[2]{\mathbin{\vcenter{\hbox{\scalebox{#2}{$\m@th#1\bullet$}}}}}
\makeatother

\DeclareMathOperator{\std}{st} 

\DeclareMathOperator{\reg}{\operatorname{reg}}

\DeclareMathOperator{\exot}{\operatorname{exot}}

\DeclareMathOperator{\Crit}{\operatorname{Crit}}

\DeclareMathOperator{\Rea}{\operatorname{Re}}

\DeclareMathOperator{\concave}{\operatorname{concave}}
\DeclareMathOperator{\Weinstein}{\operatorname{Weinstein}}

\newcommand{\FMod}{\operatorname{FMod}}

\newcommand{\Stein}{\operatorname{Stein}}
\newcommand{\ind}{\operatorname{ind}}

\newcommand{\crit}{\operatorname{crit}}
\newcommand{\fib}{\operatorname{fib}}

\newcommand{\PSH}{\operatorname{PSH}}

\newcommand{\bC}{\mathbb{C}}
\newcommand{\bD}{\mathbb{D}}

\newcommand{\bR}{\mathbb{R}}

\newcommand{\cF}{\mathcal{F}}

\newcommand{\cH}{\mathcal{H}}

\newcommand{\cJ}{\mathcal{J}}

\newcommand{\cM}{\mathcal{M}}
\newcommand{\cN}{\mathcal{N}}

\newcommand{\cU}{\mathcal{U}}

\newcommand{\cW}{\mathcal{W}}

\usepackage{thmtools} 
\usepackage{thm-restate}

\begin{document}

\title{Nearly Lefschetz fibrations, Stein structures, and regular Lagrangians} 

\author{Joseph Breen}

\author{Agniva Roy}

\author{Luya Wang}

\address{University of Alabama \\ Tuscaloosa, AL}
\email{jjbreen@ua.edu}

\address{Department of Mathematics \\ Boston College \\ Chestnut Hill, MA}
\email{agniva.roy@bc.edu}

\address{Institute for Advanced Study \\ Princeton, NJ}
\email{luyawang@ias.edu}

\thanks{JB was partially supported by NSF Grant DMS-2038103 and an AMS-Simons Travel Grant. AR was partially supported by an AMS-Simons Travel Grant. LW was supported by NSF Grant DMS-2303437, DMS-2641772, and IAS Giorgio and Elena Petronio Fellow II Fund.}

\begin{abstract}
We study Stein structures on nearly Lefschetz fibrations, with applications to symplectic and Lagrangian submanifolds in Weinstein domains. We prove that, up to Weinstein deformation equivalence, all Lagrangian disks with Legendrian boundary in $4$-dimensional Weinstein domains are regular in the sense of Eliashberg-Ganatra-Lazarev. This settles a Weinstein analogue of the nearby Lagrangian conjecture for two-dimensional Lagrangian disks, and resolves part of a Lagrangian analogue of the Slice-Ribbon conjecture. Along the way, we show that positive allowable nearly Lefschetz fibrations are supported by canonical Stein structures and prove a corresponding quasiflexibility result in the planar case. This yields a generalization of work of Boileau-Orevkov which may be of independent interest. Finally, we give an explicit algorithm producing a Weinstein Kirby diagram from the nearly Lefschetz fibration structure of a multisection complement. 
\end{abstract}

\maketitle
\tableofcontents

\section{Introduction}\label{sec: intro}
Lefschetz fibrations and pencils have played a central role in symplectic topology since the foundational work of Thurston, Donaldson, and Gompf \cite{Thurston,donaldson1999lefschetz,gompf_topological_symplectic}. The twenty-first century has seen the emergence of a rich theory of Stein structures on Lefschetz fibrations, deepening the bond between Stein geometry, contact topology, and mapping class factorizations \cite{LoiPiergallini2001Stein,Akbulut_Ozbagci,Wendl_strongly,p-vhm,GirouxPardon2017Lefschetz}. This article develops the theory further by studying Stein structures on \emph{nearly} Lefschetz fibrations, which were first hinted at in \cite{spinal_II} and defined in \cite{spinal_exotic}. We give several applications to symplectic and Lagrangian submanifolds.

\subsection{Stein structures on surface exteriors}
\label{subsec: intro_stein_exterior}
Given a ribbon surface $S$ embedded in $B^4$, its exterior $B^4 \setminus \nu(S)$ is a compact orientable $4$-manifold admitting a handle decomposition with handles of index at most $2$; here $\nu(S)$ denotes a tubular neighborhood of $S$. One can further ask if the exterior admits a Stein structure. This is not always the case: for example, the standard annulus filling of a $2$-component unlink has an exterior diffeomorphic to $(S^1 \times B^3) \natural (S^2 \times D^2)$, which contains a homologically essential sphere of self-intersection number $0$, violating the adjunction inequality \cite{Lisca1997Tight}. In general, there is no known characterization of which ribbon surfaces have exteriors that admit Stein structures, though we highlight two known cases of interest:
\begin{enumerate}
\item If $\Sigma$ is a complex curve in $B^4 \subset \C^2$, then $B^4 \setminus \nu(\Sigma)$ admits a Stein structure. In fact, the exterior is a Stein domain using the ambient complex structure (as opposed to simply admitting some Stein structure, a priori unrelated to the complex structure on $\C^2$);
see \cite{simha}, or \cite[\S 5, Theorem 5]{grauert1979steinspaces}. More generally, work of Boileau-Orevkov \cite{boileau-orevkov} and Rudolph \cite{rudolph2004algebraic} implies that any symplectic surface in $B^4$ is ambiently diffeomorphic to a complex curve, hence exteriors of symplectic surfaces in $B^4$ also admit Stein structures. (In this case, however, the resulting Stein structure need not be compatible with the restriction of the ambient symplectic form.)\label{part:stein}

\item If $L$ is a regular Lagrangian disk, then $B^4 \setminus \nu(L)$ admits the structure of a Weinstein (hence Stein) domain \cite{EliashbergGanatraLazarev2018Regular}. In fact, it inherits a Weinstein structure directly from an ambient Weinstein structure on $B^4$: regularity implies that the Weinstein structure on $B^4$ can be homotoped to witness $L$ as a cocore of a $2$-handle, and the exterior is then obtained by removing the handle, leaving a Weinstein domain.\label{part:weinstein} 
\end{enumerate}

Generalizing \eqref{part:stein} further, properly embedded complex curves in Stein surfaces have Stein domain exteriors. However, it is unknown whether every symplectic surface in a Stein domain is ambiently diffeomorphic to a holomorphic curve. Hayden \cite{hayden2021quasipositive} has generalized some aspects of Boileau-Orevkov and Rudolph to Stein domains; namely, that holomorphic curves bound quasipositive links, and any quasipositive link bounds a symplectic surface in some Stein filling. A weaker exterior version of this problem is: 
\begin{question}\label{question:symplectic-exterior-stein}
Let $(W, J)$ be a $4$-dimensional Stein domain, and let $\Sigma\subset W$ be a properly embedded symplectic surface with nonempty transverse boundary in $\partial W$. Does $W \setminus \nu(\Sigma)$ admit the structure of a Stein domain?
\end{question}

\noindent A corollary of our work is an affirmative answer in two cases.

\begin{corollary}
The answer to Question \ref{question:symplectic-exterior-stein} is yes if either 
\begin{enumerate}
    \item $\Sigma$ is a symplectic push-off of a Lagrangian disk with Legendrian boundary, or
    \item $(W,J)$ is supported by a planar nearly Lefschetz fibration.
\end{enumerate}
\end{corollary}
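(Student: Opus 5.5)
The corollary will follow from the two main results the abstract announces. First, every Lagrangian disk with Legendrian boundary in a $4$-dimensional Weinstein domain is regular up to Weinstein deformation equivalence. Second, positive allowable nearly Lefschetz fibrations carry canonical Stein structures, with a quasiflexibility statement in the planar case. I treat the two cases separately.

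\emph{Case (1).} Let $L \subset W$ be a Lagrangian disk with Legendrian boundary, and let $\Sigma$ be a symplectic push-off of $L$. By the regularity theorem, after a Weinstein deformation of the structure on $W$ (which does not change the Stein deformation class of $W$), $L$ is regular. So there is a Weinstein handle decomposition of $W$ in which $L$ is the cocore of a $2$-handle $H$. Removing $H$ leaves a Weinstein domain $W \setminus H$, exactly as in the regular-disk discussion earlier in the introduction; by Cieliebak--Eliashberg this is Stein.

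It remains to compare $W \setminus \nu(\Sigma)$ with $W \setminus \nu(L)$. The symplectic push-off is obtained by perturbing $L$ inside a Weinstein neighborhood $T^*D^2$, with its boundary pushed off the Legendrian to a transverse push-off. Hence $\Sigma$ is isotopic to $L$ through properly embedded disks whose boundaries are isotopic in $\partial W$, meaning transverse versus Legendrian only up to the smooth isotopy class of the knot. Therefore $\nu(\Sigma)$ and $\nu(L)$ are ambiently isotopic, and $W\setminus\nu(\Sigma)\cong W\setminus\nu(L)\cong W\setminus H$. The Stein structure is transported along this diffeomorphism. The only care needed is that $\nu(L)$ is a $2$-handle $D^2\times D^2$ attached along a neighborhood of $\partial L$, which is exactly the cocore picture.

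\emph{Case (2).} Suppose $(W,J)$ is supported by a planar nearly Lefschetz fibration $\pi\colon W\to D^2$, and $\Sigma$ is a properly embedded symplectic surface with transverse boundary. I would use the quasiflexibility result to put $\Sigma$ in standard position. This should be the generalization of Boileau--Orevkov mentioned in the abstract: symplectic surfaces in a planar nearly Lefschetz fibration are isotopic, through symplectic surfaces, to multisections (braided surfaces) of $\pi$. Then $W\setminus\nu(\Sigma)$ inherits a nearly Lefschetz fibration by restricting $\pi$. The fibers lose disks around the multisection points, and branch points of $\Sigma$ over $D^2$ become the ``nearly'' singularities. Positivity and allowability of the restricted fibration follow from positivity of the branch points, since $\Sigma$ is symplectic, and from the fact that $\partial\Sigma\neq\emptyset$ ensures the fibers of the complement still have boundary. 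The canonical Stein structure theorem for positive allowable nearly Lefschetz fibrations then gives a Stein structure on the exterior.

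The main obstacle is this last step: checking that the fibration restricted to the complement of a multisection is genuinely a positive allowable nearly Lefschetz fibration in the paper's sense. In particular, allowability, meaning the vanishing cycles are homologically essential in the punctured fibers, must hold. I would first try to handle this by using the planarity and the quasiflexibility normal form to control the local models near branch points.
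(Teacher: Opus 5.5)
Your case (1) is correct but runs in the opposite direction from the paper. You invoke Theorem~\ref{thm: regular_lag_disk}, delete the $2$-handle whose cocore is $L$, and transport the structure along the smooth isotopy $\Sigma\simeq L$. In the paper this case is an intermediate step inside the proof of that theorem. By Proposition~\ref{prop:sympl_disk_multisection} the push-off is symplectically isotopic to a positive multisection of a Giroux--Pardon Lefschetz fibration, its exterior is a PANLF, and Theorem~\ref{thm: stein_on_panlf} makes it Stein. Your route therefore depends on the full regularity theorem, which is itself proved through exactly that step. In return it yields a Weinstein subdomain of a structure deformation equivalent to the original one.

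Case (2) has a genuine gap at the step you take as given. Theorem~\ref{thm: quasiflexibility} compares Stein homotopy classes with deformation classes of convex symplectic forms on $W$. It says nothing about where a symplectic surface sits relative to $\Pi$, so it cannot put $\Sigma$ in standard position. The normal form the paper actually proves is Theorem~\ref{thm: planarNLF_multisection}, via a holomorphic-curve argument: make $\widehat\Sigma$ holomorphic for some $\widehat J_S$, deform from $\widehat J\in\cJ(\Pi)$ to $\widehat J_S$ while the planar fibers persist (Theorem~\ref{thm: spinal_ii_6_4}), then use positivity of intersections and \cite[Lemma 6.31]{spinal_II}. That theorem is weaker than what you assert in three ways.
\begin{enumerate}
\item It is proved only for products $F\times B$. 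The absence of singular and exotic fibers is used explicitly there to control the family of fibrations.
\item It needs $\partial\Sigma$ braided in the vertical boundary $F\times\partial B$, whereas Question~\ref{question:symplectic-exterior-stein} only gives a transverse boundary. You would first have to braid $\partial\Sigma$ and realize that transverse isotopy by a symplectic isotopy of $\Sigma$ in a collar.
\item It has a second outcome, $\Sigma$ isotopic to a fiber, which you must handle separately. This is easy: the exterior then fibers over $B$ minus a disk, so Theorem~\ref{thm: stein_on_panlf} still applies.
\end{enumerate}
Nothing in the paper supplies your statement for an arbitrary planar nearly Lefschetz fibration with singular and exotic fibers. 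To get it you would have to rerun the moduli-space argument in that setting.

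Positivity also does not follow from $\Sigma$ being symplectic. The surface $\{(\epsilon\bar w^2,w)\}\subset\C^2$ is symplectic for small $\epsilon$, yet it is a negative branch point of $(z_1,z_2)\mapsto z_1$. Near it the exterior has the inverse boundary interchange as monodromy, not the positive exotic model of Definition~\ref{def: panlf}. Positivity comes from $\Sigma$ and the fibers being holomorphic for a common $J$, so it too rests on the missing normal-form step.

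By contrast, the allowability check you flag as the main obstacle is routine. Simple positive branch points give exactly the exotic fibers of Definition~\ref{def: panlf}. After a small isotopy the multisection misses $E_{\crit}$. A vanishing cycle that is homologically essential in $F$ stays essential in $F$ minus the multisection points, because its class already maps to a nonzero class under inclusion into $F$.
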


\noindent We will expand on the role of nearly Lefschetz fibrations below. 

Let us recall the definition of regularity, first studied by Eliashberg-Ganatra-Lazarev \cite{EliashbergGanatraLazarev2018Regular}, more precisely. A Lagrangian cobordism $L$ with Legendrian boundary in a Weinstein cobordism $(W,\lambda,\phi)$ is \emph{regular} if the Weinstein structure can be homotoped to $(\lambda',\phi')$, keeping $L$ Lagrangian throughout, so that the Liouville vector field $X_{\lambda'}$ is tangent to $L$. While non-regular caps and concordances with nonempty negative end have been constructed \cite{Murphy2013Exact,Lin2016Caps,DimitroglouRizellGolovko2024NonRegular,DimitroglouRizellGolovko2025NonRegular}, it is an open question as to whether there exists a non-regular exact Lagrangian filling in a Weinstein domain. 

\begin{question}\label{question:lagrangian-weinstein}
Let $(W^{2n}, \lambda, \phi)$ be a Weinstein domain of any dimension, and let $L\subset W$ be an exact Lagrangian filling. 
\begin{enumerate}
    \item Is $L$ regular? 
    \item When does $W \setminus \nu(L)$ admit the structure of a Weinstein domain?
\end{enumerate}
\end{question}

\noindent If $L$ is regular, $W\setminus\nu(L)$ naturally inherits the structure of a Weinstein \emph{cobordism}; the case of a regular disk is special in that the disk can be pushed "to the top," so that the exterior becomes a Weinstein domain \cite[Proposition 2.3]{EliashbergGanatraLazarev2018Regular}. Since zero-sections and cotangent fibers in cotangent bundles are examples of regular Lagrangians, Question \ref{question:lagrangian-weinstein}(1) can be understood as the Weinstein analogue of the nearby Lagrangian and nearby fiber conjectures. These were studied by \cite{Eliashberg_Polterovich, Hind2012Stein, Abouzaid2012Nearby,Kragh2013Nearby,DimitroglouRizellGoodmanIvrii2016, AbouzaidKragh2018Nearby,Ekholm_Smith, Cote_DR, asplund2025homotopyrigiditynearbylagrangian}.

Our first theorem is an affirmative answer to  Question \ref{question:lagrangian-weinstein}(1) for disks in dimension $4$, up to Weinstein deformation equivalence. Recall that two Weinstein domains $(W_i, \lambda_i, \phi_i)$, $i\in\{1,2\}$, are \emph{Weinstein deformation equivalent} if there is a diffeomorphism $\psi: W_1 \to W_2$ such that $(\lambda_1, \phi_1)$ and $(\psi^*\lambda_2, \psi^*\phi_2)$ are Weinstein homotopic.

\begin{mainthm}[Lagrangian disks are regular]
\label{thm: regular_lag_disk}
Let $(W^4, \lambda, \phi)$ be a $4$-dimensional Weinstein domain and $L\subset W$ a Lagrangian disk with Legendrian boundary. Then $L$ is a cocore, hence regular, for a Weinstein structure $(\lambda', \phi')$ on $W$ which is Weinstein deformation equivalent to $(\lambda, \phi)$. 
\end{mainthm}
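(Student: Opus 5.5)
The plan is to go through nearly Lefschetz fibrations, following the strategy announced in the introduction. Let $\Lambda = \partial L \subset \partial W$.

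\textbf{Step 1: make $\partial L$ a vanishing cycle.} By Giroux–Pardon style results, $(W,\lambda,\phi)$ is Weinstein deformation equivalent to the total space of a Lefschetz fibration $\pi\colon W \to D^2$ with Weinstein fiber $F$. The induced open book supports $\xi$ on $\partial W$. After Weinstein homotopy and positive stabilizations, we can take $\Lambda$ to lie on a page as an essential, nonseparating curve $\gamma \subset F$. Every stabilization is realized by a Weinstein deformation of $W$, so nothing is lost.

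\textbf{Step 2: remove $L$ to get a nearly Lefschetz fibration.} Here $L$ is a Lagrangian disk whose boundary sits on a page of the boundary open book. The model to aim for is that $L$ behaves like the Lefschetz thimble of an extra critical point. Push $L$ slightly inward and take the complement $W \setminus \nu(L)$; this exterior should carry the structure of a nearly Lefschetz fibration. Its fibers over the boundary are $F$ with $\gamma$ ``cut,'' and its monodromy factorization is the original one with the vanishing cycle $\gamma$ deleted. Conversely, attaching a Weinstein $2$-handle to this exterior along $\Lambda$ with page framing $-1$ recovers $W$, with $L$ as the cocore.

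To make this rigorous, I would compare $L$ with the standard cocore $D$ of such a handle. First show that the exterior $W\setminus\nu(L)$ is supported by a positive allowable nearly Lefschetz fibration. By the paper's result that such fibrations are supported by canonical Stein structures, the exterior is then a Stein (Weinstein) domain $W'$. Attaching the handle along $\Lambda$ gives a Weinstein structure on $W$ in which $L$ is the cocore, hence regular.

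\textbf{Step 3: show the new structure is deformation equivalent to the original.} The new structure comes from a Lefschetz fibration with the same monodromy factorization as the original, up to Hurwitz moves and stabilization, so the two are Weinstein deformation equivalent. The diffeomorphism identifying them may move $L$. That is acceptable, because the statement only requires deformation equivalence, not homotopy fixing $L$.

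\textbf{Main obstacle.} The key difficulty is Step 2: showing that an arbitrary Lagrangian disk, possibly knotted, can be isotoped through Lagrangians so that it is fibered over an arc, i.e.\ is a thimble of a (nearly) Lefschetz fibration. My first attempt would be to build the fibration on the exterior directly, from a Legendrian surgery/Kirby presentation. Along $\Lambda$, $L$ is a Lagrangian cap from the surgery viewpoint, and the disk plus a collar gives a Lagrangian sphere in $W \cup (\text{handle along }\Lambda)$. I would then use a Lefschetz fibration on that larger manifold in which the sphere is matching or vanishing. Justifying this realization with positivity and allowability intact is where I expect the real work to lie.
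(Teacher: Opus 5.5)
There is a genuine gap, and it sits exactly where you put your ``main obstacle'': Step 2 assumes the conclusion. You assert that $W\setminus\nu(L)$ is fibered with monodromy ``the original factorization with $\gamma$ deleted.'' That presupposes that $\gamma$ already occurs as a vanishing cycle and that $L$ is (Lagrangian isotopic to) the thimble over a radial arc. But such a thimble is already the cocore of a $2$-handle in the Lefschetz Weinstein structure, so at that point there is nothing left to prove. You give no mechanism for reaching this situation. Your fallback, realizing the Lagrangian sphere $L\cup\mathrm{core}(h)$ in $W\cup h$ as a vanishing or matching cycle, is at least as hard as the theorem and is not available in general.

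The picture is also not coherent as stated. If $L$ were such a thimble, its exterior would (up to deformation) be an ordinary Lefschetz fibration with one singular fiber removed. So nothing in your argument produces, or needs, exotic fibers. Step 3 is likewise unsupported: nothing in your construction relates the new factorization to the old one by Hurwitz moves and stabilizations.

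The missing idea is to stop trying to fiber $L$ and instead work with its symplectic push-off $S$. This is a symplectic disk with transverse boundary, smoothly isotopic to $L$ inside a small $\bD^*L$ neighborhood. The paper's argument runs as follows.
\begin{enumerate}
\item Stabilize the open book so that $\partial S$ is a binding component of minimal action.
\item Make the completion of $S$ a $\widehat{J}_0$-holomorphic plane. It is cut out transversely by automatic transversality; its index is $0$, using $c_1^\tau(N_L)=-1$.
\item Deform through $\cH$-adapted almost complex structures to one in $\cJ(\Pi)$, for which the fibers are holomorphic. Exactness rules out bubbling, and minimality of the action rules out breaking.
\item Positivity of intersections with the fibers then shows $\Pi|_S$ is a branched cover, i.e.\ $S$ is symplectically isotopic to a positive multisection.
\end{enumerate}
It is $W\setminus\nu(S)$, not $W\setminus\nu(L)$, that is a PANLF, with exotic fibers at the branch points. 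Its canonical Stein structure makes $-\partial\nu(L)$ contact type. So $L$ can be added back as the cocore of a Weinstein $2$-handle, and the resulting structure is supported by (a pullback of) $\Pi$. Deformation equivalence with the original structure then comes from contractibility of the space of almost Stein structures supported by a fixed Lefschetz fibration, not from comparing factorizations.
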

\noindent 

\begin{remark}
    In general, it is open whether the space of Weinstein structures compatible with a given symplectic form (or symplectic deformation equivalence class) is connected; see \cite[Problems 1.4, 1.5]{eliashberg2018revisited}. In the case of $(B^4, \omega_{\std})$, this is related to the question of whether $\mathrm{Diff}_+(B^4)$ is connected, while it is known that Weinstein fillings of $(S^3,\xi_{\std})$ are unique up to Weinstein deformation equivalence \cite[\S 16.2]{Cieliebak_Eliashberg}.
\end{remark}

For Question \ref{question:lagrangian-weinstein}(2), we give an answer for a specific class of regular Lagrangians.

\begin{mainthm}
\label{mainthm: regular_complement}
Let $(W^{2n}, \lambda, \phi)$ be a Weinstein domain and $L\subset W$ a regular Lagrangian filling built from $0$-handles and $1$-handles.
Then $W \setminus \nu(L)$ admits the structure of a Weinstein domain.
\end{mainthm}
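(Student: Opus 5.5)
Since $L$ is regular, we first homotope the Weinstein structure through Weinstein structures, keeping $L$ Lagrangian, so that $X_\lambda$ is tangent to $L$. By \cite{EliashbergGanatraLazarev2018Regular}, we may then arrange that $\phi|_L$ is a Morse function whose critical points are exactly the critical points of $\phi$ lying on $L$. These are critical points of $\phi$ of the same index as for $\phi|_L$; in the standard normal form the unstable manifold of a critical point of $W$ on $L$ meets $L$ in the unstable manifold of $\phi|_L$. By hypothesis $L$ is built from $0$- and $1$-handles, so $\phi|_L$ can be taken to have critical points of index $0$ and $1$ only. The relevant piece of $W$ near $L$ is then a Weinstein neighborhood: a regular neighborhood $U$ of $L$ together with the stable disks through these points. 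It is built from a "subcritical-in-$L$-direction" handle decomposition in which each $k$-handle of $L$ ($k\in\{0,1\}$) is the core-$k$-disk intersected with a Weinstein $k$-handle of $W$.

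Next we exploit the fact that $L$ is a filling, so $\partial L = \Lambda$ is Legendrian in $\partial W$. Consider the cotangent-type neighborhood $\nu(L)\cong D^*L$ with its handle decomposition induced by that of $L$. We aim to show the following: $W$ is obtained from $W\setminus\nu(L)$ by attaching, along its concave-looking part, handles that are dual to the handles of $L$. Turning the decomposition upside down relative to $L$, a $k$-handle of $L$ sitting inside a Weinstein $k$-handle $D^k\times D^{2n-k}$ of $W$ becomes, after removing $\nu(L)$, a handle whose complementary piece is $D^k \times (D^{2n-k}\setminus \nu(D^k\text{-direction fiber}))$. The key point is that for $k\le 1$ the cotangent fiber directions of $L$ inside $W$ can be taken isotropic of low dimension. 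Concretely, in a $0$-handle $B^{2n}$ with $L\cap B^{2n}$ a Lagrangian $n$-disk through the origin, the complement $B^{2n}\setminus\nu(D^n)$ is Weinstein deformation equivalent to a subcritical piece. More precisely, it is a $0$-handle together with an index $n-1$-handle, i.e. the exterior $S^{n-1}\times D^{n+1}$-type piece, which we realize explicitly via the splitting $\C^n=\C\times\C^{n-1}$ and pulling the standard Lagrangian $\R^n$ out as a cocore. For a $1$-handle we do the same fiberwise over the core interval.

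The plan is therefore to proceed handle by handle. First, pass to the regular structure with $X_\lambda$ tangent to $L$ and $L$ decomposed into $0$- and $1$-handles. Second, on each Weinstein handle of $W$ meeting $L$, write down an explicit model Weinstein structure on the handle minus $\nu(L)$ with convex boundary near $\partial W\setminus\nu(\Lambda)$. Third, glue these models to the untouched handles of $W$ away from $L$ using the Weinstein handle attachment lemma; this requires that attaching spheres of later handles avoid $\nu(L)$, which is arranged by general position since $L$ meets each handle only in its core-type disk. Fourth, smooth corners along $\partial\nu(L)$ so that the boundary of $W\setminus\nu(L)$ is convex.

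The main obstacle is the third/fourth step: $\partial\nu(L)$ near $L$ is \emph{concave} for the Liouville field tangent to $L$ (flow lines exit through $\Lambda$ and enter along the normal directions), so naively the exterior is a cobordism with negative end $\partial\nu(L)$. To fix this we will use the $1$-handle-body structure to push $L$ "to the top," in the spirit of \cite[Proposition 2.3]{EliashbergGanatraLazarev2018Regular} for disks. We cancel the $0$-handles of $L$ against the $1$-handles successively, or more carefully we isotope $L$ through regular Lagrangians so that the concave part becomes a union of standard neighborhoods of cocores of $n$-handles and their $1$-handle connections. Removing a cocore neighborhood is just deleting a handle. Removing a $1$-handle band between two cocores is where we expect the real work. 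We would first try to model it as deleting a Weinstein $n$-handle whose cocore is a boundary connected sum, and then check convexity explicitly in a local model.
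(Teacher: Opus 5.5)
There is a genuine gap. Your skeleton is right: regularize, make the $0$-handle disk of $L$ a cocore as in \cite[Proposition 2.3]{EliashbergGanatraLazarev2018Regular}, then reattach the handles of $W$ along isotropics avoiding $\partial L$. But the step you flag as ``the real work'' --- removing the $1$-handles of $L$ --- is never carried out, and the fix you suggest cannot work in general.

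Realizing a $1$-handle band as part of the cocore of a single $n$-handle (a boundary connected sum of cocores) only covers bands joining two \emph{different} cocores, i.e.\ $1$-handles that cancel $0$-handles. After such cancellations, every remaining $1$-handle of a component of $L$ joins the cocore to itself. Since cocores are disks, your model fails whenever $L$ is not a disk, e.g.\ an annulus or a positive-genus surface when $n=2$. That is precisely the case the theorem adds beyond \cite{EliashbergGanatraLazarev2018Regular}. The smooth count (each $m$-handle of $L$ contributes an $(n-1+m)$-handle to the exterior) already says each such $1$-handle should \emph{add} an $n$-handle to $W\setminus\nu(L)$, not be absorbed into a deleted one. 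Separately, your assertion that critical points of $\phi|_L$ have the same index in $W$ is false in general. A cocore point has index $0$ in $L$ and index $n$ in $W$, and your push-to-the-top step depends on exactly this discrepancy.

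The missing ingredient is a local symplectic model for the $1$-handles. Take a coupled Weinstein $1$-handle $D^1\times D^{2n-1}$ with Lagrangian core $\{x_1=y_2=\cdots=y_n=0\}$, and let $R_L$ be the complement of a thickened core. Then $R_L$ is a disk cotangent neighborhood of the Lagrangian annulus $D^1\times S^{n-1}$. So is the standard pair $R_{\std}=H_1\cup H_n$ consisting of a Weinstein $1$-handle and an $n$-handle. By the Weinstein neighborhood theorem $R_L\cong R_{\std}$, and the Liouville fields agree on the attaching region. Your phrase ``do the same fiberwise over the core interval'' matches the topology of this splitting but supplies no Weinstein structure, so the concavity problem you identify is left unresolved exactly there.

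With this lemma the paper argues as follows:
\begin{enumerate}
\item Attach a cancelling pair $h^{n-1},h^n$ so that the $0$-handle of $L$ becomes the cocore of $h^n$.
\item Slide the coupled $1$-handles onto the belt sphere of $h^n$.
\item Delete $h^n$ and drill out the $1$-handle cores. This gives on $T^*L\setminus\nu(L)$ the handles of $T^*L$, plus $h^{n-1}$, plus one new $n$-handle for each $1$-handle of $L$.
\item The handles of $W_L$ in $W\simeq T^*L\cup W_L$ then attach unchanged, because $\partial(T^*L\setminus\nu(L))$ differs from $\partial T^*L$ only near $\partial L$.
\end{enumerate}
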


\noindent In particular, this is an affirmative answer to Question \ref{question:lagrangian-weinstein}(2) for regular Lagrangian fillings in dimension $2n=4$.

\begin{remark}
There are basic homological obstructions to generalizing the handle index assumptions in Theorem \ref{mainthm: regular_complement} further. For example, if $W = T^*(S^2 \times [-1,1])$ and $L^3\subset W^6$ is the $0$-section, then $H_4(W \setminus \nu(L); \Z) \neq 0$; thanks to the presence of supercritical topology, $W\setminus\nu(L)$ does not admit the structure of a Weinstein domain.
\end{remark}

Before discussing the technology developed to prove these theorems, we describe some consequences. Theorem \ref{thm: regular_lag_disk} resolves one rung of a Lagrangian version of the Slice-Ribbon conjecture (see \cite{breen2026derivativelinkscontacttopology}). Briefly, recall that the classical Slice-Ribbon conjecture asks whether every slice knot in $S^3$ bounds a ribbon disk. One refinement of interest is \emph{handle-ribbonness}, which requires a knot to bound a slice disk whose exterior admits a $0$-, $1$-, $2$-handle decomposition. The following implications are known, and each converse implication is open: 
\begin{equation}\label{eq:SR}
\{\text{ribbon}\} \Rightarrow \{\text{handle-ribbon}\} \Rightarrow \{\text{slice}\}.  
\end{equation}
By the \emph{Lagrangian Slice-Ribbon conjecture} we mean the analogous chain of implications for Legendrian knots in $(S^3, \xi_{\mathrm{st}})$:
\begin{equation}\label{eq:SSR}
\{\text{decomposably slice}\} \Rightarrow \{\text{regularly slice}\} \Rightarrow \{\text{Lagrangian slice}\}.     
\end{equation}
A Legendrian knot is \emph{decomposably} (resp.\ \emph{regularly}) \emph{slice} if it bounds a decomposable (resp.\ regular) Lagrangian disk in $(B^4, \omega_{\mathrm{st}})$. The first implication follows from \cite{Conway_Etnyre_Tosun}, and prior to this work, both converse implications were open; we refer to \cite[\S 2]{breen2024regularlysliceimpliesoncestably} for a more thorough survey and comparison between \eqref{eq:SR} and \eqref{eq:SSR}, as well as the introduction of \cite{breen2026derivativelinkscontacttopology}. Theorem \ref{thm: regular_lag_disk} confirms that every Lagrangian slice knot is regularly slice. 

A further corollary is that the notions of Stein fillability and strong symplectic fillability coincide for contact-$(+1)$ surgeries.

\begin{corollary}
Let $\Lambda \subset (S^3, \xi_{\mathrm{st}})$ be a Legendrian knot, and let $(Y, \xi)$ denote contact-$(+1)$ surgery along $\Lambda$. Then $(Y, \xi)$ is Stein fillable if and only if it is strongly fillable.     
\end{corollary}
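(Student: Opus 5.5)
The plan is to reduce the statement to Theorem \ref{thm: regular_lag_disk}, Theorem \ref{mainthm: regular_complement}, and the standard correspondence between Legendrian surgery, contact-$(+1)$ surgery, and Lagrangian disks. One direction is immediate: a Stein filling is in particular a strong symplectic filling. The content is the converse.

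\textbf{From a strong filling to a Lagrangian disk.} Suppose $(W,\omega)$ is a strong filling of $(Y,\xi) = (S^3,\xi_{\mathrm{st}})_{+1}(\Lambda)$. Contact-$(-1)$ surgery on a Legendrian push-off $\Lambda'$ of the surgery dual cancels the $(+1)$-surgery. Concretely, I would use the fact that $(Y,\xi)$ contains a Legendrian knot $\Lambda^*$, the core of the surgery torus, on which Legendrian surgery returns $(S^3,\xi_{\mathrm{st}})$.

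Attaching a Weinstein $2$-handle to $W$ along $\Lambda^*$ produces a strong filling $X$ of $(S^3,\xi_{\mathrm{st}})$. By Gromov--McDuff, $X$ is symplectomorphic to a blow-up of $(B^4,\omega_{\mathrm{st}})$. Moreover, the cocore of the attached handle is a Lagrangian disk $D \subset X$ with Legendrian boundary isotopic to $\Lambda$; up to the standard identification, $\Lambda^*$ in the surgered manifold corresponds to $\Lambda$.

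The first obstacle is the possibility of exceptional spheres. I would try to rule them out using exactness near the disk, or else blow them down away from $D$: exceptional spheres can be made disjoint from an exact Lagrangian disk after isotopy via $J$-holomorphic curve arguments. This yields $D \subset (B^4,\omega_{\mathrm{st}})$, and $W$ is recovered, up to deformation, as the complement of a neighborhood of $D$, with the strong-filling structure glued back in.

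\textbf{From the disk to a Stein filling.} Now apply Theorem \ref{thm: regular_lag_disk} to $D \subset B^4$. It gives a Weinstein structure on $B^4$, deformation equivalent to the standard one, for which $D$ is a cocore and hence regular. Removing that $2$-handle, as in \cite[Proposition 2.3]{EliashbergGanatraLazarev2018Regular}, or directly via Theorem \ref{mainthm: regular_complement} since a disk is a single $0$-handle, gives a Weinstein domain structure on $B^4 \setminus \nu(D)$.

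Its boundary is $S^3$ with contact-$(+1)$ surgery along $\partial D = \Lambda$. That is exactly $(Y,\xi)$, because deleting the neighborhood of a Lagrangian disk with Legendrian boundary performs $(+1)$-surgery on its boundary. Hence $(Y,\xi)$ is Weinstein, and therefore Stein, fillable.

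\textbf{Main obstacle.} The delicate step is the middle one: producing an honest exact Lagrangian disk in the standard ball from an arbitrary strong filling. This requires handling non-exactness and blow-ups. I would first handle minimal fillings, where McDuff's theorem gives $X \cong B^4$ directly. I would then argue that any filling of $(Y,\xi)$ can be replaced by a minimal one without changing fillability, since blowing down preserves strong fillability.
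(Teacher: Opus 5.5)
Your proposal follows the paper's argument. The paper's proof is: strong fillability of $(Y,\xi)$ means $\Lambda$ bounds a Lagrangian disk in $(B^4,\omega_{\mathrm{st}})$, which is simply cited from \cite{Conway_Etnyre_Tosun}. Theorem~\ref{thm: regular_lag_disk} then makes that disk a cocore up to Weinstein deformation, and removing it gives a Weinstein, hence Stein, filling of $(Y,\xi)$.

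Your middle section reconstructs the cited result: attach a Weinstein $2$-handle along the dual Legendrian, apply Gromov--McDuff, and identify the belt sphere with $\Lambda$. This reconstruction is fine except at one point. Minimality of $W$ does not \emph{directly} give $X\cong B^4$, because you must still rule out exceptional spheres in $X=W\cup H$ that pass through the handle.

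The standard fix is to stretch the neck along the contact-type hypersurface $\partial W=Y$. A holomorphic exceptional sphere has no punctures. Any limit component in the exact handle cobordism $H$, or in a top symplectization level, would have only negative ends and hence negative $\omega$-energy. So the limit lies in $W$, contradicting minimality of $W$. This same argument is what justifies your claim that exceptional spheres can be made disjoint from $D$.

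Alternatively, you can just cite \cite{Conway_Etnyre_Tosun}, as the paper does. Recovering $W$ itself as $B^4\setminus\nu(D)$ is unnecessary, since only the existence of the disk in $B^4$ is used.
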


\noindent Indeed, by \cite{Conway_Etnyre_Tosun}, a contact-$(+1)$ surgery is strongly fillable if and only if the knot bounds a Lagrangian disk in $(B^4, \omega_{\std})$. By Theorem \ref{thm: regular_lag_disk}, such a disk is regular up to Weinstein deformation equivalence, hence the contact-$(+1)$ surgery, witnessed by removing the disk, is Stein fillable.

Since a contact-$(+1)$ surgery along a Lagrangian slice knot $\Lambda \subset (S^3, \xi_{\mathrm{st}})$ arises by removing a Lagrangian disk, the result sits as a contact-type hypersurface in $(\R^4, \omega_{\mathrm{st}})$. We more generally conjecture: 

\begin{conjecture}\label{conj:hypersurface}
Every contact-type hypersurface in $(\R^4, \omega_{\mathrm{st}})$ is Stein fillable. 
\end{conjecture}

\noindent We highlight some relevant questions that place the conjecture in context. First, motivated by conjectures of Chen \cite{chen2018splitting} and Gompf \cite{gompf2013smoothstein}, Mark-Tosun \cite[Question 1.5]{mark2022contacttype} have asked whether any rational homology sphere besides $S^3$ embeds as a contact-type hypersurface in $(\R^4, \omega_{\mathrm{st}})$, with their work giving evidence to a negative answer. In later work \cite[Question 1.9]{mark2025weinsteindomainssymplecticmanifolds}, they ask if the convex side of any contact-type hypersurface in $\mathbb{C}P^2$ is necessarily a Weinstein domain.

\subsection{Nearly Lefschetz fibrations and quasiflexibility}
The proof of Theorem \ref{thm: regular_lag_disk} involves a new construction of Stein structures on \emph{nearly Lefschetz fibrations}, which is of independent interest. Briefly, a nearly Lefschetz fibration is a generalization of a Lefschetz fibration with an additional class of smooth fibers, called \emph{exotic fibers}, with different topology than the regular fibers. While the monodromy around a typical Lefschetz singularity is a Dehn twist, the monodromy around an exotic fiber is modeled after a boundary interchange map as depicted in Figure \ref{fig:interchange}. Nearly Lefschetz fibrations were first formally defined in \cite{spinal_exotic}, drawing on ideas from \cite{baykur2016multisections,spinal_I,spinal_II}; their boundaries naturally carry spinal open book decompositions. Definitions are reviewed in \S \ref{sec: PANLF}. 

\begin{figure}[ht]
        \centering
        \includegraphics[width=0.9\linewidth]{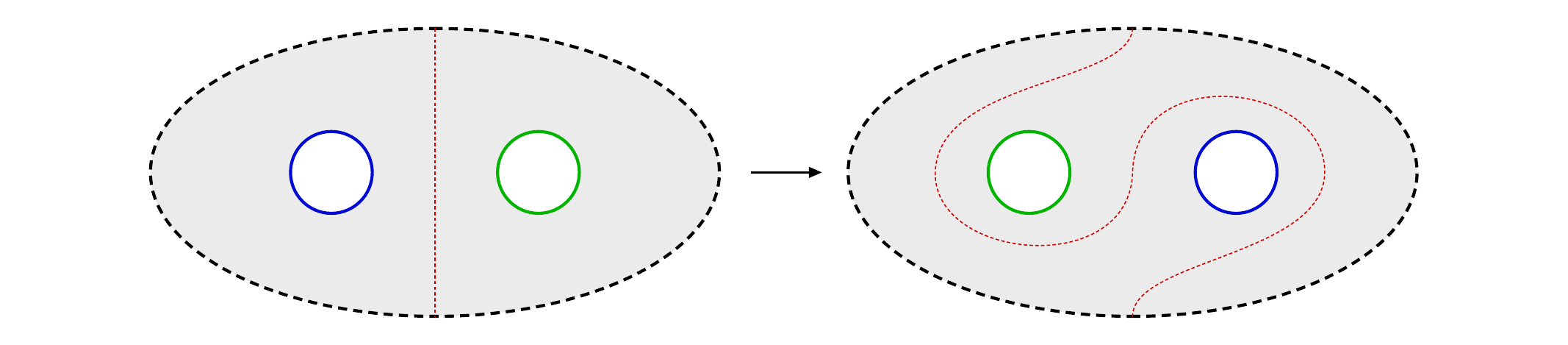}
        \caption{A boundary interchange map. Each boundary component is rotated clockwise by $\pi$, which is not shown in the picture.}
        \label{fig:interchange}
    \end{figure}

The fact that a positive allowable Lefschetz fibration (PALF) over the disk supports a Stein structure is well-known \cite{gompf1999manifolds,LoiPiergallini2001Stein,Akbulut2001Lefschetz}.  
This was generalized to PALFs over any compact orientable surface with boundary by \cite{baykurvhm,spinal_I}. Our interest in generalizing even further stems from the fact that the complement of a Lefschetz fibration \emph{multisection}, i.e.\ a surface along which the fibration map restricts to a simple branched cover, is naturally a nearly Lefschetz fibration. 
We associate a canonical Stein structure to a \emph{positive allowable nearly Lefschetz fibration (PANLF)}, where the monodromy is given by a positive factorization on essential curves and boundary interchanges. This generalizes the Stein existence results above.

\begin{mainthm}[Stein structures on PANLFs]
\label{thm: stein_on_panlf}
    Let $E$ be a positive allowable nearly Lefschetz fibration with projection map $\Pi: E \to B$. Then there exists a nonempty and contractible space of Stein structures supporting $\Pi$.
\end{mainthm}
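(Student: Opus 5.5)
The plan follows the standard proof that PALFs support Stein structures (Gompf, Loi--Piergallini, Akbulut--Ozbagci, and in the relative form of \cite{baykurvhm,spinal_I}), adapted to the exotic fibers. First we build a model Stein structure near each exotic fiber. By definition, a neighborhood of an exotic fiber is a fibration over a disk $D$ whose monodromy around $\partial D$ is a boundary interchange, as in Figure~\ref{fig:interchange}. We realize this neighborhood explicitly as the complement, in a trivial fibration $F' \times D$ with a different fiber topology, of a multisection. Concretely, locally a double branched cover $\{w^2 = z\}$ is removed, so that the two boundary components of the regular fiber that come from the removed sheets are interchanged. The ambient product $F'\times D$ carries a product Stein structure, and the removed curve is holomorphic with transverse boundary. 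By the complex-curve result recalled in \S\ref{subsec: intro_stein_exterior}\eqref{part:stein} (\cite{simha,grauert1979steinspaces}), the exterior is Stein with the ambient $J$. The fibration is $J$-holomorphic, and there is a plurisubharmonic exhaustion that is fiberwise exhausting near the vertical boundary. This gives the local model $\Pi^{-1}(D_{\exot})$, with an explicit strictly plurisubharmonic function $\phi_{\exot}$.

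Next we assemble the pieces globally, following \cite{spinal_I}. Decompose $B$ into a neighborhood of a spine (a $1$-complex, or a disk after handle slides), small disks around the Lefschetz critical values, and small disks around the exotic values. Over the regular part, use a fiberwise Stein structure on the regular fiber $F$, i.e.\ Stein structures on $F$, which is a surface with boundary. Allowability ensures that every vanishing cycle is essential, hence a Lagrangian isotopic to an exact curve. Twisting by base directions gives a product-type $J$ together with $\phi = \phi_F + K|\pi|^2$-type functions. At Lefschetz singularities we attach Weinstein $2$-handles along Legendrian lifts of the vanishing cycles in the standard way, or equivalently use the local holomorphic model $z_1^2+z_2^2$. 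At exotic values we glue in the model from the first step. The gluing requires that the boundary restriction of $\phi_{\exot}$ agrees, up to a fiberwise convex combination, with the product function over $\partial D_{\exot}$. That in turn requires the exotic model's fiber Stein structure to be conjugate under the interchange monodromy to the chosen one on $F$. Since the interchange can be realized by a symplectomorphism of $F$ preserving a chosen Liouville form near the boundary (rotation by $\pi$ on the collars), we fix this compatibility at the start, then interpolate with cutoff functions. Convexity of the space of strictly plurisubharmonic functions for a fixed $J$, together with the large-$K$ trick, preserves strict plurisubharmonicity.

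For nonemptiness and contractibility, define ``supporting $\Pi$'' as in the PALF setting: $J$ makes $\Pi$ holomorphic near the critical and exotic fibers, the fibers are $J$-complex, and $\phi$ is fiberwise exhausting with horizontal level sets at the boundary. The space of such pairs $(J,\phi)$ then fibers over the space of compatible $J$. The fiber over a given $J$ is convex, and the space of $J$ is contractible, because the fiberwise and horizontal data are sections of bundles with contractible fibers (almost complex structures on $2$-planes compatible with orientation, and positive cones). Contractibility then follows by a standard parametric version of the construction above applied to families parametrized by spheres.

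The main obstacle is the local model at an exotic fiber. Specifically, we must show that the exterior of the branched multisection is Stein with a plurisubharmonic function that is fiber-compatible near $\partial D_{\exot}$, and that its boundary fibration matches the product structure twisted by exactly the interchange monodromy. We would first try an explicit computation in $\C\times\C$, namely the exterior of $\{w^2=z\}$ intersected with a polydisk. Failing that, we would argue abstractly: the multisection is isotopic to a holomorphic one, and then we adjust $\phi$ near the vertical boundary by hand.
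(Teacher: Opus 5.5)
\textbf{Verdict: there is a genuine gap.} Your overall architecture matches the paper's: fiberwise data, cutoffs, a Thurston-type term $K\varphi\circ\Pi$, and fibering the supported structures over a contractible space of $J$'s. Your topological model of an exotic fiber is also right. With $z_1=z-w^2$ and $z_2=w$, the exterior of $\{w^2=z\}$ is exactly the paper's $A_\delta\times D^2$ with $\Pi=z_2^2+z_1$.

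\textbf{First gap: the local function at an exotic fiber is never produced.} This is the one genuinely new ingredient, and the result you invoke cannot supply it. \cite{simha,grauert1979steinspaces} give \emph{some} Stein structure on the exterior, but no control over the tube $\partial\nu(\{w^2=z\})$, which is part of the horizontal boundary (the spine). What is needed there is the following:
\begin{itemize}
\item $f$ is constant on the meridian circles, which are exactly the spine fibers;
\item $\lambda=-df\circ J$ restricts to a contact form on that boundary whose Reeb flow is the meridian rotation, so $\lambda(\partial\theta)$ is constant and $\ker\lambda$ is flow-invariant;
\item $d\lambda>0$ on fibers, so that Thurston's trick applies.
\end{itemize}
The obstacle you name, matching over $\partial D_{\exot}$ and ``near the vertical boundary,'' is misplaced: the exotic neighborhood can be chosen disjoint from $\partial_v E$.

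The paper supplies the missing function explicitly as $f_{\exot}=-\log|z_1|+\epsilon(2x_2^2-y_2^2)$, i.e.\ the pluriharmonic $-\log|z-w^2|$ plus a small fiberwise term. Then $\lambda_{\exot}=-d\theta_1+\epsilon(4x_2\,dy_2+2y_2\,dx_2)$ and $d\lambda_{\exot}=2\epsilon\,dx_2\wedge dy_2$. This is positive on every fiber, since $z_2$ is a fiber coordinate, though it is degenerate on $\C^2$. On the level sets of $f_{\exot}$ near the removed curve, $\lambda_{\exot}$ is contact with Reeb flow the $\theta_1$-rotation (Lemma~\ref{lem: contact_lambda}). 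The model is only fiberwise convex. Strict $J$-convexity comes from the $K\varphi\circ\Pi$ term, so a strictly plurisubharmonic exterior model is not what you should be looking for.

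\textbf{Second gap: the gluing and the contractibility step.} Both have the same fix. You build each piece with its own complex structure: the ambient one on $F'\times D$, and a product-type one elsewhere. You then propose to reconcile them by a conjugation condition under the interchange monodromy and interpolation with cutoffs. But $J$-convexity is a convex condition only for a \emph{fixed} $J$, and interpolating between different $J$'s does not preserve it.

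The paper instead fixes an arbitrary $(J,\partial\theta)\in\cJ(\Pi)$ first. It then shows (Lemma~\ref{lem:standard_acs_exot}) that for \emph{every} such $J$, the model $f_{\exot}$ can be transported to a function near the exotic fiber that is fiberwise $J$-convex with the boundary properties above. Next, $f=\sum_z(\rho_z\circ\Pi)f_z$, with cutoffs pulled back from the base, is fiberwise convex because on each fiber it is a convex combination. No monodromy compatibility is needed, and $f+K\varphi\circ\Pi$ is $J$-convex for $K\gg 0$.

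This ``for every $J$'' statement is exactly what your contractibility argument lacks. To deduce contractibility from that of the space of $J$'s, you need the fiber over \emph{each} $J$ to be nonempty and contractible. Your exotic model exists only for the integrable $J$ on $F'\times D$. Also, convexity is only available for the fiberwise space $\PSH^{\fib}_{(J,\partial\theta)}(\Pi)$. For the genuinely supported space, the paper runs the Thurston trick on sphere families with a parameter cutoff and applies Whitehead's theorem, which is the parametric argument you allude to.
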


Two remarks are in order. First, there is an equivalence between \emph{almost} Stein structures and Stein structures via the equivalence between Weinstein and Stein structures by \cite{Cieliebak_Eliashberg}; more details will be given in \S\ref{subsec:almost_stein_review}. To prove Theorem \ref{thm: stein_on_panlf}, we thus analyze the space of supporting almost Stein structures associated to a projection map, a notion defined in Definitions \ref{def: supported_almost_Stein} and \ref{def: J_Stein}. 

Second, one naturally wants to talk about Stein structures and Stein homotopy without having to specify the projection map $\Pi$ on the nearly Lefschetz fibration. In the case when the fibers are of genus zero, this is possible via the following "quasiflexibility" theorem.

\begin{mainthm}[Quasiflexibility of planar PANLFs]
\label{thm: quasiflexibility}
    Let $\Pi: E\to B$ be a nearly Lefschetz fibration with planar fibers. Suppose $J_0$ and $J_1$ are two Stein structures with associated symplectic structures $\omega_0$ and $\omega_1$. Suppose $J_0$ is supported by $\Pi$. Then $J_0$ and $J_1$ are Stein homotopic if and only if $\omega_0$ and $\omega_1$ are homotopic through symplectic structures convex at the boundary. 
\end{mainthm}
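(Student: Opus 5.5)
One direction is formal. A Stein homotopy $J_t$ comes with exhausting $J_t$-convex functions $\phi_t$. It therefore induces a path of symplectic forms $\omega_t=-dd^{\mathbb{C}}_{J_t}\phi_t$, and each $\omega_t$ is convex at the boundary because the Liouville field $\nabla\phi_t$ points outward. So all the content is in the converse: given a path $\omega_t$ of boundary-convex symplectic structures from $\omega_0$ to $\omega_1$, we must produce a Stein homotopy from $J_0$ to $J_1$.

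The plan imitates the proof of quasiflexibility for planar PALFs. That argument rests on Wendl's theorem that strong fillings of planar contact manifolds are supported by Lefschetz fibrations, obtained from foliations by $J$-holomorphic curves.

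\textbf{Step 1.} The path $\omega_t$ gives a path of contact structures $\xi_t$ on $\partial E$. By Gray stability we may isotope so that $\xi_t$ is constant, equal to the contact structure supported by the spinal open book induced by $\Pi$. Up to a symplectic deformation, $\omega_1$ is then a strong filling of the same planar spinal open book as $\omega_0$.

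\textbf{Step 2.} We would invoke the spinal analogue of Wendl's classification, in the form due to Lisi--Van Horn-Morris--Wendl \cite{spinal_I,spinal_II}. For a strong filling of a planar spinal open book, the moduli space of holomorphic curves asymptotic to the pages foliates the filling. The resulting map $\Pi_1\colon E\to B$ is a nearly Lefschetz fibration; the exotic fibers arise where curves degenerate along the multisection-type behavior, as in \cite{spinal_exotic}. Running this in the one-parameter family $\omega_t$, with generic paths of compatible almost complex structures, gives a family of fibrations $\Pi_t$. Planarity forces index and automatic transversality of the relevant curves, so the foliation persists over the whole path. The upshot is that $(E,\omega_t)$ is supported by a family $\Pi_t$ of PANLFs, with $\Pi_0$ deformation-equivalent to $\Pi$.

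\textbf{Step 3.} Apply Theorem \ref{thm: stein_on_panlf}. The space of Stein structures supported by a given PANLF is nonempty and contractible, and the construction is parametric over a family of fibrations. This yields a continuous family $J'_t$ of Stein structures, with $J'_t$ supported by $\Pi_t$. Contractibility at $t=0$ connects $J_0$ to $J'_0$, since both are supported by $\Pi$. At $t=1$ we must connect $J'_1$ to $J_1$. Here we apply Steps 1–2 to $J_1$ itself: its Stein symplectic form is $\omega_1$, and the holomorphic foliation produced for $\omega_1$ is unique up to isotopy. So $J_1$ is supported, up to homotopy, by a fibration isotopic to $\Pi_1$, and contractibility again connects the two. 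Throughout we pass between almost Stein, Weinstein and Stein structures using \cite{Cieliebak_Eliashberg}.

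\textbf{Main obstacle.} The hardest point is Step 2. We need that the holomorphic foliation exists for families, and that its singular fibers are exactly Lefschetz and boundary-interchange type, with no worse degenerations appearing in a generic one-parameter family. We also need that the foliation attached to $J_1$ coincides up to isotopy with the one at the end of the path. Our first attempt would be to adapt Wendl's intersection-theoretic compactness: positivity of intersections together with planarity gives index-$2$ curves that are embedded and pairwise disjoint, and one would check that the degenerate nodal configurations in codimension $\leq 1$ are exactly the exotic fibers of \cite{spinal_exotic}.
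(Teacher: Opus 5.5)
Your forward direction is fine, and Steps 1--2 match what the paper does. The paper takes the family of foliations directly from \cite[Proposition 6.4]{spinal_II} (Theorem \ref{thm: spinal_ii_6_4}), so what you call the main obstacle is not where the work lies.

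The genuine gap is at the endpoint $t=1$, in the sentence ``$J_1$ is supported, up to homotopy, by a fibration isotopic to $\Pi_1$.'' Uniqueness of the holomorphic foliation for $\omega_1$ does not give this. The leaves are holomorphic for an auxiliary compatible $\widehat J$ adapted at infinity, not for $J_1$. Even if you build the foliation from the completion of $J_1$ itself, you only learn that the fibers are $J_1$-complex curves. Being supported in the sense of Definition \ref{def: supported_almost_Stein} requires much more:
\begin{itemize}
\item $\Pi_1:(E,J_1)\to(B,j)$ must be pseudoholomorphic over all of $B$. The complex structure that $J_1$ induces on $TB$ can vary along each fiber, so no such $j$ need exist.
\item $f_1$ must satisfy the boundary conditions (Giroux form, constancy on the fibers of $\pi_h$, Reeb invariance).
\end{itemize}
So contractibility of $\cJ_{\Stein}(\Pi_1)$ cannot be applied to $J_1$. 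You have not shown that $J_1$ lies in that space or can be deformed into it, and that deformation is the actual content of quasiflexibility.

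The paper fills this gap with \emph{peripheral} support:
\begin{enumerate}
\item The compatible homotopy $\widehat J_\tau$ is fixed equal to $J_+$ on the cylindrical end, so $\Pi_1$ and $\Pi_0$ coincide near the boundary.
\item Hence $(J_1,f_1)$ is peripherally supported by $\Pi_0$ (Definition \ref{def: periphally_supported_almost_Stein}). This only asks that fibers be $J$-complex and that $\Pi_0$ be holomorphic over a collar $\cU$ of $\partial B$ and along $\partial_h E$.
\item Theorem \ref{thm: peripheral_asc} then homotopes $(J_1,f_1)$ through almost Stein structures to one fully supported by $\Pi_0$. Its proof goes through the parametric Proposition \ref{prop:peripheral_to_total}: the Lisi--Van Horn-Morris--Wendl construction $F^\epsilon_\tau=\varphi\circ\Pi+\epsilon(\beta\circ\Pi)f_\tau$ with a modified $J^\epsilon_\tau$, plus new local work near the exotic fibers using $f_{\exot}$.
\item Only then does Proposition \ref{prop: contractible_almost_Stein} connect it to $(J_0,f_0)$.
\end{enumerate}
Your parametric family $J'_t$ supported by $\Pi_t$ does not help here, since it only transports structures along an isotopy of fibrations. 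The missing ingredient is this peripheral-to-total deformation.
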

\noindent In other words, given a planar PANLF, there is a canonical supporting Stein structure (regardless of the projection map).

As an application of Theorem \ref{thm: quasiflexibility}, we prove the following generalization of the work of Boileau-Orevkov \cite{boileau-orevkov} on characterizing properly embedded symplectic surfaces in $D^2 \times D^2$. We extend their characterization to $4$-dimensional Weinstein domains which are products of surfaces.

\begin{mainthm}
\label{thm: planarNLF_multisection}
Let $B$ and $F$ be compact surfaces with nonempty boundaries, and assume $F$ is planar. In the product $4$-dimensional Weinstein domain $F \times B$, any properly embedded symplectic surface with braided boundary in the vertical boundary $F \times \partial B$ is symplectically isotopic to either a fiber $F$, or a positive multisection of the projection map to $B$. 
\end{mainthm}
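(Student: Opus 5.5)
The strategy is to follow Boileau--Orevkov: relate the complement $E=(F\times B)\setminus\nu(\Sigma)$ to a planar nearly Lefschetz fibration, and use Theorem \ref{thm: quasiflexibility} to compare its Stein structures. Let $\Sigma\subset F\times B$ be a properly embedded symplectic surface whose boundary is braided in $F\times\partial B$, and let $\pi: F\times B\to B$ be the projection. If $\partial\Sigma$ is a closed braid of degree $d\ge 1$, the goal is to isotope $\Sigma$ through symplectic surfaces to a positive multisection of $\pi$. The degenerate case $d=0$ can only occur when $\partial\Sigma$ is isotopic to a fiber boundary; then $\Sigma$ should be isotoped to a fiber $F$, and this case is handled by the same argument with the roles of the factors swapped.

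\textbf{Step 1.} I would deform the almost complex structure rather than the surface. Choose an almost complex structure $J_\Sigma$ that is tamed by the product symplectic form, makes $\Sigma$ $J_\Sigma$-holomorphic, and is product-like near the vertical boundary. Near $F\times\partial B$ the braid condition lets us arrange that $\Sigma$ is transverse to the fibers, so $\Sigma$ is a multisection near the boundary.

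\textbf{Step 2.} Next, deform $J_\Sigma$ through tamed structures, always keeping $\Sigma$ holomorphic, to one for which the fibers $\{*\}\times F$ form a $J$-holomorphic foliation. In Boileau--Orevkov this step uses a Gromov-type argument with pencils of holomorphic disks; here I would replace it by the quasiflexibility statement.

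The complement $E=F\times B\setminus\nu(\Sigma)$ carries two structures:
\begin{itemize}
\item a Stein structure $J_1$ coming from $\Sigma$ being symplectic;
\item the model fibration $\Pi$ obtained from a reference positive multisection $\Sigma_0$ with the same boundary braid. Removing $\Sigma_0$ gives a planar PANLF: the branch points of $\Sigma_0$ become exotic fibers with boundary-interchange monodromy. This PANLF carries a supported Stein structure $J_0$ by Theorem \ref{thm: stein_on_panlf}.
\end{itemize}

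Such a $\Sigma_0$ exists because the boundary braid is quasipositive, and this is where we use that $\Sigma$ is symplectic, via Hayden's results. Theorem \ref{thm: quasiflexibility} then says $J_1$ and $J_0$ are Stein homotopic provided the two symplectic forms are homotopic through forms convex at the boundary. That homotopy comes from the linear path in the space of forms tamed on $F\times B$ that agree near $\Sigma$ and $\Sigma_0$ after matching the boundary, since both surfaces are symplectic with the same boundary.

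\textbf{Step 3.} Convert the Stein homotopy of complements back into a symplectic isotopy of surfaces. Regluing $\nu(\Sigma)$ along the homotopy, with the boundary fixed, gives a family of symplectic structures on $F\times B$ in which the surfaces remain symplectic. A Moser argument relative to the boundary turns this family into an ambient symplectic isotopy carrying $\Sigma$ to $\Sigma_0$.

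\textbf{Main obstacle.} The hardest step is Step 2: showing the boundary braid is quasipositive and that the associated positive multisection has a complement symplectomorphic to the relevant convex structure. I would first try to prove this by making $\Sigma$ $J$-holomorphic and counting, via positivity of intersections, its tangencies with the fibers, which should yield a braided factorization. Planarity of $F$ should be exactly what lets the fibers $\{b\}\times F$ be realized as $J$-holomorphic curves for generic $J$, as in Wendl's foliation results.
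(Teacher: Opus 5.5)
Your main line of argument (Steps 2--3) has a genuine gap; in fact it is circular.

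(i) You give $E=(F\times B)\setminus\nu(\Sigma)$ a Stein structure $J_1$ ``coming from $\Sigma$ being symplectic,'' but no such structure is available. This is exactly Question \ref{question:symplectic-exterior-stein}. The paper's affirmative answer in the planar case is a \emph{consequence} of the theorem you are proving: once $\Sigma$ is a multisection, the complement is a PANLF.

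(ii) The reference multisection $\Sigma_0$ requires the boundary braid to be quasipositive in the braid group of $F$. Hayden's results concern holomorphic curves. Quasipositivity of the boundary of a \emph{symplectic} surface is essentially the Boileau--Orevkov statement, i.e.\ part of what is to be proved.

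(iii) Theorem \ref{thm: quasiflexibility} compares two Stein structures on one fixed manifold carrying one fixed planar nearly Lefschetz fibration. Your $J_1$ lives on $(F\times B)\setminus\nu(\Sigma)$ and $J_0$ on $(F\times B)\setminus\nu(\Sigma_0)$. Identifying these two manifolds is already as strong as the isotopy you want. So there is no meaningful path of forms between them, and the regluing/Moser argument in Step 3 presupposes the conclusion. Quasiflexibility gives homotopies of structures on a fixed complement; it cannot by itself carry one surface onto another.

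The argument that works is the one you sketch in your last paragraph, and it is what the paper does; it makes the quasiflexibility detour unnecessary. Keep $\Sigma$ fixed and let the \emph{fibration} move, rather than trying to make the fixed fibers $\{b\}\times F$ holomorphic.
\begin{itemize}
\item Take $J\in\cJ(\Pi)$ with double completion $\widehat J$.
\item Choose a cylindrical $\widehat J_S$, equal to $J_+$ on the end, making $\widehat\Sigma$ holomorphic as in Lemma \ref{lem: symplectic_to_J_transverse}.
\item Follow the holomorphic fibers along any path $\widehat J_\tau$ from $\widehat J$ to $\widehat J_S$. Planarity gives persistence of the foliation as in Theorem \ref{thm: spinal_ii_6_4}. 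Exactness excludes sphere bubbling, and $\pi_1$-injectivity of the fiber into $F\times B$ excludes singular or exotic fibers.
\item At $\tau=1$ you get a fibration isotopic to the original whose fibers and $\widehat\Sigma$ are simultaneously $\widehat J_S$-holomorphic.
\item Unique continuation (Carleman similarity) then says either $\Sigma$ coincides with a fiber, or it meets every fiber in isolated positive intersections.
\item In the latter case, \cite[Lemma 6.31]{spinal_II} upgrades this to a branched cover with positive branching, i.e.\ a positive multisection.
\end{itemize}
This also produces the fiber alternative directly. Your plan to handle that case by swapping the factors is unavailable, since $B$ need not be planar.
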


The proof of Theorem \ref{thm: planarNLF_multisection} is based on a similar strategy to part of the proof of Theorem \ref{thm: regular_lag_disk}. In the latter, we go from an almost complex structure making the symplectic surface pseudoholomorphic to an almost complex structure making the fiber pseudoholomorphic; in the former, we go the opposite way. A version of Theorem \ref{thm: planarNLF_multisection} is also claimed in the work-in-progress by \cite{hedden_quasipositive}.

\subsection{From PANLFs to handle diagrams} Finally, it is natural to ask how the Stein structure supported by a PANLF can be described in other standard ways, for example by a Weinstein Kirby diagram or a supporting PALF. In \S\ref{sec:algo} we describe an algorithm for converting the PANLF on a multisection complement in $B^4$ to a Weinstein Kirby diagram directly from a (monotonic) quasipositive braid presentation of the multisection, and we compute several examples; Figure \ref{fig:intro820} depicts a sample output. Once a handle diagram is obtained, there are standard arguments for obtaining a supporting PALF, and we give an example of this as well. This gives an explicit global algorithm to convert spinal open books to ordinary open books in these settings.

In general, as a result of Theorem \ref{thm: stein_on_panlf}, we know abstractly that we can always convert a spinal open book on the boundary of a PANLF to an ordinary open book. Finding an explicit (and perhaps more local) algorithm in general would be very interesting. See \S \ref{subsec:questions} for further questions.

\begin{figure}[ht]
	\centering
	\begin{overpic}[width=\linewidth]{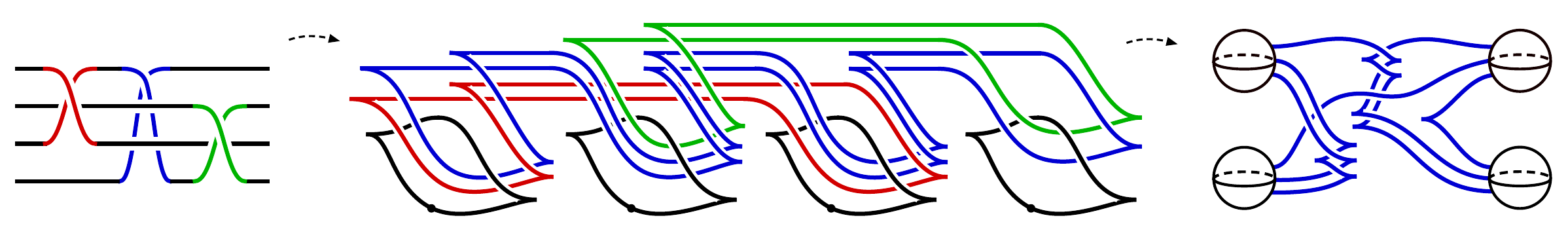}
        \put(180,63){\footnotesize \textcolor{darkred}{$(-1)$}}
        \put(200,63){\footnotesize \textcolor{darkblue}{$(-1)$}}
        \put(220,63){\footnotesize \textcolor{darkgreen}{$(-1)$}}
        \put(365,60){\footnotesize \textcolor{darkblue}{$(-1)$}}
	\end{overpic}
	\caption{A sample output from the algorithm in \S\ref{sec:algo}; c.f.\ Figure \ref{fig:m820}.}
	\label{fig:intro820}
\end{figure}

\subsection*{Organization} In \S\ref{sec: PANLF} we review the definitions of spinal open books, nearly Lefschetz fibrations, and almost Stein structures. In \S\ref{sec:PANLF_stein} and \S\ref{sec: quasiflex}, we establish the Stein existence and quasiflexibility results, Theorems \ref{thm: stein_on_panlf} and \ref{thm: quasiflexibility}. In \S\ref{subsec:panlf_weinstein}, we construct a specific Weinstein structure on a PANLF, and in \S\ref{sec: proof_of_main} we prove Theorems \ref{thm: regular_lag_disk} and \ref{mainthm: regular_complement}. The proof of Theorem \ref{thm: planarNLF_multisection} is contained in \S\ref{sec: planarNLF_multisection}. Finally, the algorithm for converting a multisection complement PANLF into a handle diagram is presented in \S\ref{sec:algo}. We note that \S\ref{subsec:panlf_weinstein} and \S\ref{sec:algo}, which contain the concrete descriptions of Weinstein structures, can be read independently on a first pass before the technical material in \S\ref{sec:PANLF_stein} and \S\ref{sec: quasiflex}.

\subsection*{Acknowledgments}
The authors started thinking about ideas in this paper during the 2023 BIRS workshop on ``Interactions between Symplectic and Holomorphic Convexity in 4 Dimensions'' and thank its organizers. The authors thank helpful conversations from Georgios Dimitroglou Rizell, Yakov Eliashberg, John Etnyre, Oleg Lazarev, Sam Lisi, Mark McLean, B\"ulent Tosun, and Chris Wendl. Special thanks to Kyle Hayden for pointing out missing assumptions in Theorem \ref{thm: planarNLF_multisection} in a preliminary draft. LLMs were only used in checking (and correcting) computations on vector fields in Lemma \ref{lem: contact_lambda} and Lemma \ref{lem: symplectic_to_J_transverse}, as well as writing a program to help us visualize the local model in \S \ref{subsec: exotic_weinstein}. LLMs were not used in generating ideas, texts, and figures.

\section{Nearly Lefschetz fibrations}\label{sec: PANLF}
\subsection{Spinal open books and nearly Lefschetz fibrations}
A nearly Lefschetz fibration is a generalization of Lefschetz fibration, with the addition of a new type of "singularity" near the horizontal boundary. The boundary of a nearly Lefschetz fibration takes the form of a spinal open book, which we describe first.

\begin{definition}\label{def:spinalobd}
    A \textbf{spinal open book decomposition} of a closed oriented $3$-manifold $M$ is a decomposition $M = M_\Sigma \cup M_P$ (into components called the \textbf{spine} and \textbf{paper}, respectively) together with a pair of fibrations $\pi = (\pi_{\Sigma} : M_\Sigma \to \Sigma,\,  \pi_{P}:M_P \to S^1)$ where:
    \begin{enumerate}
        \item The spine $\Sigma$ is a compact oriented surface whose connected components (called \textbf{vertebrae}) have nonempty boundary, and $\pi_\Sigma$ is a trivial fibration with $S^1$ fiber.
        \item The fibers of $\pi_P$ are compact oriented surfaces whose connected components (called \textbf{pages}, denoted by $P$) have nonempty boundary and intersect $\partial M_P$ transversely. Additionally, the intersection of $P$ and $M_\Sigma$ consists of fibers of $\pi_\Sigma$.
        \item For every component of $\partial M_P$, there exist local coordinates $(\phi, t, \theta) \in S^1 \times (-1,0] \times S^1$ on a collar neighborhood such that $\pi_P(\phi,t,\theta) = m\phi$ for some $m \in \mathbb{N}$. On every component of $\partial M_\Sigma$, there exist local coordinates $(s, \phi, \theta) \in (-1,0] \times S^1 \times S^1$ such that $\pi_{\Sigma}(s, \phi, \theta) = (s,\phi)$. The number $m$ is called the {\bf multiplicity} of $\pi_P$ at that boundary component of $M_P$. On the overlap between $M_P$ and $M_\Sigma$ the $2$-torus coordinates $(\phi, \theta)$ agree, while the interval coordinates are related by $s = -t$.
        \item The paper $M_P$ can be identified with a mapping torus 
        \[
            M_P = \R \times P \big/ \sim, \quad ( \tau, p) \sim (\tau-1,\phi(p))
        \]
        and $\pi_P: M_P \to S^1$ is given by $[(\tau, p)] \to [\tau]$. Here, $\phi \in \FMod(P)$ is the monodromy in the framed mapping class group of $P$. 
    \end{enumerate}
\end{definition}

\begin{definition}\label{def: panlf}
  A \textbf{nearly Lefschetz fibration} of a $4$-dimensional compact manifold with corners $E$ over a compact surface $B$ is a smooth map $\Pi: E \to B$ with finitely many interior critical points $E_{\crit} \subset E^{\circ}$ and critical values $B_{\crit} \subset B^{\circ}$, and finitely many \textbf{exotic points} $B_{\exot} \subset B^{\circ}$ such that:
  \begin{enumerate}
    \item For each $q \in B_{\crit}$, there are local holomorphic coordinates on $E$ centered at a corresponding critical point and on $B$ centered at $q$ such that $\Pi(z_1, z_2) = z_1^2 + z_2^2$.
    \item All fibers $E_z := \Pi^{-1}(z)$ for $z\in B$ are connected with nonempty boundary in $\partial_h E$.
    \item For each $p \in B_{\exot}$, there are local holomorphic coordinates $\psi \colon U_p \to \C$, where $U_p \subset B^{\circ}$, sending $p$ to $0$, and $\phi\colon V_p \to (\C \setminus D) \times \C$, where $V_p \subset \nu(\partial_h (E)) \cap \Pi^{-1}(U_p)$ and $D$ is a small neighborhood of $0 \in \C$, such that the following diagram commutes:
    \[
    \begin{tikzcd}
      V_p \arrow{r}{\phi}  \arrow{d}{\Pi} &(\C \setminus D) \times \C\arrow{d}{\Pi_p}\\
      U_p \arrow{r}{\psi} & \C
    \end{tikzcd}
    \]
    where $\Pi_p(z_1,z_2) = z_2^2-z_1$. We will refer to the fiber corresponding to $\Pi^{-1}(0)$ as an \textbf{exotic fiber}, and its boundary component in $V_p$ as an exotic boundary component. We will also denote the boundary component of $(\C \setminus D)$ as $\partial_+(\C\setminus D)$.
    \item The vertical boundary is $\Pi^{-1}(\partial B) = \partial_v E$ and $\Pi|_{\partial_v E} :\partial_v E \to \partial B$ is a smooth fiber bundle.
    \item The horizontal boundary $\partial_h E$, formed by the boundaries of the fibers of $\Pi$, admits a smooth fiber bundle structure $\Pi_h: \partial_h E \to B$, where \[\Pi_h = \Pi|_{\partial_h E} \text{ on } \partial_h E \setminus \bigcup_{c \in B_{\exot}} V_p\] and on the interior of $V_p$ the fiber at $\psi^{-1}(d)$ is given by $\phi^{-1}(\partial_+(\C \setminus D) \times \{\pm\sqrt{d}\})$ where $d \in \psi(U_p)$.
    \end{enumerate}
\end{definition}

The boundary of a nearly Lefschetz fibration $\Pi: E \to B$ naturally gives rise to a spinal open book. In the Lefschetz \emph{amenable} case, i.e. when there are no exotic fibers, the spinal open book has $\partial_v E$ as the paper and $\partial_h E$ as the spine, and the corresponding projection maps are obtained by $(\pi_P:=\Pi|_{\partial_v E}, \pi_{\Sigma}:=\tilde \Pi|_{\partial_h E})$, where $\tilde \Pi|_{\partial_h E}$ is defined by passing the base of the fibration of $\Pi|_{\partial_h E}$ to an oriented cover so that the fibers of $\tilde \Pi|_{\partial_h E}$ are connected. We now describe $\pi_\Sigma$ on the boundary of a nearly Lefschetz fibration in the presence of exotic fibers. In the "ideal" situation, i.e.\ when we complete the Lefschetz fibration by an appropriate double completion,
the exotic neighborhood is given a Seifert fibration. In a large compact cut-off, the exotic neighborhood contains singular "figure-eight" fibers as in Figure \ref{fig:hypersurf}. However, on the spine $M_\Sigma:=\partial_hE$, we nevertheless have another fibration given by the Reeb foliation in the local model; see \S \ref{subsec: exotic_weinstein}, where this explicit fibration essentially mimics the "ideal" situation. 

\begin{figure}
        \centering
	    \begin{overpic}[width=0.9\textwidth]{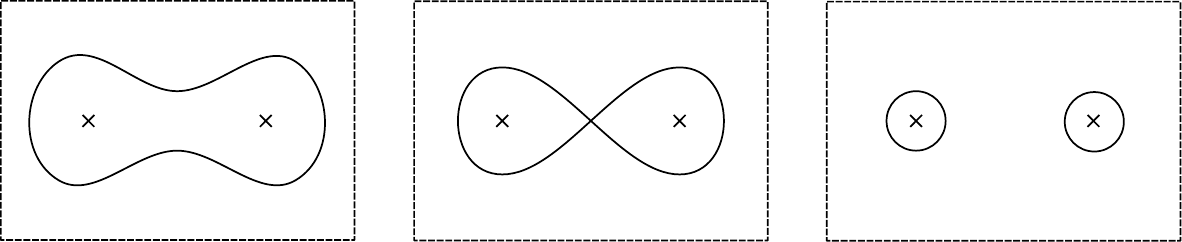}
        \put(5,5){close to an exotic fiber}
        \put(271,5){far from an exotic fiber}
        \end{overpic}
        \caption{Intersection of the fibers close to an exotic fiber with a contact hypersurface cut out at a level set near infinity.}
        \label{fig:hypersurf}
    \end{figure}
There is a Seifert fibration on $\partial_h E$ given by the lift of $\Pi_h$ defined in (5) of Definition \ref{def: panlf}.
 The base of this Seifert fibration $\Sigma$ is a disjoint union $\Sigma_1 \sqcup \dots \sqcup \Sigma_k$, and there are branched cover maps $\pi_i: \Sigma_i \to B$, with the branch points corresponding to exotic fibers. The projection map of the Seifert fibration is then $\pi_\Sigma$ and $ \Pi_h = \pi_i\circ\pi_{\Sigma}$ on each component of $\partial_h E$.

\begin{remark}
    Throughout the paper, we often suppress the distinction between a bordered Lefschetz fibration with corners and its appropriate smoothing, since the two Weinstein structures are canonically identified. 
    The exception is in Lemma \ref{lem: completion}, where the smoothing needs to be carefully chosen in order for the boundary hypersurface to be equipped with an appropriate stable Hamiltonian structure.
\end{remark}

A PANLF can be understood as a the complement of a positive multisection in a bordered Lefschetz fibration. See also \cite{baykur2016multisections, spinal_exotic}. We first recall the definition of branched cover of surfaces with cylindrical ends.
\begin{definition}[Definition 6.2 in \cite{spinal_II}]
    Let $\Sigma_1$ and $\Sigma_2$ be two closed oriented surfaces. Then a continuous map $\pi: \Sigma_1 - \{p_1, \dots, p_m\} \to \Sigma_2 - \{q_1, \dots, q_n\}$ is a \textbf{branched cover of surfaces with cylindrical ends (with degree $d$)} if it is proper and its extension to a map $\Sigma_1 \to \Sigma_2$ is a branched cover (with degree $d$). 
\end{definition}
\noindent In this paper, we often work with the compact-with-boundary version of the above definition.

\begin{definition} \label{def: multisection}
  Let $\Pi\colon E \to B$ be a bordered Lefschetz fibration. A properly embedded compact oriented surface $\Sigma \subset E$ is called a {\bf positive multisection} of $E$ if it satisfies the following conditions: 

  \begin{enumerate}
    \item The restriction $\Pi|_{\Sigma}$ is an $n$–fold simple branched covering map.
    \item For any branch point $p \in  \Sigma \setminus B_{\crit}$, there exist local coordinates $\phi: U \to \C^2$ of $p$ and local coordinates $\psi: V \to \C$ of $q = \Pi(p)$ such that the following diagram commutes: 
    \[
    \begin{tikzcd}
      (U, U \cap \Sigma) \arrow{r}{\phi}  \arrow{d}{\Pi} &(\C^2, \Sigma_0)\arrow{d}{f_0}\\
      V \arrow{r}{\psi} & \C
    \end{tikzcd}
    \]
    where $f_0: \C^2 \to \C$ is projection to the first component, and $\Sigma_0 = \{(z^2,z)\} \subset \C^2$.
    \item If a branch point $p \in \Sigma$ of $\Pi|_{\Sigma}$ is in $B_{\crit}$, then there are complex coordinates $(U,\phi)$ and $(V, \psi)$ as above such that $\phi(U \cap \Sigma)$ is equal to $\{(z,z) \in \C^2\}$. 
  \end{enumerate}
\end{definition}

\noindent Note that we can (and will) always assume the branch points to be disjoint from $B_{\crit}$ via a Weinstein homotopy by adjusting the critical points of the associated Morse function. We also occasionally drop the adjective positive in positive multisections when it is clear from context.

\subsection{Almost Stein structures}\label{subsec:almost_stein_review}
In this subsection, we review almost Stein structures, a structure lying between Weinstein and Stein structures.

\begin{definition}[Definition 1.22 in \cite{spinal_I}]
Let $W$ be a compact manifold, possibly with corners. An \textbf{almost Stein structure} on $W$ is a pair $(J, f)$ consisting of an almost complex structure $J$ and a smooth function $f: W \to \bR$ such that, for $\lambda:= -df \circ J$,
\begin{enumerate}
    \item $d\lambda$ is a symplectic form taming $J$;
    \item $\lambda$ restricts to a contact structure on every smooth face of $\partial W$.
\end{enumerate}
\end{definition}

We sometimes refer to $(J, f, \lambda)$ in the above definition as an \emph{almost Stein triple}. We will also refer to $f$ as a \emph{(strictly) plurisubharmonic}\footnote{The word "strictly" is often omitted in this paper as well as in the symplectic literature. See \cite[p33]{Cieliebak_Eliashberg}.} function. Note that compared to Stein structures, an almost Stein structure does not require $J$ to be integrable nor $f$ to be constant on the boundary. Nevertheless, if the Liouville vector field associated to $\lambda$ points transversely outward at the boundary of $W$ after smoothing, then $(J,f)$ determines a canonical Weinstein structure up to Weinstein homotopy via \cite{Cieliebak_Eliashberg}. See \cite[\S 1.1.3]{spinal_I} for further details relating Stein, almost Stein, and Weinstein structures. In particular, we have the following lemma as a consequence of the deep work of \cite{Cieliebak_Eliashberg}.
\begin{lemma}[{\cite{spinal_I}}]
\label{lem: almost_Stein=Stein}
    There is a canonical one-to-one correspondence between almost Stein homotopy classes on $W$ and the Stein homotopy classes on $W$ after smoothing corners. 
\end{lemma}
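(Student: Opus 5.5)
The plan is to pass through Weinstein structures, using the equivalences established in \cite{Cieliebak_Eliashberg}. Given an almost Stein pair $(J,f)$ on $W$, set $\lambda=-df\circ J$. Then $\omega=d\lambda$ is symplectic and tames $J$, and $X_\lambda$ is the $\omega$-dual of $\lambda$. First compute that $df(X_\lambda)=\lambda(JX_\lambda)$-type expressions are positive away from critical points, using taming. This gives $df(X_\lambda)\ge c(|X_\lambda|^2+|df|^2)$ for a metric built from $\omega(\cdot,J\cdot)$ symmetrized, so $X_\lambda$ is gradient-like for $f$. After a $C^\infty$-small perturbation, $f$ is Morse; this can be done keeping the almost Stein conditions, since these are open. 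Thus $(\omega,X_\lambda,f)$ is a Weinstein structure, except for the boundary.

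Next, smooth the corners. On each smooth face $\lambda$ restricts to a contact form, so $X_\lambda$ is transverse to the face, and outward-pointing once we choose orientation conventions as in the definition. Round the corners through a family of hypersurfaces interpolating the faces, all transverse to $X_\lambda$. Near the rounded boundary, modify $f$ so that the boundary is a regular level set without creating critical points; this is possible since $X_\lambda$ is transverse. The result is a Weinstein domain $\mathfrak W(J,f)$. This construction is canonical up to Weinstein homotopy, because the choices (perturbation, rounding, modification of $f$) form contractible families, and almost Stein homotopies map to Weinstein homotopies.

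Now apply \cite[Theorems 1.1--1.2 / Ch.~13--15]{Cieliebak_Eliashberg}. The map $\Stein\to\Weinstein$, given by $(J,\phi)\mapsto(-dd^{\C}\phi,\nabla_\phi\phi,\phi)$, induces a bijection on homotopy classes, with a homotopy inverse in the $4$-dimensional case as well. Composing with the previous step gives a map $\mathcal A:\{\text{almost Stein}\}/\!\sim\ \to \{\text{Stein}\}/\!\sim$. There is also an obvious map back: a Stein structure on the smoothed $W$ is almost Stein after identifying the smoothing with $W$, since integrable $J$ and $\phi$ constant on the boundary are special cases. One checks that both composites are the identity on homotopy classes. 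One direction holds because the Weinstein structure of a Stein structure is exactly the one produced above, and Cieliebak--Eliashberg's bijection then gives back the same class.

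The main obstacle is the other composite. An almost Stein structure with non-integrable $J$ must be shown almost-Stein-homotopic to a genuine Stein one. I would argue that the space of $J$ tamed by a fixed $\omega$ and making $f$ $J$-convex is contractible. Then the Cieliebak--Eliashberg Stein structure realizing the Weinstein class $\mathfrak W(J,f)$ can be connected through almost Stein structures, first by interpolating $J$ through tamed almost complex structures and then by the Weinstein homotopy lifted via the flexibility of $J$-convexity. The delicate part is verifying that $-df\circ J_t$ stays contact on faces along this interpolation; I would handle this by fixing $f$ near the boundary and interpolating only $J$ there among structures for which the levels remain $J_t$-convex.
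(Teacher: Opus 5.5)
Your overall route is the one the paper intends. The paper gives no argument beyond observing that an almost Stein pair determines, after smoothing corners, a Weinstein structure canonically up to homotopy, and then invoking the Stein--Weinstein correspondence of \cite{Cieliebak_Eliashberg} via \cite[\S 1.1.3]{spinal_I}. Your first two paragraphs correctly show that $\mathcal A$ is well defined: taming gives $df(X_\lambda)=\omega(X_\lambda,JX_\lambda)>0$, contact faces force $X_\lambda$ to be transverse, and you round corners and adjust $f$ in a Liouville collar. The composite Stein $\to$ almost Stein $\to$ Stein is the identity, so $\mathcal A$ is surjective. One small correction: \cite{Cieliebak_Eliashberg} only gives $\pi_0$-level statements, namely that Weinstein structures are homotopic to Weinsteinizations of Stein structures and that Weinstein homotopies between Weinsteinizations lift to Stein homotopies. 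There is no known homotopy inverse, but you only need the $\pi_0$ statements.

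The gap is in injectivity, i.e.\ your last paragraph. The claim that ``the space of $J$ tamed by a fixed $\omega$ and making $f$ $J$-convex is contractible'' is the wrong object, and it is unjustified. With $f$ fixed, changing $J$ changes $\lambda=-df\circ J$ and hence $d\lambda$, so these $J$ do not give almost Stein structures with symplectic form $\omega$. Moreover, $J$-convexity of $f$ involves derivatives of $J$ nonlinearly, so no convexity argument is available. Likewise you cannot ``interpolate $J$'' between $(J,f)$ and the Cieliebak--Eliashberg Stein structure $(J',\phi')$, because they carry different symplectic forms and functions. What is actually needed has two parts. First, lift the whole Weinstein homotopy $\mathfrak W(J,f)\simeq\mathfrak W(J',\phi')$, including birth--death moments, to a path of almost Stein structures starting at $(J,f)$. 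Second, connect two almost Stein structures with the same Weinsteinization. Your sketch asserts this lift without an argument.

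The missing ingredient is a statement about the fibre of the Weinsteinization map. For Weinstein data $(\omega,X,\phi)$ with $\lambda=\iota_X\omega$, consider $\{J:\ \omega \text{ tames } J,\ \lambda=-d\phi\circ J\}$. Any such $J$ makes $(J,\phi)$ almost Stein with $d(-d\phi\circ J)=\omega$ exactly. The contact condition on faces is then automatic because $X$ stays outward transverse, so your separate boundary worry disappears.

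Where $X\neq 0$, this fibre is pointwise nonempty and contractible:
\begin{enumerate}
\item $J$ must preserve $\xi_0=\ker\lambda\cap\ker d\phi$, which is symplectic because it is a complement of $\mathbb{R}X$ in the coisotropic $X^{\omega}$.
\item $J$ is forced on $T/\xi_0$.
\item $J|_{\xi_0}$ is a contractible choice.
\item Taming is a convex condition on the remaining off-diagonal block, and the block-diagonal choice relative to $\xi_0\oplus\xi_0^{\omega}$ satisfies it.
\end{enumerate}

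Near zeros of $X$ the first-order part of $\lambda=-d\phi\circ J$ determines $J_p$ from $DX_p$ and $\mathrm{Hess}_p\phi$. So a general Weinstein structure is not of the form $\mathfrak W(J,\phi)$ there. You must first normalize both the almost Stein structure and the Weinstein homotopy to the standard Stein local models near critical and birth--death points. This uses the fact that $J$-convexity at a critical point depends only on $J_p$ and the Hessian. Carried out parametrically, this gives the lift. The same fibre argument also absorbs your boundary modification of $f$, and injectivity follows.
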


\section{Almost Stein structures on nearly Lefschetz fibrations}
\label{sec:PANLF_stein}
This section constructs a canonical Stein structure on a given PANLF. In \cite{spinal_I}, the authors build an almost Stein structure supporting a bordered Lefschetz fibration, which in turn determines a canonical Stein structure on the PANLF. 
Our strategy to build an almost Stein structure on a PANLF is similar to that in \cite{spinal_I} -- we extend their approach by locally constructing $J$-convex functions near the exotic neighborhoods and then matching them up with the fiberwise $J$-convex functions constructed in \cite[\S 2]{spinal_I}. After that, an application of the Thurston trick allows us to construct a global $J$-convex function, and therefore an almost Stein structure, compatible with the PANLF. Finally, we use a similar argument via a Serre fibration as in \cite[\S 2]{spinal_I} to show that the space of compatible almost Stein structures on a given PANLF is contractible.

Recall that as part of the definition of a nearly Lefschetz fibration, we have the induced spinal open book on its boundary and can talk about the fibers of $\pi_h$, i.e. the circle fibers in the spine. The following definition is similar to Definition 1.23 in \cite{spinal_I}.

\begin{definition}[Supported almost Stein structures]
\label{def: supported_almost_Stein}
    Let $\Pi: E \to B$ be a nearly Lefschetz fibration. An almost Stein structure $(J, f)$ on $E$ is \textbf{supported by} $\Pi$ if
\begin{itemize}
    \item There exists a complex structure $j$ such that $\Pi: (E, J) \to (B, j)$ is pseudoholomorphic;
    \item $\lambda:= -df \circ J$ restricts to $\partial E$ as a Giroux form for $\pi:=\partial \Pi$;
    \item For every $z\in B$, $f$ is constant on the fibers of $\pi_h$;
    \item The maximal $J$-complex subbundle of $T(\partial_h E)$ is invariant under the Reeb flow determined by $\lambda|_{T(\partial_h E)}$.
\end{itemize}
\end{definition}

\begin{definition}[Fiberwise almost complex structure]
\label{def: fiberwise_acs}
    Let $\cJ(\Pi)$ denote the space of pairs $(J, \partial \theta)$ where $J$ is an almost complex structure on $E$ compatible with orientation and $\partial \theta$ is a nowhere zero vector field on $\partial_h E$ such that
\begin{itemize}
    \item $\partial \theta$ is oriented in the positive direction of the fibers (of $\pi_\Sigma: \partial_h E \to \Sigma$);
    \item There exists a complex structure $j$ on $B$ such that $\Pi: (E, J) \to (B, j)$ is pseudoholomorphic;
    \item The flow of $\partial \theta$ is $1$-periodic; 
    \item The maximal $J$-complex sub-bundle in $\partial_h E$ is the $2$-plane field preserved by the flow of $\partial \theta$. 
    \end{itemize}
\end{definition}

\begin{remark}
    In this paper, we will use the notation $J\in\cJ(\Pi)$ for the discussion only concerning the almost complex structures that make $\Pi$ pseudoholomorphic. The vector field $\partial \theta$ gives additional information of a strict contact fiber bundle on $\partial_h E$. See also the \cite[Remark 2.16]{spinal_I}.
\end{remark}

\begin{lemma}\label{lem:contractible_acs_space}
    The space of almost complex structures $\cJ(\Pi)$ is contractible. 
\end{lemma}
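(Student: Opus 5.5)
The plan is to follow the strategy of \cite[\S 2]{spinal_I} for bordered Lefschetz fibrations and reduce contractibility to a fibration argument with contractible base and fibers. First I will forget $J$ and look at the space $\mathcal{V}$ of admissible vector fields $\partial\theta$ on $\partial_h E$. These are nowhere-zero vector fields tangent to the circle fibers of $\pi_\Sigma$, positively oriented, with $1$-periodic flow. Such a field is determined by a positive function on $\partial_h E$ that rescales a fixed generator, subject to the constraint that each orbit has period exactly one. This amounts to a choice of parametrization of each circle fiber with fixed total length. The space is convex, since the condition $\int_{\text{fiber}} (1/g)\,d\theta_0 = 1$ is preserved under the appropriate affine combination of the inverse functions. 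Hence $\mathcal{V}$ is contractible. Near an exotic fiber I will use the Seifert fibration structure described after Definition \ref{def: panlf}. It is actually a genuine circle bundle over the branched cover $\Sigma$, so the same argument applies verbatim.

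Next I will consider the forgetful map $\cJ(\Pi)\to\mathcal{V}$ together with the further map to the space of complex structures $j$ on $B$ compatible with orientation, which is contractible. I will show that $\cJ(\Pi)\to\mathcal{V}\times\{j\}$ is a Serre fibration with contractible fibers. Fix $\partial\theta$ and $j$. An almost complex structure $J$ making $\Pi$ pseudoholomorphic away from $E_{\crit}$ is determined by three pieces of data: the vertical complex structure on $\ker d\Pi$, a horizontal complement $H$, and the lift of $j$. Orientation-compatible complex structures on an oriented $2$-plane bundle form a contractible space, namely sections of a bundle with fiber the disk. Complements $H$ form an affine space. At the critical points, the local model $z_1^2+z_2^2$ forces $J$ to agree to first order with the integrable structure. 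Following \cite{spinal_I}, I will fix $J$ near $E_{\crit}$ to be the standard one; requiring it to be tangent to the standard germ gives a contractible (in fact convex-type) space. Near exotic points the local model $\Pi_p=z_2^2-z_1$ is a submersion in $V_p$, so no extra singular constraint arises. The only requirement there is compatibility with the boundary condition, which is the same as elsewhere.

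The main obstacle is the boundary condition along $\partial_h E$. The maximal $J$-complex subbundle $\xi_J = T(\partial_h E)\cap J T(\partial_h E)$ must be the plane field preserved by the flow of $\partial\theta$. On $\partial_h E$, $\ker d\Pi$ meets $T\partial_h E$ in the line spanned by $\partial\theta$, and $\xi_J$ is forced to be a $\partial\theta$-invariant plane field transverse to $\partial\theta$ projecting isomorphically onto $TB$. I will show that such invariant complements correspond to connection-type data: $S^1$-invariant sections of an affine bundle over $\Sigma$. These form a convex set, obtained by averaging under the periodic flow. Given $\xi_J$, $J$ on $T E|_{\partial_h E}$ is determined by $j$, $\xi_J$, and the vertical structure, where the latter must send the inward normal to a positive multiple of $\partial\theta$ plus something in $\xi_J$. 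This is again a convex condition. Near exotic boundary components the relevant projection is $\Pi_h$ through the branched cover $\pi_i$. At branch points, the requirement that $\xi_J$ project isomorphically to $TB$ must be checked using the explicit coordinates $\partial_+(\C\setminus D)\times\{\pm\sqrt d\}$. This is where I expect the most care.

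Finally, any boundary choice extends to the interior with a contractible space of extensions, via a collar and a partition of unity. Combining this with homotopy lifting gives the Serre fibration over a contractible base with contractible fibers, and the long exact sequence together with Whitehead's theorem yields that $\cJ(\Pi)$ is contractible.
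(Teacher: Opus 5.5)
Your strategy is an expanded version of the paper's. The paper fixes $\partial\theta$ (``a contractible choice'') and says that choosing $J$ then amounts to choosing $j$ on $B$ and an oriented complex structure on the $\partial\theta$-invariant plane field. Away from the exotic neighborhoods your boundary analysis is correct: $\partial\theta$ is vertical, $VE|_{\partial_h E}=\mathrm{span}(\partial\theta,J\partial\theta)$, $\xi_J$ is a flow-invariant complement to $\R\partial\theta$, and $J|_{\xi_J}=(d\Pi|_{\xi_J})^*j$.

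The step you flag as needing care, however, actually fails. In the chart $\phi$ of Definition \ref{def: panlf}, the exotic part of the spine is $\partial D\times\C$ with $\Pi_p=z_2^2-z_1$. The circles of $\pi_\Sigma$ there are $\partial D\times\{\pm\sqrt d\}$, and these are not boundaries of fibers of $\Pi$: indeed $d\Pi_p(\partial_{\theta_1})=-iz_1\neq 0$. So on the whole exotic spine $\ker d\Pi\cap T(\partial_h E)\neq\R\partial\theta$, and your description of the vertical structure through $J\partial\theta$ is unavailable.

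Worse, along the branch circle $C=\partial D\times\{0\}$ one has $d\Pi_p=-dz_1$, so $VE|_C=\{0\}\oplus\C\subset T(\partial_h E)$: the fibers of $\Pi$ are tangent to $\partial_h E$ there. For every $J$ making $\Pi$ holomorphic, $VE$ is a complex line, and a real $3$-dimensional subspace contains exactly one complex line. Hence $\xi_J|_C=VE|_C$ for \emph{every} such $J$. At the branch points, therefore, $\xi_J$ does not project isomorphically onto $TB$ (the ``requirement'' you intend to check is false), and $J|_{\xi_J}$ is not determined by $j$.

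The consequence is that flow-invariance of $\xi_J$ along $C$ is a condition on $\partial\theta$ alone. Write $\partial\theta=g\,\partial_{\theta_1}$ in the chart. The linearized flow along $C$ preserves $VE|_C$ if and only if $dg|_{VE}=0$ on $C$. For example, $1/g=(1+\epsilon\, x_2\cos\theta_1)/2\pi$ (cut off in $z_2$) lies in your $\mathcal{V}$ but violates this constraint. So $\cJ(\Pi)\to\mathcal{V}$ has empty fibers over such $\partial\theta$, and ``Serre fibration over $\mathcal{V}$ with contractible fibers'' is false as stated. The paper's ``fix a $\partial\theta$, which is a contractible choice'' passes over the same point.

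The base is easy to repair. The constraint is linear in $h=1/g$, so the admissible set $\mathcal{V}'\subset\mathcal{V}$ is still convex and contains $2\pi\,\partial_{\theta_1}$. You still need a genuinely local argument, for instance in the model of Lemma \ref{lem:standard_acs_exot}, that the fibers over $\mathcal{V}'$ are contractible near $C$. There the splitting $TE=\xi_J\oplus VE$, on which all your convexity statements rest, degenerates, and smoothness of $J$ constrains how $\xi_J$ may converge to $VE$.

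A minor point: at a Lefschetz critical point, holomorphicity forces only $J=i$ at the point, not agreement to first order. Fixing $J=J_{\crit}$ near $E_{\crit}$ should be stated as a convention, as the paper does in Definition \ref{def: fiberwise_forms}, not derived.
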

\begin{proof}
    Fix a $\partial \theta$, which is a contractible choice. Now, choosing $J \in \cJ(\Pi)$ is equivalent to choosing $j$ on $B$, and an oriented almost complex structure on the plane field preserved by the flow of $\partial\theta$. Since the space of positively oriented complex structures on any oriented real vector bundle of rank two is nonempty and contractible, these are both contractible choices.
\end{proof}

\begin{definition}[Fiberwise forms]\label{def: fiberwise_forms}
    Let $VE:= \ker T\Pi \subset TE$, $V(\partial_h E):= VE \cap T(\partial_h E)$, and let $X$ be a non-vanishing vector field on $\partial E$ that is oriented along the fibers of $\pi_h$ in $\partial_h E$. Note that $X$ agrees with $V(\partial_h E)$ away from the exotic neighborhoods. 
    Then, $\Lambda^{\text{fib}}(\partial \Pi)$ denotes the space of germs of $1$-forms $\lambda$ defined on a neighborhood of $\partial E$ in $E$ such that
    \begin{itemize}
        \item $\lambda \in \Lambda^{\text{fib}}(\partial_h \Pi)$, i.e. 
        $$\lambda(X)>0 \text{ and } X \subset \ker \Big(d\lambda|_{T(\partial_h E)}\Big)$$
        \item $d\lambda|_{VE}>0$ at $\partial_v E$.
    \end{itemize}
    Similarly, $\Omega_{\text{strong}}^{\text{fib}}(\Pi)$ (resp. $\Omega_{\text{exact}}^{\text{fib}}(\Pi)$) denotes the space of germs of closed $2$-forms $\omega$ such that
    \begin{itemize}
        \item $\omega>0$ on all fibers in $E \setminus E_{\crit}$;
        \item $\omega$ is nondegenerate and tames $J_{\crit}$ on $E_{\crit}$, where $J_{\crit}$ is fixed such that $\Pi$ is holomorphic near $E_{\crit}$;
        \item $\omega$ is nondegenerate and tames $J_{\exot}$ on the intersection of exotic fibers with $\partial_h E$, where $J_{\exot}$ is fixed such that $\Pi$ is holomorphic near the intersection of the exotic fibers with $\partial_h E$;
        \item $\omega = d\lambda$ near $\partial E$ for some $\lambda \in \Lambda^{\fib}(\partial \Pi)$ (and in the case of $\Omega_{\text{exact}}^{\text{fib}}(\Pi)$, $\lambda$ extends to a global primitive of $\omega$ on $E$).
    \end{itemize}
\end{definition}

Roughly speaking, suppose $J \in \mathcal{J}(\Pi)$ and $\lambda = -df \circ J$, $\omega = d\lambda$. Then, $\omega\in \Omega_{\text{exact}}^{\text{fib}}(\Pi)$ implies that $\omega$ is positive on the fibers of $\Pi$ and its primitive $\lambda$ belongs to $\Lambda^{\text{fib}}(\partial \Pi)$.

\begin{definition}[{Fiberwise $J$-convex, \cite[Definition 2.17]{spinal_I}}]
\label{def: fiberwise_J_convex}
    Let $(J, \partial \theta)\in \cJ(\Pi)$ and $f: E \to \bR$ be a smooth function. Let $\lambda:= -df \circ J$. The function $f$ is \textbf{fiberwise $J$-convex} if
    \begin{itemize}
        \item $f$ is constant on the orbits of $\partial \theta$; 
        \item $d\lambda \in \Omega_{\text{exact}}^{\text{fib}}(\Pi)$;
        \item $\lambda|_{\partial E} \in \Lambda^{\text{fib}}(\partial \Pi)$;
        \item $\lambda(\partial \theta)$ is constant. 
    \end{itemize}
    The space of fiberwise $J$-convex function with respect to $(J, \partial \theta)$ is denoted as $\PSH^{\fib}_{(J, \partial \theta)}(\Pi)$.
\end{definition}

\subsection{Almost Stein structure near an exotic fiber}\label{subsec: exotic_weinstein}
For this subsection, we choose the local model near the exotic fiber to be $z_2^2+z_1$ instead of $z_2^2-z_1$. This is equivalent up to a biholomorphism and makes the notation slightly less cluttered.

Let $\Pi: E\to B$ be a nearly Lefschetz fibration. We first fix an appropriate neighborhood of an exotic fiber. Let $p$ denote the image of an exotic fiber. Given a sufficiently small neighborhood $B_p \subset B$ of $p$, there exist $0 < \delta < 1$ and $A_{\delta} \coloneqq \{\delta < |z| < 1\}$, such that the preimage $\Pi^{-1}(B_p)$ contains a subdomain $V_p \cong A_{\delta} \times D^2$ with complex coordinates $(z_1,z_2)$ so that $\Pi|_{V_p} = z_2^2+z_1$ and $\Pi(0) = p$ (see Figure~\ref{fig:annulus}). In particular, there is an almost complex structure $j$ on $\Pi(V_p)$ such that 
$$\Pi \circ J_{\exot} = j \circ \Pi,$$
where $J_{\exot}$ is the pullback of the standard almost complex structure on $A_\delta \times D^2$ to $V_p$. Let $f_A$ be a function on $A_{\delta}$ such that it is 
$$f_{A}(z_1):= -\log(|z_1|)$$
near $|z_1| = \delta$ and
$f_A(z_1) = \Rea(z_1)$ near $|z_1|= 1$. See Figure~\ref{fig:annulus} for a picture of its gradient vector field. Let $f_{D^2} := 2x_2^2 - y_2^2$. On $A_{\delta} \times D^2$, we consider the function 
$$f_{\exot} \coloneqq 
\frac{1}{2}\bigg(f_A+\epsilon f_{D^2}\bigg) = -\log|z_1| +\epsilon(2x_2^2 - y_2^2)$$ where $\epsilon>0$ is arbitrarily small.

    \begin{figure}
        \centering
        \includegraphics[width=0.5\linewidth]{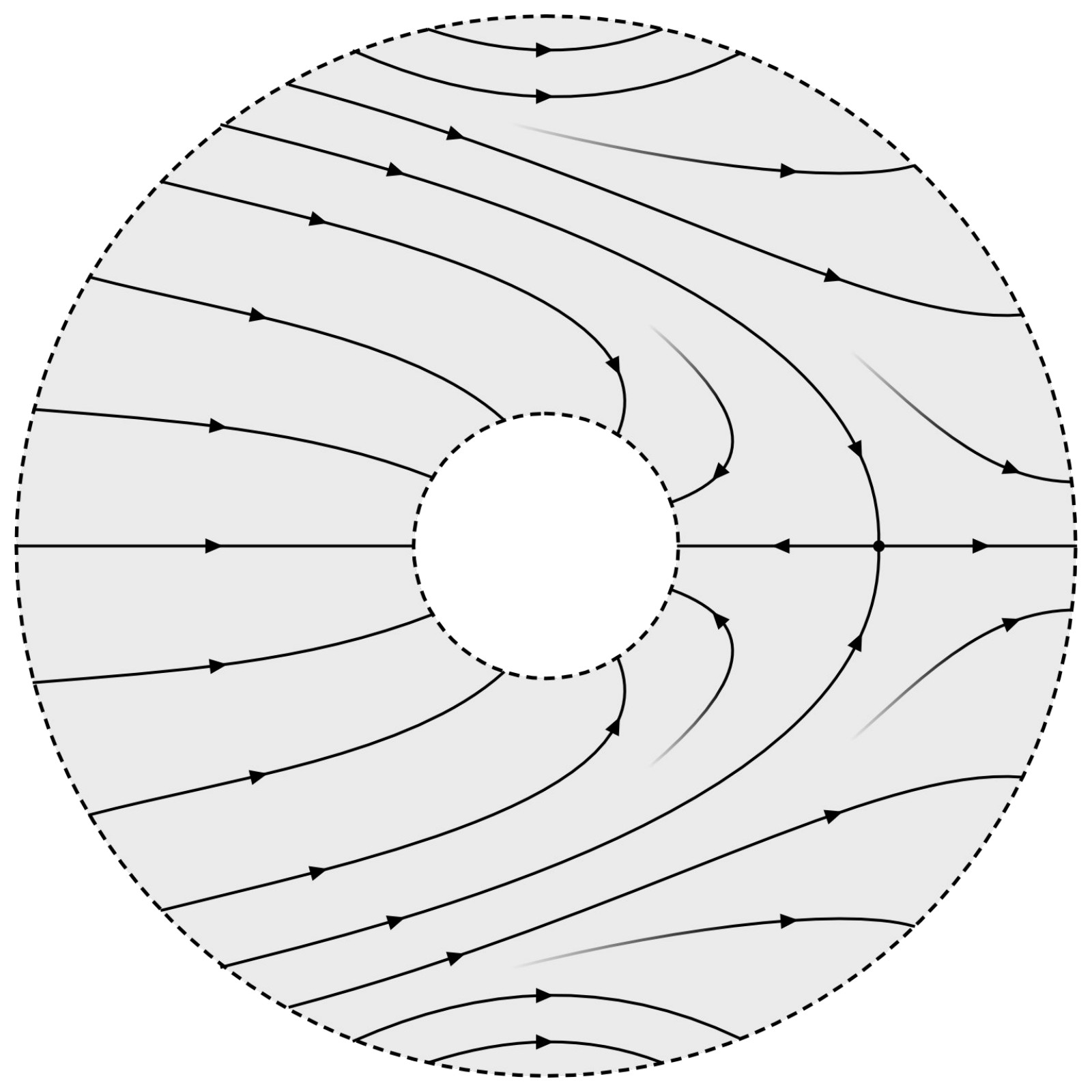}
        \caption{Open annulus and the gradient vector field of $f_A$.}
        \label{fig:annulus}
    \end{figure}

Now, we consider a model for $V_p$ that is ``compatible'', in a sense that will be clarified, with $f_{\exot}$. Let $S^1_\delta$ denote the inner boundary component of $A_\delta$, i.e. the boundary of the $\delta$-disk. Also, consider the second factor of $A_\delta \times D^2$ to be a cornered domain inside $D^2$ so that the gradient flow of $2x_2^2-y_2^2$ is positively transverse along part of the smooth boundary, denoted as $H$, and negatively transverse along the rest. Abusing notation, we will continue referring to this domain as $D^2$. The boundary of $A_\delta \times D^2$ that is positively transverse to the flow corresponding to $f_{\exot}$ is $(S^1_\delta \times D^2) \cup (A_\delta \times H)$.

We may assume that $B_p$ has been chosen small enough so that $V_p$ does not intersect the vertical boundary of $E$. Therefore, the intersection of $V_p$ with the horizontal boundary is given by $S^1_\delta \times D^2$. 

\begin{definition}\label{def:good_exotic_nbd}
For sufficiently small $\delta>0$, let $U_{\exot}^{\delta} := (A_\delta \times D^2) \cap \{f_{\exot}\leq -\log(\delta)\}$. Let $Y_{\exot}^\delta: = (A_\delta \times D^2) \cap \{f_{\exot}= -\log(\delta)\}$.
\end{definition} 

There is a fiber-preserving isotopy identifying $U_{\exot}^{\delta}$ with $A_\delta \times D^2$, by taking $\epsilon$ to 0. Thus, the identification of $V_p$ with $A_\delta \times D^2$ can be further composed with this isotopy, to identify $V_p$ with $U_{\exot}^{\delta}$. We use this to locally define an almost Stein structure on $V_p$ and the set $Y_{\exot}^\delta$ is identified with $V_p \cap \partial_h E$, which furthermore is identified with $V_p \cap \partial E$ by the choice of $B_p$.

As a consequence these identification, we are now ready to state the following lemma, which allows us to build fiberwise almost Stein structures on PANLFs. 

Let $\lambda_{\exot}:= - df_{\exot} \circ J_{\exot}$, where $J_{\exot}$ denotes the standard complex structure on $\C^2$ restricted to $U_{\exot}^{\delta}$. Let $(r_1, \theta_1)$ denote the polar coordinate on $A_\delta \subset \bC$.

\begin{lemma}\label{lem: contact_lambda}
For each $K\gg 0$, the $1$-form $\lambda_{\exot}$ is a contact form on $Y_{\exot}^K$ with Reeb vector field $\partial \theta_1$. 
\end{lemma}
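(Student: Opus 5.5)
The plan is to compute $\lambda_{\exot}$ explicitly in the coordinates $(r_1,\theta_1,x_2,y_2)$, on the part of $A_\delta\times D^2$ where $f_A=-\log|z_1|$. For $K\gg 0$ the level set $Y_{\exot}^K$ lies entirely in this region, because there $|z_1|$ is small up to an error controlled by $\epsilon\,|f_{D^2}|$, which is bounded on $D^2$.

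\textbf{Computing $\lambda_{\exot}$ and $d\lambda_{\exot}$.} With $J\partial_x=\partial_y$ on each factor, we have $dr\circ J=-r\,d\theta$, $dx\circ J=-dy$ and $dy\circ J=dx$. Hence
\[
\lambda_{\exot}=-df_{\exot}\circ J_{\exot}=\pm d\theta_1+\epsilon\,(2y_2\,dx_2+4x_2\,dy_2).
\]
The overall sign of the $d\theta_1$ term depends only on the orientation convention for $\theta_1$, and I will fix $\theta_1$ so that this coefficient is $+1$. Then
\[
d\lambda_{\exot}=2\epsilon\,dx_2\wedge dy_2,
\]
which has kernel $\mathrm{span}(\partial_{r_1},\partial_{\theta_1})$ in $T\C^2$.

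\textbf{Contact condition.} It suffices to show $\lambda\wedge d\lambda\neq 0$ on $TY$, where $TY=\ker df_{\exot}$.
\begin{enumerate}
\item Since $f_{\exot}$ is independent of $\theta_1$, the vector $\partial_{\theta_1}$ is tangent to $Y$.
\item We have $\lambda(\partial_{\theta_1})=1$, so $\partial_{\theta_1}$ is transverse to $\ker\lambda$.
\item We have $\iota_{\partial_{\theta_1}}d\lambda=0$.
\end{enumerate}
Therefore $\lambda\wedge d\lambda|_{TY}\neq 0$ if and only if $d\lambda|_{TY}$ is nonzero. Since $d\lambda$ is pulled back from the area form on the $z_2$-plane, this holds exactly when the projection $TY\to\C_{z_2}$ is surjective.

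The kernel of this projection restricted to $TY$ is $\mathrm{span}(\partial_{r_1},\partial_{\theta_1})\cap\ker df_{\exot}$. This intersection is one-dimensional precisely when $\partial_{r_1}f_{\exot}\neq 0$. Here $\partial_{r_1}f_{\exot}=-1/r_1\neq 0$, so the kernel is one-dimensional. A dimension count ($3-1=2$) then gives surjectivity, and hence $\lambda_{\exot}$ is contact on $Y_{\exot}^K$.

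\textbf{Reeb vector field.} The vector field $\partial_{\theta_1}$ is tangent to $Y$, lies in $\ker d\lambda$ and satisfies $\lambda(\partial_{\theta_1})=1$. These are exactly the Reeb equations, so $\partial_{\theta_1}$ is the Reeb vector field.

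\textbf{Main obstacle.} I expect the only real care to lie in two places. The first is ensuring that for $K\gg 0$ the level set avoids the interpolation region of $f_A$ and the corners of the cornered domain $D^2$. This should follow from the bound on $\epsilon f_{D^2}$, which forces $|z_1|$ to be small on the level set. The second is getting the sign and orientation conventions consistent with the boundary orientation of $Y$.
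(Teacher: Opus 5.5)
Your proposal is correct and is essentially the paper's proof. You do the same explicit computation of $\lambda_{\exot}$ and $d\lambda_{\exot}=2\epsilon\,dx_2\wedge dy_2$, and your tangency-and-surjectivity argument (via $\partial_{r_1}f_{\exot}\neq 0$) just makes explicit why $\lambda_{\exot}\wedge d\lambda_{\exot}=-2\epsilon\,d\theta_1\wedge dx_2\wedge dy_2$ remains nonzero on the level set, which the paper asserts without further comment.

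One addition: your explicit choice of orientation for $\theta_1$ is actually needed for the literal statement. With the counterclockwise convention, the paper's own formula $\lambda_{\exot}=-d\theta_1+\epsilon(4x_2\,dy_2+2y_2\,dx_2)$ gives $\lambda_{\exot}(\partial_{\theta_1})=-1$, so the Reeb field would be $-\partial_{\theta_1}$.
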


\begin{proof}
     Use the coordinates $z_j:= x_j + i y_j$. By definition, $J_{\exot} \circ \partial x_j = \partial y_j$ and $J_{\exot} \circ \partial y_j = -x_j$. We may compute
    \begin{align*}
        df_{\exot} = -\frac{1}{|z_1|^2}(x_1dx_1 + y_1 dy_1) + \epsilon(4x_2dx_2 - 2y_2dy_2)
    \end{align*}
    and
    \begin{align*}
        \lambda_{\exot} &= - df_{\exot} \circ J_{\exot} = -\frac{1}{|z_1|^2}(x_1dy_1 - y_1 dx_1) + 2\epsilon(2x_2 dy_2 + y_2 dx_2) \\
        &= - d\theta_1 + \epsilon(4x_2 dy_2 + 2y_2 dx_2),
    \end{align*}
    since $d\theta_1 = \frac{x_1dy_1 - y_1dx_1}{|z_1|^2}$.

    Now, $$d\lambda_{\exot} = \epsilon(4dx_2dy_2 +2dy_2dx_2) = 2\epsilon dx_2dy_2.$$
    Then, we have that
    \begin{align*}
        \lambda_{\exot} \wedge d\lambda_{\exot} 
        &= 2\epsilon (-d\theta_1) \wedge dx_2 \wedge dy_2,
    \end{align*}
    which is always contact on any level set $\{f_{\exot} = K\}$. The Reeb vector field $X_{\exot}$ is $\partial \theta_1$.
\end{proof}

This gives us the almost Stein triple $(f_{\exot}, J_{\exot}, \lambda_{\exot})$ in the exotic neighborhood $V_p$. Furthermore, by construction, $f_{\exot}$ is fiberwise $J_{\exot}$-convex when considered as the restriction to the neighborhood $V_p$. In the rest of this section, we show that this almost Stein triple can be extended to an almost Stein structure on the whole positive allowable nearly Lefschetz fibration. In \S \ref{subsec:panlf_weinstein}, we see a Weinstein handle description of the almost Stein structure of the exotic neighborhood.

\subsection{Fiberwise \texorpdfstring{$J$}{J}-convex}

The following lemma is analogous to \cite[Proposition 2.10]{spinal_I}, which allows us to assume some standard properties in an exotic neighborhood for any $(J,\partial \theta) \in \mathcal{J}(\Pi)$.

\begin{lemma}\label{lem:standard_acs_exot}
Let $(J,\partial\theta) \in \mathcal{J}(\Pi)$, $p \in U_p \subset B_{\exot}$, and $V_p \subset \Pi^{-1}(U_p)$ as in Definition~\ref{def: panlf}. Then, there exists a function $f_J$ on $V_p$ such that
\begin{enumerate}
    \item The function $f_J$ is $J$-convex;
    \item $\lambda_J(\partial \theta)$ is constant where $\lambda_J \coloneqq -df_J \circ J$;
    \item The 2-plane field preserved by $\partial \theta$, inside $\partial_h E \cap V_p$, is the contact structure $\ker(\lambda_J)$.
\end{enumerate} 
\end{lemma}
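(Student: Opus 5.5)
The plan is to compare an arbitrary $(J,\partial\theta)\in\cJ(\Pi)$ with the model triple $(f_{\exot},J_{\exot},\lambda_{\exot})$ of \S\ref{subsec: exotic_weinstein}, and then transport $f_{\exot}$ to $J$. The argument would parallel \cite[Proposition 2.10]{spinal_I}. The key point is that on $V_p$ the conditions defining $\cJ(\Pi)$ rigidly determine $J$ on the $z_2$-direction along $\partial_h E$ up to the choice of $j$ and a complex structure on the preserved plane field. Moreover, $J_{\exot}$ is itself one such structure: by Lemma \ref{lem: contact_lambda}, its Reeb field is $\partial\theta_1$, and $\ker\lambda_{\exot}$ on $Y_{\exot}^\delta$ is spanned by $\partial x_2,\partial y_2$ (up to radial corrections), which is $J_{\exot}$-complex and $\partial\theta_1$-invariant.

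\emph{Step 1: reduce to the model.} First I normalize $\partial\theta$. The space of admissible $\partial\theta$ is contractible, as in Lemma \ref{lem:contractible_acs_space}, and both $\partial\theta$ and $\partial\theta_1$ are $1$-periodic and tangent to the fibers of $\pi_\Sigma$. So after shrinking $U_p$ and applying a fiber-preserving diffeomorphism of $V_p$ isotopic to the identity, I may assume $\partial\theta=\partial\theta_1$ on $Y_{\exot}^\delta$. Then the plane field $\xi$ preserved by $\partial\theta$ agrees with $\ker\lambda_{\exot}$. Since $J$ makes $\Pi$ pseudoholomorphic and makes $\xi$ complex, $J$ and $J_{\exot}$ differ along $\partial_h E\cap V_p$ only in the choice of orientation-compatible complex structures on $\xi$ and on $\Pi^*TB$.

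\emph{Step 2: construct $f_J$.} I would take $f_J=f_{\exot}+\phi$ with a correction $\phi$ supported near the relevant region, or more simply $f_J=h\circ f_{\exot}$ for a rapidly increasing convex $h$. I would use the fact that $J$-convexity is an open condition on the $2$-jet of $J$ relative to $f$. Since $J$ tames $dd^{\mathbb C}_{J_{\exot}}f_{\exot}$ on $\xi$ and on the horizontal directions, I expect the reparametrization $h$ to dominate the cross terms. This is the standard trick of \cite[\S 2]{spinal_I}, and it also makes $-d(h\circ f)\circ J$ symplectic and taming.

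For condition (2), $f_J$ is constant on $\partial\theta$-orbits because $f_{\exot}$ depends on $z_1$ only through $|z_1|$. Then $\lambda_J(\partial\theta)=-df_J(J\partial\theta)$, and along the level set $J\partial\theta$ is the radial direction (up to $\xi$-components, which are killed since $df_J|_\xi=0$ on the level set). Hence $\lambda_J(\partial\theta)$ is $h'$ times the radial derivative of $-\log|z_1|$, which is constant on the level set.

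For condition (3), $\ker\lambda_J$ on the level set is $J$ applied to $\ker df_J\cap J\ker df_J$, i.e. the maximal complex subspace of $T(\partial_h E)$. By the fourth bullet of Definition \ref{def: fiberwise_acs}, this is exactly the plane field preserved by $\partial\theta$.

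\emph{Main obstacle.} The hard step is ensuring full $J$-convexity in the interior of $V_p$, not just along $\partial_h E$. There $J$ is only constrained by holomorphicity of $\Pi$, and the $\epsilon(2x_2^2-y_2^2)$ term of $f_{\exot}$ is not subharmonic on its own. My first attempt would be to interpolate $J$ to $J_{\exot}$ on a collar, using the contractibility from Lemma \ref{lem:contractible_acs_space}. On that collar, $J$-convexity follows by closeness in $C^2$ after shrinking $\delta$ and rescaling. Since the statement only asks for $f_J$ on $V_p$, I would take $V_p$ to be a thin neighborhood of $\partial_h E$ within the exotic chart, where this closeness argument applies.
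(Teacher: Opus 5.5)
Your argument takes a different route from the paper's and, as written, does not go through. The decisive problem is your last step. The lemma fixes $(J,\partial\theta)\in\cJ(\Pi)$, so interpolating $J$ to $J_{\exot}$ on a collar produces a function that is convex for a \emph{different} almost complex structure. There is also no $C^2$-closeness to fall back on. Nothing in Definition~\ref{def: fiberwise_acs} makes $J$ close to $J_{\exot}$ anywhere in your chart: $j$, the complex structure on the fibers, the part of $J$ transverse to the fibers, and the plane field $\xi$ are all free. Shrinking $V_p$ or rescaling does not change that.

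The earlier steps have related gaps.
\begin{itemize}
\item In Step 1, making $\partial\theta=\partial\theta_1$ does not give $\xi=\ker\lambda_{\exot}$. Invariance under $\partial\theta_1$ does not single out a plane field, since every connection on the circle bundle $\partial_hE\cap V_p\to D^2$ is $\partial\theta_1$-invariant. Moreover $\xi=T(\partial_hE)\cap JT(\partial_hE)$ is determined by $J$, not by $\partial\theta$.
\item In Step 2, postcomposition cannot create convexity where it is needed. We have $-d\big(d(h\circ f)\circ J\big)=h'(f)\,\big(-d(df\circ J)\big)+h''(f)\,df\wedge(-df\circ J)$, and the last term vanishes on complex lines in $\ker df\cap J\ker df$. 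So $h\circ f_{\exot}$ is $J$-convex only if the level sets of $f_{\exot}$ are already $J$-convex, which for arbitrary $J$ is exactly the point at issue.
\item Condition (2) is not automatic either. With $f_J=h\circ f_{\exot}$ one gets $\lambda_J(\partial\theta)=-h'(f_{\exot})\,df_{\exot}(J\partial\theta)$, which depends on the normal component of $J\partial\theta$ along $\partial_hE$. The space $\cJ(\Pi)$ does not fix that component.
\end{itemize}

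The paper's idea is to move the model onto $J$ rather than $J$ onto the model. It completes $V_p$ cylindrically along $\partial_hE$, with $\partial\theta$ as Reeb field and $J\partial\theta$ as the cylindrical direction. It then invokes \cite[Proposition 4.10]{spinal_exotic} to obtain a diffeomorphism $\Phi\colon\widehat V_p\to\C^*\times\C$ sending the completed $\widehat J$-holomorphic fibers to the leaves of $z_2^2+z_1$. It adjusts $\Phi$ on $\langle\partial\theta,J\partial\theta\rangle$ and by fiberwise biholomorphisms of the twice-punctured planes, and sets $f_J:=\Phi^*f_{\exot}$. In this way (1)--(3) are inherited from Lemma~\ref{lem: contact_lambda} instead of estimated.

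Two cautions when you rework this.
\begin{itemize}
\item $2x_2^2-y_2^2$ is strictly subharmonic ($\Delta=2$). The degeneracy of $d\lambda_{\exot}=2\epsilon\,dx_2\wedge dy_2$ is in the $z_1$-direction, where $-\log|z_1|$ is pluriharmonic. So even for $J=J_{\exot}$ the model is only fiberwise convex, and positivity in the base direction has to come from the Thurston term of Proposition~\ref{prop:Thurston_fK}.
\item An exact identity $\Phi^*J_{\exot}=\widehat J$ would force $J$ to be integrable on $V_p$. But an almost complex structure making $\Pi$ holomorphic can have nonzero vertical Nijenhuis tensor. What a fibration-preserving, fiberwise biholomorphic $\Phi$ transfers robustly is fiberwise convexity, since $\iota_F^*d\lambda_J$ depends only on $J|_{TF}$ and $f_J|_F$. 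The boundary conditions (2)--(3) then have to be arranged separately along $\partial_hE$.
\end{itemize}
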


\begin{proof}
    Let us denote the standard complex structure on $\C^* \times \C$ as $\widehat{J}_{\exot}$, to be consistent with the notation in Section~\ref{subsec: exotic_weinstein}. 
    Let $\widehat{V}_p$ denote the completion of the exotic neighborhood and extend $J$ as $\widehat{J}$ on $\widehat{V}_p$. This is given by a usual cylindrical construction where we consider a local Giroux form making $\partial \theta$ into a Reeb vector field and define the cylindrical direction as $J(\partial\theta)$. Note that since the completion takes place along the horizontal boundary, we do not need the full construction of double completion, in particular the contact-type smoothing, of Definition-Theorem \ref{def:double_completion_LF}.
    
    Now, by Definition \ref{def: panlf}, there exists a diffeomorphism $\phi: V_p \to (\bC \setminus D) \times \bC$ such that the fibers are exactly the level sets of $z_2^2-z_1$. Therefore, together with the fact that $J\in \cJ(\Pi)$, the completed almost complex structure $\widehat{J}$ makes the completed fibers in $\widehat{V}_p$ all pseudoholomorphic. Now, by \cite[Proposition 4.10]{spinal_exotic},
    we have a diffeomorphism
    \[
    \Phi: \widehat{V}_p \to \C^* \times \C
    \]
    identifying the pseudoholomorphic leaves on both sides. Now, consider the almost complex structures $\Phi^*(J_{\exot})$ and $\widehat{J}$ on $\widehat{V}_p$. We can ensure that they agree on the $\langle \partial \theta, \bR \rangle$ plane via a potential further diffeomorphism. We still denote the composition as $\Phi$. In addition, for any completed fiber $\widehat{F}$ of $\Pi|_{\widehat{V}_p}$, note that $\Phi^*(J_{\exot})|_{\widehat{F}}$ and $\widehat{J}|_{\widehat{F}}$ are related by a biholomorphism, since $\widehat{F}$ is a twice-punctured complex plane. By composing $\Phi$ with a fiberwise biholomorphism which is again still denoted as $\Phi$, we can ensure that $\Phi^*(J_{\exot}) = \widehat{J}$.

    Observe that the function $f_{\exot}$ can naturally be extended to all of $\C^* \times \C$. Now, define $f_J$ on $\widehat{V}_p$ as $\Phi^*f_{\exot}$; thus $\lambda_J = \Phi^*\lambda_{\exot}$. The intersection of $\partial E$ with $V_p$, which is included inside $\partial_h E$, is then a level set of $f_J$, such that $\lambda_J$ is contact. Now, by definition of $\mathcal{J}(\Pi)$, the maximal $J$-complex subbundle in $\partial_h E$ is the 2-plane field preserved by the flow of $\partial \theta$. This implies that in $\partial_h E\cap V_p$, this 2-plane field coincides with $\ker(\lambda_J)$, and the orbits of $\partial\theta$ agree with the orbits of the Reeb flow of $\lambda_J$. Therefore, by above choices of $\Phi$, $\partial\theta$ is precisely the Reeb vector field of $\lambda_J$.
\end{proof}

Now, analogous to \cite[Proposition 2.25]{spinal_I}, we prove that fiberwise $J$-convex functions exist on PANLFs.

\begin{proposition}[Fiberwise almost Stein]
\label{prop: fiberwise_almost_Stein}
    Let $E$ be a PANLF with the projection map $\Pi: E \to B$. Fix $(J, \partial \theta) \in \cJ(\Pi)$. Then there exists $f\in \PSH^{\fib}_{(J, \partial \theta)}(\Pi)$ and $\PSH^{\fib}_{(J, \partial \theta)}(\Pi)$ is contractible. 
\end{proposition}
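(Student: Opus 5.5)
We follow the proof of \cite[Proposition 2.25]{spinal_I}, inserting the exotic neighborhoods as an extra local piece. The plan is to build $f$ in three stages: near the horizontal boundary, near the vertical boundary and the critical points, and then on the interior of the fibers.

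First, near each exotic point $p \in B_{\exot}$ we use the function $f_J$ from Lemma~\ref{lem:standard_acs_exot} on $V_p$. It is $J$-convex, so $d\lambda_J$ tames $J$ and is in particular positive on the fibers. Moreover $\lambda_J(\partial\theta)$ is constant and $\ker\lambda_J$ is the plane field preserved by $\partial\theta$. Since $f_J=\Phi^*f_{\exot}$ and $f_{\exot}=-\log|z_1|+\epsilon(2x_2^2-y_2^2)$, the function $f_J$ depends on $z_1$ only through $|z_1|$. Hence it is constant on the $\partial\theta=\partial\theta_1$ orbits (Lemma~\ref{lem: contact_lambda}). We may rescale $f_J$ by a positive constant so that $\lambda_J(\partial\theta)$ equals a prescribed constant $c$.

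Away from the exotic neighborhoods, the horizontal boundary is a genuine circle bundle over $\Pi_h$. There we take the collar construction of \cite[\S2]{spinal_I}: on a collar $(-\eta,0]\times\partial_h E$ set $f=\rho(t)$, with $\rho$ convex increasing and $t$ the collar coordinate determined by $J\partial\theta$. This gives $\lambda(\partial\theta)=c$, $f$ constant on orbits, and $\lambda|_{\partial_h E}$ fiberwise contact.

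The main technical point is to make these two pieces agree on the overlap $V_p\cap(\text{collar away from } V_p)$, which is an annular region of the spine. On this overlap both functions have the form $g(t)$ in the collar coordinate, since each is constant on $\partial\theta$-orbits and $J$-convex along the fibers. Using the normal form from Lemma~\ref{lem:standard_acs_exot} together with the identification of Reeb orbits, we can choose the collar profile $\rho$ to coincide with the profile of $f_J$ on this region. If this fails, we instead interpolate convexly, $f=\beta f_J+(1-\beta)f_{\mathrm{coll}}$, with a cutoff $\beta$ pulled back from the spine $\Sigma$. Its error terms $d\beta\wedge d^c(\cdot)$ are controlled because both functions share the same $\lambda(\partial\theta)=c$ and the same $\ker\lambda$. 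Getting fiberwise positivity of $d\lambda$ through this interpolation, uniformly in the fiber directions, is the step I expect to be the main obstacle.

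Near $E_{\crit}$ we use the standard $|z_1|^2+|z_2|^2$ model. Near $\partial_v E$ we use fiberwise positivity in the standard way. The resulting germ then lies in $\Omega^{\fib}_{\mathrm{exact}}(\Pi)$ near $\partial E\cup E_{\crit}\cup\bigcup_p V_p$. It remains to extend over the interior of each fiber. For each fiber $F_z$, the set of functions on $F_z$ that are $J|_{F_z}$-subharmonic and agree with the prescribed germ is nonempty and convex: nonempty since every fiber is a connected surface with nonempty boundary, and convex because subharmonicity is preserved under convex combinations. A partition of unity on $B$ then produces a global fiberwise $J$-convex $f$, with the boundary conditions of Definition~\ref{def: fiberwise_J_convex} preserved under convex combination.

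For contractibility, note that each defining condition is preserved under convex combinations, for fixed $(J,\partial\theta)$. These conditions are: constancy on $\partial\theta$-orbits, $\lambda(\partial\theta)$ constant, positivity of $d\lambda$ on fibers, the taming conditions near $E_{\crit}$ and the exotic fibers, and $\lambda|_{\partial E}\in\Lambda^{\fib}(\partial\Pi)$. For the last condition, the constant $\lambda(\partial\theta)>0$ and the kernel condition are both linear in $f$. Hence $\PSH^{\fib}_{(J,\partial\theta)}(\Pi)$ is a nonempty convex subset of $C^\infty(E)$, and is therefore contractible.
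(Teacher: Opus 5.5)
Your contractibility argument (each defining condition is convex for fixed $(J,\partial\theta)$) is exactly the paper's. The gap is in the existence part, at the only place where exotic fibers matter: gluing $f_J$ on $V_p$ to your regular collar function. You leave this step open, and neither of your two options is established.

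For the first option, being constant on $\partial\theta$-orbits and fiberwise $J$-convex does not force a function to depend on the collar coordinate alone. Even in the model collar coordinate (flow of $J\partial\theta_1$), where $f_J=-\log\delta+t$, the profile is affine. So one profile $\rho$ cannot both match it and be convex enough for the regular collar, where $d(dt\circ J)|_{VE}$ has no sign for general $J$.

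For the fallback, a cutoff pulled back from the spine is not constant on $\Pi$-fibers near $V_p$. In the model $\Pi=z_2^2+z_1$ the spine coordinate is $z_2$, while the boundary of $\Pi^{-1}(c)$ is $\{z_2^2=c-z_1\}\cap\partial_hE$, along which $z_2$ moves as $z_1$ winds around. This is precisely the statement $\Pi_h\neq\Pi|_{\partial_hE}$. So $d\beta$ does not vanish on $VE$, and the cross terms $d\beta\wedge d^c(f_J-f_{\mathrm{coll}})$ and $(f_J-f_{\mathrm{coll}})\,dd^c\beta$ enter $d\lambda|_{VE}$. That is exactly the estimate you do not supply.

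The missing idea, which is how the paper argues, is to glue only with cutoffs pulled back from $B$, after normalizing.
\begin{itemize}
\item Take local fiberwise $J$-convex functions $f_z$ on $E|_{\mathcal U_z}$: from \cite[Proposition 2.25]{spinal_I} for regular and singular values, and $f_J$ from Lemma~\ref{lem:standard_acs_exot} over $\mathcal U_p$.
\item Normalize each by post-composition with a convex increasing function (\cite[Lemma 2.21]{spinal_I}) so that all equal one constant $C$ on $\partial_hE$ and satisfy $\lambda_z(\partial\theta)\equiv c$.
\item Set $f=\sum_z(\rho_z\circ\Pi)f_z$.
\end{itemize}
Because $\Pi$ is $(J,j)$-holomorphic, $d(\rho_z\circ\Pi)$ and $d(\rho_z\circ\Pi)\circ J=\Pi^*(d\rho_z\circ j)$ both vanish on $VE$. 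Hence $d\lambda|_{VE}=\sum_z(\rho_z\circ\Pi)\,d\lambda_z|_{VE}>0$ with no error terms at all.

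On $\partial_hE$ one has $f\equiv C$. So constancy on $\partial\theta$-orbits is automatic, even though inside $V_p$ those orbits leave the $\Pi$-fibers. Also $\lambda|_{T\partial_hE}=\sum_z(\rho_z\circ\Pi)\lambda_z|_{T\partial_hE}$, a convex combination of forms that all coincide (kernel $\xi_J$, value $c$ on $\partial\theta$). Then $\iota_{\partial\theta}d\lambda|_{T\partial_hE}=0$ follows from flow-invariance of $\xi_J$. This construction also makes your separate fiber-interior extension step unnecessary.

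If you prefer to keep your spine cutoff, the same normalization rescues it. After normalizing, $\lambda_J$ and $\lambda_{\mathrm{coll}}$ agree on all of $TE|_{\partial_hE}$: both vanish on $\xi_J$ and on $J\partial\theta$, and both equal $c$ on $\partial\theta$. So $f_J-f_{\mathrm{coll}}$ vanishes to first order along $\partial_hE$, the cross terms are $O(t)$, and fiberwise positivity persists in a thin enough collar.
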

\begin{proof}
    The construction of $f\in \PSH^{\fib}_{(J, \partial \theta)}$ depends on an arbitrary choice of $(J, \partial \theta) \in \mathcal{J}(\Pi)$, which we fix throughout. The proof of \cite[Proposition 2.25]{spinal_I} gives fiberwise $J$-convex functions in neighborhoods $\cU_z\subset B$ of regular and singular values $z\in B \setminus B_{\exot}$, denoted as $f_z: E|_{\mathcal{U}_z} \to \bR$. We can check directly that the first and fourth conditions are convex, and the second are third are convex because we are considering fiberwise forms.
    
    In \S \ref{subsec: exotic_weinstein}, we constructed an almost Stein triple $(f_{\exot}, J_{\exot}, \lambda_{\exot})$ near the exotic fibers. Given an exotic fiber at $p \in B_{\exot}$ and a neighborhood $p \in B_p \subset \mathring{B}$, we consider a neighborhood $V_p \subset \Pi^{-1}(B_p)$. In particular, we defined the almost Stein triple on $U_{\exot}^{\delta} \subset A_\delta \times D^2$ and then used an identification of $V_p$ with $U_{\exot}^{\delta}$ to define them on $V_p$ via pullbacks. We need to show that near the exotic fibers, for any $(J,\partial \theta)$ in $\cJ(\Pi)$, there exists a locally defined function, which is fiberwise $J$-convex. This follows from Lemma~\ref{lem:standard_acs_exot}.
    
    Now, we use partition of unity as in the proof of \cite[Proposition 2.25]{spinal_I}. Specifically, choose a partition of unity $\{\rho_z: \cU_z \to [0,1]\}_{z\in I}$ subordinate to a finite cover $\{\cU_z\}_{z\in I}$ of $B$. To match $f_z$ near $\partial_h E$ to a common large positive constant, we may post-compose $f_z$ with a function $h$ with large second derivative while still keeping $h\circ f_z$ $J$-convex by \cite[Lemma 2.21]{spinal_I} which is still denoted as $f_z$; c.f. \cite[Lemma 2.7]{Cieliebak_Eliashberg}. Similarly, we may match $-df_z \circ J \partial \theta$ to a large positive constant. Let 
    $$f:=\sum_{z\in I}(\rho_z\circ \Pi) f_z.$$
    We can check that $\lambda:=-df\circ J$ satisfies $\lambda|_{\partial E} \in \Lambda^{\text{fib}}(\partial \Pi)$ and   
    $d\lambda \in \Omega_{\text{exact}}^{\text{fib}}(\Pi)$ since near $\partial_h E$, $\lambda(\partial \theta) \equiv \nu >0$ and $d\lambda(\partial \theta, \cdot) \equiv 0$ as a consequence of the fact that $\ker \lambda$ is invariant under the flow of $\partial \theta$.
\end{proof}

\subsection{Thurston's trick}
Now, we use Thurston's trick to construct an almost Stein structure on the total space of $\Pi$ from the fiberwise almost Stein structure.

\begin{definition}(Giroux forms)
    The space of \textbf{Giroux forms near $\partial E$} is defined to be
    $$\Lambda(\partial \Pi):=\{\lambda \in \Lambda^{\text{fib}}(\partial \Pi)\mid\lambda|_{T(\partial_h E)} \ \text{and} \ \lambda|_{T(\partial_v E)} \ \text{are both contact}\}.$$
\end{definition}

\begin{definition}
\label{def: PSH}
    Define $\PSH_{(J, \partial \theta)}(\Pi)\subset \PSH^{\fib}_{(J, \partial \theta)}(\Pi)$ to be the subspace for which $d\lambda_J$ is a symplectic form taming $J$ and $\lambda_J|_{\partial E} \in \Lambda(\partial \Pi)$.
\end{definition}

\begin{proposition}
\label{prop:Thurston_fK}
    Let $(J, \partial \theta) \in \cJ(\Pi)$ and $f\in \PSH^{\fib}_{(J, \partial \theta)}(\Pi)$. Let $j$ be the unique complex structure on $B$ such that $\Pi: (E,J)\to (B, j)$ is pseudoholomorphic and let $\varphi$ be a plurisubharmonic function on $(B, j)$. Then there exists $K_0 \geq 0$ depending continuously on $J$ and $f$, such that for every $K\geq K_0$,
    $$f_K:= f + K(\varphi \circ \Pi)$$
    belongs to $\PSH_{(J, \partial \theta)}(\Pi)$.
\end{proposition}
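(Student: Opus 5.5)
The plan is to follow the Thurston-trick argument of \cite[Proposition 2.26]{spinal_I}. Treat the bulk of $E$ as in the Lefschetz case, and add one extra local check in the exotic neighborhoods $V_p$. Write $\lambda_K := -df_K\circ J = \lambda + K\,\Pi^*(-d\varphi\circ j)$, using that $\Pi$ is $(J,j)$-holomorphic, so that $d(\varphi\circ\Pi)\circ J = \Pi^*(d\varphi\circ j)$. Then
\[
\omega_K := d\lambda_K = d\lambda + K\,\Pi^*\omega_B,\qquad \omega_B := -dd^{j}\varphi,
\]
where $\omega_B$ is an area form on $B$ positive with respect to $j$. Two things must be checked for $K\ge K_0$: that $\omega_K$ is symplectic and tames $J$, and that $\lambda_K|_{\partial E}\in\Lambda(\partial\Pi)$. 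In both, $K_0$ is determined by finitely many continuous quantities of $(J,f)$ over compact sets, so it depends continuously on $(J,f)$.

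\textbf{Taming.} At a point $x\in E\setminus(E_{\crit}\cup \text{exotic boundary region})$, split $v=v^V+v^H$, with $v^V\in VE$ and $v^H$ in a $J$-invariant complement. Then
\[
\omega_K(v,Jv)=d\lambda(v^V,Jv^V)+2\,d\lambda(v^V,Jv^H)+d\lambda(v^H,Jv^H)+K\,\omega_B(\Pi_*v,j\Pi_*v).
\]
The first term is $\ge c|v^V|^2$ because $d\lambda\in\Omega^{\fib}_{\text{exact}}(\Pi)$ is positive on fibers. The last term is $\ge Kc'|v^H|^2$. The cross terms are $O(|v^V||v^H|)$ and $O(|v^H|^2)$, so Cauchy--Schwarz gives positivity for $K$ large, uniformly on compact sets. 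Near $E_{\crit}$, and near the intersection of exotic fibers with $\partial_h E$, $d\lambda$ already tames $J$ by the definition of $\Omega^{\fib}$. Since $\Pi^*\omega_B(v,Jv)\ge 0$ everywhere, adding $K\Pi^*\omega_B$ preserves taming there. Compactness of $E$ then yields a uniform $K_0$.

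\textbf{Boundary forms.} On $\partial_v E$, the restriction $\lambda_K|_{T\partial_v E}$ is a Thurston--Winkelnkemper type form. There $\Pi^*(-d\varphi\circ j)$ restricts to a positive multiple of $\Pi^*d\theta_B$ along $\partial B$, provided $\varphi$ is chosen with $\partial B$ a regular level set, which we may arrange. Meanwhile $d\lambda$ is positive on the pages. The standard computation $\lambda_K\wedge d\lambda_K = K(\cdots)+O(1)$ then shows contactness for $K$ large. On $\partial_h E$ the required structure is preserved. First, $X\subset\ker\Pi_*$ away from the exotic regions, so $\Pi^*(\cdot)(X)=0$, and hence $\lambda_K(X)=\lambda(X)>0$ and $X\subset\ker d\lambda_K|_{T\partial_h E}$. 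Second, $d\lambda_K|_{\ker\lambda_K\cap T\partial_h E}$ equals $d\lambda$ plus $K\Pi^*\omega_B$, and the latter is positive on the horizontal plane field, which is $J$-complex and maps isomorphically to $TB$. This gives contactness.

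\textbf{Main obstacle.} The hard case is $\partial_h E\cap V_p$. There $X=\partial\theta$ is the Reeb field $\partial\theta_1$ of Lemma~\ref{lem: contact_lambda}, which is \emph{not} vertical, since $\Pi_*\partial\theta_1\ne0$. As a result, $\Pi^*(-d\varphi\circ j)(\partial\theta)$ need not vanish or be constant, and this could destroy the conditions that $\lambda_K(\partial\theta)$ be constant and that $\partial\theta\subset\ker d\lambda_K|_{T\partial_h E}$. I would handle this by choosing $\varphi$ so that on each small disk $B_p$ it is $\psi$-radial and its restriction is invariant under the circle action induced by $\partial\theta_1$ on the base. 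In the local model this action is $z_1\mapsto e^{it}z_1$, which acts on $\Pi=z_2^2+z_1$ only up to the fiberwise correction by $z_2$. Concretely, I would take $\varphi=|w|^2$ in the local coordinate $w$ of $B_p$ and compute directly in coordinates $(z_1,z_2)$, as in Lemma~\ref{lem: contact_lambda}. The aim is to check that $\Pi^*(-d\varphi\circ j)$ is $\partial\theta_1$-invariant and, on $Y^\delta_{\exot}$, pairs with $\partial\theta_1$ to a constant, after possibly shrinking $\delta$. With that in hand, contactness there follows as before from $\lambda_{\exot}\wedge d\lambda_{\exot}\ne0$ together with $\Pi^*\omega_B\ge0$ on $\ker\lambda_{\exot}$. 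If exact invariance fails, the fallback is to absorb the discrepancy by slightly modifying $\partial\theta$ within the contractible space $\cJ(\Pi)$ (Lemma~\ref{lem:contractible_acs_space}) to the Reeb field of $\lambda_K$. This is justified because the Reeb field of $\lambda_K$ remains $C^0$-close to tangent to the Seifert fibers.
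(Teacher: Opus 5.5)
\paragraph{Verdict.} Your taming estimate and your treatment of $\partial_vE$ and of $\partial_hE$ away from the exotic neighborhoods are fine. They are the Thurston-trick argument of \cite{spinal_I}. The paper's own proof is exactly that citation plus the observation, which you also make, that in $V_p$ the term $K\,d\sigma(\Pi_*v,j\Pi_*v)\ge 0$ (with $\sigma=-d\varphi\circ j$) is added to an already positive $d\lambda(v,Jv)$. The genuine gap is in the step you single out as the main obstacle: your proposed resolution does not work.

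\paragraph{Invariance and constancy.} On $\partial_hE\cap V_p\cong S^1_\delta\times D^2$, the orbits of $\partial\theta=\partial_{\theta_1}$ project under $\Pi=z_2^2+z_1$ to circles of radius $\delta$ centered at $z_2^2$. Nearby such circles intersect, so $\varphi\circ\Pi$ can be constant on all of them only if $\varphi$ is constant on the open set they sweep out. Hence for every $K>0$ and every strictly plurisubharmonic $\varphi$, $f_K$ violates the first condition of fiberwise $J$-convexity. No choice of $\varphi$ on $B_p$ helps.
\begin{itemize}
\item Concretely, for $\varphi=|w|^2$ one gets $\Pi^*\sigma(\partial_{\theta_1})=2\delta^2+2\delta\,\mathrm{Re}(\bar z_2^{\,2}e^{i\theta_1})$. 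This is neither constant nor $\partial_{\theta_1}$-invariant, and shrinking $\delta$ does not change that.
\item Likewise $d\lambda_K(\partial_{\theta_1},v)=K\,d\sigma(\Pi_*\partial_{\theta_1},\Pi_*v)\neq 0$ for suitable $v\in T(\partial_hE)$ wherever $z_2\neq 0$, since $\Pi_*$ maps $T(\partial_hE)$ onto $TB$ there. So $\partial\theta\not\subset\ker d\lambda_K|_{T(\partial_hE)}$.
\end{itemize}

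\paragraph{Contactness.} Your contactness claim on $\partial_hE\cap V_p$ also fails. Positivity of $\Pi^*\omega_B$ on $\ker\lambda$ does not control the cross term $K\,\Pi^*\sigma\wedge d\lambda$.
\begin{itemize}
\item Along the circle $\{z_2=0\}$, $\Pi_*T(\partial_hE)$ is one-dimensional, so $\Pi^*d\sigma$ restricts to zero there. In the model with $\varphi=|w|^2$ one finds $\lambda_K\wedge d\lambda_K|_{T(\partial_hE)}=2\epsilon(2K\delta^2-1)\,d\theta_1\wedge dx_2\wedge dy_2$ at those points.
\item For $|z_2|$ of order $\sqrt\delta$, by contrast, the dominant term $K\,\lambda\wedge\Pi^*d\sigma$ has the same sign as $\lambda\wedge d\lambda$.
\item So once $K>1/(2\delta^2)$ this $3$-form changes sign on the connected set $\partial_hE\cap V_p$, and $\lambda_K$ is not contact for large $K$.
\end{itemize}

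\paragraph{The fallback.} Modifying $\partial\theta$ does not rescue this, for two reasons.
\begin{itemize}
\item It changes $\partial\theta$, which the statement fixes and which the projection $\cJ_{\Stein}(\Pi)\to\cJ(\Pi)$ must preserve.
\item For large $K$, where a Reeb field of $\lambda_K$ exists, it is pushed toward $\ker\Pi_*\cap T(\partial_hE)$. That is, it follows the curves $z_2^2+\delta e^{i\theta_1}=c$ (including figure-eights), not the Seifert circles. Neither the claimed closeness nor $1$-periodicity holds.
\end{itemize}

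\paragraph{Upshot.} Near $\partial_hE\cap V_p$ the Thurston term cannot be of the form $K\,\varphi\circ\Pi$ with $K$ arbitrarily large; a correct argument must modify it there. Note that the paper's proof does not supply that step either. It defers everything except taming to \cite[Theorem 2.27]{spinal_I}, which treats the case where $\partial\theta$ is vertical.
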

\begin{proof}
    By \cite[Theorem 2.27]{spinal_I}, it suffices to show that $f_K$ is $J$-convex in the exotic neighborhood. 
    Let $\sigma:= -d\varphi \circ j$. Observe that 
    $$\lambda_J^K: = -df_K \circ J = \lambda - Kd(\varphi \circ \Pi) = \lambda - K\Pi^*(-d\varphi \circ j)= \lambda + K \Pi^* \sigma$$
    by holomorphicity of $\Pi$ and 
    \begin{equation}
    \label{eq: dlambda}
        d\lambda_J^K(v, Jv) = d\lambda_J(v, Jv) + K d\sigma(\Pi_* v, j\Pi_* v).
    \end{equation}
    Note that $K d\sigma(\Pi_* v, j\Pi_* v)\geq 0$ for $K \geq 0$. Now, since $f$ is $J$-convex in the exotic neighborhood by Lemma~\ref{lem:standard_acs_exot}, we have that $d\lambda_J(v, Jv)>0$ so $f_K$ is in $\PSH_{(J, \partial \theta)}(\Pi)$.
\end{proof}

\begin{definition}[{\cite[Definition 2.18]{spinal_I}}]
\label{def: J_Stein}
    The \textbf{space of supported almost Stein structures} associated to a nearly Lefschetz fibration with projection $\Pi$ is
    $$\cJ_{\Stein}(\Pi):= \{(J, f) \mid (J, \partial \theta) \in \cJ(\Pi) \text{ for some } \partial \theta \text{, and } f\in \PSH_{(J, \partial \theta)}(\Pi)\}.$$
\end{definition}

While Definition \ref{def: J_Stein} consists of almost Stein structures that are compatible with some $\partial \theta$, this $\partial \theta$ is canonically determined if $\lambda \in \Lambda^{\text{fib}}(\partial_h \Pi)$. 

Notice that when $(J,f)\in \cJ_{\Stein}(\Pi)$, by definition the associated $\lambda_J|_{\partial E} \in \Lambda(\partial \Pi)$ and in particular, $\lambda \in \Lambda^{\text{fib}}(\partial \Pi)$ which implies $\lambda \in \Lambda^{\text{fib}}(\partial_h \Pi)$. This implies that we have a well-defined projection
\begin{equation}
\label{eq:almost_stein_fibration}
    \cJ_{\Stein}(\Pi) \to \cJ(\Pi):(J, f) \mapsto (J, \partial \theta),
\end{equation}
with fiber $\PSH_{(J,\partial \theta)}(\Pi)$. In fact, this is a Serre fibration, since the $J$-convex function $f_K$ constructed in Proposition \ref{prop:Thurston_fK} can be done parametrically. Now we show contractibility of $\cJ_{\Stein}(\Pi)$ by considering the homotopy sequence of a Serre fibration.
\begin{proposition}[Contractibility of supported almost Stein structures]
\label{prop: contractible_almost_Stein}
    Let $E$ be a PANLF with the projection map $\Pi: E \to B$. Then the projection (\ref{eq:almost_stein_fibration}) is a homotopy equivalence. In particular, $\cJ_{\Stein}(\Pi)$ is contractible.
\end{proposition}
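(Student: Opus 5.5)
The plan is to use the long exact sequence of homotopy groups of the Serre fibration \eqref{eq:almost_stein_fibration}, $\cJ_{\Stein}(\Pi) \to \cJ(\Pi)$, whose fiber over $(J,\partial\theta)$ is $\PSH_{(J,\partial\theta)}(\Pi)$. By Lemma \ref{lem:contractible_acs_space} the base $\cJ(\Pi)$ is contractible. So the projection is a weak homotopy equivalence, and hence $\cJ_{\Stein}(\Pi)$ is weakly contractible, once we know each fiber $\PSH_{(J,\partial\theta)}(\Pi)$ is nonempty and weakly contractible. Since these are spaces of smooth objects with the $C^\infty$ topology, they have the homotopy type of CW complexes, so Whitehead's theorem upgrades weak contractibility to contractibility.

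The Serre fibration property itself I would justify as the text indicates. Given a family $(J_t,\partial\theta_t)$ over a disk $D^k$ with a lift over $\partial D^k$ (or the usual homotopy-lifting data), extend it as follows. First choose a parametric fiberwise $J_t$-convex family $f_t$, using the partition-of-unity construction of Proposition \ref{prop: fiberwise_almost_Stein} done in families, with Lemma \ref{lem:standard_acs_exot} near the exotic fibers. Then apply Proposition \ref{prop:Thurston_fK}, noting that $K_0$ depends continuously on $(J,f)$, so a single large $K$ works over the compact parameter space. Finally, interpolate with the prescribed boundary lift through the convex structure described below.

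The main step is contractibility of the fiber. Nonemptiness comes from Proposition \ref{prop: fiberwise_almost_Stein} followed by Proposition \ref{prop:Thurston_fK}. For weak contractibility, take a sphere family $g_s\in\PSH_{(J,\partial\theta)}(\Pi)$, $s\in S^k$, and consider the straight-line homotopy
\[
g_{s,u} = (1-u)\,g_s + u\,g_* + K(\varphi\circ\Pi), \qquad u\in[0,1],
\]
toward a fixed basepoint $g_*$, with $K$ large. Each defining condition of $\PSH^{\fib}$ is convex in $f$:
\begin{itemize}
\item constancy on $\partial\theta$-orbits;
\item $\lambda(\partial\theta)$ constant;
\item fiberwise positivity of $d\lambda$ and the taming conditions near $E_{\crit}$ and near the exotic fibers;
\item the $\Lambda^{\fib}$ conditions.
\end{itemize}
As the proof of Proposition \ref{prop: fiberwise_almost_Stein} already notes, this makes $\PSH^{\fib}$ convex, so the convex combination stays in $\PSH^{\fib}$. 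Adding $K(\varphi\circ\Pi)$ with $K$ uniform over the compact family $S^k\times[0,1]$ then restores global taming and the contact condition on $\partial E$ by Proposition \ref{prop:Thurston_fK}.

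The remaining issue is to connect $g_s$ to $g_s+K(\varphi\circ\Pi)$ inside $\PSH$. I expect this to be the main obstacle. The path $g_s+tK(\varphi\circ\Pi)$, $t\in[0,1]$, works because of \eqref{eq: dlambda}: adding $K\,\Pi^*\sigma$ with $K\ge 0$ only increases $d\lambda(v,Jv)$. For the boundary contact condition, I would argue as in \cite[Theorem 2.27]{spinal_I}, where adding $K\Pi^*\sigma$ preserves the Giroux-form conditions for all $K\ge0$ once they hold at $K=0$. In the exotic neighborhood, $\partial E$ lies in $\partial_h E$, where $\lambda(\partial\theta)$ and $\ker\lambda\supset$ the horizontal directions are unaffected in the relevant sense. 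This concatenation contracts the sphere family to $g_*+K(\varphi\circ\Pi)$ and hence to the basepoint, completing the argument.
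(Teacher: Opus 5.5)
Your proposal is correct and follows essentially the same route as the paper: a Serre fibration over the contractible base $\cJ(\Pi)$, contractibility of each fiber $\PSH_{(J,\partial\theta)}(\Pi)$ from convexity of $\PSH^{\fib}$ plus a uniform-$K$ parametric Thurston trick, and then Whitehead's theorem. The only difference is packaging: the paper extends the sphere family over $\bD^n$ and adds $K\psi(s)\,(\varphi\circ\Pi)$ with a parameter cutoff $\psi$ instead of concatenating homotopies as you do, and it tacitly needs in the region $0<\psi<1$ the same monotonicity step you flag --- which needs no separate exotic-neighborhood argument, because along $g+K(\varphi\circ\Pi)$ every defining condition of $\PSH$ is affine in $K$ (the quadratic term $\Pi^*(\sigma\wedge d\sigma)$ in the contact condition vanishes), so the admissible $K\geq 0$ form an interval containing $0$ and $[K_0,\infty)$.
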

\begin{proof}
    With Proposition \ref{prop: fiberwise_almost_Stein} and Proposition \ref{prop:Thurston_fK}, the proof essentially follows from that of \cite[Proposition 2.19]{spinal_I}. For completeness, we give a proof here. First we note that, by Definition \ref{def: PSH}, any $f \in \PSH_{(J,\partial \theta)}(\Pi)$ is also in $\PSH^{\fib}_{(J,\partial \theta)}(\Pi)$.
    In addition, $\PSH^{\fib}_{(J,\partial \theta)}(\Pi)$ is convex by the proof of Proposition \ref{prop: fiberwise_almost_Stein}. 

    Now, since $\PSH^{\fib}_{(J,\partial \theta)}(\Pi)$ is convex, any $S^{n-1}$-family of functions in $\PSH_{(J,\partial \theta)}(\Pi)$ extends to a $\bD^n$-family of fiberwise $J$-convex functions. Now, we consider a cutoff function $\psi: \bD^n \to [0,1]$ which is $0$ near $\partial \bD^n$ and $1$ outside the neighborhood. Since $\bD^n$ is compact, we may find a large $K_0$ such that for $K \geq K_0$, the modified Thurston construction $f_K:= f + K\psi \circ (\phi \circ \Pi)$ from Proposition \ref{prop:Thurston_fK} remains in $\PSH_{(J,\partial \theta)}(\Pi)$. Now, we have extended the $S^{n-1}$-family of functions in $\PSH_{(J,\partial \theta)}(\Pi)$ to an $\bD^{n}$-family in the same space. Therefore, by Whitehead's theorem, $\PSH_{(J,\partial \theta)}(\Pi)$ is contractible. Finally, applying Whitehead's theorem again, the projection \eqref{eq:almost_stein_fibration} is a homotopy equivalence. Since $\cJ(\Pi)$ is contractible by Lemma \ref{lem:contractible_acs_space}, $\cJ_{\Stein}(\Pi)$ is also contractible.
\end{proof}

\begin{proof}[Proof of Theorem \ref{thm: stein_on_panlf}]
    This follows from Lemma \ref{lem: almost_Stein=Stein} and Proposition \ref{prop: contractible_almost_Stein}.
\end{proof}

\begin{remark}
    Similar to \cite[Theorem 1.24]{spinal_I}, given a nearly Lefschetz fibration $\Pi: E\to B$, we can show via a similar argument as in the proof of Theorem \ref{thm: stein_on_panlf} that the space of supported symplectic structures that are strongly convex are both nonempty and contractible. As a summary, one needs to (1) check the space of fiberwise symplectic structures $\Omega_{\text{strong}}^{\text{fib}}(\Pi)$ is nonempty and contractible and (2) use Thurston trick to go from fiberwise space to the total space of symplectic structures supporting $\Pi$ and show that the latter is homotopy equivalent to the former. In comparison to \cite[\S 2.4]{spinal_I}, we only need to examine the situation near the exotic fibers. Observe that in the local model in Section~\ref{subsec: exotic_weinstein}, we construct a symplectic structure on the exotic neighborhood such that it restricts to a contact form on the boundary, whose Reeb orbits are exactly the orbits of $\pi_h$. Thus, by using partitions of unity, we can glue these local constructions around exotic fibers with constructions around regular and singular fibers, to conclude that $\Omega_{\text{strong}}^{\text{fib}}(\Pi)$ is nonempty. The rest of the argument proceeds similarly as in \cite[\S 2.4]{spinal_I}.
\end{remark}

Note that this proposition shows that given a $\Pi$, the space of almost complex structures supported by $\Pi$ is contractible. However, one usually prefers to talk about Stein homotopy independent of the projection maps. This ``quasiflexibility'' that only depends on the boundary symplectic data (under suitable assumptions) is the content of the next section. 

\section{Quasiflexibility of Weinstein structures on PANLFs}
\label{sec: quasiflex}
In this section, we discuss a ``quasiflexibility'' phenomenon of Stein structures on nearly Lefschetz fibrations with planar fibers and prove Theorem \ref{thm: quasiflexibility}. Roughly speaking, quasiflexibility means that a symplectic homotopy convex at infinity induces a Stein homotopy. In the setting of Lefschetz fibrations, this was first done by \cite{Wendl_strongly} for planar Lefschetz fibrations over disks and the by \cite{spinal_I} for amenable planar Lefschetz fibrations over more general compact surfaces. Theorem \ref{thm: quasiflexibility} is a further generalization of \cite{spinal_I}, allowing exotic fibers in the setting of planar nearly Lefschetz fibrations. The assumption of planar is classical, since we rely on the classification of pseudoholomorphic foliations in family. We give an application of these ideas in \S \ref{sec: planarNLF_multisection} in the proof of Theorem \ref{thm: planarNLF_multisection}.

\subsection{Peripherally supported almost Stein structures} As observed in \cite[\S 3]{spinal_I}, to study almost Stein structures up to almost Stein homotopy, we need to relax the first condition in Definition \ref{def: supported_almost_Stein}, i.e., the projection $\Pi$ needs not be pseudoholomorphic on all of $B$, but just an open neighborhood $\cU \subset B$ of $\partial B$. This will give more room to construct the homotopy of almost Stein structures, when, in the homotopy of symplectic structures convex at infinity, we do not necessarily have $\Pi$-supported almost Stein structures along the path.
The goal of this subsection is to show that a Stein structure which is compatible with a nearly Lefschetz fibration $\Pi$ in a neighborhood of $\partial \Pi$ (but not necessarily on the whole space) is nevertheless almost Stein homotopic to a compatible one in the sense of Definition \ref{def: supported_almost_Stein}. 

\begin{definition}[Peripherally supported almost Stein structure]
\label{def: periphally_supported_almost_Stein}
    Let $\Pi: E \to B$ be a nearly Lefschetz fibration. Let $j$ be an almost complex structure on $B$. An almost Stein structure $(J, f)$ on $E$ is \textbf{peripherally supported by} $(\Pi, B, j)$ if
\begin{itemize}
    \item $J$ restricts to a positively oriented complex structure on the smooth part of each fiber;
    \item The projection $\Pi: (E, J) \to (B, j)$ is pseudoholomorphic on an open neighborhood $\cU \subset B$ of $\partial B$;
    \item $\lambda:= -df \circ J$ restricts to $\partial E$ as a Giroux form for $\pi:=\partial \Pi$;
    \item For every $z\in B$, $f$ is constant on each connected component of $\partial E_z$;
    \item The maximal $J$-complex subbundle $\xi_J$ of $T(\partial_h E)$ is invariant under the Reeb flow determined by $\lambda|_{T(\partial_h E)}$ and $J|_{\xi_J} = \Pi^* j$. 
\end{itemize}
\end{definition}

In the following, we state the quasiflexibility result for nearly Lefschetz fibrations. This is a generalization of \cite[Theorem C]{spinal_I}. 

\begin{theorem}
\label{thm: peripheral_asc}
    Let $\Pi: E \to B$ be a nearly Lefschetz fibration and let $(J_i, f_i, \lambda_i:= -df_i \circ J_i)$ be associated almost Stein triples peripherally supported by $\Pi$ and $J_0$ is supported by $\Pi$. Then $(J_0, f_0, \lambda_0)$ and $(J_1, f_1, \lambda_1)$ are almost Stein homotopic.
\end{theorem}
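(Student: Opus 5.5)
The plan is to follow the strategy of \cite[Theorem C]{spinal_I}: deform $(J_1,f_1,\lambda_1)$ through peripherally supported almost Stein triples until it becomes supported by $\Pi$ in the sense of Definition \ref{def: supported_almost_Stein}. Once both triples lie in $\cJ_{\Stein}(\Pi)$, Proposition \ref{prop: contractible_almost_Stein} joins them by a path, since that space is contractible, in particular path connected.

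\textbf{Step 1: make $\Pi$ pseudoholomorphic for $J_1$ everywhere.} By definition $J_1$ already makes $\Pi$ pseudoholomorphic over a collar $\cU$ of $\partial B$. It also restricts to a positively oriented complex structure on each smooth fiber. On $\Pi^{-1}(B\setminus \cU)$ I will interpolate $J_1$ to some $J'\in\cJ(\Pi)$ that agrees with $J_1$ over $\cU$ and near $\partial_h E$. The interpolation keeps the vertical part $J_1|_{VE}$ and changes only the horizontal part: choose the horizontal distribution $d\lambda_1$-orthogonal to $VE$, and rotate the horizontal complex structure so that it becomes the lift of $j$. Near $E_{\crit}$ and near the exotic neighborhoods $V_p$, I first use a homotopy to make $J_1$ standard, namely integrable with $\Pi$ holomorphic. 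In $V_p$ I use Lemma \ref{lem:standard_acs_exot}, together with the model function $f_{\exot}$ of \S\ref{subsec: exotic_weinstein}.

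The danger is that along the interpolation $d\lambda_1$ must keep taming $J_t$. Vertical vectors are fine, because $d\lambda_1>0$ on fibers wherever $J_1$ is fiberwise complex. Horizontal and mixed terms can fail. To control them I apply Thurston's trick as in Proposition \ref{prop:Thurston_fK}: replace $f_1$ by $f_1+K\,\varphi\circ\Pi$, with $\varphi$ plurisubharmonic on $(B,j)$ and supported compactly in $B\setminus\partial B$, or else constant near $\partial B$ so that the boundary Giroux data is unchanged. This adds $K\Pi^*d\sigma$ to the form. For $K$ large, $-d(f_1+K\varphi\circ\Pi)\circ J_t$ then tames $J_t$ along the entire compact family, and linear interpolation in $f$ is harmless at the start. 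The whole family stays peripherally supported, because near $\partial E$ nothing changes.

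\textbf{Step 2: make $f$ supported.} Now $(J',f')$ is peripherally supported and $\Pi$ is $J'$-holomorphic. It remains to check the last two conditions of Definition \ref{def: supported_almost_Stein}. These are conditions at $\partial E$, and there they are already implied by peripheral support: $f$ is constant on boundary circles, and $\xi_J$ is Reeb-invariant. Pick $\partial\theta$ to be the Reeb field of $\lambda'|_{\partial_h E}$, suitably normalized to have $1$-periodic flow. Then $(J',\partial\theta)\in\cJ(\Pi)$ and $f'\in\PSH_{(J',\partial\theta)}(\Pi)$, possibly after a further Thurston enlargement. Both triples now lie in $\cJ_{\Stein}(\Pi)$, and Proposition \ref{prop: contractible_almost_Stein} finishes the proof.

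The main obstacle is Step 1 near the exotic fibers. The fibration structure near $\partial_h E$ in $V_p$ is not a product, and both the horizontal interpolation and the periodicity of $\partial\theta$ must be matched with the local model of Lemma \ref{lem: contact_lambda}. My first attempt there is to homotope $(J_1,f_1)$ on $V_p$ to $(J_{\exot},f_{\exot})$ pulled back by the diffeomorphism $\Phi$ from the proof of Lemma \ref{lem:standard_acs_exot}. The key point making this possible is the convexity of fiberwise $J$-convex functions in that local model.
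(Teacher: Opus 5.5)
Your Step 1 fails, and the rest of the argument depends on it. Thurston's trick as in Proposition \ref{prop:Thurston_fK} rests on the identity $-d(\varphi\circ\Pi)\circ J=\Pi^*\sigma$, which holds only where $\Pi$ is $(J,j)$-holomorphic. Along your path $J_t$ this fails over $B\setminus\cU$ for every $t<1$. It fails in particular at $t=0$, where $J_t=J_1$ is only peripherally supported.

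Concretely, your $J_t$ preserves $VE$, so it induces a complex structure $j^t_x$ on $T_{\Pi(x)}B$ that depends on the point $x$ of the fiber. Then $\alpha:=-d(\varphi\circ\Pi)\circ J_t=-d\varphi\circ j^t_x\circ T\Pi$ annihilates $VE$. Hence $d\alpha$ vanishes on pairs of vertical vectors. For $v$ vertical and $Y$ a $\Pi$-projectable field, $[v,Y]$ is vertical, so $d\alpha(v,Y)=v(\alpha(Y))=-d\varphi((\partial_v j^t_x)\,T\Pi(Y))$. So the Levi form $w\mapsto d\alpha(w,J_tw)$ has zero vertical block and a mixed vertical--horizontal block that is nonzero as soon as $j^t_x$ varies along the fibers. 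Wherever that block is nonzero, the form is indefinite.

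In the Levi form of $f_1+K\varphi\circ\Pi$, the mixed term therefore grows linearly in $K$, while the vertical block comes only from $f_1$ and stays fixed. For large $K$ taming fails rather than holds. Even your first leg, $f_1+sK\varphi\circ\Pi$ with $J_1$ fixed, does not stay $J_1$-convex.

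There are two related slips.
\begin{itemize}
\item The condition to control is $J_t$-convexity of $f$, and $-df_1\circ J_t\neq\lambda_1$ once $J_t\neq J_1$. So asking that $d\lambda_1$ tame $J_t$ is not the relevant condition.
\item A strictly plurisubharmonic $\varphi$ can be neither compactly supported in the interior nor constant near $\partial B$, since then $0<\int_B dd^c\varphi=\int_{\partial B}d^c\varphi=0$. So the boundary data necessarily changes.
\end{itemize}
The real obstruction is therefore global over $B\setminus\cU$, not concentrated at the exotic fibers.

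The missing idea is to adapt $J$ to the function, instead of adapting the function to a $\Pi$-holomorphic $J$, which you never have in the interior. Following \cite{spinal_I}, the paper joins $J_0$ and $J_1$ by a path in $\cJ(\Pi;\cU)$ and applies Proposition \ref{prop:peripheral_to_total} with $X=[0,1]$ and $A=\{0,1\}$. There the scaling is the reverse of yours: $F^\epsilon_\tau=\varphi\circ\Pi+\epsilon(\beta\circ\Pi)f_\tau$, with $\varphi$ replaced by a sufficiently convex $h\circ\varphi$. The almost complex structure $J_\tau$ is perturbed to a $C^\infty$-close $J^\epsilon_\tau$ built from the plane field $\xi^\epsilon_\tau=\ker dF^\epsilon_\tau\cap\ker\Theta^\epsilon_\tau$, where $\Theta^\epsilon_\tau=\Pi^*\sigma_\tau+\epsilon\eta^\epsilon_\tau$. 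Separate local models are used on $N_{\crit}$ and on the exotic neighborhoods $V_p$, where $F^\epsilon_\tau$ is glued to $f_{\exot}$. Finally one interpolates back to $f_\tau$ near $\tau\in A$. The output is directly an almost Stein homotopy from $(J_0,f_0)$ to $(J_1,f_1)$, so no passage through $\cJ_{\Stein}(\Pi)$ and Proposition \ref{prop: contractible_almost_Stein} is needed.
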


\begin{remark}
    In fact, the proof of Theorem \ref{thm: peripheral_asc} shows that the space of peripherally supported almost Stein triples is contractible.
\end{remark}

Corresponding to the generalization in Definition \ref{def: periphally_supported_almost_Stein}, we need to modify the space of supported almost complex structure in Definition \ref{def: fiberwise_acs}.

\begin{definition}[Peripheral almost complex structure]
\label{def: peripheral_acs}
    Given an open subset $\cU\subset B$, let $\cJ(\Pi; \cU)$ denote the space of pairs $(J, \partial \theta)$ where $J$ is an almost complex structure on $E$ compatible with the orientation and $\partial \theta$ is a nowhere zero vector field on $\partial_h E$ such that
\begin{enumerate}
    \item $\partial \theta$ is oriented in the positive direction of the fibers (of $\partial_h E \to B$);
    \item $J$ restricts to a positively oriented complex structure on the smooth part of every fiber; 
    \item There exists a complex structure $j$ on $B$ such that $\Pi: (E, J) \to (B, j)$ is pseudoholomorphic in $E|_{\cU}$ and along $\partial_h E$;
    \item The flow of $\partial \theta$ is $1$-periodic and preserves $\xi_J: = T(\partial_h E) \cap JT(\partial_h E)$. 
\end{enumerate}
\end{definition}

As in \cite{spinal_I}, the proof of Theorem~\ref{thm: peripheral_asc} follows from the technical proposition stated below, which allows us to associate to any $(J,\partial \theta) \in \mathcal{J}(\Pi;\mathcal{U})$ where $\mathcal{U}$ contains $\partial B$, a perturbation $(J',\partial \theta') \in \mathcal{J}(\Pi;\mathcal{U})$ which admits a suitable $J$-convex function. Further this perturbation can be arranged to depend continuously on parameters.

\begin{proposition}\label{prop:peripheral_to_total}
    Suppose $\Pi: E \to B$ is a PANLF, $\mathcal{U} \subset B$ is an open neighborhood of $\partial B$, $X$ is a compact cell complex, $A \subset X$ is a subcomplex, and
    \begin{align*}
        &X \to \mathcal{J}(\Pi;\mathcal{U}): \tau \mapsto (J_\tau, \partial_\theta^\tau),\\
        &A \to C^\infty(E): \tau \mapsto f_\tau
    \end{align*}
    are continuous maps such that for every $\tau \in A$, 
    \begin{itemize}
    \item $(J_\tau, f_\tau)$  is an almost Stein structure
    \item $f_\tau$  is constant on all boundary components of fibers, and
    \item $\lambda_\tau \coloneqq -df_\tau \circ J_\tau$  restricts to $\partial E$ as a Giroux form for  $\partial\Pi$ with $\lambda_\tau(\partial_\theta^\tau) \equiv$ constant.
    \end{itemize}

    Then there exists a continuous family of almost Stein structures $\{(J_\tau',f_\tau')\}_{\tau \in X}$ matching $(J_\tau, f_\tau)$ for all $\tau \in A$ such that $J_\tau'$ is $C^\infty$ close to $J_\tau$ for all $\tau \in X$.
\end{proposition}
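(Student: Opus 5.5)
The plan is to follow the proof of the analogous statement in \cite[\S 3]{spinal_I}, which handles the Lefschetz (amenable) case. Only the exotic neighborhoods $V_p$ need separate treatment, and the local model of \S\ref{subsec: exotic_weinstein} together with Lemma~\ref{lem:standard_acs_exot} handles them. After shrinking $\mathcal{U}$ and choosing the sets $B_p$ around $p\in B_{\exot}$ small enough, I will assume $B_p\cap\mathcal{U}=\emptyset$ and $V_p\cap\partial_v E=\emptyset$.

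\textbf{Step 1: correct $J_\tau$ on the interior.} Since $J_\tau$ is not pseudoholomorphic for $\Pi$ away from $\mathcal{U}$, I first deform each $J_\tau$ parametrically to $J'_\tau\in\mathcal{J}(\Pi)$. The deformation is supported away from $\partial E$ and from a slightly smaller neighborhood of $\Pi^{-1}(\partial B)$, and it is chosen so that $J'_\tau$ is $C^\infty$-close to $J_\tau$. The point is that the fibers are already $J_\tau$-holomorphic by condition (2) of Definition~\ref{def: peripheral_acs}. So $J'_\tau$ is obtained by keeping $J_\tau$ on $VE$ and replacing the horizontal part by the lift of a complex structure $j_\tau$ on $B$. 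The complex structure $j_\tau$ extends the one given on $\mathcal{U}$, and the horizontal complement is chosen $J_\tau$-invariant.

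Near $V_p$ and near $E_{\crit}$, I would instead use the standard models $J_{\exot}$ and $J_{\crit}$, interpolated on the fibers. This is possible because both spaces of choices are contractible, in the sense of Lemma~\ref{lem:contractible_acs_space}. The closeness claim needs care: the interpolation is $C^0$-small, but to make it $C^\infty$-small I follow \cite{spinal_I}, letting the cutoffs vary slowly and absorbing the cost into the Thurston constant $K$ in Step 3. For $\tau\in A$, I do not change $J_\tau$; the deformation is faded out over a collar of $A$ in $X$.

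\textbf{Step 2: fiberwise convex functions.} For the family $(J'_\tau,\partial_\theta^\tau)$, Proposition~\ref{prop: fiberwise_almost_Stein}, applied parametrically, produces $g_\tau\in\PSH^{\fib}_{(J'_\tau,\partial^\tau_\theta)}(\Pi)$. Near exotic fibers these come from Lemma~\ref{lem:standard_acs_exot}, and they are glued using the convexity of $\PSH^{\fib}$. Over $A$, I want these to agree with $f_\tau$. Each $f_\tau$ for $\tau\in A$ is $J_\tau$-convex, constant on fiber boundaries, and has $\lambda_\tau(\partial^\tau_\theta)$ constant, so it already lies in the fiberwise space. Because that space is convex, a relative extension from $A$ to $X$ exists, after the usual post-composition with a convex function $h$ to match boundary constants.

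\textbf{Step 3: Thurston trick and interpolation over $A$.} I would set
$$f'_\tau := g_\tau + K\,\psi(\tau)\,(\varphi\circ\Pi),$$
where $\varphi$ is plurisubharmonic for $j_\tau$ and $\psi$ is a cutoff on $X$ vanishing on $A$ and equal to $1$ off a small neighborhood of $A$. By Proposition~\ref{prop:Thurston_fK} and compactness of $X$, a uniform $K$ makes $f'_\tau$ $J'_\tau$-convex whenever $\psi(\tau)=1$. On the collar where $0<\psi<1$, I would argue as in the proof of Proposition~\ref{prop: contractible_almost_Stein}. There $g_\tau$ is close to the genuinely convex $f_\tau$, and $J'_\tau$ is close to $J_\tau$, so convexity holds for all $\psi\ge 0$ by an openness argument once the collar is thin enough.

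\textbf{Main obstacle.} I expect the main difficulty to be in Step 1 near the exotic fibers. Away from $\mathcal{U}$, $J_\tau$ is only fiber-preserving, so identifying it with $J_{\exot}$ requires the completed-fiber argument of Lemma~\ref{lem:standard_acs_exot}, namely the diffeomorphism from \cite[Proposition 4.10]{spinal_exotic} followed by fiberwise biholomorphisms, done in families. It must also stay consistent with the Reeb condition $\lambda(\partial_\theta)=\text{const}$ on $\partial_h E\cap V_p$. I would first handle the exotic neighborhoods parametrically in this way, then extend over the rest of $E$ as in \cite{spinal_I}.
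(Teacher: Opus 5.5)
Your Step~1 cannot be carried out, and Steps~2 and~3 depend on it. A pair $(J_\tau,\partial_\theta^\tau)\in\mathcal{J}(\Pi;\mathcal{U})$ preserves $VE$, so $J_\tau$ induces a complex structure $\bar J_\tau$ on $TE/VE\cong\Pi^*TB$. The map $\Pi$ is $(J,j)$-holomorphic exactly when this induced structure equals $\Pi^*j$, i.e.\ when it is constant along each fiber. Nothing in Definition~\ref{def: peripheral_acs} forces this over $B\setminus\mathcal{U}$ away from $\partial_h E$. For example, on $F\times B$ take $J=j_F\oplus j_{B,x}$ with $j_{B,x}$ varying in $x\in F$. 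Every $J'\in\mathcal{J}(\Pi)$ then differs from $J_\tau$ by an amount bounded below by the oscillation of $\bar J_\tau$ along fibers. So no choice of cutoffs makes your correction $C^0$-small, let alone $C^\infty$-small. This is exactly the situation the proposition exists for: in the quasiflexibility argument the fibers are holomorphic leaves but $\Pi$ is not holomorphic.

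The same defect breaks the later steps. For $\tau\in A$ and on your collar, $J'_\tau\notin\mathcal{J}(\Pi)$, so Propositions~\ref{prop: fiberwise_almost_Stein} and~\ref{prop:Thurston_fK} do not apply there. Since $-d(\varphi\circ\Pi)\circ J'_\tau\neq\Pi^*\sigma$, the term $K\psi(\tau)\,\varphi\circ\Pi$ adds an unsigned error multiplied by a large constant. The openness argument also fails, because at the outer edge of the collar $g_\tau$ must be convex for a structure that is $O(1)$ away from $J_\tau$. Replacing $J_\tau$ by $J_{\crit}$ and $J_{\exot}$ near singular and exotic fibers has the same problem. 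So the real obstacle is the non-holomorphicity of $\Pi$ in the interior, not the exotic neighborhoods.

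The missing idea, which the paper takes from \cite[Proposition 3.4]{spinal_I}, is to reverse Thurston's trick: keep $J_\tau$ essentially fixed and let the base function dominate.
\begin{itemize}
\item Determine $j_\tau$ only on $\mathcal{U}$ and $\partial_h E$, and set $\sigma_\tau=-d\varphi\circ j_\tau$.
\item Define $F^\epsilon_\tau=\varphi\circ\Pi+\epsilon(\beta\circ\Pi)f_\tau$.
\item Prescribe the one-form $\Theta^\epsilon_\tau=\Pi^*\sigma_\tau+\epsilon\eta^\epsilon_\tau$, where $\eta^\epsilon_\tau=\lambda_\tau$ near $\partial_h E$ and on $E|_{\mathcal{U}}$, and $d\eta^\epsilon_\tau|_{VE}>0$.
\item Modify $J_\tau$ only along the plane field $\xi^\epsilon_\tau=\ker dF^\epsilon_\tau\cap\ker\Theta^\epsilon_\tau$, keeping $J_\tau$ on a $J_\tau$-invariant horizontal complement. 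Add a local change near $N_{\crit}$ that makes $\varphi\circ\Pi$ convex there.
\end{itemize}
Since $\xi^\epsilon_\tau\to VE$ as $\epsilon\to 0$, the resulting $J^\epsilon_\tau$ is $C^\infty$-close to $J_\tau$. After replacing $\varphi$ by $h\circ\varphi$, $F^\epsilon_\tau$ is $J^\epsilon_\tau$-convex, and $\Pi$ never needs to be holomorphic in the interior.

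Only then does one interpolate, using $\rho(\tau)f_\tau+(1-\rho(\tau))F^\epsilon_\tau$ over the open set $A'\supset A$ where $(J_\tau,f_\tau)$ is still almost Stein. This is where an openness argument is legitimate: $J$-convexity is a convex condition in $f$ for fixed $J$, and every structure involved there is close to $J_\tau$. Near exotic fibers the paper does not identify $J_\tau$ with $J_{\exot}$ in families. It simply keeps $J^\epsilon_\tau=J_\tau$ on $V_p'$, takes $F^\epsilon_\tau=f_{\exot}$ there, and relies on $J_\tau$ agreeing with $J_{\exot}$ where the exotic fiber meets $\partial_h E$.
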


To help the reader and our exposition, we summarize the steps from \cite[\S 3]{spinal_I} to prove the version for bordered Lefschetz fibration in the following:
    \begin{enumerate}[\bf Step 1:]
        \item Find the complex structures $\{j_\tau\}_{\tau\in X}$ which are uniquely determined by requiring
        $$\Pi:(E, J_\tau) \to (B, j_\tau)$$
        to be holomorphic in $E|_{\cU}$ and $\partial_h E$ for all $\tau$. This is to satisfy (3) in Definition \ref{def: peripheral_acs}.
        \item Choose a nice $j_\tau$-convex function $\varphi$ on $B$ for all $\tau$ at once. This is done in \cite[Lemma 3.5]{spinal_I}.
        \item Define $\sigma_\tau := -d\varphi \circ j_\tau$, for $\tau \in X$.
        \item Extend the $(J_\tau)_{\tau \in A}$-convex functions $(f_\tau)_{\tau \in A}$ to fiberwise $(J_\tau)_{\tau \in X}$-convex functions $(f_\tau)_{\tau \in X}$. This is done in \cite[Lemma 3.12]{spinal_I}.
        \item\label{item:crit} For $\tau \in X$, define $J^{\epsilon}_\tau$ on $N_{\crit}$ such that $(\phi \circ \Pi)|_{N_{\crit}}$ is $J^{\epsilon}_\tau$-convex. This is done in \cite[Lemma 3.8]{spinal_I}.
        \item For $\tau \in X$, define a family of smooth one-forms $\eta^{\epsilon}_\tau$ on $N_{\crit}$ so that \[\xi^{\epsilon}_\tau \coloneqq \ker(dF) \cap \ker(-dF \circ J^{\epsilon}_\tau) = \ker(dF) \cap \ker(\Pi^*\sigma_\tau + \epsilon\eta^{\epsilon}_\tau)\] on $N_{\crit}$. This is done in \cite[Lemma 3.11]{spinal_I}.
        \item Extend $\eta^{\epsilon}_\tau$ to be defined over $E$, so that near $\partial_hE$ and on $E|_{\cU}$, \[\eta^{\epsilon}_\tau = \lambda_\tau \coloneqq -df_\tau \circ J_\tau\] Also, $d\eta^{\epsilon}_\tau|_{VE} >0$. In particular, for any point away $N_{\crit}$ and $E|_{\cU}$, $\eta^{\epsilon}_\tau$ can be chosen (independent of $\epsilon$) to be a 1-form that matches $\lambda_\tau$ near $\partial_h E$ and whose exterior derivative is positive on $VE$. Locally, this choice can be the same as the one made in Proposition~\ref{prop: fiberwise_almost_Stein}, which in turn used the choice made in \cite[Proposition 2.25]{spinal_I} for regular and singular neighborhoods. This is done in \cite[Lemma 3.13]{spinal_I}.
        \item Now define
        \[
        F^{\epsilon}_\tau := \varphi \circ \Pi + \epsilon(\beta \circ \Pi)f_\tau
        \]
        where $\beta$ is a bump function on $B$, supported on $U$ and 1 near $\partial B$ and $\Crit(\varphi)$.  
        \item Define $\Theta^{\epsilon}_\tau := \Pi^*\sigma_\tau + \epsilon\eta^\epsilon_\tau$, which is globally defined on $E$. This also allows us to extend the definition of $\xi_\tau^\epsilon$ to now $\xi^{\epsilon}_\tau := \ker dF^{\epsilon}_\tau \cap \ker\Theta^{\epsilon}_\tau$ on $E$.
        \item\label{item:acs} Extend $J^{\epsilon}_\tau$, defined in Step~\ref{item:crit} on $N_{\crit}$, over $E$ so that $J^{\epsilon}_\tau = J_\tau$ on $E|_{\cU}$. In particular, on an open set $E^{\reg}$ with closure disjoint from $N_{\crit} \cup E|_{\cU}$, $J^{\epsilon}_\tau$ is defined as follows. Choose $g_\tau$ a $J_\tau$-invariant metric on $E$ and let $H_\tau E^{\reg}$ be the subbundle of $TE^{\reg}$ which is the $g_\tau$-orthogonal complement of $VE|_{E^{\reg}}$. Then the projection $TE^{\reg} \to VE$ along $H_\tau E^{\reg}$ restricts to a bundle isomorphism $\Psi_\tau: \xi^{\epsilon}_\tau \to VE$. On $E^{\reg}$, $J^{\epsilon}_\tau$ is defined as:
        \[
        J^{\epsilon}_\tau|_{\xi^{\epsilon}_\tau}=\Psi^*J_\tau|_{VE}, J^{\epsilon}_\tau|_{H_\tau E^{\reg}} = J_\tau|_{H_\tau E^{\reg}}
        \]
        \item After replacing $\phi$ by a function $h \circ \phi$ for a sufficiently convex function $h$, the pairs $(J^{\epsilon}_\tau, F^{\epsilon}_\tau)$ become almost Stein structures for all $\tau \in X$ and all $\epsilon>0$ sufficiently small. This is done in \cite[Lemma 3.14, 3.15 and 3.16]{spinal_I}. 
        \item Let $A' \supset A$ be the subset of $X$ such that for $\tau \in A'$, $(J_\tau,f_\tau)$ is an almost Stein structure. Choose a bump function $\rho: X \to [0,1]$ which is 1 on $A$, with support on $A' \supset A$. Then define 
        \[
        f^{\epsilon}_\tau = \rho(\tau)f_\tau + [1-\rho(\tau)]F^{\epsilon}_\tau
        \]
        The pairs $(J^{\epsilon}_\tau, f^{\epsilon}_\tau)$ for all $\epsilon > 0$ are almost Stein structures as required. This is done in \cite[Lemma 3.17 and 3.18]{spinal_I}. 
\end{enumerate}

\begin{proof}[Proof of Proposition \ref{prop:peripheral_to_total}]
    The proof of Proposition~\ref{prop:peripheral_to_total} will follow from the proof of \cite[Proposition 3.4]{spinal_I} outlined above, with an extra ingredient near the exotic fibers. In other words, Steps 1 - 3 can be done as above, and Step 4 can be also completed using necessary modifications near the exotic fibers as done in the proof of Proposition~\ref{prop: fiberwise_almost_Stein}. This gives us a family of fiberwise almost Stein structures $(f_\tau, J_\tau)$ on $E$, for all $\tau \in X$.
        
    Given an exotic fiber at $p \in B_{\exot}$ and a neighborhood $c \in B_p \subset \mathring{B}$, we consider a neighborhood $V_p \subset \Pi^{-1}(B_p)$. We can then repeat the steps (5 - 11) above to produce $J_\tau^{\epsilon}$-convex functions $F_\tau^{\epsilon}$ away from these exotic neighborhoods, such that $(J_\tau^{\epsilon},F_\tau^{\epsilon})$ satisfy the required properties on the boundary of $E \setminus (\cup_{p \in B_{\exot}} V_p)$. Suppose $(J_\tau^{\epsilon},F_\tau^{\epsilon})$ are defined on an open set $E^R$ that covers $E \setminus (\cup _{c \in B_{\exot}} V_p)$.
        
    We then show how to extend the definition of $F^{\epsilon}_\tau$ and $J^{\epsilon}_\tau$ over $V_p$ so that the pair $(J^{\epsilon}_\tau,F^{\epsilon}_\tau)$ define almost Stein structures on $E$. Firstly, we can assume that on the intersection $E^R \cap V_p$, $J^{\epsilon}_\tau$ is defined as on $E^{\reg}$ in Step~\ref{item:acs}, and $F^{\epsilon}_\tau = \varphi \circ \Pi$. Now recall the definition of $U_{\exot}^{\delta} \subset A_{\delta} \times D^2$, and describe $V'_p \subset V_p$ to be identified with the subset of $U_{\exot}^{\delta}$ where $f_{\exot} \neq \varphi \circ \Pi$. 
        
    We extend $J^{\epsilon}_\tau$ to be equal to $J_\tau$ on the interior of $V'_p$, and extend $F^{\epsilon}_\tau$ to equal $f_{\exot}$ on $V_p$, appropriately using bump functions as necessary. Similar to the argument in Proposition~\ref{prop: fiberwise_almost_Stein}, since the $J_\tau$ agree with $J_{\exot}$ on the intersection of the exotic fiber with $\partial_h E$, an appropriate choice of $K$ and $\delta$, and $\epsilon$ sufficiently small, will ensure $F^{\epsilon}_\tau$ is $J^{\epsilon}_\tau$-convex everywhere on $E$, and that it satisfies the appropriate conditions on $\partial E$.

    Now, we can replicate Step 12, by interpolating between the functions $f_\tau$ and $F^{\epsilon}_\tau$ to obtain the almost Stein structures $(J'_\tau, f'_\tau)$ for $\tau \in X$, matching the almost Stein structures $(J_\tau, f_\tau)$ for $\tau \in A$. The extra step in this situation, compared with the computations in \cite[\S 3]{spinal_I} is that in $\partial_hE$, we have to check that the interpolated pairs $(J'_\tau, f'_\tau)$ as defined above restrict to contact forms in a neighborhood of the exotic fibers. The computation will be similar to that in \cite[Lemma 3.17]{spinal_I}.

    This completes the proof of Proposition~\ref{prop:peripheral_to_total}.
\end{proof}

\subsection{Quasiflexibility}
In this subsection we prove Theorem \ref{thm: quasiflexibility}. This is essentially a consequence of Proposition \ref{prop:peripheral_to_total} and \cite[Proposition 6.4]{spinal_II} which analyzes the moduli space under deformation of $1$-parameter data.

In order to work with the moduli space of finite-energy pseudoholomorphic curves, we need to ``complete'' the compact bordered Lefschetz fibrations by attaching cylindrical ends. Given a bordered Lefschetz fibration $E$ whose boundary has a decomposition $\partial E = \partial_h E \cup \partial_v E$, we can associate to it a symplectic ``double completion'' by attaching cylindrical ends to both the fibers and base in a compatible way. The details of the construction are given in \cite[\S 3]{spinal_II} in the setting of stable Hamiltonian structures and perturbed to the contact case in \cite[\S 4]{spinal_II}. See also \cite[Lemma 6.34]{spinal_II} and the discussion afterwards. The planar assumption is used in the step when we perturb to the contact setting, but still want the pseudoholomorphic foliation to persist.
Here, we summarize the definition and properties into the following definition-theorem. 

\begin{definitiontheorem}[Double completion for planar nearly Lefschetz fibrations]
\label{def:double_completion_LF}
    Fix a planar nearly Lefschetz fibration $(E, \Pi)$ together with an almost complex structure $J\in \cJ(\Pi)$. Then, there exist a contact-type\footnote{In \cite{spinal_II}, the notion of \emph{weakly contact-type} is used, which in particular forgoes the local exactness condition near the hypersurface. Since we are only concerned with strong symplectic (and mostly Stein) fillings, the weakly contact hypersurface can be assumed to be contact.} smoothing $M$ of $\partial E$ and a \textbf{double completion} $(\widehat{E}, \widehat{J})$, where topologically $\widehat{E} \cong E \cup [0,\infty)\times M$ and $\widehat{J}$ is an almost complex structure which is 
    \begin{enumerate}
        \item Part of an almost Stein triple, i.e. there exists a $\widehat{J}$-convex function $\widehat{f}$ such that $\widehat{\lambda}:=-d\widehat{f}\circ \widehat{J}$ is a Liouville form on $\widehat{E}$;
        \item $\widehat{J}|_{\widehat\cN_-(\partial E)} \equiv J_+$, where $\widehat\cN_-(\partial E)$ is a neighborhood of infinity in $\widehat\cN_-(\partial E) \subset \widehat{E}$ which can be identified with half-symplectization $([0,\infty)\times M, J_+)$ for an almost complex structure $J_+$ which is $\lambda$-adapted, i.e. $\bR$-invariant, $J_+(\partial r) = R$ and $J_+(\xi) = \xi$, where $R$ is the Reeb vector field of $\lambda$ and $\xi = \ker \lambda$ ;
        \item $[0,\infty)\times M$ admits an embedded $J_+$-foliation whose leaves project to pages of $\Pi|_{\partial_v E}$.
        \item The restriction $\widehat{J}|_{E} \equiv J$.
        \item $\widehat{J}$ is $\widehat{\lambda}$-adapted.
    \end{enumerate}
\end{definitiontheorem}

\begin{definition}
\label{def: compatible homotopy}
    A $1$-parameter family of almost complex structure $\{\widehat{J}_\tau\}_{\tau\in [0,1]}$ on $(\widehat{E}, \widehat{J})$ is a \textbf{compatible homotopy} if the following holds:
    \begin{itemize}
        \item $\widehat{J}_{\tau}|_{\widehat\cN_-(\partial E)} \equiv J_+$ for all $\tau$;
        \item $\widehat{J}_{\tau}$ is $d\widehat\lambda_\tau$-compatible for all $\tau$;
        \item $\widehat{J}_0 = \widehat{J}$;
        \item $\widehat{J}_1$ and the homotopy $\{\widehat{J}_{\tau}\}$ are both generic in the interior of $E$.
    \end{itemize}
\end{definition}

Under these conditions, the following theorem produces a $1$-parameter family of $\widehat{J}_\tau$-holomorphic curves which deforms smoothly under symplectic deformations. In particular, the finite numbers of singular and exotic fibers deform smoothly and we have a $1$-parameter family of nearly Lefschetz fibrations.

\begin{theorem}[{\cite[Proposition 6.4]{spinal_II}}]
\label{thm: spinal_ii_6_4}
    For a compatible homotopy of $\{\widehat{J}_\tau\}_{\tau\in [0,1]}$, there exists a homeomorphism
    \begin{equation*}
        \widehat{\cM}^{\cF}(\widehat{J}) \times [0,1] \to \widehat{\cM}^{\cF}(\{\widehat{J}_\tau\})
    \end{equation*}
    inducing a homotopy of nearly Lefschetz fibrations.
\end{theorem}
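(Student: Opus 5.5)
The plan is to adapt the proof of the corresponding statement in the Lefschetz setting \cite[Proposition 6.4]{spinal_II}. That proof is Fredholm and deformation theory for the foliation by planar holomorphic curves, and the main new work is at the exotic fibers. We write $\widehat{\cM}^{\cF}(\widehat{J})$ for the moduli space of $\widehat{J}$-holomorphic curves. These curves are the completed fibers of $\Pi$ (regular, nodal singular, and exotic), with asymptotics at the Reeb orbits coming from Definition-Theorem \ref{def:double_completion_LF}(3). The parametric moduli space $\widehat{\cM}^{\cF}(\{\widehat{J}_\tau\})$ is the union over $\tau$. We aim to show three things:
\begin{enumerate}
\item the parametric space is a topological $5$-manifold (with boundary over $\tau\in\{0,1\}$);
\item the projection to $[0,1]$ is a proper submersion (in the stratified sense);
\item each slice $\widehat{\cM}^{\cF}(\widehat{J}_\tau)$ foliates $\widehat{E}$ and is diffeomorphic to $B$.
\end{enumerate}
Ehresmann's theorem for proper submersions then yields the homeomorphism $\widehat{\cM}^{\cF}(\widehat{J})\times[0,1]\to \widehat{\cM}^{\cF}(\{\widehat{J}_\tau\})$, and evaluating leaves gives the homotopy of nearly Lefschetz fibrations.

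The first step is regularity and local structure. Every smooth leaf is an embedded genus-zero curve with punctures asymptotic to nondegenerate or Morse--Bott orbits in the spine and paper. We compute its Fredholm index, getting $2$ (the dimension of the base), and apply Wendl's automatic transversality criterion. In dimension four this holds for all $J$, so regularity needs no genericity beyond what Definition \ref{def: compatible homotopy} provides. Intersection theory (Siefring's intersection numbers and the adjunction formula) then shows that distinct leaves are disjoint and that each leaf is embedded. Hence, near a smooth leaf, the family $\tau\mapsto\widehat{J}_\tau$ moves leaves smoothly and they still foliate.

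The second step is compactness. Because $\widehat{J}_\tau\equiv J_+$ on the cylindrical end $\widehat\cN_-(\partial E)$, energy is uniformly bounded and the asymptotic leaves in the end are fixed. By SFT compactness, limits of leaves are holomorphic buildings. Planarity, positivity of intersections with the fixed leaves in the end, and the index count restrict the possible limits to two kinds: nodal curves with one node (the Lefschetz singular fibers), and curves with the exotic asymptotic behaviour. The Lefschetz case is exactly \cite[\S 6]{spinal_II}. The nodal fibers form a codimension-two set and deform smoothly, because the gluing parameter is a complex number.

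The main obstacle is the exotic fibers. I would first show that, on the region where $\widehat{J}_\tau=J_+$, each exotic fiber is rigid. This is plausible because exotic fibers correspond to branch points of the Seifert fibration base $\Sigma\to B$ in the spine, which lie in the end where nothing changes. This would imply that the exotic values $B_{\exot}$ are constant in $\tau$ up to the identification. It then remains to show that curves near an exotic fiber are obtained by gluing, as in the local model $z_2^2+z_1$ of \S\ref{subsec: exotic_weinstein}, with a one-complex-parameter gluing, so that the moduli space is a manifold near them. If this rigidity fails, the fallback is a direct index computation: treat the exotic fiber as a curve with one doubly covered asymptotic orbit and check the Morse--Bott index condition, which gives a smooth two-dimensional family near it. Finally, a degree argument on the leaf through each point, using that the foliation covers $\widehat E$ at $\tau=0$, gives the foliation property for every $\tau$.
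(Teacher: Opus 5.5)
The paper gives no argument for this statement. It quotes \cite[Proposition 6.4]{spinal_II} and adds only that the singular and exotic fibers ``deform smoothly'' along the homotopy. Your overall skeleton is the standard one: automatic transversality for the embedded index-$2$ leaves, Siefring intersection theory, SFT compactness using $\widehat{J}_\tau\equiv J_+$ near infinity, and a degree argument for the foliation property.

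\textbf{The exotic-fiber step is wrong.} The step you flag as the main obstacle rests on a false premise, and it contradicts exactly the paper's remark above.

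\begin{itemize}
\item An exotic fiber lies over an interior point of $B$ and passes through the region where $\widehat{J}_\tau$ varies. Only its doubly covered asymptotic orbit $\gamma_0^2$ lies in $\widehat{\cN}_-(\partial E)$.
\item The branched cover from the vertebrae $\Sigma$ to the leaf space records which leaf is asymptotic to which spine orbit. That is decided by $\widehat{J}_\tau$ in the interior, not by $J_+$.
\item Compare the exotic leaf with a nearby regular leaf, which has two simple ends on orbits of the $2$-dimensional Morse--Bott family parametrized by $\Sigma$. The exotic leaf has one puncture fewer, contributing $-1$ to the index. Its unconstrained Conley--Zehnder term is $1$ instead of $1+1$ in the product trivialization of the spine, contributing another $-1$. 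The relative first Chern number is the same.
\item So the exotic leaf has index $0$ and is automatically transverse ($c_N=-1<0$). It is isolated for fixed $\tau$, but it moves along an arc in the parametric moduli space, and nothing pins its asymptotic orbit in $\Sigma$.
\end{itemize}

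So neither rigidity nor constancy of $B_{\exot}$ holds. What you need instead is that exotic leaves persist along such arcs and never coincide with one another or with nodal leaves. Such coincidences have negative index in a one-parameter family, so genericity of the compatible homotopy excludes them.

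\textbf{Your fallback fails for the same reason.} The exotic leaf does not sit in a smooth $2$-dimensional family of curves of its own asymptotic type.
\begin{itemize}
\item The nearby leaves have two simple ends and belong to a different moduli space.
\item They converge to the exotic leaf only as a two-level building whose upper level is a branched double cover of the trivial cylinder over $\gamma_0$.
\item The transversality-and-gluing package does not directly give a chart at such a building. The explicit one-complex-parameter family in the model $z_2^2+z_1$ exists only for the integrable structure.
\end{itemize}

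What gives a surface chart at an exotic leaf is topological. Leaves are embedded and pairwise disjoint, and each spine orbit is the asymptotic limit of exactly one leaf. So near an exotic leaf, the leaf space is the quotient of a small disk in $\Sigma$ by the continuous involution that exchanges the two ends of nearby leaves; this involution fixes only the end of the exotic leaf. The quotient is a disk, and the quotient map is topologically $z\mapsto z^2$. Compare the local branched-cover statement \cite[Lemma 6.31]{spinal_II} that the paper uses for $\Pi|_S$. This is why the statement only claims a homeomorphism.

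\textbf{Two smaller corrections.}
\begin{itemize}
\item The parametric leaf space is $3$-dimensional, not $5$-dimensional. Five is the dimension of the universal curve, which evaluates onto $\widehat{E}\times[0,1]$.
\item Smooth Ehresmann does not apply at nodal or exotic leaves. The trivialization over $[0,1]$ has to be assembled topologically, for example from the arcs of nodal and exotic leaves together with the smooth structure on the regular part.
\end{itemize}
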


\begin{proof}[Proof of Theorem \ref{thm: quasiflexibility}]
    Let $\Pi: E \to B$ be a nearly Lefschetz fibration with genus-zero fibers. Let $(J_0, f_0)\in \cJ_{\Stein}(\Pi)$ be an almost Stein structure on $E$ supported by $\Pi$. Now, let $(J_1, f_1)$ be another almost Stein structure, such that $\omega_1:=-df_1 \circ J_1$ is homotopic to $\omega_0:=-df_0 \circ J_0$ as symplectic structures convex at infinity after appropriate smoothing at corners.

    Now, we may associate to both almost Stein structures their double completions $(\widehat{E}, \widehat{J}_0, \widehat{f}_0)$ and $(\widehat{E}, \widehat{J}_1, \widehat{f}_1)$ such that there is a compatible homotopy $\{\widehat{J}_\tau\}_{\tau\in [0,1]}$ between them induced by the homotopy of symplectic forms $\{\widehat\omega_\tau\}_{\tau\in[0,1]}$ matching $d\widehat\lambda_\tau$ at $\tau = 0,1$ after further deformation as established in the proof of \cite[Theorem 1.13]{spinal_II}. As usual, here $\widehat\lambda_\tau:= -d\widehat f_\tau \circ \widehat J_\tau$. Now, by Theorem \ref{thm: spinal_ii_6_4}, we have a homotopy of nearly Lefschetz fibrations with projection maps $\Pi_\tau:\widehat{E}_\tau \to B_\tau$. Now, since $(\widehat{J}_1, \widehat{f}_1)$ is supported hence peripherally supported by $\Pi_1$, it is also peripherally supported by $\Pi_0$ as they share the same information near the contact boundary. Finally, by Theorem \ref{thm: peripheral_asc}, we know that $(\widehat{J}_1, \widehat{f}_1)$ is supported by $\Pi_0$ up to almost Stein homotopy. Now, for the fixed projection $\Pi_0$, we know that the space of supported almost Stein structure $\cJ_{\Stein}(\Pi_0)$ is compatible by Proposition \ref{prop: contractible_almost_Stein}. Therefore, $(\widehat{J}_0, \widehat{f}_0)$ and $(\widehat{J}_1, \widehat{f}_1)$ are almost Stein homotopic.    
\end{proof}

\section{A Weinstein structure on a PANLF} \label{subsec:panlf_weinstein}
In this section, we construct a specific Weinstein structure supporting a given PANLF in dimension four, in the spirit of \cite[\S 8.2]{gompf1999manifolds}. The results in \S \ref{sec:PANLF_stein} imply that any compatible Weinstein structure is homotopic to this one. This provides a concrete model for applications, for example in the proof of Theorem \ref{thm: regular_lag_disk} given in \S \ref{sec: proof_of_main}.

Recall that for a bordered Lefschetz fibration, a Weinstein structure can be described by starting with a trivial fibration with a subcritical Weinstein structure, and then extending over neighborhoods of the singular fibers via Weinstein $2$-handle attachments. 
Now, let $W$ be supported by a nearly Lefschetz fibration $\Pi$ with fiber $S$ and base $B$, both oriented surfaces with boundary.
By assumption, the critical values of $\Pi$ are isolated, the exotic fibers are isolated, and the images of exotic fibers are distinct from the critical values. We use $B_{\exot}, B_{\crit} \subset B$ to denote the set of exotic and critical values, respectively. 
A particular Weinstein structure on $W$ can be described as follows. (Note that the description around the regular and singular fibers is standard, while the handle attachment around exotic fibers is the novel aspect.) As in \S \ref{subsec: exotic_weinstein}, we use the model $z_2^2+z_1$ for neighborhoods the exotic fibers.

\begin{enumerate}
    \item Choose a Weinstein structure on $S$ given by a standard ``height'' function $f_S$ with a single index $0$ critical point and $1-\chi(S)$ index $1$ critical points, and likewise a Weinstein structure on $B$ given by a function $f_B$ with a single index $0$ critical point and $1-\chi(B)$ index $1$ critical points. 
    
    \item Pick a finite open cover with open disks $\mathcal{U} = \cup_x U_x$ of $B$ such that for every critical value $q \in B_{\crit}$, there is a unique open set $U_q$ containing $q$. Likewise, we assume that for any image $p \in B_{\exot}$ of an exotic fiber, there is a unique open set $U_p$ containing $p$. We ensure that $\cup_{x \notin B_{\crit}\cup B_{\exot}} U_x =: \cU_1 \cup \cU_2$ is connected, and $\mathcal{U}_1\subset B$ is homeomorphic to $B$. See Figure \ref{fig:panlf_stein}. 
    
\begin{figure}[htbp]
    \vspace{0.4cm}
    \begin{overpic}[width=0.9\textwidth]{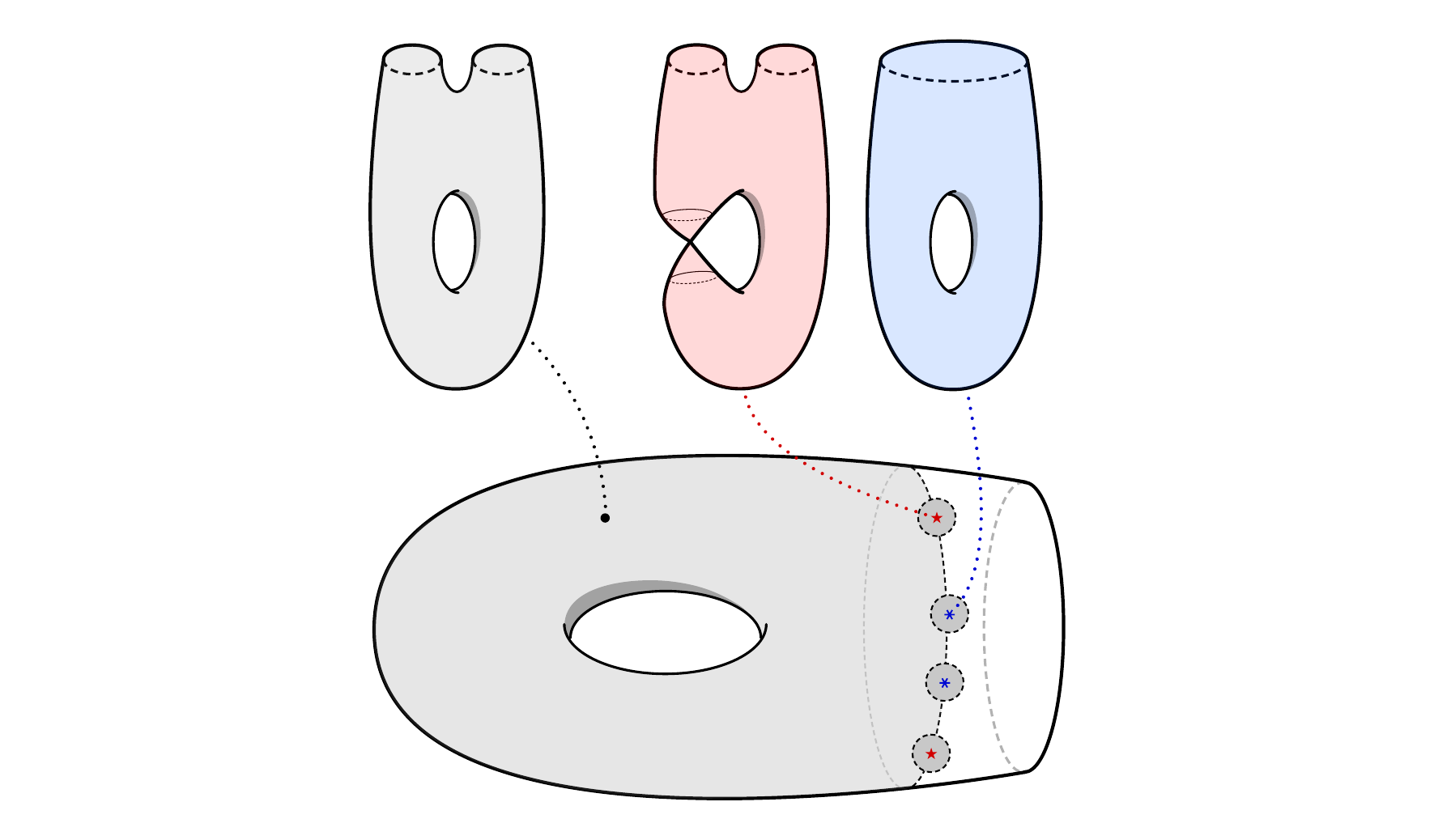}
        \put(85,50){$\Sigma$}
        \put(175,25){\small $\mathcal{U}_1$}
        \put(236.5,58){\tiny $\mathcal{U}_p$}
        \put(233.5,78){\tiny $\mathcal{U}_q$}
        \put(109,219){\scriptsize \textcolor{black}{regular}}
        \put(184,219){\scriptsize \textcolor{darkred}{singular}}
        \put(245,219){\scriptsize \textcolor{darkblue}{exotic}}
    \end{overpic}
    \caption{Constructing a Weinstein structure on a PANLF using local models around regular fibers, singular fibers, and exotic fibers.} 
    \label{fig:panlf_stein}
\end{figure}
    
    \item \emph{Regular fibers.} Note that $\Pi^{-1}(\mathcal{U}_1)$ is diffeomorphic to $B \times S$. Endow this piece with the product Weinstein structure using the function $f_0 = f_S + f_B$. Since $\cU_1$ contains no critical values or exotic values, we can ensure that $f_0$ is arbitrarily close to the real part of $\Pi$ in some choice of coordinates over the rest of $\Pi^{-1}(\mathcal{U}_1)$ and has a Liouville vector field positively transverse to the boundary of $W$. We can ensure these choices of coordinates agree with the choices made for the local models near critical and exotic fibers. 

    \item \emph{Singular fibers.} Recall that near any critical point $c$ of $\Pi$, there are complex local coordinates $(z_1,z_2)$ on a neighborhood $V_c \subset W$ such that $\Pi|_{V_c}: V_c \to B$ takes the form $z_1^2+z_2^2$. Consider the function $f_q = x_1^2+x_2^2-y_1^2-y_2^2$ in these coordinates, where $z_j = x_j+iy_j$.
    Given any critical value $q$ of $\Pi$, a critical point $c$ such that $\Pi(c) = q$, and an open set $U_q$ containing $q$, write $\Pi^{-1}(U_q) = V_c \cup V'_c$ where $V'_c$ is also open and avoids $c$. We can choose $U_q$ so that $U_q \subset \Pi(V_c)$, which in particular ensures that $V'_c$ is a trivial $(S - A)$-bundle over the disk $U_q$, where $A$ is an annulus neighborhood of the vanishing cycle corresponding to the critical value $q$.
    Since $f_q = \Rea(\Pi)$ and $q$ is in the interior, we can arrange for $f_q$ to match up with $f_0$ directly on $V'_c \cap \Pi^{-1}(\mathcal{U}_1)$. 
    We thus extend $f_0$ to a Weinstein structure on $\Pi^{-1}(\mathcal{U}_1)$ to $\Pi^{-1}(\mathcal{U}_1 \cup \bigcup_{q \in B_{\crit}} U_q)$. 
    
    \item \emph{Exotic fibers.} Now, near any $p \in B_{\exot}$, we can again find an open set $U_p$ and observe that $\Pi^{-1}(U_p) = V_d \cup V_d'$, where $d \subset V_d$ is a neighborhood meeting the boundary where $\Pi$ takes the form $\Pi|_{V_d}: \C - B \to \C$, $\Pi(z_1,z_2) = z_2^2+z_1$, with $V_d \cong (\C - B) \times \C$, such that $B$ is a ball around the origin.
    Similarly as above, $V_d'$ is a trivial $S - P$ bundle over the disk, where $P$ is a pair of pants.  On $V_d$, define the function $f_p = f_A+x_2^2-y_2^2$ and observe that on $V_d \cap \Pi^{-1}(\mathcal{U}_1)$, the function $f_p$ agrees with $\Rea(\Pi)$. Note that $f_p$ has a single index two critical point inside $V_d$. We can thus extend $f_0$ over $\Pi^{-1}(U_p)$, so that the Weinstein structure is defined on $\Pi^{-1}(\mathcal{U}_1\cup \bigcup_{z \in B_{\crit}\cup B_{\exot}}U_z)$. 
    
    \item Once the above steps are done for every critical and exotic point, the Weinstein structure can be extended with no critical points over the trivial cobordism described by $\Pi^{-1}(\mathcal{U}_2)$.

    \item By Lemma \ref{lem: contact_lambda}, the Weinstein structure above induces a contact form on the boundary which is a Giroux form for the spinal open book induced on the boundary from the PANLF. 
\end{enumerate}

\begin{figure}
    \centering
    \includegraphics[width=\linewidth]{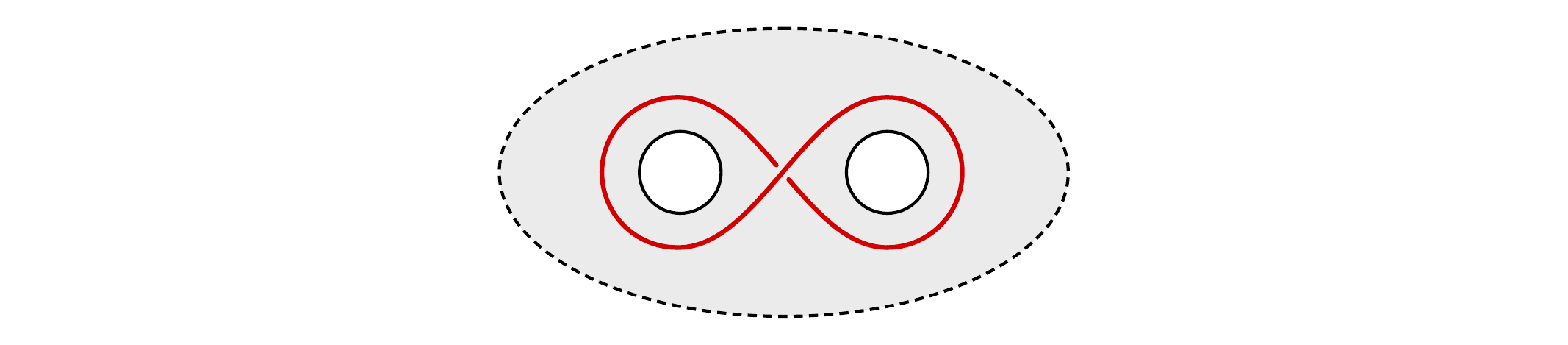}
    \caption{The attaching curve for the Weinstein $2$-handle cobordism corresponding to an exotic fiber, living in a neighborhood of a page of the spinal open book.}
    \label{fig: exotic-cobordism}
\end{figure}

Next, we examine the Weinstein structure near an exotic fiber as described in (5) more carefully. Recall that a right-handed Dehn twist around a vanishing cycle (the monodromy of a singular fiber) is accomplished by attaching a Weinstein handle along the vanishing cycle. Indeed, since the contact framing agrees with the page framing, a Weinstein handle induces a $-1$ surgery framing relative to the pages. In contrast, the monodromy associated to an exotic fiber is a \emph{boundary interchange} map, i.e.\ a right-handed Dehn twist along an arc connecting two boundary components of the page.

Before stating the lemma, we introduce some language that will be used throughout the rest of the article. If $(P, \lambda)$ is an exact symplectic manifold and $\Lambda \subset (P\times \R, dz + \lambda)$ is a Legendrian link in the contactization, we call the projection of $\Lambda$ to $P$ the \emph{Lagrangian} projection, generalizing the usual Lagrangian projection when $P = \R^2$ and $\lambda = -y\, dx$.

\begin{lemma}\label{lemma:NLF_2handle}
    Let $P$ be a pair of pants neighborhood of the arc along which a boundary interchange is performed. The boundary interchange mapping class corresponds to a Weinstein cobordism between the relevant contact manifolds, obtained by attaching a Weinstein $2$-handle along a Legendrian knot in $P \times \R$ whose Lagrangian projection to $P$ is given by Figure~\ref{fig: exotic-cobordism}. 
\end{lemma}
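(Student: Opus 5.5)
The plan is to analyze the local model $f_p = f_A + x_2^2 - y_2^2$ on $V_d \cong (\C - B)\times \C$ from step (5). This function has a single index-$2$ critical point. We identify its descending (stable) disk, then compute the Legendrian attaching sphere on a level set just below the critical value. That level set is a contact manifold fibered over a neighborhood of $\partial_+$ with pages pairs of pants. Finally, we check that the resulting $2$-handle cobordism realizes the boundary interchange monodromy.

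Concretely, the critical point sits at $z_1 = x_1$ real, near the outer boundary, where $f_A = \Rea(z_1)$ is being interpolated with $-\log|z_1|$, and at $z_2 = 0$. The descending manifold combines the $y_2$-direction with the radial direction of $A$ in which $f_A$ decreases. It is a Lagrangian disk for $d\lambda$. Its boundary on a regular level $\{f_p = c\}$ is a circle. We then project this circle under $\Pi = z_2^2 + z_1$ and track its position in the fibers.

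First, I would verify that on $\Pi^{-1}(\mathcal{U}_1)\cap V_d$ the sublevel set below the critical value is a trivial fibration over a half-disk, with fiber the pair of pants $P$. Each fiber $\{z_2^2 + z_1 = w\}$ is a double cover of the $z_1$-annulus branched at $z_1 = w$, i.e.\ a disk minus two holes, whose two inner boundary circles lie over $S^1_\delta$. This identifies the lower boundary with a neighborhood of a page $P\times\R$ in the contactization $(P\times\R, dz+\lambda)$, with $z$ the base coordinate $\operatorname{Im}\Pi$ up to reparametrization. Next, I would compute the attaching circle explicitly. Along $y_2 = \pm t$, $z_1$ moves, and in the fiber $\Pi = w$ the two preimage sheets get exchanged. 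So the Lagrangian projection to $P$ is an immersed curve that encircles the two inner boundary components in the figure-eight or "arc-doubling" pattern of Figure~\ref{fig: exotic-cobordism}. It is the boundary of a regular neighborhood of the arc joining the two boundary components, closed up with a twist. I would check that it is Legendrian by using Lemma~\ref{lem: contact_lambda}, whose contact form restricted near the exotic boundary is $-d\theta_1 + \epsilon(\cdots)$, to read off the $z$-coordinate as the integral of $\lambda$ along the projected curve.

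Then I would identify the monodromy. The change in monodromy across a Weinstein $2$-handle attached along a Legendrian in $P\times\R$ is the positive Dehn twist along its Lagrangian projection, when that projection is embedded. When the projection is immersed, it is the corresponding product of twists determined by the front data. Alternatively, and more robustly, I would compare directly with the local model of Definition~\ref{def: panlf}(3): the monodromy of $z_2^2 + z_1$ around $w = 0$ swaps the two sheets over $S^1_\delta$ with a half-twist. This is precisely the boundary interchange of Figure~\ref{fig:interchange}. Since the handle is the only topology change in $V_d$, the cobordism realizes this mapping class.

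I expect the main obstacle to be the precise identification of the Legendrian curve, in particular its crossings and $z$-heights in the Lagrangian projection and its framing. The contact framing must agree with the page framing up to $-1$, so that the Weinstein surgery induces exactly the half-twist interchange rather than a full Dehn twist. To settle this, I would first try an explicit parametrization of the descending disk boundary with $\epsilon\to 0$, and then confirm the framing by computing the rotation of the curve relative to the page.
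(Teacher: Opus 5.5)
\textbf{There is a genuine gap.} Your outline matches the paper's: the attaching sphere is the boundary of the descending disk of $f_p$, projected to the pages of $z_2^2+z_1$, with the monodromy coming from the local model. But the step that carries the content of the lemma is misidentified and then asserted rather than computed.

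\emph{The descending disk.} Its $A$-factor is not radial. On the positive real axis, $f_A$ interpolates between $-\log|z_1|$, which decreases in $|z_1|$, and $\Rea z_1$, which increases. So the critical point is a radial \emph{minimum} and an angular maximum of $f_A$. The descending disk is therefore (a smoothing of) $\{z_1 = e^{i\pi t/2},\ z_2 = is\}$, the semicircular stable arc of $f_A$ times the $y_2$-axis. The attaching sphere is the union of the four arcs $s=\pm 2$, $t=\pm 1$. Your radial disk $\{z_1 = r>0,\ z_2 = is\}$ would instead project to a degenerate curve lying on the coordinate axes of the page.

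\emph{The projection to $P$.} What is missing is a consistent identification of nearby fibers with a single page. The paper parametrizes the fibers as $u(c,z) = (c(1-z^2), \sqrt{c}\,z)$ with $z\in\C\setminus\{\pm 1\}$, so the page coordinate is $z = z_2/\sqrt{z_1+z_2^2}$. The four arcs then project to $z = is/\sqrt{e^{i\pi t/2}-s^2}$, and plotting these gives Figure~\ref{fig: exotic-cobordism}. Saying ``the sheets get exchanged'' does not by itself determine this immersed curve.

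\emph{The crossing.} Lemma~\ref{lem: contact_lambda} does not apply here. Its formula $\lambda_{\exot} = -d\theta_1 + \cdots$ uses $f_A = -\log|z_1|$, which holds only near $|z_1|=\delta$. The attaching sphere sits near $|z_1|\approx 1$, where $f_A = \Rea z_1$ and $-df_A\circ J = dy_1$. The paper reads the crossing directly off the base coordinate $c = z_1 + z_2^2$. The double point $z=0$ comes from $s=0$, $t=\pm 1$, and the $t=+1$ strand has $c$ further counterclockwise, so it lies above in the contactization. No separate Legendrian check is needed: the boundary of the stable manifold on a level set is automatically Legendrian.

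\emph{Two smaller corrections.}
\begin{itemize}
\item Any product of Dehn twists on $P$ fixes each boundary component, so it cannot equal the interchange. Your alternative argument via the monodromy of $z_2^2+z_1$ about $0$ is the correct one, and it matches how the paper builds the correspondence into \S\ref{subsec:panlf_weinstein}.
\item In $(P\times\R,\ker(dz+\lambda))$ the contact framing \emph{equals} the page (blackboard) framing of the Lagrangian projection. The handle is attached at contact framing $-1$. What distinguishes the interchange from a Dehn twist is that the curve is immersed rather than embedded in a page, not the framing.
\end{itemize}
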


\begin{remark}
To clarify the framing of the handle in the statement of the lemma: If the inner boundary components are capped off so that the page is (locally) a disk, the attaching sphere in Figure \ref{fig: exotic-cobordism} is an unknot with writhe $+1$. The writhe computes the blackboard framing in the figure, which is the the page (and thus contact) framing. A Weinstein handle attached along the knot would then have framing $+1 + (-1) = 0$. For a different perspective on Lemma \ref{lemma:NLF_2handle}, we refer the reader to the explicit piecewise-smooth models of the attaching sphere described later in Lemma \ref{lemma:algo1}. 
\end{remark}

\begin{proof} 
    We identify the attaching sphere of the $2$-handle corresponding to an exotic fiber with the knot in Figure~\ref{fig: exotic-cobordism}. 
    In the local model of the exotic fiber, the attaching sphere is the boundary of the stable manifold of the Liouville vector field associated to the plurisubharmonic function $f_{\exot} \coloneqq f_A+x_2^2-y_2^2$ and the standard complex structure $z_j = x_j + iy_j$. Recall that the gradient vector field of $f_A$ on the annulus $A = \{\delta < |z_1| < 1\}$ is depicted in Figure \ref{fig:annulus}. The stable manifold of $f_{\exot}$ can thus be parametrized by a smoothing of 
    \begin{equation}\label{eq:param-stable}
        \{z_1 = e^{i\pi t/2}, z_2 = is, s \in [-2,2], t \in [-1,1]\}.
    \end{equation}
    Here, $z_1 = e^{i\pi t/2}$ corresponds to the approximate semi-circular stable manifold in Figure \ref{fig:annulus}, and $z_2 = is$ is the stable manifold of $x_2^2-y_2^2$. A piecewise-smooth description of the attaching sphere is therefore the union of the four arcs in \eqref{eq:param-stable} corresponding to the boundary of the parameter domain $[-2,2]_s \times [-1,1]_t$.
    
    The fibers of the PANLF, near the exotic fiber, are locally parametrized as $u(c,z) = (-c(z^2-1),\sqrt{c}z)$, where $z \in (\C \setminus \{\pm 1\})$ is a coordinate on the page and $c$ is a coordinate on the base of the PANLF. We can therefore pull back the relevant curves to the fibers, using $c = z_1+z_2^2, z = z_2 / \sqrt{c}$. 
    In particular, the projection to $P$ is determined by
    \[
    z = \frac{z_2}{\sqrt{c}} = \frac{is}{\sqrt{e^{\tfrac{i\pi t}{2}}- s^2}}.
    \]
    Using a branch of the square-root function so that $\mathrm{Im}\sqrt{c}>0$ and plotting the four arcs corresponding to $s=\pm 2, t=\pm 1$ yields Figure \ref{fig:PLplot}.

\begin{figure}[ht]
    \centering
    \begin{overpic}[width=0.9\linewidth]{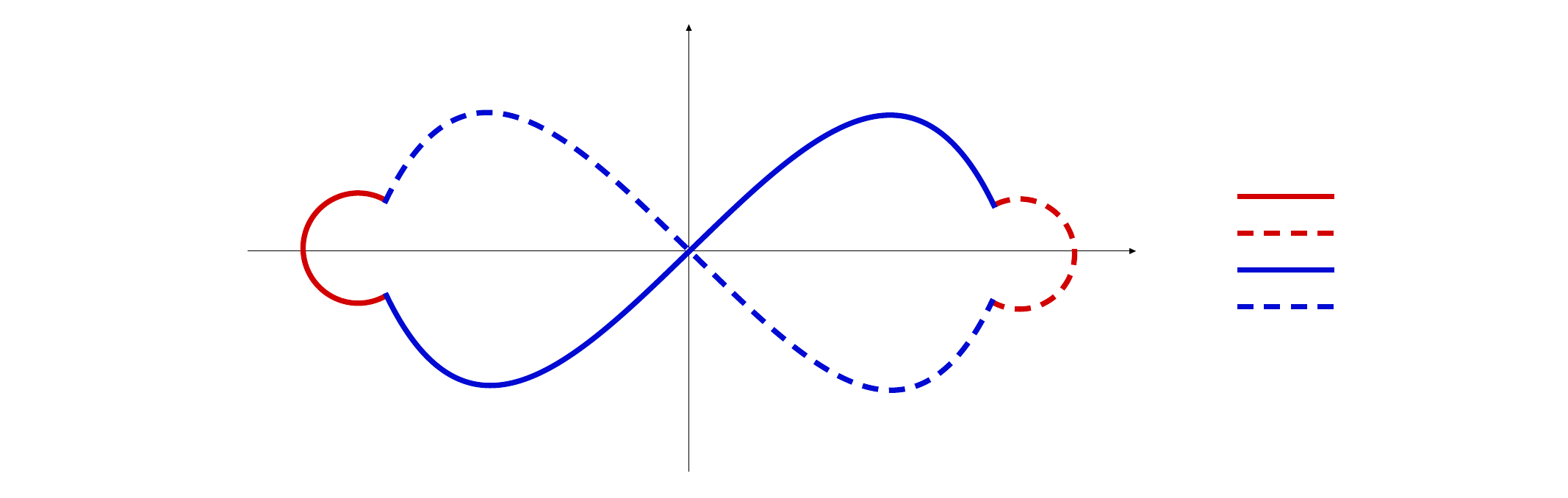}
        \put(280.5,58.5){\tiny $x$}
        \put(166.5,120){\tiny $y$}
        \put(332,72){\tiny \textcolor{darkred}{$s=-2$}}
        \put(332,62.5){\tiny \textcolor{darkred}{$s=+2$}}
        \put(332,53.5){\tiny \textcolor{darkblue}{$t=+1$}}
        \put(332,44){\tiny \textcolor{darkblue}{$t=-1$}}
	\end{overpic}
    \caption{The piecewise-smooth plot of the four components of the attaching sphere, projected to $P$ with coordinate $z = x+ iy$.}
    \label{fig:PLplot}
\end{figure}

    Finally, to verify the under crossing data at the double point of the intersection in the projection of Figure \ref{fig:PLplot}, note that the value of $c$, the coordinate in the base of the fibration, is $c = e^{\pm i \pi/4}$ when $s=0$ and $t=\pm 1$. Thus, the $t=+1$ strand is more counterclockwise in the base and thus above the $t=-1$ strand in the contactization of the page.
\end{proof}

\section{Lagrangians and their complements}
\label{sec: proof_of_main}

In this section, we prove Theorem \ref{mainthm: regular_complement} in \S \ref{subsec: reg_lag_complement}. This is in all dimensions. 
We then set up a preliminary lemma in \S \ref{subsec: symp_complement_to_lag_complement_lemma} and prove Theorem \ref{thm: regular_lag_disk} in \S \ref{subsec: proof_of_main_thm} regarding the regularity of Lagrangian disks in $4$-dimensional Weinstein domains.

\subsection{Proof of Theorem~\ref{mainthm: regular_complement} on complements of regular Lagrangians}
\label{subsec: reg_lag_complement}
In this subsection, given a regular Lagrangian $L$ built from $0$- and $1$-handles with a coupled Weinstein structure on $W$ compatible with $L$, we show that $W \setminus \nu(L)$ is a Weinstein domain.
The main content here is the generalization to Weinstein domains of a result in smooth Morse theory, which states that for a properly embedded submanifold $M^n \subset N^{2n}$, the complement $N \setminus\nu(M)$ gains a $(n-1+m)$-handle for every $m$-handle of $M$ (this is a particular case of \cite[Proposition 6.2.1]{gompf1999manifolds}). We will prove the Weinstein analogue for $L \subset W$, assuming $m = 0,1$, beginning with the following special case. Let $X_L$ denote the complement of the $0$-section in $T^*L$.

\begin{proposition}\label{prop:complement_cotangent }
 Let $L$ be a connected $n$-dimensional smooth manifold and consider its cotangent bundle $T^*L$ with the standard Weinstein structure. Then, $X_L$ admits a Weinstein structure. Furthermore, $T^*L$ is symplectomorphic to a Weinstein sub-domain of $X_L$.
\end{proposition}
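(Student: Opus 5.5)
The plan is to exhibit inside $X_L$ an exact Lagrangian copy $\Gamma$ of $L$, disjoint from the zero section, and to show that $X_L$ is obtained from a Weinstein neighborhood of $\Gamma$ by attaching Weinstein handles, or by a Weinstein cobordism with no critical points. I read $L$ as compact with nonempty boundary, as in the intended application to fillings built from $0$- and $1$-handles, and take $X_L$ to be $D^*L\setminus\nu(0_L)$ with corners smoothed.

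\textbf{Step 1.} First choose a smooth function $g\colon L\to\R$ with no critical points. This is possible because $L$ is connected with $\partial L\neq\emptyset$: start from a Morse function and cancel its critical points or push them out through the boundary along arcs. Set
\[
\Gamma:=\mathrm{graph}(K\,dg)\subset T^*L
\]
for a constant $K>0$. Then $\Gamma$ is an exact Lagrangian, Hamiltonian isotopic to $0_L$, and since $dg$ never vanishes it is disjoint from $0_L$. For $K$ large it also avoids a fixed tube $\nu(0_L)=\{|p|\le\varepsilon\}$.

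\textbf{Step 2.} Next pull back the standard Weinstein structure of $T^*L$ by the fiberwise translation $\tau(q,p)=(q,p-K\,dg_q)$, which is a symplectomorphism because $dg$ is closed. The new Liouville form is $\lambda_{\mathrm{can}}-K\,d(g\circ\pi)$, its Liouville field is radial about $\Gamma$, and its Lyapunov function is $\phi_0(q,p)=|p-K\,dg_q|^2+h(q)$ for a suitable Morse function $h$ on $L$. A sublevel set of $\phi_0$ is a Weinstein neighborhood $N(\Gamma)\subset X_L$ symplectomorphic to the standard Weinstein $T^*L$ (that is, $D^*L$). This gives the second assertion, once Step 3 shows it is a Weinstein subdomain.

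\textbf{Step 3.} The main point is to extend $(\lambda,\phi_0)$ from $N(\Gamma)$ to a Weinstein structure on all of $X_L$ whose boundary, including the new boundary $\partial\nu(0_L)$, is convex. In the translated coordinates the Liouville field is $(p-K\,dg)\,\partial_p$. Along the zero section this equals $-K\,dg\,\partial_p$, so near $0_L$ it is nowhere zero and nearly constant in the fiber direction. Hence it is transverse to $\partial\nu(0_L)$ only on the half where $\langle p,dg\rangle<0$, and it points inward on the other half. This is the expected obstacle.

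To fix it, I would modify $\lambda$ by adding $d(\chi\cdot H)$, where $\chi$ is a cutoff supported near $\nu(0_L)$ and $H(q,p)$ is a fiberwise function with $dH$ conjugate to the radial field $p\,\partial_p$. The aim is that near $\partial\nu(0_L)$ the Liouville field becomes approximately $p\,\partial_p$, which points out of $X_L$ there, while the region between $\partial N(\Gamma)$ and $\partial\nu(0_L)\cup\partial D^*L$ carries no zeros. Because the two outward directions — away from $\Gamma$ and away from $0_L$ — are compatible along the segments $t\mapsto(q,tK\,dg_q)$, I expect to realize $X_L$ as $N(\Gamma)$ with a collar attached. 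Smoothing the corner between the old and new boundaries then uses the standard Cieliebak--Eliashberg argument.

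If the direct interpolation fails to stay Liouville, the fallback is handle-theoretic. Identify $X_L\cong S^*L\times[0,1]$ with its two boundary pieces joined along $S^*L|_{\partial L}$. Use the flexibility results of Cieliebak--Eliashberg to realize the smooth handle decomposition of $X_L$ relative to $N(\Gamma)$ by Weinstein handles of index $\le n$. The index bound must be checked from $\dim X_L=2n$ together with the fact that $\partial L\neq\emptyset$, so that every handle has index at most $n$.
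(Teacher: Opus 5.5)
Steps 1--2 are fine as a symplectic embedding of $D^*L$ into $X_L$, after arranging $K\max|dg|<1$ and shrinking so that $N(\Gamma)$ stays off $\partial X_L$. Step 3, however, has a genuine gap.

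\emph{Sign error.} Since $X_L=\{|p|\ge\varepsilon\}$, the outward normal along $\partial\nu(0_L)$ is $-p\,\partial_p$. The field $(p-K\,dg)\partial_p$ points outward only where $K\langle p,dg\rangle>\varepsilon^2$, roughly the half facing $\Gamma$. Your target field $p\,\partial_p$ would make $\partial\nu(0_L)$ concave, not convex.

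\emph{Topological obstruction.} More seriously, no corrected interpolation can work. You are asking for a Weinstein collar without critical points between $\partial N(\Gamma)$ and $\partial X_L$. Such a collar is a product, so $X_L$ would deformation retract onto $N(\Gamma)\simeq L$. But your own identification $X_L\cong S^*L\times[0,1]$ gives $X_L\simeq S^*L$; for $L=D^n$ this is $S^{n-1}$ versus a point. Zeros are therefore forced: when $L$ has one $0$-handle and $k$ $1$-handles, one needs an $(n-1)$-handle and $k$ $n$-handles on top of $T^*L$.

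\emph{The fallback does not close the gap.} First, the index bound does not follow from $\partial L\neq\emptyset$. Each $m$-handle of $L$ contributes an $(n-1+m)$-handle to the complement, so you must assume $L$ is built from $0$- and $1$-handles, as the paper's proof does. For $L=S^2\times[-1,1]$ one gets $X_L\simeq S^2\times S^2$ with $H_4\neq0$, which is not Weinstein (see the remark after Theorem~\ref{mainthm: regular_complement}). Second, Cieliebak--Eliashberg existence produces \emph{some} Weinstein structure on the underlying almost complex manifold, not one with symplectic form $\omega|_{X_L}$. So it cannot show that $T^*L$ is symplectomorphic to a Weinstein subdomain of $(X_L,\omega)$. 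Third, in dimension $4$, the main case of interest, that existence theorem fails outright for $2$-handles without control of their framings relative to $\mathrm{tb}$.

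\emph{The missing idea} is an explicit symplectic model of the complement. The paper first uses Eliashberg--Ganatra--Lazarev to put $L$ in regular position. The $0$-handle of $L$ becomes the cocore of an $n$-handle $h^n$ cancelling an $(n-1)$-handle $h^{n-1}$. The $1$-handles of $L$ become Lagrangian cores of coupled $1$-handles $h^1_j$. Deleting $L$ then deletes $h^n$. Inside each $h^1_j$, the complement of the core is a neighborhood of the Lagrangian annulus $D^1\times S^{n-1}$. By the Lagrangian neighborhood theorem it is symplectomorphic to a standard $1$-handle/$n$-handle pair, with the same Liouville field on the attaching region. This gives $X_L\cong T^*L\cup h^{n-1}\cup h^n_1\cup\dots\cup h^n_k$ symplectically, which proves both assertions at once.
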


\begin{proof}
      Consider a handle structure on $L$ with a single $0$-handle and $k$-many $1$-handles. We claim that $X_L$ is symplectomorphic to a Weinstein handlebody $W_1$ given by 
    \begin{enumerate}
        \item starting with $T^*L$,
        \item attaching an $(n-1)$-handle $h^{n-1}$,
        \item then attaching $k$ many $n$-handles $h^n_1,\dots,h^n_k$.
    \end{enumerate}
    \noindent Let $\mathcal{W}_1$ denote a Weinstein structure on $T^*L$ given by a single $0$-handle and $k$-many $1$-handles.
    We can modify it to a coupled Weinstein structure $\mathcal{W}_2$, homotopic to $\mathcal{W}_1$, on $T^*L$ such that $L$ can be described in terms of the Weinstein handles of $\mathcal{W}_2$ \cite{EliashbergGanatraLazarev2018Regular}. Specifically, given $T^*D^n$ which is Weinstein homotopic to a $0$-handle, following the proof of \cite[Proposition 2.3]{EliashbergGanatraLazarev2018Regular} we can attach a canceling pair of $(n-1)$- and $n$-handles, denoted as $h^{n-1}$ and $h^n$, and homotope the Weinstein structure and obtain the zero section $D^n$ as the cocore of the $n$-handle. Now, attach coupled Weinstein $1$-handles to build $\mathcal{W}_2$. Call these $1-$handles $h_1^1, \dots, h_k^1$. Note that in $\mathcal{W}_2$, the Lagrangian $1$-handles inside the coupled $1$-handles $h^1_j$ are attached along the boundary of the cocore of the $n$-handle $h^{n}$. The homotopy from $\mathcal{W}_1$ to $\mathcal{W}_2$ is thus given by the described pair of canceling $(n-1)$- and $n$-handles and handle-sliding the $1$-handles to lie on the belt sphere of the $n$-handle.
    
     \begin{figure}
        \centering
        \includegraphics[width=0.5\linewidth]{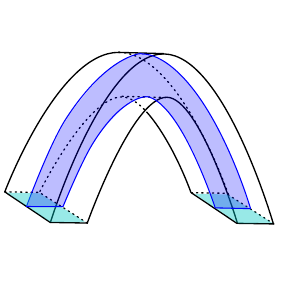}
        \caption{A schematic of a coupled $1$-handle. The shaded blue region is the Lagrangian 1-handle $\{y_2=\dots=y_n=x_1=0\}$, while the two shaded quadrilaterals represent the attaching region of the coupled handle.}
        \label{fig:coupled_1_handle}
    \end{figure}

  First, we establish that that the complement of the Lagrangian core of a Weinstein $1$-handle is symplectomorphic to a coupled ($1$, $n$)-handle pair. The Lagrangian core, or the Lagrangian $1$-handle, is given by $\{y_2=y_3=\dots=y_n=x_1=0\}$, inside the coupled Weinstein $1$-handle $D^1 \times D^{2n-1}$ with coordinate $y_1$ on $D^1$ and $(y_2,\dots,y_n,x_1,\dots,x_n)$ on $D^{2n-1}$ (see \cite[Definition 2.1]{brw1}). We denote the complement of the thickened Lagrangian core by $R_L$, as follows.
    \[
    R_L \coloneqq \{(y_1,y_2,\dots,y_n,x_1,x_2,\dots,x_n) \in D^1 \times D^{2n-1} \mid \sum_{j=2}^n y_j^2+x_1^2 \geq \epsilon\}
    \]
    The symplectic form on $R_L$ in these coordinates is $\sum dx_jdy_j$, restricted from the standard form on the Weinstein $1$-handle.

    Now, consider the following pair of Weinstein $1$- and $n$-handles
    \[
    R_{\std} \coloneqq \{(p_1,p_2,\dots,p_n,q_1,\dots,q_n) \in D^1 \times S^{n-1} \times D^n\}
    \]
    where the coordinates $(p_2,\dots,p_n,q_1)$ belong to $S^{n-1}\times D^1$.  We can see that
    \[R_{\std} = H_1\cup H_n, \]
    where
    \[ H_1 = \{(p,q) \in R_{\std} \mid p_2 \leq 0\} \text{ and } H_n = \{(p,q) \in R_{\std} \mid p_2 \geq 0\}.
    \]
     The $1$-handle $H_1 = D^1 \times D^{n-1} \times D^n$ is attached along $\partial D^1 \times D^{n-1} \times D^n$, while the $n$-handle $H_n = D^1 \times D^{n-1} \times D^{n}$ is attached along $\partial(D^1 \times D^{n-1}) \times D^n$. The symplectic form on $H_1$ in these coordinates is given by $dp_1(-dq_1)+(-dp_2)dq_2+ \sum dp_idq_i$, while on $H_n$ it is $\sum dp_idq_i$. These are standard Weinstein handles, so that $R_{\std}$ is actually a union of Weinstein handles. Now observe that $R_L$ is a disk neighborhood of the Lagrangian annulus $D^1 \times S^{n-1}$ spanned by the directions $(x_1,y_2,\dots,y_n)$, and similarly $R_{\std}$ is a disk neighborhood of the Lagrangian annulus spanned by the directions $(q_1,p_2,\dots,p_n)$. It follows by the standard Lagrangian neighborhood theorem that $R_{\std}$ and $R_L$ are symplectomorphic, since they are both symplectomorphic to the unit cotangent bundle $\bD^*(D^1 \times S^{n-1})$. Further, the Liouville vector field on the attaching region of $R_{\std}$ is the same as the attaching region of $R_L$. It follows that there is a Weinstein structure on $R_L$ that extends the Weinstein structure on $T^*L$, by attaching an $n$-handle to the corresponding $1$-handle in $\cW_1$. For the coupled Weinstein $1$-handle $h^1_j$, denote this $n$-handle as $h^n_j$.

     Now, removing the zero-section from $T^*L$ thus amounts to modifying the symplectic structure associated to $\mathcal{W}_2$ by removing the cocore of $h^n$, which is equivalent to removing $h^n$, and drilling out the insides of $h^1_j$. This symplectic manifold can be given a Weinstein structure by starting with $\mathcal{W}_1$, attaching a Weinstein $(n-1)$-handle $h^{n-1}$, and $k$-many $n$-handles $h^n_1,\dots,h^n_k$. 
\end{proof}
     
\begin{proof}[Proof of Theorem~\ref{mainthm: regular_complement}]
    Since $L$ is regular in $W$, we can write
    \[
    W \simeq T^*L \cup W_L
    \]
    where $W_L$ is a Weinstein cobordism. In particular, on the contact manifold $\partial(T^*L)$, there is a configuration of isotropic submanifolds that describe the attaching region of the handles in $W_L$. The contact boundary of $\partial X_L$ is obtained by modifying the contact structure on $\partial T^*L$ in a neighborhood of the Legendrian $\partial L$. Since the attaching isotropics of the handles in $W_L$ can be made disjoint from $\partial L$, and consequently these neighborhoods, we can construct $W$ as a Weinstein domain. 
\end{proof}

\subsection{Exact and regular Lagrangians} 
\label{subsec: symp_complement_to_lag_complement_lemma}

The work of \cite{eliashberg_2-knots,Cao_Gallup_Hayden_Sabloff} shows that given any exact Lagrangian $L$ with Legendrian boundary in a Weinstein domain, there is a symplectic surface $S$ with transverse boundary smoothly isotopic to $L$, which lives in an arbitrarily small $\bD^*L$ neighborhood. We call this the {\em symplectic pushoff} of $L$. In the case that $L$ is a disk which is the setting of Theorem \ref{thm: regular_lag_disk}, the symplectic pushoff of the Lagrangian $L = \{y_1=y_2=0\} \subset \bD^*L_{(x_1,y_1,x_2,y_2)}$ is taken to be the disk $S=\{x_1=y_2, x_2=-y_1\}$. 

The following lemma concerns an exact Lagrangian filling $L\subset W$ of a Legendrian link $\Lambda \subset \partial W$ in ambient dimension four. First, recall that given a Lagrangian disk $D$ in any Liouville domain $X$, we can decompose $X$ as $X = X' \cup h$ where $X'$ is a Liouville sub-domain, $\partial X'$ is given by contact-$(+1)$ surgery along $\partial D$, and $h$ is a Weinstein handle with D as its cocore. We refer to this process as \emph{adding $L$ back}.

\begin{lemma}\label{lem:lag_complement_stein}
Let $L$ be a properly embedded exact Lagrangian in a $4$-dimensional Weinstein domain $W$. If $S$ is a symplectic pushoff of $L$ such that $W \setminus \nu(S)$ admits a PANLF, then $W \setminus \nu(L)$ admits a Stein structure. Further, if $L$ is a Lagrangian disk, and $\Pi:W \to \bD^2$ is a Lefschetz fibration such that $S$ is a positive multisection of $\Pi$, then the Weinstein structure on $W$ obtained by adding $L$ back to $W \setminus \nu(L)$ is homotopic to an almost Stein structure supported by $\Pi$.
\end{lemma}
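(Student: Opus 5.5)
The first assertion is essentially smooth, and I would deduce it from Theorem~\ref{thm: stein_on_panlf}. By construction (\cite{eliashberg_2-knots,Cao_Gallup_Hayden_Sabloff}), the symplectic pushoff $S$ lies in an arbitrarily small neighborhood $\bD^*L$ of $L$ and is smoothly isotopic to $L$ rel the identification of boundaries. By the isotopy extension theorem, there is then an ambient diffeotopy of $W$, supported near $\bD^*L$, carrying a tubular neighborhood $\nu(S)$ onto $\nu(L)$. Hence $W\setminus\nu(S)\cong W\setminus\nu(L)$. By hypothesis $W\setminus\nu(S)$ carries a PANLF $\Pi'$, so Theorem~\ref{thm: stein_on_panlf} provides a (contractible space of) Stein structures supported by $\Pi'$. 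Pulling back by the diffeomorphism gives a Stein structure on $W\setminus\nu(L)$. At this stage I would only claim existence, and would not claim compatibility with the restriction of $d\lambda$.

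For the second assertion, let $L$ be a disk and let $S$ be a positive multisection of $\Pi\colon W\to\bD^2$. Then $\Pi'=\Pi|_{W\setminus\nu(S)}$ is a PANLF, with exotic fibers at the branch points of $\Pi|_S$. I would proceed in four steps.
\begin{enumerate}
\item Fix $(J_0,f_0)\in\cJ_{\Stein}(\Pi)$. Using the local models in Definition~\ref{def: multisection}, homotope it within $\cJ_{\Stein}(\Pi)$, which is contractible by Proposition~\ref{prop: contractible_almost_Stein}, so that $S$ is $J_0$-holomorphic and $f_0$ restricted to a small $\nu(S)$ has the form $f_{\exot}$ of \S\ref{subsec: exotic_weinstein} near the branch points. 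Then $(J_0,f_0)$ restricted to $W\setminus\nu(S)$ is supported by $\Pi'$.
\item Relate $\nu(S)$ to the Weinstein handle $h$ with cocore $L$. The region between $\nu(S)$ and $\nu(L)$ sits inside $\bD^*L$. There the symplectic form restricted from $W$, the standard form on $h=T^*D^2$, and $d\lambda_0$ can be joined by a path of symplectic forms that are convex at the boundary of the complement and fixed outside $\bD^*L$. I would build this path from the explicit linear model $S=\{x_1=y_2,\ x_2=-y_1\}$, by rotating the disk from the symplectic plane to the Lagrangian plane while keeping a Liouville field transverse to the boundary torus.
\item Apply this path to $W\setminus\nu(L)$, viewed as a Liouville subdomain with boundary the contact-$(+1)$ surgery on $\partial L$. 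The Liouville structure it inherits from $W$ is then symplectically homotopic, through forms convex at the boundary, to the $\Pi'$-supported one. Theorem~\ref{thm: peripheral_asc} and the argument of Theorem~\ref{thm: quasiflexibility} should upgrade this to an almost Stein homotopy. For this I would use the contractibility of $\cJ(\Pi;\cU)$ together with the peripheral matching near $\partial_v$, instead of the planarity hypothesis.
\item Reattach $h$ along $\partial L$ on both sides of the homotopy. Since $L$ is the cocore of $h$, adding $L$ back produces, at one end, the given Weinstein structure on $W$. At the other end it produces $(J_0,f_0)$ extended over $\nu(S)$ by the exotic local model, which is supported by $\Pi$.
\end{enumerate}

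The main obstacle is step~3. The homotopy from the form restricted from $W$ to the $\Pi'$-supported one must keep the boundary convex throughout, and without a planarity assumption one cannot directly invoke quasiflexibility. What I would try first is to confine every change to $\bD^*L$, where everything is explicit. Concretely, I would interpolate the almost complex structure from one making $L$'s symplectic pushoff holomorphic to one making the fibers of $\Pi$ holomorphic, holding $J$ fixed outside $\bD^*L$. Convexity would then be verified by a local computation analogous to Lemma~\ref{lem: contact_lambda}.
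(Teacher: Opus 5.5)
\textbf{First assertion.} Your argument is fine and agrees with the paper in substance. $S$ and $L$ are smoothly isotopic as properly embedded surfaces, so $W\setminus\nu(S)\cong W\setminus\nu(L)$, and Theorem~\ref{thm: stein_on_panlf} supplies the Stein structure. The paper phrases this through the Liouville flow of the PANLF structure, but the point is the same.

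\textbf{Second assertion: Step~1 fails.} Suppose $f_0$ is smooth with $(J_0,f_0)\in\cJ_{\Stein}(\Pi)$. Take a fiber $F$ of $\Pi$ and a small disk $D\subset F$ around a point of $F\cap S$. Since $F$ is $J_0$-holomorphic, $\int_D d\lambda_0>0$. Orient $\partial D$ as a boundary component of the fiber $F\setminus\nu(S)$ of $\Pi'$; then Stokes gives $\int_{\partial D}\lambda_0=-\int_D d\lambda_0<0$. So the restriction violates $\lambda(\partial\theta)>0$ on the new horizontal boundary and is not supported by $\Pi'$. Any $\Pi'$-supported function must be singular along $S$; that is exactly the role of the $-\log|z_1|$ term in $f_{\exot}$. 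Note also that $f_{\exot}$ is a model on the complement near branch points, not a function on $\nu(S)$.

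\textbf{Step~3 is not merely technical.} Theorem~\ref{thm: peripheral_asc} requires both ends to be almost Stein structures peripherally supported by the same fibration. The structure that $W\setminus\nu(L)$ inherits from $W$ is only known to be Liouville. Moreover, carrying a fibration along a symplectic homotopy is precisely what Theorem~\ref{thm: spinal_ii_6_4}, and hence planarity, provides. Contractibility of $\cJ(\Pi;\cU)$ concerns a fixed fibration and cannot replace it.

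\textbf{The missing idea: Step~4.} Comparing with the original structure on $W$ is not what the lemma asks for. The lemma concerns the structure obtained by gluing the standard handle $h\cong\bD^*L$, with cocore $L$, onto a $\Pi|_{W\setminus\nu(L)}$-supported structure. The comparison with the given structure on $W$ happens later, in the proof of Theorem~\ref{thm: regular_lag_disk}, via contractibility of supported structures for the Lefschetz fibration on $W$, where no planarity is needed. The real content is what your Step~4 only asserts, and it mis-describes it: what is glued in is $h$, not $\nu(S)$ with an exotic model, and one must show $h$ is compatible with $\Pi$.

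The paper proves this compatibility by a local construction in $\bD^*L$:
\begin{enumerate}
\item Foliate $\bD^*L$ by the planes parallel to $N=\{x_2=y_1,\ x_1=-y_2\}$. Each meets $S=\{x_1=y_2,\ x_2=-y_1\}$ exactly once, which defines $\Pi_{\bD^*L}\colon\bD^*L\to S$.
\item Choose $\varphi\colon\nu(L)\cong\bD^*L$ with $\Pi|_{\nu(L)}=\pi_S\circ\Pi_{\bD^*L}\circ\varphi$ and $\varphi^*J=J_1$ near the concave end.
\item Since $J_{\std}$ already makes the $N$-planes holomorphic, isotope $S$ rel the concave end so that $J|_S$ becomes $\pi_S^*j$. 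This uses uniformization, as $S$ is a disk.
\item Extend that isotopy trivially along the $N$-planes and pull back $f=\frac12(x_1^2+x_2^2)-\frac14(y_1^2+y_2^2)$ and $J_{\std}$.
\end{enumerate}
This gives a homotopy of the handle structure, fixed on the concave boundary, to one for which $\pi_S\circ\Pi_{\bD^*L}$ is holomorphic. The Giroux-form condition is then automatic, because the $N$-disks meeting $\partial W$ are holomorphic and $\nu(L)$ misses $\partial_h W$.
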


\begin{proof}
    Let $\nu(S)$ be a standard neighborhood of $S$. 
    By the contractibility of supporting Stein structures given in Theorem \ref{thm: stein_on_panlf}, we may consider the Weinstein structure given in \S \ref{subsec:panlf_weinstein} on $W\setminus \nu(S)$. Let $\nu(L)$ be a standard neighborhood of $L$ in $W$. In light of the Weinstein structure in \S \ref{subsec:panlf_weinstein}, in particular, the fact that it is obtained by a sequence of (compactly supported) Weinstein $2$-handle attachments, we have that the associated Morse function $\phi$ on $W \setminus \nu(S)$ has no critical points near the boundary, and can thus be homotoped so that the critical points are away from $L$. 
    By choosing $\phi$ and $\nu(S)$ appropriately, we can ensure that, up to a smooth isotopy, $-\partial \nu(L)$ is transverse to the Liouville flow and hence a contact type hypersurface in $W \setminus \nu(S)$. It follows that $W \setminus \nu(L)$ admits a Stein structure (which a priori is not related to the starting symplectic form). 
    Further, flowing along the Liouville vector field produces an isotopy $\Psi_t$ between $\partial (W \setminus \nu(S))$ and $-\partial \nu(L)$. 
    
    \begin{figure}
        \centering
	    \begin{overpic}{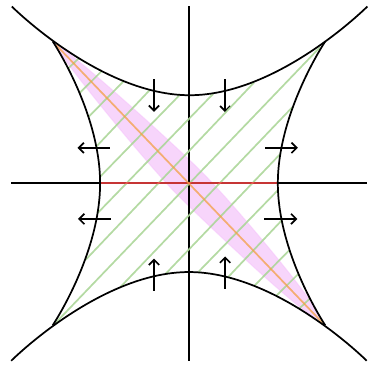}
        \put(140,92){\textcolor{red}{$L$}}
        \put(145,122){\textcolor{YellowGreen}{$N$}}
        \put(73,110){\textcolor{orange}{S}}
        \end{overpic}
        \caption{Foliation over a Weinstein 2-handle.}
        \label{fig:2-handle_LF}
    \end{figure}

    \begin{figure}
        \centering
        \begin{overpic}{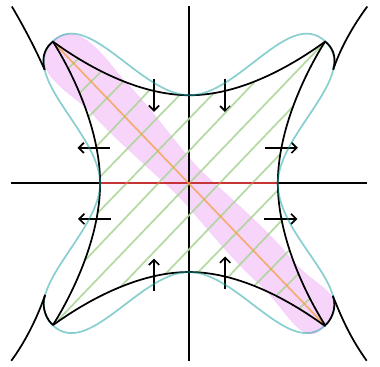}
        \put(140,92){\textcolor{red}{$L$}}
        \put(143,124){\textcolor{YellowGreen}{$N$}}
        \put(73,110){\textcolor{orange}{S}}
        \end{overpic}
        \caption{The outer blue region is identified with $\nu(L) \subset W$, the purple region is $\nu(S)$, the inner black region with ``corners'' is the Weinstein handle model of $\nu(L)$.}
        \label{fig:2-handle_smoothings}
    \end{figure}

        For the second part, we proceed with the following steps:
        \begin{enumerate}
        \item Define a smooth foliation $\Pi_{\bD^*L}$ on (smoothed) $\bD^*L$ such that $S$ is a section. See Figure \ref{fig:2-handle_smoothings}.
        Consider $\bD^*L$ with coordinates $(x_1,y_1,x_2,y_2)$ where $S = \{x_2=-y_1,x_1=y_2\}$. See Figure~\ref{fig:2-handle_LF}. Further, identify $\nu(S) = S \times D^2$ with the $D^2$ factors on planes parallel to $N=\{x_2=y_1, x_1=-y_2\}$, and (unions of) these define the fibers of $\Pi_{\bD^*L}$. To define the map $\Pi_{\bD^*L}$, observe that each leaf in the foliation $\Pi_{\bD^*L}$ on $\bD^*L$ intersects a unique point on $S$, as in Figure \ref{fig:2-handle_LF}. This defines a map $\Pi_{\bD^*L}: \bD^*L \to S$.
        
        \item Identify $\nu(L)\cong \bD^*L$ via $\varphi$, such that $\Pi|_{\nu(L)}$ is the composition $\pi_S \circ \Pi_{\bD^*L}\circ\varphi$, where $\pi_S: S \to \bD^2$ is the branched cover map given by restricting $\Pi$ to $S$ as a positive multisection. 
        
        \item Consider the PANLF on $W\setminus \nu(L)$ given by $\Pi|_{W \setminus\nu(L)}$. Let $(f_1, J_1)$ be an almost Stein structure on $W \setminus\nu(L)$ supported by $\Pi|_{W \setminus\nu(L)}$. Let $j$ be an almost complex structure on $\bD^2$ such that $\Pi|_{W \setminus\nu(L)}$ is $(J_1, j)$-holomorphic. Let $\cN_{\Weinstein}\subset \nu(L)$ be a collar neighborhood of $L$ modeled as a Weinstein $2$-handle. Define a neighborhood $\cN_{\Weinstein}^{\concave}$ of the concave boundary of $\cN_{\Weinstein}$. We extend the PANLF and $(f_1, J_1)$ on $W\setminus\nu(L)$ slightly into $W\setminus(\nu(L)-(\cN_{\Weinstein} \setminus \cN_{\Weinstein}^{\concave}))$.
        \item Choose $f = \frac{1}{2}(x_1^2+x_2^2) - \frac{1}{4}(y_1^2+y_2^2)$, $\omega_{\std}=dxdy$, and $X = 2x\partial_x-y\partial_y$. This defines the standard Weinstein cobordism structure on $\bD^*L$. We further modify $\varphi$ if necessary so that $\varphi^*J = J_1$ on $\cN_{\Weinstein}^{\concave}$. We need to check that $\pi_S \circ \Pi_{\bD^*L}$ supports the almost Stein structure $(f,J = J_{\std})$ on $\bD^*L$ given by the standard Weinstein $2$-handle cobordism up to Weinstein homotopy. For this, it suffices to find $(f',J')$ almost Stein homotopic to $(f,J)$ through a homotopy fixing the concave boundary, such that $\pi_S \circ \Pi_{\bD^*L}$ is $(J',j)$-holomorphic. 
           
        To see this, first, note that $J$ makes all the fibers $N$ pseudoholomorphic. By our choice of $\varphi$,  $$\Pi|_{\nu(L)} = \pi_S \circ \Pi_{\bD^*L}\circ\varphi.$$
        Therefore, we have that $\varphi^*J$  
        makes the $D^2$ fibers of $\Pi|_{\nu(L)}$ pseudoholomorphic in $\cN_{\Weinstein}$.  
        In addition, since $\pi_S \circ \Pi_{\bD^*L}\circ\varphi$ is $(J_1,j)$-holomorphic in a neighborhood $\cN_{\Weinstein}^{\concave}$ of the concave boundary, we have that $\pi_S \circ \Pi_{\bD^*L}$ is $(J,j)$-holomorphic also in $\varphi(\cN_{\Weinstein}^{\concave})$. Now, we claim that we can find an almost Stein structure $(f',J')$ on $\bD^*L$, homotopic to $(f,J)$, such that $\pi_S \circ \Pi_{\bD^*L}$ is $(J',j)$-holomorphic. To do this, we modify $J$ along $S$ to become $\pi_S^*j$. This can be done by the uniformization theorem since $S$ is a disk. Furthermore, we can define a $1$-parameter family $J_t$ of almost complex structures on $\bD^*L$ such that $J_0 = J$ and $J_t$ is given by $J|_N$ on $N$ and $\phi_t^*(J|_S)$ on $S$, where $\phi_t: S \to S$ is an isotopy such that $\phi_0=id$, $\phi_1^*(J|_S) = \pi_S^*j$, and $\phi_t|_{\varphi(\cN_{\Weinstein }^{\concave})} = id$. 
        Let $\tilde{\phi_t}$ denote the extended smooth isotopy on $\bD^*L$ by identity along fibers $N$. Then, $(f', J'):=(\tilde{\phi_1}^*f, \tilde{\phi_1}^*J)$ is the desired almost Stein structure on $\bD^*L$ compatible with $\phi_*(\Pi|_{\nu(L)})$.
        Note that $\tilde{\phi_t}$ preserves the almost Stein structure along the concave boundary since $\phi_t$ preserves $J$ along the concave boundary. Hence the homotopy of almost Stein cobordisms preserves the contact structure on the concave boundary of $\bD^*L$, and thus extends to a homotopy of almost Stein structures on $W$.
        
        \item Now, when we attach the Weinstein/almost Stein $2$-handle $\bD^*L$ to $W\setminus \nu(L)$, we obtain an almost Stein structure on $W$ supported by $\Pi$, since both $W\setminus \nu(L)$ and $\nu(L)$ are supported by $\Pi|_{W\setminus \nu(L)}$ and $\Pi|_{\nu(L)}$ respectively. To be completely precise, we also need to check that the induced one-form on $\partial \Pi$ by the almost Stein structure is a Giroux form. However, this is automatic since the $D^2$ factors that restrict to pages on $\partial W \cap \nu(L)$ are $J$-holomorphic by construction, hence symplectic. In addition, note that $\nu(L)$ does not meet $\partial_h W$ so we indeed have a Giroux form and no other conditions from \cite[Definition 1.23]{spinal_I} need to be checked.
        \end{enumerate}
\end{proof}   

\begin{remark}
    Note that the above lemma says that when a disk multisection in a Lefschetz fibration arises as the symplectic pushoff of a Lagrangian disk with Legendrian boundary, the Stein structure on the complement is a Stein sub-domain of the Lefschetz fibration. One might wonder if this is true if the multisection cannot be the symplectic pushoff of a Lagrangian slice disk, e.g. in the case of the quasipositive disk bounded by $-8_{20}$.
\end{remark}

\subsection{Proof of Theorem \ref{thm: regular_lag_disk}}
\label{subsec: proof_of_main_thm}
In this subsection, we complete the proof of Theorem \ref{thm: regular_lag_disk} using pseudoholomorphic curves. The main remaining ingredient is Proposition \ref{prop:sympl_disk_multisection} showing that a symplectic surface obtained from a symplectic pushoff of a Lagrangian disk with Legendrian boundary can be made a positive multisection in the sense of \cite{boileau-orevkov}.

The overall strategy to prove Theorem \ref{thm: regular_lag_disk} is to leverage the pseudoholomorphic foliation associated to a Lefschetz fibration. Here, the symplectic completion is a refinement of the Theorem-Definition \ref{def:double_completion_LF} in the Lefschetz fibration case for two reasons: (1) we need the contact form on the smoothed hypersurface to have the property such that the boundary of the symplectic surface of interest is of the smallest action; (2) the genus of the fibers of the Lefschetz fibration can now be of higher genus, so we do not have the freedom to do a further perturbation going from stable Hamiltonian structures to contact structures while requiring that the foliation persists. 

In light of the more serious requirement (2), we stay with the stable Hamiltonian structure giving a (potentially nongeneric) pseudoholomorphic foliation, and observe that the stable Hamiltonian structure is of confoliation type, so the appropriate Stokes' theorem still holds. In addition, the primitives of the stable Hamiltonian $2$-form $\omega$  of interest share the same orbits in the binding region, so (1) is also satisfied for the stable Hamiltonian $1$-form $\lambda_0$ defined as the limit of the contact form $\alpha_\epsilon$ as $\epsilon\to 0$, as in the following lemma.

\begin{lemma}
\label{lem: completion}
    Let $\Pi: E \to \bD^2$ be a bordered Lefschetz fibration. 
    There exists a symplectic completion $\widehat{E}$ such that
    \begin{itemize}
        \item $(\widehat{E},\widehat\omega) = (W, \omega) \cup ([0,\infty)_\tau \times M, \omega+d(\varphi \lambda_0))$, where $\varphi$ is a positive function in $\tau$ and $\lambda_0$ is a stable Hamiltonian $1$-form for $\omega$, where $(W,\omega, \lambda_0)$ is specified in the following items;
        
        \item $W$ is a topological smoothing of $E$ and $\omega$ is a symplectic form on $W$ with a primitive Giroux form $\alpha_\epsilon$ near $M$, such that any given binding orbit $\gamma$ of $\alpha_\epsilon$ can be made of arbitrarily small action; 
        
        \item $\Pi$ extends to $\widehat{E}$ and $\Pi|_{M}$ is isotopic to the original open book $\pi$ associated to $\partial \Pi$;
        
        \item $\lambda_0:=\lim_{\epsilon\to 0} \alpha_\epsilon$ is a stable Hamiltonian $1$-form for $\omega$ of confoliation type.
        
        \item There exists an almost complex structure $\widehat{J}$ on $\widehat{E}$ such that 
        \begin{itemize}
            \item $\widehat{J}$ is adapted to the stable Hamiltonian structure $\cH:=(\omega, \lambda_0)$,
           
            \item $\widehat{J}$ matches an almost complex structure $J_+$ in $(0,\infty)\times M$, which admits a $J_+$-holomorphic foliation lifting the pages of $\pi$,
            
            \item $\widehat{J}|_W \in \cJ(\Pi)$.
        \end{itemize}
    \end{itemize}
\end{lemma}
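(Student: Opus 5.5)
The plan is to build $\widehat E$ in three stages: first a stable Hamiltonian structure on a smoothing of $\partial E$, then a one-parameter family of Giroux forms degenerating to it, and finally an almost complex structure on the completion. Throughout I follow the double completion of \cite[\S 3--4]{spinal_II} (summarized in Definition-Theorem \ref{def:double_completion_LF}), but I stop before the final perturbation to a contact structure. Since the base is $\bD^2$, there are no exotic fibers and no planarity assumption. The boundary $\partial E=\partial_h E\cup\partial_v E$ carries an honest open book $\pi$: the binding is $\partial_h E$, a union of solid tori $S^1\times D^2$, and the paper is $\partial_v E$, the mapping torus of the monodromy.

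\textbf{Step 1: the stable Hamiltonian structure.} Choose a supported almost Stein structure $(J,f)\in\cJ_{\Stein}(\Pi)$, which exists by Proposition \ref{prop: contractible_almost_Stein}. Smooth the corner $\partial_h E\cap\partial_v E$ so that the resulting hypersurface $M$ is a graph over the corner in suitable collar coordinates, and let $W$ be the domain it bounds. On $M$, build the standard stable Hamiltonian pair $(\omega_M,\lambda_0)$ adapted to the open book:
\begin{itemize}
\item on the paper, $\lambda_0=d\phi$, where $\phi$ is the $S^1$ coordinate of $\pi$, and $\omega_M$ is a fiberwise area form;
\item on the binding, $\lambda_0$ is a form whose kernel is foliated by the disks $D^2$, interpolated across the smoothing region.
\end{itemize}
With these choices, $\ker\lambda_0$ is foliated by pages, so $\lambda_0\wedge d\lambda_0\ge 0$, which is the confoliation-type condition. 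Moreover, $d\lambda_0$ vanishes on $\ker\omega_M$, so the pair is stable Hamiltonian.

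\textbf{Step 2: the Giroux forms $\alpha_\epsilon$.} Set $\alpha_\epsilon=\lambda_0+\epsilon\beta$, where $\beta$ is a fiberwise Liouville form on the pages extended to the binding. For small $\epsilon>0$ this is a Giroux form for $\pi$ (Thurston--Winkelnkemper), and $\alpha_\epsilon\to\lambda_0$ as $\epsilon\to 0$, as the lemma requires. Near a binding circle $\gamma$, take $\alpha_\epsilon=h_\epsilon(r)\,d\theta+g_\epsilon(r)\,d\phi$ in the standard way. Its action is $\int_\gamma\alpha_\epsilon=h_\epsilon(0)$, and we are free to take $h_\epsilon(0)$ as small as we like; this gives the arbitrarily small action of the prescribed binding orbit. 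Next, choose $\omega$ on $W$ with $\omega=d\alpha_\epsilon$ near $M$, by interpolating the primitive $-df\circ J$ with $\alpha_\epsilon$ in a collar; the Giroux-form property makes the interpolation nondegenerate, as in the Thurston trick of Proposition \ref{prop:Thurston_fK}. On the end, set $\widehat\omega=\omega+d(\varphi\lambda_0)$ on $[0,\infty)\times M$, where $\varphi$ is increasing in $\tau$. Nondegeneracy there follows because $\omega_M$ is positive on $\ker\lambda_0$ and $d\tau\wedge\lambda_0$ is positive on the complementary plane. Since $\ker\lambda_0$ is tangent to pages, the map $\Pi$ extends $\tau$-invariantly over the end, and its restriction to $M$ is isotopic to $\pi$.

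\textbf{Step 3: the almost complex structure $\widehat J$.} Define $J_+$ on $(0,\infty)\times M$ by the usual adapted recipe:
\begin{itemize}
\item $J_+\partial_\tau=R_0$, the Reeb field of $(\omega_M,\lambda_0)$;
\item $J_+$ preserves $\ker\lambda_0$, which is tangent to the pages, and is compatible with $\omega_M$ there.
\end{itemize}
The cylinders $\mathbb{R}\times\text{page}$ are then $J_+$-holomorphic and foliate the end; because $\lambda_0$ is only stable Hamiltonian, this foliation needs no genericity. Finally, interpolate in a collar between $J_+$ and an element of $\cJ(\Pi)$ on $W$, keeping $\Pi$ pseudoholomorphic. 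This is possible because both structures make the fibers complex and project to a complex structure on the base, and such choices form a contractible space (Lemma \ref{lem:contractible_acs_space}).

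The main obstacle I expect is the smoothing region near the corner. There the binding model (kernel foliated by disks) must be matched with the paper model (kernel tangent to pages) while keeping three things at once: $\omega_M$ positive on $\ker\lambda_0$, $d\lambda_0=0$ on $\ker\omega_M$, and $\Pi$ pseudoholomorphic across the interpolation. I would handle this by taking the explicit cornered coordinates of \cite[\S 3]{spinal_II} and checking these conditions there directly.
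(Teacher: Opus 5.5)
There is a genuine gap in Step 3, and it sits exactly where the paper's proof does its main work.

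Your claim that the translated pages are $J_+$-holomorphic and foliate the end holds only over $[0,\infty)\times M_P$. (As written, $\bR\times\text{page}$ is three-dimensional; you presumably mean the leaves $\{a\}\times\text{page}$.) Over $[0,\infty)\times M_P$ the pages are tangent to $\ker\lambda_0=\ker d\phi$. By your own Step 1, however, on the binding $\ker\lambda_0$ is tangent to the disks $\{\theta\}\times D^2$, and across the corner it rotates from one to the other. There a page contains directions transverse to $\ker\lambda_0$, so $\{a\}\times\text{page}$ is not $J_+$-holomorphic, and nothing in your proposal foliates $[0,\infty)\times\partial_h W$. The sentence ``$J_+$ preserves $\ker\lambda_0$, which is tangent to the pages'' contradicts your Step 1 there.

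The paper handles this region as follows:
\begin{enumerate}
\item It continues each leaf into $[0,\infty)\times S^1\times\bD^2$ as a curve $(a(\tau),\eta,\rho(\tau),\phi_0)$ solving \eqref{eq:a'_rho'}. This curve bends in the $\bR$-direction and becomes a half-cylinder asymptotic to the binding orbit.
\item It uses the resulting foliation to define $\Pi_C\colon W_C=W\cup([0,c]\times M)\to\bD^2_c$, $\Pi_C(p)=(a_p+\rho_p,\phi_p)$. This extends $\Pi$ over an \emph{enlarged} disk, and its fibers in $W_C\setminus W$ are $J_+$-holomorphic.
\item Only then does it choose $J_1\in\cJ(\Pi_C)$ and interpolate with $J_+$ across $W_C\setminus W$.
\end{enumerate}
Your $\tau$-invariant extension of $\Pi$ cannot play this role. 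Over $\partial\bD^2$ its level sets are the three-dimensional sets $[0,\infty)\times\text{page}$. So there is no fibration on the collar whose fibers are complex both for $J_+$ and for an element of $\cJ(\Pi)$, and the appeal to Lemma \ref{lem:contractible_acs_space} has nothing to act on.

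Your binding model is a second, related problem. Because $\ker\lambda_0$ is integrable on the binding, the stable Hamiltonian condition forces $d\lambda_0=0$ there. Hence $\lambda_0=h(\theta)\,d\theta$, and every closed $\cH$-orbit in that region has action a positive integer multiple of $\int_\gamma\lambda_0$. Nothing in your construction then makes $\gamma$ isolated, nondegenerate, or strictly shortest. Moreover, the $\alpha_\epsilon$-action of $\gamma$ tends to its $\lambda_0$-action as $\epsilon\to 0$, so you are not free to shrink it once $\lambda_0$ is fixed.

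The paper instead keeps Wendl's model $\alpha_B=f(\rho)\,d\theta+g(\rho)\,d\phi$ unchanged near the binding for every $\epsilon$. Only the paper degenerates, via $\lambda_\epsilon=\epsilon\lambda+d\phi$ and the interpolating $f_\epsilon,g_\epsilon$. Consequently $\lambda_0$ is contact near $\gamma$, and $\gamma$ is the only closed orbit in $\cN$, elliptic with $CZ_\tau(\gamma)=1$, and of smaller period than every other orbit. These properties let the leaves near the binding converge to $\gamma$. They are also used later, in the index computation of Lemma \ref{lem: symplectic_to_J_transverse} and in the no-breaking argument of Proposition \ref{prop:sympl_disk_multisection}.

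To repair the proposal, adopt Wendl's binding model and add the ODE construction of the leaves near the binding, together with the extension $\Pi_C$.
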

\begin{proof}
    The proof is a variation on a construction given in \cite{Wendl_SHS, wendl_hierarchy, spinal_I}. 
    
    First, we find smoothing $(W, \lambda_W, \omega)$ of the bordered Lefschetz fibration $E$ similar to \cite[\S 2.5]{spinal_I} and \cite[\S 3.1]{spinal_II}. In particular, $\partial W$ can be decomposed as 
    \begin{equation}
    \label{eq: partial_W_decomp}
        \partial W = \partial_h W \cup \partial_v W,
    \end{equation}
    where $\Pi$ can be extended to $\widehat{\Pi}$ so that
    $$\widehat{\Pi}|_{\partial_v W}: \partial_v W \to S^1$$
    and
    $$\widehat{\Pi}|_{\partial_h W}: \partial_h W \to \bD^2$$
    are fiber bundles and the Liouville form $\lambda_W$ induces a contact form, denoted as $\alpha$, on $\partial W$ compatible with the open book associated to~\eqref{eq: partial_W_decomp}. See Figure \ref{fig: smoothed_LF}. 
    
    For the smoothing, the coordinates $(s,\phi, t, \theta)$ on $\nu(\partial_h E)$ are replaced by
    \begin{equation}
        \big\{ (g(\rho), \phi, f(\rho), \theta)\big|\phi, \theta \in S^1 \text{ and } 0\leq \rho<1 \big\},
    \end{equation}
    where $g,f: [0,1)\to (0,1]$ are smooth functions specified below. In particular, $$(g(\rho), f(\rho)) = (1, -\rho)$$ 
    for $\rho \geq \delta$, $g'(\rho)>0$ and $f'(\rho) <0$ for $\rho\in [0,\delta]$ where $\delta$ is specified below.

    \begin{figure}
        \centering
	    \begin{overpic}[width=\linewidth]{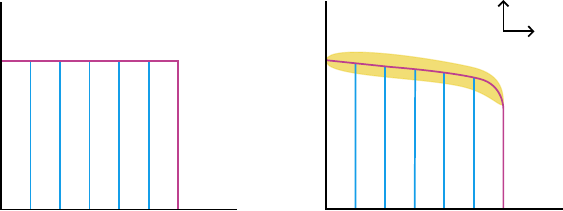}
        \put(400,125){\small $s$}
        \put(387,150){\small $t$}
        \end{overpic}
        \caption{Smoothed Lefschetz fibration and corresponding boundary decomposition. The highlighted region on the right is the smoothed spine.}
        \label{fig: smoothed_LF}
    \end{figure}

    In the local neighborhood $\cN:=\mathbb{D}^2_{\rho, \phi}\times S^{1}_{\theta}$ of the binding orbit $\gamma$, we consider the contact form 
    $$\alpha_B:=f(\rho)d\theta + g(\rho)d\phi,$$
    where $f,g: [0,1]\to \bR$ are some smooth functions specified in \cite[Lemma 3.7]{wendl_hierarchy} with the following properties:
    \begin{itemize}
        \item The function $fg'-f'g>0$, ensuring that $\alpha_B$ is contact.
        \item There is a $\delta>0$ such that $g(\rho)=1$ for $1-\delta<\rho \leq 1$.
        \item The only closed Reeb orbit in $\cN$ is the orbit $\{\rho=0\}=\gamma$ which is elliptic. 
        \item $CZ_{\tau}(\gamma) = 1$ for the asymptotic trivialization $\tau$ given by $\langle \rho, \phi\rangle$ and $\gamma$ has period less than all other orbits.
    \end{itemize}
    The existence of such $f,g$ is given in \cite[\S 3.2.1]{wendl_hierarchy} (see also Figure 8 op. cit.). Let us denote $M = M_P \cup M_\Sigma$ associated to $\Pi$, where $M_P$ denotes the paper, and $M_\Sigma$ the unions over all the bindings. On $M
    _P$, choose the $1$-form $\lambda$ such that $d\lambda>0$ on the pages and near $\partial M_P$, $\lambda$ takes the form
    $$\lambda = (1-\rho) d\theta$$
    in the binding coordinates $(\rho, \phi, \theta)$ and we assume $\rho<1$.
    Now, for $\epsilon>0$ sufficiently small, consider 
    $$\lambda_\epsilon:=
    \epsilon \lambda + d\phi$$ 
    which is contact on $M_P$. 
    Now we extend $\lambda_\epsilon$ to the rest of the boundary $\partial W$ as follows. Recall that $(s,\phi, t, \theta) \in (-1,0]\times S^1 \times (-1,0]\times S^1$ are the coordinates on the components of $\nu(\partial_vE\cap \partial_hE)$ such that
    $$\Pi(s,\phi,t, \theta) = (s,\phi).$$

    Now, for $r<1$, the extension is
    $$\alpha_\epsilon:=
    \begin{cases}
        f_\epsilon(\rho)d\theta + g_\epsilon(\rho)d\phi &\text{ on } \partial_h W\Leftrightarrow \rho \leq r-\delta\\
        \lambda_\epsilon &\text{ on } \partial_v W \Leftrightarrow r-\delta<\rho\leq 1
    \end{cases}$$
    where 
    $f_\epsilon, g_\epsilon: [0,r]\to \bR$ satisfy
    \begin{enumerate}
        \item $(f_\epsilon, g_\epsilon) = (f, g)$ for $\rho \leq r-\delta'$;
        
        \item $g_\epsilon = 1$ and $f_\epsilon' <0$ for $\rho \in [r-\delta', r-\delta]$;

        \item $(f_\epsilon, g_\epsilon) = (\epsilon(1-\rho), 1)$ for $\rho\in [r-\delta, r]$,
        
        \item $f_\epsilon\to f$ and $g_\epsilon\to g$ smoothly as $\epsilon\to 0$,
    \end{enumerate}
    for some $0\leq r-\delta'\leq r-\delta \leq r$. 
    One can check that $\alpha_\epsilon$ is indeed a contact form supporting the same embedded spinal open book as in the proof of \cite[Proposition 2.28]{spinal_I}. Therefore, we may define $\lambda_0:=\lim_{\epsilon\to 0} \alpha_\epsilon$ which is a stable Hamiltonian $1$-form for $\omega$ of confoliation type.

    Finally, we discuss the almost complex structure in the completion amenable for the pseudoholomorphic pages. Start by considering the almost complex structure $J_+$ in $[0,\infty) \times M$ with the following properties: 
    \begin{enumerate}
        \item $J_+\in \cJ(\cH)$ where $\cH:=(\omega, \lambda_0)$.
        \item $J_+v_1 = \beta(\rho)v_2$, where $v_1= \partial \rho$ and $v_2 = -g(\rho) \partial \theta + f(\rho) \partial \phi$, and $\beta$ is a function specified later so that $J_+$ extends smoothly to $\rho=0$. 
    \end{enumerate}

    Now let $C:=\{c\}\times M \subset [0,\infty)_\tau \times M$ be a contact-type hypersurface. We claim that for any $c>0$, the Weinstein domain
    $$W_C: = W \cup \{0\leq \tau\leq c\}\times M$$
    admits a Lefschetz fibration $\Pi_C: W_C \to \bD^2$ with $\Pi_C|_{W} = \Pi$, such that 
    \begin{equation}\label{cond:star}
    \text{for any } x \in \bD^2, \Pi^{-1}(x) \cap (W_C \setminus W) \text{ is $J_+$-holomorphic}.
    \tag{*}
    \end{equation}
    
    To construct the curves in the foliation, we start with the curves $u_{a,x}:=(a,\Pi^{-1}(x))$ in $[0,\infty) \times M_P$. These are $J_+$-holomorphic by the assumption $J_+\in \cJ(\cH)$, since $T\Pi^{-1}(x)=T_xP=\ker \lambda_0\mid_{x}$. Now, recall that $[0,\infty) \times M_\Sigma$ is a union of $[0,\infty)_a \times S^1 \times \bD^2$ components. In each of these components, the curves $u_a$ are extended by solutions $(a(\tau),t,\rho(\tau),\phi_0)$,  where 
    \begin{equation}
    \label{eq:a'_rho'}
    \dfrac{da}{d\tau}=f(\rho), \dfrac{d\rho}{d\tau}=
    \begin{cases}
        \frac{f'(\rho)}{\beta(\rho)D(\rho)} &\text{ if } \rho < 1-\delta\\
        -1 &\text{ if } \rho \geq 1-\delta 
    \end{cases}
    \end{equation}
    where $D(\rho) = f(\rho)g'(\rho) - f'(\rho)g(\rho)$, and $\beta$ is a positive function that is 1 outside a neighborhood of 0. 
    Further, $a(0)$, $\rho(0)=1$ and $\phi_0$ specify the initial conditions of the solutions which match them up smoothly with the curves in $[0,\infty) \times M_P$. This yields a $J_+$-holomorphic half-cylinder asymptotic to the orbit at $\{\rho=0\}$ as in \cite[\S 3]{Wendl_SHS}.

    Now we extend this $J_+$ to $W_C$ such that $\Pi_C|_W = \Pi$ yields the desired foliation satisfying \eqref{cond:star}, by refining the above paragraph. Again, parametrize the unit disk $\bD^2_{\rho,\phi}$ by $\{\rho \leq 1\}$ and recall that our choice of coordinates on $[0,\infty) \times S^1 \times \bD^2$ satisfy that $\{0\} \times S^1 \times \bD^2 \subset \partial W$ and that $$\Pi|_{\{0\}\times S^1 \times \bD^2}: \{0\}\times S^1 \times \bD^2 \to \bD^2$$ is given by projection to the last coordinate. 

    Let $(\tau, \eta)\in [0, \infty) \times S^1$ be the coordinates on the domain of the pseudoholomorphic curve $u_{a,x}$ near a puncture. It suffices to consider each component of the bindings individually.
    Given $(\rho_0,\phi_0) \in \bD^2$,
    define the extension of the curve $\Pi^{-1}(\rho_0,\phi_0)$ into $[0,\infty) \times S^1\times \bD^2$ by $(a(\tau),\eta,\rho(\tau),\phi_0)$, where $a(0) = 0$, $\rho(0)= \rho_0$, and $a'$ and $\rho'$ satisfy the conditions as \eqref{eq:a'_rho'}. This ensures we have a smooth foliation by surfaces on $W \cup [0,\infty)\times M$, which are $J_+$-holomorphic where $J_+$ is defined.
    On $[0,\infty) \times M_P$, this foliation is the $a$-translation of the foliation $\Pi|_{M_P}$. Given any $p \in [0,\infty) \times M_\Sigma$, it lies on a unique leaf of the foliation (this is by a standard observation of how integral curves of a nonzero vector field foliates the domain). For the leaf passing through $p$, denote its minimum $a$ value inside $[0,\infty) \times S^1 \times \bD^2$ by $a_p$.  Also, the intersection of the leaf with $\{a_p\} \times S^1 \times \bD^2$ is a circle at $(\rho_p, \phi_p) \in \bD^2$, where $\rho_p = 1$ if $a_p>0$, since any $\bR$-invariant leaf projects to $\partial_v E$ which is precisely when $\rho = 1$.
    
    Now, consider the disk $\bD^2_c$ given by $\{\rho \leq 1+c\}$. Define \[\Pi_C: W_C \to \bD_c^2\] as follows:
    \[\Pi_C(p) =
    \begin{cases}
        & \Pi(p) \text{, if } p \in W\\
        & (a_p+\rho_p,\phi_p) \text{, otherwise.}
    \end{cases}
    \]
    \noindent It is clear that $\Pi_C$ is a Lefschetz fibration that restricts to $\Pi$ on $W$. Now, pick $J_1 \in \mathcal{J}(\Pi_C)$ and therefore $J_1 \in \mathcal{J}(\Pi)$. Now, since $c>0$, we can use the region $W_C \setminus W$ to interpolate between $J_1$ and $J_+$, and define $\widehat{J}$ so that it restricts to $J_+$ on the cylindrical end and an element of $\mathcal{J}(\Pi)$ on $W$.
\end{proof}

\begin{lemma}
\label{lem: symplectic_to_J_transverse}
    Let $E$ be the total space of a symplectic Lefschetz fibration $\Pi$. Let $S \subset E$ be a properly embedded symplectic disk obtained as the symplectic pushoff of a properly embedded Lagrangian disk $L \subset E$ with Legendrian boundary.
    Then, there exists a cylindrical almost complex structure $\widehat{J}_0$ identifying with $J_+$ on the cylindrical end, such that the completion of $S$ is a transversely cut-out $\widehat{J}_0$-holomorphic plane asymptotic to the Reeb orbit of smallest action.
\end{lemma}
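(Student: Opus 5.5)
We will construct $\widehat{J}_0$ in three pieces: on the cylindrical end, on a neighborhood of $S$ in $W$, and on the rest of $W$. We then check that the completed disk is a transversely cut-out plane.

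\emph{Step 1: the end.} Work in the completion of Lemma \ref{lem: completion}. The Legendrian $\partial L$ has a standard neighborhood in $M$, and the symplectic pushoff $S=\{x_1=y_2,\,x_2=-y_1\}$ meets $M$ in a transverse knot $\gamma$ lying in an arbitrarily small neighborhood of $\partial L$. After a contact isotopy, which changes the open book only by isotopy, we arrange $\gamma$ to be a binding orbit of the Giroux form $\alpha_\epsilon$. By the second item of Lemma \ref{lem: completion}, $\gamma$ can then be made the Reeb orbit of smallest action. In the binding neighborhood $\cN$ we use the model $\alpha_B=f(\rho)d\theta+g(\rho)d\phi$. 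There the trivial cylinder $\R\times\gamma$ is $J_+$-holomorphic for the $J_+$ of Lemma \ref{lem: completion}. We therefore complete $S$ by gluing $[0,\infty)\times\gamma$ to $S$ along $\partial S=\gamma$. The result $\widehat S$ is a smooth plane with one positive puncture asymptotic to $\gamma$.

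\emph{Step 2: the interior.} Since $S$ is a symplectic surface, choose $J_S$ along $TS$ with $J_S(TS)=TS$, tamed by $\omega$. Extend it to a tubular neighborhood of $S$ so that it agrees with $J_+$ near $\gamma$. The compatibility check here is an explicit vector field computation in the coordinates $(x_1,y_1,x_2,y_2)$ of $\bD^*L$ against the binding coordinates $(\rho,\phi,\theta)$. Away from the neighborhood of $S$, extend $\widehat J_0$ arbitrarily within the space of $\omega$-tame almost complex structures that agree with $J_+$ on the end; this space is contractible. Then $\widehat S$ is $\widehat J_0$-holomorphic.

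\emph{Step 3: transversality.} We use automatic transversality in dimension four (Wendl). For an immersed curve $u$ of genus $0$ the criterion is
\[
\ind(u) > c_N(u) = c_1^\tau(u^*TW) - \chi(\dot\Sigma) + \#\Gamma_0 + \tfrac12\big(\mu_{CZ}^\tau\text{-correction}\big),
\]
where the correction term is the parity contribution in the normal Chern number $c_N$.

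We compute with the trivialization $\tau$ given by $\langle\rho,\phi\rangle$, for which $CZ_\tau(\gamma)=1$. Here $\chi(\dot\Sigma)=1$. The relative first Chern class is $c_1^\tau=\chi(S)+e(\nu S)$ relative to $\tau$. Because $S$ is a pushoff of a Lagrangian disk, its normal bundle is identified with $TL$ twisted by $J$, and we expect $c_1^\tau$ to vanish or be small. The index is $\ind=-\chi+2c_1^\tau+CZ=-1+2c_1^\tau+1=2c_1^\tau$. We plan to show $\ind=2$ and $c_N=0$ after correctly accounting for the framing difference between the Legendrian (contact) framing and $\tau$. Since $\gamma$ is elliptic with odd CZ index, $\#\Gamma_0=0$.

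Alternatively, if the index bookkeeping fails, we fall back on perturbing $\widehat J_0$ generically away from $\widehat S$. Transversality for somewhere-injective curves then holds for generic $J$ in a neighborhood of the image, keeping $\widehat S$ holomorphic since it is embedded.

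The main obstacle is Step 1/2 compatibility: making $\gamma$ the smallest-action orbit while $J_+$ near $\gamma$ agrees with a structure for which $S$ is holomorphic. A related difficulty is pinning down the framing so that the index computation gives $\ind>c_N$.
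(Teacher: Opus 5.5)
\textbf{Main gap: Step 1.} You cannot, in general, make $\gamma$ a binding orbit by a contact isotopy that changes the open book only by isotopy. A contact isotopy preserves transverse link types. So the binding of the open book induced by $\Pi$ keeps its transverse class, while $\gamma$, the transverse pushoff of $\partial L$, has the knot type of $\partial L$. For instance, for the trivial fibration $D^2\times D^2\to D^2$ the binding is an unknot, whereas Legendrian knots bounding Lagrangian disks in $B^4$ can be knotted (e.g.\ $m(9_{46})$). Lemma~\ref{lem: completion} only lets you shrink the action of \emph{binding} orbits. Without this step, $\gamma$ need not be a Reeb orbit of the Giroux form at all, so the trivial cylinder over $\gamma$ is not $J_+$-holomorphic and there is no orbit of smallest action for $\widehat S$ to be asymptotic to. The missing idea is to change the open book, not merely move it. The paper positively stabilizes the boundary open book, and correspondingly the Lefschetz fibration, via the Baker--Etnyre--Van Horn-Morris algorithm, so that $\gamma$ becomes a binding component. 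Only then does it apply Lemma~\ref{lem: completion} to the new $\Pi$. After that, your Step 2 is essentially what the paper does: glue $[0,\infty)\times\gamma$ to $S$, then interpolate between a $J$ making $S\cup[0,c]\times\gamma$ holomorphic and $J_+$.

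\textbf{Step 3 is not carried out, and your fallback would not work.} Wendl's criterion reads $2c_N(u)=\ind(u)-2+2g+\#\Gamma_0$. For a plane asymptotic to an orbit with $CZ_\tau(\gamma)=1$ it reduces to $\ind(u)>-2$. So you do not need $\ind=2$, only a lower bound. You do, however, need $c_1^\tau(N_u)$ computed in the same framing $\tau=\langle\rho,\phi\rangle$ in which $CZ_\tau(\gamma)=1$. The paper gets this by transporting the boundary section to $L$, where $N_L\cong J\,TL$, and finds a single zero of sign $-1$. This gives $c_\tau(u)=\chi(u)+c_1^\tau(N_u)=0$ and $\ind(u)=-1+0+1=0>-2$. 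Note that this computation implicitly identifies $\tau$ along $\gamma$ with the framing induced by the contact framing of $\partial L$. Your stated target $\ind=2$ also conflicts with your own expectation that $c_1^\tau$ vanishes, since you derived $\ind=2c_1^\tau$. Finally, perturbing $\widehat J_0$ ``generically away from $\widehat S$'' cannot produce transversality. The linearized operator $D_u$ depends only on the $1$-jet of $J$ along the image of $u$, so perturbations supported off $\widehat S$ leave it unchanged. You would have to perturb the normal derivative of $J$ along $\widehat S$, and even that yields surjectivity only once you know $\ind(u)\geq 0$, which brings you back to the Chern number computation.
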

\begin{proof}
    Let $\gamma:= \partial S$ denote the transverse knot. We stabilize the induced boundary open book decomposition from $\Pi$, so that $\gamma$ becomes a binding component, hence a Reeb orbit following an algorithm in \cite{Baker_Etnyre_VHM_rational}. This is still denoted as $\Pi$. We apply Lemma \ref{lem: completion} to obtain the desired Giroux form on $(M,\xi)$ making $\gamma$ of the smallest action. 
    
    Now, we extend $S$ cylindrically to $u$ so that $u$ is $J_+$-holomorphic on $(0,\infty) \times M$. Similar to the proof of Lemma \ref{lem: completion}, we define $S_c:= S \cup [0,c]\times \gamma$ for any $c>0$. Find a $J_0$ that makes $S_c$ $J_0$-holomorphic. Now, since there is an overlap in the domain of definition for $J_0$ and $J_+$, we do an interpolation to ensure that $u$ is $J_0$-holomorphic in $W$ and $J_+$-holomorphic in $(0,\infty)\times M$, so in particular it's asymptotically cylindrical.

    Now we check for automatic transversality. Recall that $\ind(u) = -\chi(u) + 2c_{\tau}(u) + CZ_\tau(\gamma)$. Automatic transversality \cite[Proposition A.1]{Wendl_strongly} states that, if $$\ind(u)>c_N(u):=\frac{\ind(u)-2+2g+\#\Gamma_0}{2},$$ 
    where $\#\Gamma_0$ denotes the number of even Conley-Zehnder orbits, then $u$ is cut out transversely.  Therefore, it suffices to achieve automatic transversality if
    \begin{equation}
    \label{eq:auto_transversality}
        \ind(u)> -2+\#\Gamma_0 = -2
    \end{equation}
    for our $u$ of interest, which has the topology of a plane. 
    
    To check \eqref{eq:auto_transversality}, we compute $c_\tau(u):=c_1^\tau(u^*T\widehat{W}) = \chi(u) + c_1^\tau(N_{u})$. The normal bundle $N_{u}$ matches $\xi$ near infinity. Note that a constant section of the trivialization $\tau$ is given by $\partial_\rho$, which is the oriented normal of the core of the solid torus parameterized by $(\theta, \rho, \phi)$. The value of $c_1^\tau(N_u)$ is given by the number of signed singularities of this vector field $\partial_\rho$ when extended over $u$. We compute this value over the Lagrangian $L$, since $c_1^\tau(N_u)$ is independent of homology class representative.

    By the choice of the Weinstein neighborhood in \S \ref{subsec: symp_complement_to_lag_complement_lemma}, the Lagrangian $L$ can be parameterized as $\{y_1=y_2=0\}$ inside $(D^2_{x_1,x_2}\times D^2_{y_1,y_2}, dx_1dy_1+dx_2dy_2)$, such that $\{x_1^2+x_2^2 = 1\} \subset \partial W$.  
    Further, $\partial L \subset \{x_1^2+x_2^2 = 1\}$ is a Legendrian for the contact structure which is locally the kernel of the 1-form $(x_1dy_1-y_1dx_1+x_2dy_2-y_2dx_2)$. A constant section of the trivialization $\tau$ at the boundary of $L$, given by the oriented normal inside the contact plane, is the vector field $X=-\sin(\theta)\partial_{x_1}+\cos(\theta)\partial_{x_2}$ at the point $(0,0,\cos(\theta),\sin(\theta))$ on $\partial L$. By looking at the winding of $X$ with respect to $\partial_{x_1}$ (which extends without zeros over $L$), one can see that $X$, extended over $L$, has a single singularity at the origin inside $L$, and the sign of that singularity is -1. It follows that $c_1^\tau(N_S) = c_1^\tau(N_L) = -1$.
    Therefore, $c_\tau(u) = 0$.

    Now, we have that
    $$\ind(u) = -\chi(u)+2c_\tau(u)+CZ_\tau(\gamma)=-1+0+1=0.$$
    Therefore, (\ref{eq:auto_transversality}) holds and $u$ is transversely cut out.
\end{proof}

\begin{proposition}
\label{prop:sympl_disk_multisection}
    Let $(E, \lambda, \varphi)$ be a Weinstein domain supported by a bordered Lefschetz fibration $\Pi: E \to \mathbb{D}^2$. Let $S \subset E$ be a properly embedded symplectic disk 
    obtained as the symplectic pushoff of a properly embedded Lagrangian disk $L \subset E$ with Legendrian boundary.
    Then $S$ is symplectically isotopic to a positive multisection of $\Pi$.
\end{proposition}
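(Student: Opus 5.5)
The plan is a pseudoholomorphic-curve deformation argument in the completion $\widehat{E}$ of Lemma \ref{lem: completion}. First, stabilize the boundary open book induced by $\Pi$ so that the transverse knot $\gamma = \partial S$ becomes a binding component, as in the proof of Lemma \ref{lem: symplectic_to_J_transverse}. Then build $\widehat{E}$ so that $\gamma$ is the Reeb orbit of smallest action, and $J_+$ on the cylindrical end carries the holomorphic foliation lifting the pages. Lemma \ref{lem: symplectic_to_J_transverse} gives a cylindrical $\widehat{J}_0$, agreeing with $J_+$ at infinity, for which the completion $u_0$ of $S$ is a transversely cut out $\widehat{J}_0$-holomorphic plane of index $0$ asymptotic to $\gamma$. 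Lemma \ref{lem: completion} separately gives $\widehat{J}_1$, with $\widehat{J}_1|_W \in \cJ(\Pi)$, for which the completed fibers of $\Pi$ are $\widehat{J}_1$-holomorphic. Join the two by a path $\{\widehat{J}_\tau\}$ of cylindrical almost complex structures, all equal to $J_+$ near infinity and tamed by $\widehat\omega$. This is possible because the space of tame structures with fixed behavior at infinity is contractible. Make the path generic on the interior, relative to its endpoints.

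Next, follow the plane $u_0$ along the parametrized moduli space $\cM(\{\widehat{J}_\tau\})$ of index-$0$ planes asymptotic to $\gamma$ in the relative class of $S$. The automatic transversality computation of Lemma \ref{lem: symplectic_to_J_transverse} uses only the topology and the asymptotics: $c_\tau(u)=0$, $CZ_\tau(\gamma)=1$, and $\ind = 0 > -2$. So every such plane, for every $\tau$, is regular, and the one-dimensional parametrized moduli space is a smooth manifold projecting submersively onto $[0,1]$. It remains to prove compactness, which should rule out bubbling and breaking.

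\begin{itemize}
\item Breaking at the positive end: since $\gamma$ has the smallest action among Reeb orbits, and using the confoliation-type Stokes argument for the stable Hamiltonian structure $\lambda_0$, any broken building would have a top level asymptotic to $\gamma$ with lower levels of smaller positive action. This is impossible.
\item Sphere bubbling: this is excluded by exactness of $\widehat\omega$.
\end{itemize}

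Hence the component through $u_0$ is a compact $1$-manifold with boundary over $\{0,1\}$, and it yields a plane $u_1$ at $\tau=1$ connected to $u_0$.

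Finally, show that $u_1$ is a positive multisection and that the family gives a symplectic isotopy. The planes $u_\tau$ are embedded, by the adjunction formula / Siefring intersection theory with $c_N=0$ and the asymptotic to a simple orbit, so they stay embedded throughout. Hence $\{u_\tau\}$ restricted to $W$ (after truncating and using the cylindrical end) is an isotopy through symplectic surfaces. At $\tau=1$, both $u_1$ and the fibers of $\Pi$ are $\widehat{J}_1$-holomorphic. Positivity of intersections then makes $\Pi|_{u_1}$ an orientation-preserving branched cover with local models holomorphic, so its branch points are positive and generically simple, and $u_1$ is not itself a fiber since it is asymptotic to a binding orbit. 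A small perturbation puts the branch points in the model of Definition \ref{def: multisection} and away from $B_{\crit}$.

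I expect the compactness step to be the main obstacle. The stable Hamiltonian structure is only confoliation-type and possibly non-generic, and the fiber foliation near the binding is built from the degenerate limit $\lambda_0 = \lim \alpha_\epsilon$. So one must check carefully that energy bounds hold and that no curves can escape into the binding region of $\gamma$ or break along orbits of $\lambda_0$ that are not orbits of $\alpha_\epsilon$. I would first try to work with $\alpha_\epsilon$ for small fixed $\epsilon$, where action minimality of $\gamma$ is explicit, and only pass to $\lambda_0$ at the endpoint $\tau=1$ through a limiting argument.
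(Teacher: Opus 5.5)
Up to the last step your argument is the paper's: $\widehat J_0$ from Lemma~\ref{lem: symplectic_to_J_transverse}, $\widehat J_1$ from Lemma~\ref{lem: completion}, and a path between them. Along the path there is no bubbling by exactness, and no breaking because $\gamma$ has minimal action, with Stokes' theorem supplied by the confoliation-type stable Hamiltonian structure \cite[Proposition 2.4]{spinal_II}. Your automatic transversality along the whole family and the adjunction argument for embeddedness spell out what the paper leaves to ``a generic path''. The gaps are in the final step.

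First, your reason that $u_1$ is not a fiber is wrong. After the stabilization that makes $\gamma$ a binding component, every completed fiber of $\Pi$ is itself asymptotic to $\gamma$ (indeed to every binding orbit), so being asymptotic to a binding orbit distinguishes nothing. A completed regular fiber has one puncture per boundary component of the page. So the plane $u_1$ can coincide with a fiber exactly when the page is a disk. This case is not vacuous: for $E=\bD^2\times\bD^2$ the completed fibers are themselves planes asymptotic to the binding, and then $\Pi|_{u_1}$ is constant. The paper handles this case separately: a disk page forces $E\cong\bD^2\times\bD^2$, where a fiber is symplectically isotopic to a section by \cite{Eliashberg90b, EliashbergFraser98}.

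Second, positivity of intersections only tells you that $\Pi|_{u_1}$ is locally an orientation-preserving branched cover at interior points. It does not make $\Pi|_{S_R}$ a proper branched cover of the disk. For that you need the boundary of the truncation $S_R$ to be braided with respect to the fibration, i.e.\ to lie in the vertical boundary $\partial_v E_R$ positively transverse to the pages. Because $u_1$ and all the fibers converge to the same orbit $\gamma$, interior intersection positivity gives no control there. The degree of $\Pi|_{S_R}$ is the winding of the end of $u_1$ around $\gamma$ relative to the page framing, and this is exactly what has to be controlled.

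The paper gets this from Siefring's asymptotic description \cite[Theorem 2.3]{Siefring_asymptotics}: $\partial S_R$ is a torus braid of some index $d$ around $\gamma$, which can be placed in $\partial_v E_R$. It then uses:
\begin{itemize}
\item the Carleman similarity principle, for isolated intersections;
\item \cite[Lemma 6.31]{spinal_II}, for the local branched-cover form in the interior;
\item connectedness, to match the interior degree with $d$.
\end{itemize}
Without this asymptotic step, and without the disk-page case, your final paragraph does not yet show that $S$ is a positive multisection.
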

\begin{proof}
    Let $\gamma:= \partial S$ denote the transverse knot. By Lemma \ref{lem: symplectic_to_J_transverse}, up to ambient Weinstein homotopy, we can make $S$ a transversely cut out $\widehat{J}_0$-holomorphic plane asymptotic to the smallest action Reeb orbit in the contact boundary, still denoted as $(\partial E, \lambda)$. 

    Now we consider an almost complex structure $J_1 \in \cJ(\Pi)$. By definition, any generic fiber of the Lefschetz fibration is $J_1$-holomorphic. We consider the double completion $(\widehat{E}, \widehat{J}_1)$ given in Lemma \ref{lem: completion} where in particular $\widehat{J}_1$ is $\widehat\lambda_1$-adapted and the generic fibers are also completed into pseudoholomorphic curves with asymptotic ends at Reeb orbits.
    Furthermore, we have that $\widehat{J}_1$ agrees with $J_1$ in $W$ and agrees with $J_+$ in $(0,\infty)\times M$. Let $\cH:=(\omega, \lambda_0)$ be the stable Hamiltonian structure given in Lemma \ref{lem: completion}. 
    
    Now since both $\widehat{J}_0$ and $\widehat{J}_1$ are $\cH$-adapted almost complex structures, we may consider a generic path of $\cH$-adapted almost complex structures $\widetilde{J}_t$ between them where $\widetilde{J}_0=\widehat{J}_0$ and $\widetilde{J}_1=\widehat{J}_1$.   
    Now we consider a family of $\widetilde J_t$-holomorphic curves $S_t$ where $S_0=S$. Since $\cH$ is of confoliation type, we have Stokes' theorem by \cite[Proposition 2.4]{spinal_II}. Now, since $\omega$ is exact, there is no bubbling, and because the asymptotic end of $S$ has the smallest action among all Reeb orbits, there is no breaking. Therefore, SFT compactness ensures that $S_t$ forms a smooth family of embedded symplectic planes, all asymptotic to the same orbit $\gamma$. 
    The conclusion is that we have a common almost complex structure $\widehat{J}_1$ which makes both $S_1$ and the smooth part of the completed fibers pseudoholomorphic. Let $\widehat{F}$ denote any completed fiber (including possibly nodal fibers) and further assume that $S_1$ avoids critical points of $\Pi$ up to a small symplectic isotopy.  
    
    Now, consider $S_R$ and $F_R$, the compact cutoff of $\widehat{S}$ and $\widehat{F}$ at a level $R \gg 0$. By Siefring \cite[Theorem 2.3]{Siefring_asymptotics}, $\partial S_R$ is a torus braid around $\gamma$ of braid index $d$. In particular, since the base of $\Pi$ is $\bD^2$, we may assume that $\partial S_R \subset \partial_v E_R$, i.e. the vertical boundary of the compact cutoff of $\widehat{E}$. Now, by an application of the Carleman Similarity Principle (see e.g. \cite[Lemma 2.36]{wendl2016lectures}), we may assume that $S_R$ and $F_R$ have isolated intersections. This is because otherwise $S_R = F_R$, so $S$ is symplectically isotopic to a smooth fiber. However, since $E$ is a PALF, in this case $E = \bD^2 \times \bD^2$ and any fiber is symplectically isotopic to a section by \cite{Eliashberg90b, EliashbergFraser98}. 
    
    Now, by \cite[Lemma 6.31]{spinal_II}, 
    locally in each small neighborhood in the interior of $S_R$, $\Pi$ takes the form of a possibly branched cover. By connectedness of $S_R$, the total degree of the possibly branched cover is a fixed number $d'$. By continuity, $d = d'$ and we conclude that $\Pi|_S$ is a branched cover of surfaces, and thus $S$ is a positive multisection.
    \end{proof}

\begin{proof}[Proof of Theorem \ref{thm: regular_lag_disk}]
    First, we use \cite{GirouxPardon2017Lefschetz} and \cite[Corollary 11.21]{Cieliebak_Eliashberg} to find a Weinstein homotopy from the configuration $(W,L)$ to $(W',L')$, where $W'$ is equipped with a Lefschetz fibration $\Pi: W' \to \bD^2$ and $L'$ is a Lagrangian at time one of the homotopy. Let $\cW$ be an almost Stein structure on $W'$ supported by $\Pi$. Then, in a neighborhood $\nu(L')$, we find $S$, a symplectic pushoff of $L'$ with transverse boundary as in \S \ref{subsec: symp_complement_to_lag_complement_lemma}. 

    By Proposition \ref{prop:sympl_disk_multisection}, $S$ is symplectically isotopic to a positive multisection for $\Pi$, denoted as $S_1$. Let $S_t$ denote the symplectic isotopy, where $S_0=S$.
    Now, by Definition \ref{def: panlf}, $W' \setminus \nu(S_1)$ has the structure of a PANLF. Therefore, by Theorem \ref{thm: stein_on_panlf}, $W' \setminus \nu(S_1)$ is supported by a Stein structure, so by the first part of Lemma \ref{lem:lag_complement_stein}, we know that $W\setminus \nu(L)$ is also Stein. Now, by the proof of \cite[Corollary 3.7]{Conway_Etnyre_Tosun}, we know that $L'$ is a cocore, hence regular, for some Weinstein structure on $W'$. In fact, we know more about how this Weinstein structure is related to the original almost Stein structure $\cW$ as follows.
    
    By the symplectic isotopy extension theorem in \cite[Theorem 3.3.2]{mcduff_salamon_intro} (see also \cite{Banyaga}), we have that the symplectic isotopy $S_t$ induces an ambient symplectic isotopy on the neighborhood $\nu(L')$ described in Lemma \ref{lem:lag_complement_stein}, denoted as $\Phi_t$. 
    Therefore, $\Phi_1(L')$ is also an exact Lagrangian which has a pushoff $\Phi_1(S)$.
    Now, extend $\Phi_t$ to a smooth isotopy $\tilde \Phi_t$ on $W'$. Consider the Lefschetz fibration $\tilde\Phi_1^*\Pi$ on $W'$, and an associated almost Stein structure $\tilde\Phi_1^*\cW$. The above implies that $L'$ is a Lagrangian in $\tilde\Phi_1^*\cW$, whose symplectic pushoff $S$ is a positive multisection for $\tilde\Phi_1^*\Pi$. Then, Lemma~\ref{lem:lag_complement_stein} implies that up to Weinstein homotopy, $L'$ is regular in an almost Stein structure supported by $\tilde\Phi_1^*\Pi$.
    
    Now, since the space of supporting almost Stein structures given an allowable Lefschetz fibration is contractible by \cite[Theorem 1.24]{spinal_I},
    we have that up to Weinstein homotopy, $L'$ is regular in $\tilde\Phi_1^*\cW$. 
\end{proof}
    
\section{Proof of Theorem \ref{thm: planarNLF_multisection}}
\label{sec: planarNLF_multisection}
In this section, we prove Theorem \ref{thm: planarNLF_multisection}, which generalizes \cite{boileau-orevkov}.

\begin{proof}[Proof of Theorem \ref{thm: planarNLF_multisection}]
    Let $W:= B \times F$ where $B$ and $F$ are compact surfaces with nonempty boundaries. $W$ is a PANLF without singular or exotic fibers and the projection map $\Pi: W \to B$.
    Let $S$ be a symplectic surface with braided boundary in the vertical boundary of $W$. The overall strategy is similar to the proof of Theorem \ref{thm: regular_lag_disk}. Specifically, we seek an almost complex structure on the double completion $\widehat{E}$ of $E$ as in Definition \ref{def:double_completion_LF} that makes both the completed fibers and the completed symplectic surface $\widehat{S}$ pseudoholomorphic.

    To find such an almost complex structure, we start with $J\in \cJ(\Pi)$ and consider the completed almost complex structure $\widehat{J}$ as given in Definition \ref{def:double_completion_LF}.
    Now, we find an asymptotically cylindrical almost complex structure $\widehat{J}_S$ that agrees with $J_+$ on $(0,\infty)\times M$ and makes $\widehat{S}$ pseudoholomorphic, as constructed in the proof of Lemma \ref{lem: symplectic_to_J_transverse}. Next, consider a family of almost complex structures $\{\widehat{J}_{\tau}\}_{\tau\in[0,1]}$ where $\widehat{J}_0=\widehat{J}$ and $\widehat{J}_1 = \widehat{J}_S$. A priori, we don't necessarily have that $\{\widehat{J}_\tau\}$ is a compatible homotopy. However, by the assumption of no singular and exotic fibers, the proof of Theorem \ref{thm: spinal_ii_6_4} still gives us a one-parameter family of nearly Lefschetz fibrations without singular or exotic fibers. This is because $W$ is Weinstein so there are no closed pseudoholomorphic spheres, together with the fact that the fundamental group of a generic fiber injects into the fundamental group of the product.
    
    Now, by Theorem \ref{thm: spinal_ii_6_4}, we have a one-parameter family of nearly Lefschetz fibrations associated to $\{\widehat{J}_\tau\}$ and at time $\tau = 1$, we have an almost complex structure $\widehat{J}_S$ making both the completed fibers and $\widehat{S}$ pseudoholomorphic. 
    Similar to the proof of Proposition \ref{prop:sympl_disk_multisection}, we take cutoffs $S_R$ of $\widehat{S}$ and $F_R$ of $\widehat{F}$ and may assume that $S_R$ has only isolated intersections with $F_R$, unless $S$ is symplectically isotopic to a smooth fiber. Therefore, we can similarly apply \cite[Lemma 6.31]{spinal_II} and conclude that $S$ is either symplectically isotopic to a smooth fiber or a positive multisection. 
\end{proof}

\section{Handle diagrams for multisection complements}\label{sec:algo}
In this section we present an algorithm for drawing Weinstein handlebody diagrams for multisection complements in $B^4$, exemplifying the Weinstein structure constructed earlier on nearly Lefschetz fibrations. 

The section is lengthy, so we outline its organization here. Background on monotonic quasipositive braids is provided in \S\ref{subsec:monotonic}, followed by some lemmas which justify the algorithm in \S\ref{subsec:lemmas-for-algo}. The algorithm itself is described and justified in \S\ref{subsec:algo-NLF}, and several examples are computed in \S\ref{subsec:algo-exs}. In \S\ref{subsec:PANLFtoPALF}, we show by example how our algorithm can be used to systematically convert a PANLF to a PALF. Finally, we conclude by identifying some open questions in \S\ref{subsec:questions}. 

\subsection{Monotonic quasipositive braided surfaces}\label{subsec:monotonic}

For technical convenience, we work with \emph{monotonic quasipositive braids}, a framework named by Apostolakis-Piergallini-Zuddas \cite{apostolakis2013lefschetz}. We recall their definition and relevant result.

Let $\sigma_1, \dots, \sigma_{n-1}$ denote the standard Artin generators of the $n$-strand braid group. A \emph{positive band} is any conjugate of $\sigma_i$, and a braid is \emph{quasipositive} if it admits a factorization into positive bands. Following \cite{apostolakis2013lefschetz}, a \emph{monotonic positive band} is a word of the form 
\[
(\sigma_{i+1}^{\epsilon_{i+1}}\cdots \sigma_{j-2}^{\epsilon_{j-2}}\sigma_{j-1}^{\epsilon_{j-1}})^{-1}\, \sigma_i\, (\sigma_{i+1}^{\epsilon_{i+1}}\cdots \sigma_{j-2}^{\epsilon_{j-2}}\sigma_{j-1}^{\epsilon_{j-1}})
\]
for some $i< j$ and $\epsilon_{\ell}\in \{-1,1\}$, and a \emph{monotonic quasipositive braid} is one admitting a factorization into monotonic positive bands. Finally, we call a link $L\subset S^3$ \emph{monotonic quasipositive} if it is the closure of a monotonic quasipositive braid. 

Informally, a monotonic positive band can be described as follows. Consider the $n$ strands of a trivial braid as positioned at heights $z=1,2,\dots, n$ in $\R^3$. Given a positive band, we can identify a core arc $c$ with endpoints on the original braid strands onto which the band retracts. A band is monotonic if the height function $(x,y,z) \mapsto z$ restricts to $c$ without critical points, i.e.\ if $c$ appears approximately vertical with respect to the $z$-height. See the left of Figure \ref{fig:monotonic} for a non-monotonic band (in blue).

\begin{proposition}[{\cite[Proposition 3]{apostolakis2013lefschetz}}]
Let $\beta$ be a quasipositive braid. Then $\beta$ is positive Markov stabilization equivalent to a monotonic quasipositive braid. In particular, if $L\subset S^3$ is a quasipositive link, then $L$ is monotonic quasipositive.    
\end{proposition}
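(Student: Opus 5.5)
The plan is to argue combinatorially, converting each non-monotonic positive band into a product of monotonic ones, at the cost of positive Markov stabilizations. Write $\beta = \prod_{k=1}^m w_k \sigma_{i_k} w_k^{-1}$ as a product of positive bands. Each band retracts onto a core arc $c_k$ joining two strands of the trivial $n$-braid, placed at heights $z=1,\dots,n$. The band is monotonic exactly when the height function has no interior critical points on $c_k$. So I would induct on the total number of interior critical points of height along all core arcs, after isotoping each arc so that this number is minimal.

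For the inductive step, take a band whose core arc $c$ has an interior local extremum, say a local maximum lying between strand levels $j$ and $j+1$ and above the higher endpoint. Perform a positive Markov stabilization by adding a new strand $n+1$ at the top, connected by a positive crossing $\sigma_n$. After conjugating, I would route this new strand so that it passes exactly through the turning point of $c$. The arc $c$ then splits into two subarcs, each of which has one fewer critical point. The band $w\sigma_i w^{-1}$ becomes, up to braid isotopy, a product of two positive bands whose cores are these subarcs. This is the standard identity used in the Rudolph/Orevkov calculus: a band along $c$ equals the composite through a new sheet, together with the stabilizing $\sigma_n$, which is itself a monotonic band. Local minima are handled symmetrically, either by adding the new strand at the bottom and relabeling, or by first conjugating so the extremum becomes a maximum.

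The step I expect to be the main obstacle is checking that this splitting does not create new critical points on the other bands. The new strand has to be threaded through the other core arcs, and it could introduce extrema on them. I would first try to choose the stabilization strand so that, in the $(x,z)$ projection, it runs monotonically and at the top level, crossing the other cores only transversally. Since $n+1$ lies above every existing level, passing over or under the existing arcs changes only the crossing signs $\epsilon_\ell$, not their monotonicity. This is exactly the flexibility that the definition of a monotonic band allows.

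Once every band is monotonic, the result is a monotonic quasipositive factorization of a braid that is positive-Markov equivalent to $\beta$. For the link statement, I would take any quasipositive braid whose closure is $L$. Positive Markov stabilization preserves the closure, so $L$ is the closure of a monotonic quasipositive braid.
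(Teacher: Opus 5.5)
Your overall strategy matches the mechanism the paper indicates: induct on interior critical points of the cores, stabilize positively, and cut a non-monotone core at an extremum through the new strand. The paper defers the proof to Apostolakis--Piergallini--Zuddas and only says to stabilize in an appropriate location, then slide the non-monotone band across the stabilizing band. But your key step fails as stated.

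\textbf{The splitting identity.} A positive band has exponent sum $1$, so $w\sigma_i w^{-1}$ can never become a product of two positive bands. Your bookkeeping ($m-1$ untouched bands, two subarc bands, and the surviving monotone $\sigma_n$) gives $m+2$ bands for a braid of exponent sum $m+1$. The correct identity is a Hurwitz slide that \emph{consumes} the stabilizing band. Write $\tau_a$ for the positive half twist along an arc $a$, let $r,t$ be the endpoints of $c$, and let $q$ be the turning point. If the stabilizing core $\gamma$ meets $c$ only at $t$, then $\tau_c\tau_\gamma=\tau_\gamma\,\tau_{\tau_\gamma^{-1}(c)}$ and $\tau_\gamma\tau_c=\tau_{\tau_\gamma(c)}\,\tau_\gamma$. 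For the output to be the two halves of $c$ joined to the new strand, $\gamma$ itself must descend from the new puncture $p_{n+1}$ to $q$ and then run alongside $c$ to $t$. For one of the two orders, $\tau_\gamma^{\mp 1}(c)$ retracts across the thin strip between $c$ and $\gamma$ to the arc that follows $c$ from $r$ up to $q$ and then $\gamma$ up to $p_{n+1}$. The two new cores together then carry one fewer interior critical point than $c$.

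\textbf{Placement of the new strand.} This is the second gap, and it is the source of the obstacle you flag.
\begin{itemize}
\item You cannot conjugate the new strand to $q$. Conjugation applies the same braid to every core, and where the added band sits relative to the old cores genuinely matters. For a suitable arc $\gamma$ from the third puncture, $\sigma_1\tau_\gamma\in B_3$ closes to $m(8_{20})$, while $\sigma_1\in B_2$ closes to the unknot.
\item Your two descriptions conflict: a strand at level $n+1$ does not pass through a turning point at an intermediate level.
\end{itemize}

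\textbf{How to repair it.} Keep $p_{n+1}$ at the top and put all the freedom into $\gamma$. Suppose $\gamma$ meets the vertical ray $\delta$ from $p_{n+1}$ to the boundary only at $p_{n+1}$; this is automatic if $\gamma$ lies below level $n+1$. Cutting along $\gamma\cup\delta$ then gives $\gamma=g(\gamma_0)$ with $g\in B_n$, where $\gamma_0$ is the core of $\sigma_n$. Hence $\beta\tau_\gamma=g\,(g^{-1}\beta g\,\sigma_n)\,g^{-1}$ is a positive stabilization up to conjugacy. The same holds when $\tau_\gamma$ is inserted next to the band along $c$, since the partial products lie in $B_n$ and fix $\delta$. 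No other core is touched, so the threading problem disappears.

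\textbf{Choosing the extremum.} You still need $q$ to be reachable from $p_{n+1}$ by a descending path disjoint from $c$. An arbitrary local maximum above the higher endpoint can be enclosed by other parts of $c$. A \emph{highest} interior local maximum works: an ascending path from just above it could only be capped by a higher interior local maximum of $c$. If $c$ has no interior maximum, take a lowest interior minimum after conjugating by $\Delta$, which reverses height and preserves monotonicity.
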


The mechanism of the proof is depicted in Figure \ref{fig:monotonic}, where we convert a standard (non-monotonic) quasipositive factorization of the knot $m(8_{20})$ into a monotonic quasipositive factorization by first Markov stabilizing in an appropriate location, and then sliding the non-monotonic band across the stabilization band.

\begin{figure}[ht]
	\centering
	\begin{overpic}[width=\linewidth]{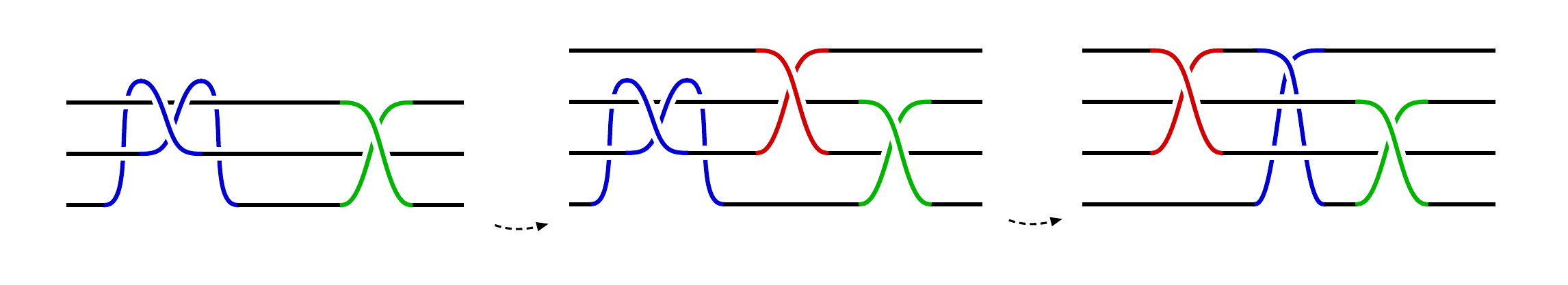}
      
	\end{overpic}
	\caption{On the left is a minimal index quasipositive factorization of the knot $m(8_{20})$. In the middle, we perform a quasipositive Markov stabilization by increasing the braid index with an additional red positive band. On the right, we have slid the blue non-monotonic band across the red positive band. The result is a monotonic quasipositive braid.}
	\label{fig:monotonic}
\end{figure}

\subsection{Some lemmas}\label{subsec:lemmas-for-algo}

Given an abstract open book decomposition $(\Sigma, \phi)$ for a contact $3$-manifold $(Y, \xi)$ with $\phi$ presented as a factorization into Dehn twists, one may produce a contact-$(\pm 1)$ surgery diagram for $(Y, \xi)$ by embedding $\Sigma$ into $(S^3, \xi_{\mathrm{st}})$ as a symplectic ribbon neighborhood of a Legendrian graph with appropriately chosen $1$-handles, and then applying the Legendrian realization principle to Reeb shifts of the Dehn twist generators to draw their front projections. Right-handed twists each induces a contact-$(-1)$ surgery, and left-handed twists each induces a contact-$(+1)$ surgery. A practical implementation was first detailed by Avdek \cite{avdek2013surgery} in the case that the page has connected boundary and the Dehn twists involve standard Lickorish generators; a fully general algorithm for any page and any twist curves was recently described by Stenhede \cite{stenhede2026algorithmlegendrianrealizecurve}.  

Our algorithm can be seen as a further generalization, as the Weinstein handle attaching sphere described by Lemma \ref{lemma:NLF_2handle} is not embedded in a page of the open book decomposition, but rather in a neighborhood of the page. To that end, we begin with the following lemma. 

\begin{lemma}\label{lemma:algo1}
Consider the contact manifold $(P\times \R_z, \ker(dz + \lambda))$, where $P$ is a pair of pants and $(P, \lambda)$ is a Liouville domain. Let $C$ be a smoothly embedded curve in $P\times \R$ as on the left of Figure \ref{fig:2handle-graph}. Then there is a Legendrian realization of $C$ in $P\times \R_z$ and a contact embedding $\iota: P\times \R_z \hookrightarrow (S^3, \xi_{\mathrm{st}})$ such that 
\begin{enumerate}
    \item $\iota(P\times \{0\})$ is a symplectic ribbon neighborhood of the black Legendrian graph $G$ depicted on the right side of Figure \ref{fig:2handle-graph}, and 
    \item $\iota(C)$ is given by the red front projection.
\end{enumerate}
Moreover, the Lagrangian projection of $C$ in $P$ agrees with the left of Figure \ref{fig:2handle-graph}. 
\end{lemma}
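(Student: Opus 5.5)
The proof will be constructive, assembled from three standard ingredients: a model embedding of the pair of pants as a ribbon of a Legendrian graph, Legendrian lifting of curves in the contactization, and the piecewise description of the attaching sphere from the proof of Lemma~\ref{lemma:NLF_2handle}.

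\textbf{Step 1: the contact embedding.} Take $G$ to be the black Legendrian graph on the right of Figure~\ref{fig:2handle-graph}, for example a theta-shaped graph or a Legendrian unknot with one extra Legendrian arc. Its standard ribbon $R_G\subset(S^3,\xi_{\mathrm{st}})$ is a surface deformation retracting onto $G$, with $\xi_{\mathrm{st}}|_{R_G}$ tangent to $R_G$ along $G$ and $d\alpha_{\mathrm{st}}|_{R_G}>0$. Since $G$ has Euler characteristic $-1$ and its ribbon is planar with three boundary components, $R_G\cong P$. By the standard neighborhood theorem for ribbons (as used by Avdek and Stenhede), a neighborhood of $R_G$ is contactomorphic to $(R_G\times(-\epsilon,\epsilon)_z,\ker(dz+\lambda))$ with $\lambda=\alpha_{\mathrm{st}}|_{R_G}$. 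Any Liouville form on $P$ is homotopic to this one through Liouville forms, so by Gray stability we may assume $\lambda$ is this restriction. Rescaling $z$ (replacing $\lambda$ by $c\lambda$ gives a strict contactomorphism after rescaling) then yields $\iota$ on $P\times\R_z$, or at least on a slab containing $C$, which is all that is needed. This establishes item (1).

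\textbf{Step 2: Legendrian realization of $C$.} The curve $C$ is specified by its projection to $P$, the immersed curve on the left of Figure~\ref{fig:2handle-graph}, with one double point and prescribed crossing data. A closed immersed curve $\gamma\subset P$ lifts to a Legendrian in $P\times\R_z$ exactly when $\int_\gamma\lambda=0$, in which case $z=-\int\lambda$ along $\gamma$. I would first isotope $\gamma$ within its regular homotopy class, keeping the double point, so that the signed area condition holds. This can be done by adding or removing area in the two lobes, using Stokes on the regions cut out by $\gamma$ and the boundary components. The crossing data then imposes the inequality that the $z$-difference at the double point, which equals the area of one lobe counted with the $\lambda$-boundary terms, have the sign dictated by Figure~\ref{fig:PLplot}: the $t=+1$ strand lies above. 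I will arrange this by enlarging the appropriate lobe. This step also gives the final claim, that the Lagrangian projection agrees with the figure.

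\textbf{Step 3: the front.} Compose with $\iota$ and compute the front projection in $S^3$. Along each edge of $G$, the ribbon is a thin band, and a curve running near that edge in $P$ appears in the front as a parallel copy of the edge, shifted vertically by the $z$ value. Crossing a vertex or a twist of the band produces the standard local pictures (cusps and crossings), as in the Avdek--Stenhede algorithm. Reading off these local pieces, one edge at a time, should reproduce the red front.

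I expect the main obstacle to be consistency in Step 3. The heights accumulated along $\gamma$ must produce the single crossing on the correct side, and the total rotation of the front must match the claimed writhe $+1$ page framing from the remark after Lemma~\ref{lemma:NLF_2handle}. I would check this first by computing $tb$ and the rotation number of the red front and comparing them with the page framing.
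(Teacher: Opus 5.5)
Your Steps 1 and 2 are sound in outline. The Reeb-flow neighborhood of the ribbon of $G$ gives the contact embedding, and imposing $\int_\gamma\lambda=0$ on the figure-eight projection does give a Legendrian realization with the required Lagrangian projection.

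The gap is Step 3, which is the actual content of the lemma. Item (2) says $\iota(C)$ is a specific front drawn \emph{together with} $G$. The algorithm relies on exactly this position of $\iota(C)$ relative to $G$, and hence relative to the dotted circles that later thread the holes of $P$. Your Step 2 produces some curve $\gamma$ with suitable lobe areas, but says nothing about where $\gamma$ runs relative to the edges and cycles of $G$. So the edge-by-edge reading has no input.

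The proposed check cannot replace this computation, for two reasons. First, $\mathrm{tb}$ and rotation number of the red front are invariants of the knot in $S^3$ and do not see its position relative to $G$. Second, comparing $\mathrm{tb}$ (Seifert framing in $S^3$) with the writhe $+1$ page framing already requires knowing how the ribbon of $G$ twists in $S^3$, i.e.\ knowing $\iota$ explicitly.

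What is missing is a Legendrian realization built from pieces whose fronts are known in advance. The paper proceeds as follows.
\begin{enumerate}
\item Use Giroux flexibility (a Liouville homotopy of $\lambda$) to make the skeleton of $P$ a Legendrian barbell graph, with $\lambda=e^s\,d\theta$ on collars of the inner boundary components $A,B$.
\item Take $C_{z_0},C_{z_1}$ to be Reeb pushoffs, at heights $z_0<z_1$, of the bar of the barbell. Extend them to $A$ and $B$ along Liouville flow lines, which are Legendrian at constant $z$.
\item Take $C_A,C_B$ to be leaves of the characteristic foliation of the vertical cylinders $A\times\R$, $B\times\R$, each winding once around and joining height $z_0$ to $z_1$. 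The collar levels are chosen so that the two height changes agree; this is where your condition $\int_\gamma\lambda=0$ is enforced.
\item Let $C$ be the Legendrian smoothing of $C_A\cup C_{z_0}\cup C_{z_1}\cup C_B$. Perturbing the two bar copies so they cross once gives the figure-eight.
\item Embed via a trivalent Legendrian graph $G'$ adapted to this skeleton. Then the bar copies are literally vertical shifts of the bar's front, and $C_A,C_B$ are arcs running once around the two cycles. A Legendrian graph isotopy $G'\to G$ carries this to the red front.
\end{enumerate}

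This model also corrects a point in your Step 2. Under $\int_\gamma\lambda=0$, the height jump at the double point equals, up to sign, $\mathrm{area}(R_A)-a=\mathrm{area}(R_B)-b$. Here $R_A$ is the annulus between the lobe and $A$, and $a=\int_A\lambda>0$ with the boundary orientation. So its sign is not controlled by enlarging one lobe relative to the other.

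Moreover, the crossing in the figure (writhe $+1$ after capping) is opposite to that of the planar figure-eight lift. It therefore forces both lobes to be tight, with the boundary actions dominating. This is exactly the collar regime of the construction above.
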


\begin{figure}[ht]
	\centering
	\begin{overpic}[width=\linewidth]{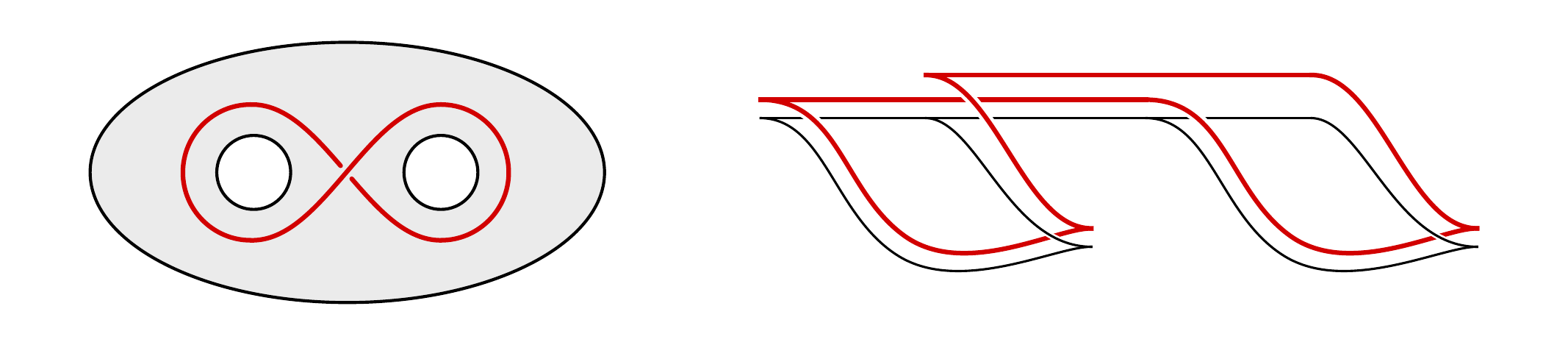}
        \put(300,18){\small $G$}
        \put(296,38){\small \textcolor{darkred}{$\iota(C)$}}
        \put(160,18){\small $P$}
        \put(143,43){\small \textcolor{darkred}{$C$}}
	\end{overpic}
	\caption{Legendrian realization of $C$ in the front projection.}
	\label{fig:2handle-graph}
\end{figure}

\begin{proof}
By Giroux flexibility, we may assume that after an isotopy of $P = P\times \{0\}$ in $P\times \R_z$ (equivalently, by a Liouville homotopy of $\lambda$) the oriented characteristic foliation of $\xi$ on $P$ is given as in the left side of Figure \ref{fig:columns}. In particular, the Liouville skeleton of $P$ is the bold gray trivalent Legendrian barbell graph in Figure \ref{fig:columns},
consisting of singular points of the characteristic foliation. 

\begin{figure}[ht]
	\centering
	\begin{overpic}[width=\linewidth]{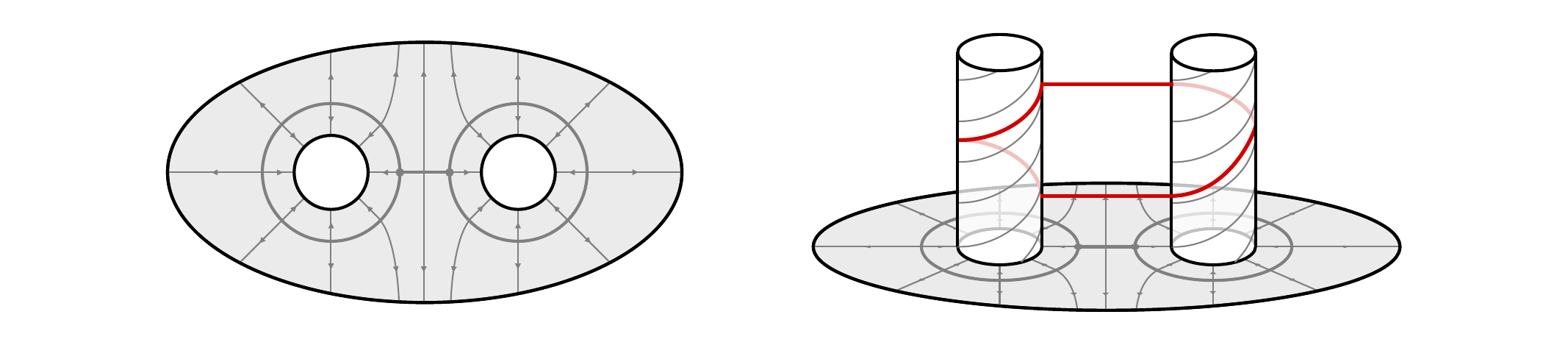}
        \put(294,48){\footnotesize \textcolor{darkred}{$C_{z_0}$}}
        \put(294,76){\footnotesize \textcolor{darkred}{$C_{z_1}$}}
        \put(246,53){\footnotesize \textcolor{darkred}{$C_A$}}
        \put(344,55){\footnotesize \textcolor{darkred}{$C_B$}}
	\end{overpic}
	\caption{Piecewise-linear Legendrian realization of $C$.}
	\label{fig:columns}
\end{figure} 

Let $A,B$ denote the two inner components of $\partial P$. Because $A$ and $B$ are convex boundary components of the Liouville domain $P$, a collar neighborhood of $A$ can be identified with $(-\delta,0]_s \times A$ with Liouville form $e^s\, d\theta$, where $\theta$ is a coordinate on $A$ oriented as the boundary of $P$. The oriented characteristic foliation on the vertical cylinder $A \times \R$ induced by the contact form $dz + e^s\, d\theta$ is then directed by $\partial_z + \partial_{\theta}$. The same model holds near $B$. This is drawn on the columns on the right side of Figure \ref{fig:columns}.

We identify a piecewise-smooth Legendrian $C_{\mathrm{PL}} = C_A \cup C_{z_0} \cup C_{z_1} \cup C_B$ as follows. Let $C_{z_0}, C_{z_1}$ be Reeb shifts at $z=z_0,z_1$ with $z_0 < z_1$ of the handle of the barbell graph, extended along the characteristic foliation of (parallel copies of) $P$ so that their endpoints lie on $A \times \{z_i\}$ and $B \times \{z_i\}$, $i=0,1$. Let $C_A$ be a leaf of the characteristic foliation of the $A\times \R$ which makes a full rotation around $A$ and connects $A \times \{z_0\}$ and $A \times \{z_1\}$; note that we choose $z_0 < z_1$ accordingly with respect to the action of the characteristic foliation. Define $C_B$ analogously. Then, we define $C$ to be the natural Legendrian smoothing of $C_{\mathrm{PL}}$. After a further generic perturbation to produce transverse double points of the projection, the Lagrangian projection to $P$ of $C$ agrees with the left side of Figure \ref{fig:2handle-graph}, giving the "moreover" statement of the lemma. 

\begin{figure}[ht]
	\centering
	\begin{overpic}[width=0.9\linewidth]{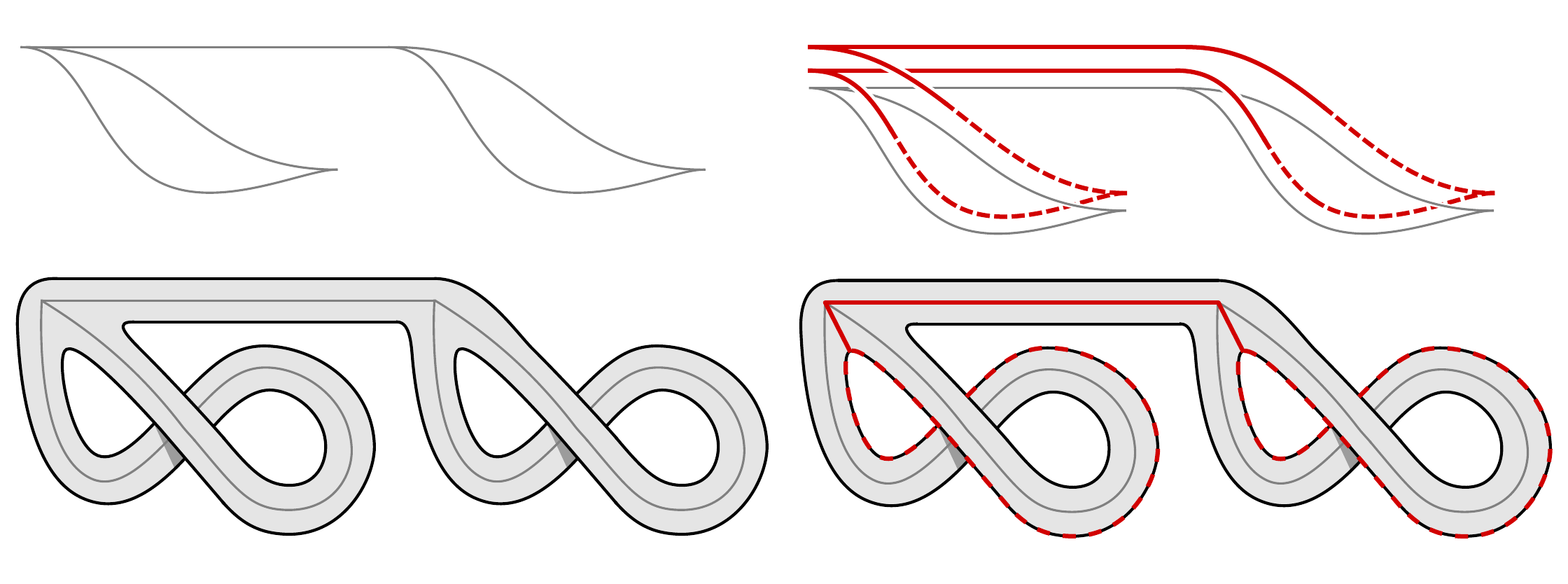}
        \put(5,110){\small \textcolor{gray}{$G'$}}
	\end{overpic}
	\caption{Front projections (top) and Lagrangian projections (bottom) of $G'$ and its symplectic ribbon neighborhood.}
	\label{fig:PLmodel}
\end{figure} 

Next we describe the embedding into $(S^3, \xi_{\std})$. It is a standard fact that a $1$-jet neighborhood of the trivalent Legendrian graph $G'$ on the left of Figure \ref{fig:PLmodel}, i.e.\ that a symplectic ribbon neighborhood of $G'$ is a pair of pants. Here we use $G'$ as an intermediate graph, as it is more directly adapted from the characteristic foliation of $P$; we then pass to $G$ primarily for ease of drawing diagrams. 
Front projections of $C_{z_0}, C_{z_1}$ then correspond to the dark red segments on the right side of the figure, and front projections of $C_A$ and $C_B$ are dashed. The Legendrian graph $G'$ is Legendrian graph isotopic to $G$, and the Legendrian smoothing of $C_{\mathrm{PL}}$ is Legendrian isotopic to $\iota(C)$. This completes the proof. 
\end{proof}

Lemma \ref{lemma:algo1} can be interpreted as identifying the Legendrian attaching sphere of a $2$-handle corresponding to a standard Artin braid generator. This is already sufficient for some interesting examples; see \S\ref{subsubsec:hopf-ex} and \S\ref{subsubsec:trefoil-ex} below. We obtain a generalization to monotonic quasipositive band as follows. 

\begin{figure}[ht]
	\centering
	\begin{overpic}[width=0.8\linewidth]{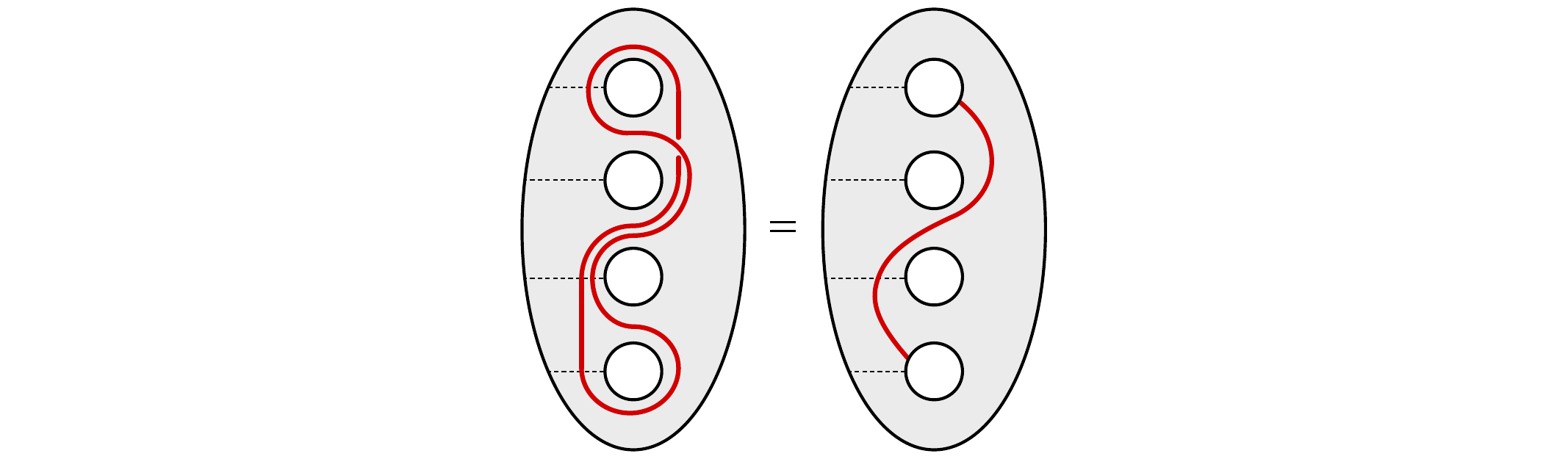}
        
	\end{overpic}
	\caption{The $2$-handle attaching sphere (left) associated to a monotonic quasipositive band core (right).}
	\label{fig:qp-band}
\end{figure} 

First, given a punctured disk representing a disk transverse to the given quasipositive braid, we record dashed horizontal lines to track the intersection with the braid axis disks. Associated to a monotonic quasipositive band is the $2$-handle locally given by Lemma \ref{lemma:NLF_2handle}; for example, see the left side of Figure \ref{fig:qp-band}. In standard fashion we switch to recording the properly embedded arc in the punctured disk which is the core of the band, so that the corresponding mapping class element is the half Dehn twist along the arc; this is depicted in the second frame of the same figure. 

\begin{lemma}\label{lemma:qp_band_algo}
Let $S$ be a compact planar surface with $n$ inner boundary components and marked braid axis disk intersections, and let $\lambda$ be a Liouville form so that $(S, \lambda)$ is a Liouville domain. Let $b$ be a properly embedded arc representing the core of a monotonic quasipositive band generator, and let $C$ denote the smoothly embedded curve in $P\times \R$ which is the associated $2$-handle attaching sphere furnished by Lemma \ref{lemma:NLF_2handle}. Then there is a Legendrian realization of $C$ in $(P\times \R, \ker(dz + \lambda))$ and a contact embedding $\iota: P\times \R \hookrightarrow (S^3, \xi_{\mathrm{st}})$ such that 
\begin{enumerate}
    \item $\iota(P\times \{0\})$ is a symplectic ribbon neighborhood of the Legendrian graph depicted at the top of Figure \ref{fig:qp-recipe}, and 
    \item $\iota(C)$ is obtained by assembling the local pieces, according to $b$, depicted in red in the rest of the figure.
\end{enumerate}
Moreover, the Lagrangian projection of $C$ in $P$ agrees with the projection of $C$ in $S$ as given locally by Lemma \ref{lemma:NLF_2handle}.
\end{lemma}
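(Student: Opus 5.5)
The plan is to reduce Lemma \ref{lemma:qp_band_algo} to Lemma \ref{lemma:algo1}, using the same two tools: Giroux flexibility to fix the characteristic foliation, and a piecewise-smooth Legendrian model that is smoothed at the end.

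First, by Giroux flexibility (equivalently, a Liouville homotopy of $\lambda$), we put the characteristic foliation of $S\times\{0\}$ in a standard form. The skeleton should be a Legendrian graph made of a central spine with one edge running out to each inner boundary component. These edges are positioned along the marked braid axis disk intersections, i.e.\ the dashed horizontal levels. Near each inner boundary component $A_k$, we use the collar model $(-\delta,0]_s\times A_k$ with form $e^s\,d\theta$. There the characteristic foliation of $A_k\times\R$ is directed by $\partial_z+\partial_\theta$, exactly as in the proof of Lemma \ref{lemma:algo1}. A $1$-jet neighborhood of this graph embeds into $(S^3,\xi_{\mathrm{st}})$. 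Its front is the Legendrian graph at the top of Figure \ref{fig:qp-recipe}, and this gives the embedding $\iota$ and item (1).

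Next, we build the piecewise Legendrian model of $C$. Since $b$ is monotonic, it meets the braid axis levels between its endpoint strands $i<j$ once each, with no backtracking. By Lemma \ref{lemma:NLF_2handle}, $C$ is isotopic to the boundary of a thickening of $b$, twisted as in Figure \ref{fig: exotic-cobordism}. It splits into:
\begin{enumerate}
\item two arcs $C_{z_0},C_{z_1}$ parallel to $b$, at heights $z_0<z_1$;
\item two helical arcs $C_{A_i},C_{A_j}$ around the end components, each making one full turn along the foliation $\partial_z+\partial_\theta$.
\end{enumerate}
We take $C_{z_0}$ and $C_{z_1}$ to be Reeb shifts of a Legendrian lift of $b$, routed along the skeleton. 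Monotonicity lets us route $b$ so that each time it passes an intermediate strand $\ell$, it follows the skeleton either over or under that strand's edge, according to the sign $\epsilon_\ell$. This local choice is exactly one of the local red pieces in Figure \ref{fig:qp-recipe}. The end pieces are the same as $C_A$ and $C_B$ in Lemma \ref{lemma:algo1}. Concatenating these pieces in the order prescribed by $b$ gives item (2), after the usual Legendrian smoothing of the corners.

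Finally, for the ``moreover'' statement, we check that a generic perturbation of the smoothing has Lagrangian projection to $S$ planar isotopic to the local picture from Lemma \ref{lemma:NLF_2handle}. We also check that the crossing at the twist has the correct sign. As in that proof, this comes from the $z$-ordering $z_0<z_1$: this ordering is forced by the direction of the characteristic foliation on the helices, and it matches the base-coordinate ordering computed there.

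I expect the main obstacle to be the middle step. The difficulty is making sure that, where $b$ passes an intermediate strand with $\epsilon_\ell=\pm1$, the two Reeb-shifted copies can be chosen consistently. They must stay on the same side of that strand's edge, and the front local pieces must glue without extra zigzags that would change the Legendrian isotopy class. My first approach would be to verify this by treating each intermediate crossing as a Legendrian isotopy of a $1$-jet neighborhood of a single edge, so that the check reduces to two explicit local models.
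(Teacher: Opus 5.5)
Your overall route is the paper's. Its proof is a direct adaptation of Lemma \ref{lemma:algo1}: fix the characteristic foliation by Giroux flexibility, take Reeb-shifted copies $C_{z_0},C_{z_1}$ of skeleton branches extended along leaves of the foliation, close up with one helical turn in the collar of each endpoint component, use monotonicity to cut the band into the elementary pieces of Figure \ref{fig:qp-recipe}, and smooth.

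The step that needs correcting is your choice of skeleton, and it is what creates the obstacle you anticipate.

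\emph{Why your skeleton fails.} A ``central spine with one edge running out to each inner boundary component'' has no loop around the inner boundary components; as written it is a tree. So it cannot be the Liouville skeleton of $S$. Even if you cap each spoke with a loop, passing an intermediate component on the side away from the spine forces each Reeb-shifted copy to run out along that spoke, around the loop, and back. That doubling back is exactly the source of the extra zigzags you are worried about.

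\emph{What the paper does instead.} Near each inner boundary component, the singular set is an $S^1$ encircling that component together with arcs joining it to the adjacent regions. This is a chain of circles generalizing the barbell of Lemma \ref{lemma:algo1}, i.e.\ the $n$-barbell graph $G_n$ used in the algorithm. With this skeleton, a monotonic core from strand $i$ to strand $j$ is isotopic to a path that runs along the connecting arcs. At each intermediate strand $\ell$, it follows the left or right half of the $\ell$-th circle according to which side of that component $b$ passes. In the page, this side choice is what $\epsilon_\ell$ records; ``over or under an edge'' has no meaning in $S$ itself.

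\emph{Why this removes your obstacle.} Both $C_{z_0}$ and $C_{z_1}$ follow the same half-circle, so the consistency check you flag is automatic. Each region then contributes one of finitely many local pieces. Monotonicity is used precisely to guarantee the path never doubles back along the chain, which is why the paper defers the non-monotonic case to Stenhede's algorithm.
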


\begin{figure}[ht]
	\centering
	\begin{overpic}[width=\linewidth]{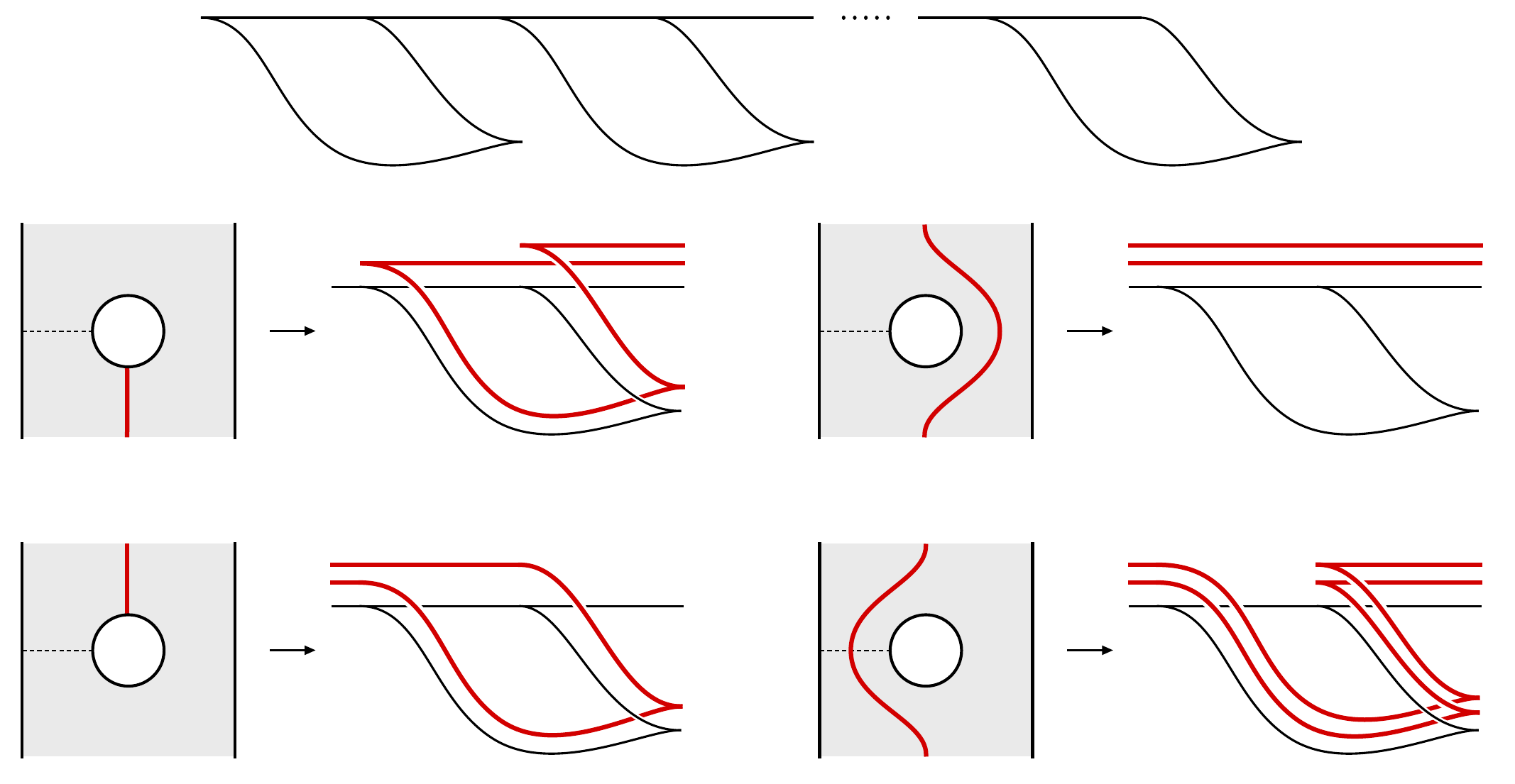}
        
	\end{overpic}
	\caption{The recipe for witnessing the $2$-handle attaching sphere associated to a monotonic quasipositive band core $b$.}
	\label{fig:qp-recipe}
\end{figure} 

\begin{proof}
This is a straightforward adaptation of the proof of Lemma \ref{lemma:algo1}. Specifically, we realize a characteristic foliation on $P$ so that in a neighborhood of each inner boundary component (that is, in the regions depicted in Figure \ref{fig:qp-recipe}) the singular set of the foliation consists of an $S^1$ surrounding the boundary component and two arcs connecting this $S^1$ to the other regions. Piecewise smooth lifts of the branches of the singular set, as in Figure \ref{fig:columns}, smooth to the front projections depicted in Figure \ref{fig:qp-recipe}. Monotonicity then allows us to decompose a given quasipositive band generator into the elementary pieces in Figure \ref{fig:qp-recipe}. (Without assuming monotonicity, the recipe would be more complicated, necessitating an adaptation of Stenhede's work \cite{stenhede2026algorithmlegendrianrealizecurve}.)
\end{proof}

\subsection{The algorithm}\label{subsec:algo-NLF}

Let $\beta = b_1b_2\cdots b_k$ be a quasipositive $n$-braid with each $b_i$ a quasipositive band generator. Assume that the associated quasipositive surface $\Sigma(\beta) \subset B^4$ filling $\hat{\beta} = \partial \Sigma(\beta)$ is connected. To draw a Weinstein handlebody diagram for the exterior $B^4 \setminus \nu(\Sigma(\beta))$: 

\begin{enumerate}
    \item Let $G_n = G_n^1$ denote a Legendrian $n$-barbell graph as depicted in Figure \ref{fig:algo-start}. Draw $k-1$ small Reeb pushoffs $G_n^2, \dots, G_n^k$ of $G_n^1$, together with $n$ dotted circle $1$-handles linking the graphs as indicated in the figure. 

    \begin{figure}[ht]
	\centering
	\begin{overpic}[width=\linewidth]{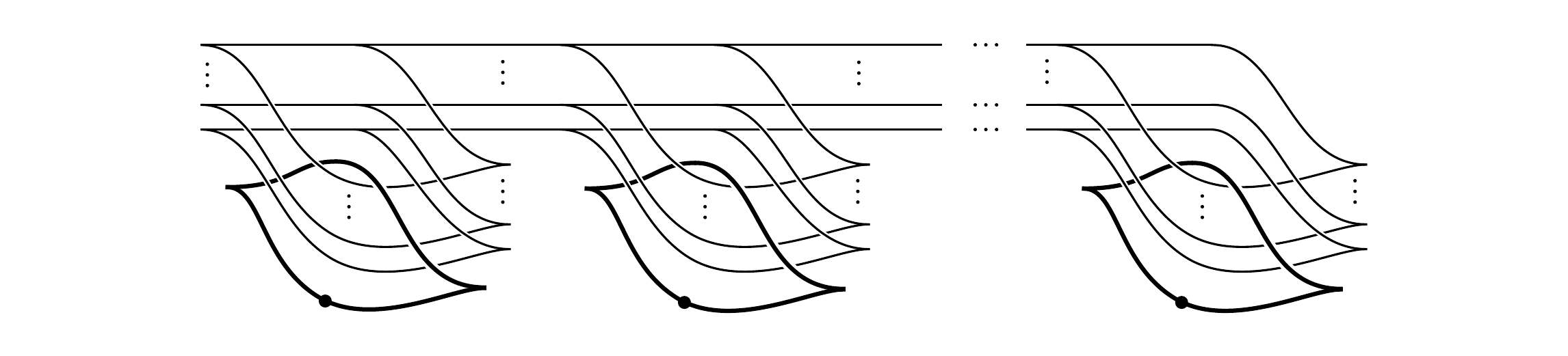}
        \put(43,57.5){\tiny $G_n^1$}
        \put(43,67.5){\tiny $G_n^2$}
        \put(43,82.5){\tiny $G_n^k$}
	\end{overpic}
	\caption{Step (1) of the algorithm.}
	\label{fig:algo-start}
\end{figure} 

    \item In a small neighborhood of $G_n^i$, apply Lemma \ref{lemma:qp_band_algo}, specifically the recipe on elementary pieces in Figure \ref{fig:qp-recipe}, to Legendrian realize the attaching sphere associated to the quasipositive band $b_i$. 

    \item Erase the graphs $G_n^1, \dots, G_n^k$ and attaching Weinstein handles along all of the Legendrian realizations from the previous step. 

    \item Optionally, perform standard Legendrian Kirby calculus to simplify the resulting presentation or convert the dotted circle $1$-handles into standard Gompf form. 
\end{enumerate}

\begin{proof}[Proof of algorithm veracity.]
Let $S$ be a compact planar surface with $n$ inner boundary components and marked braid axis disk intersections. By \S \ref{subsec:panlf_weinstein}, the Weinstein structure on $B^4 \setminus \nu(\Sigma(\beta))$ is obtained by first starting with the standard subcritical Weinstein structure on $S \times D^2$ and then attaching $2$-handles according to Lemma \ref{lemma:NLF_2handle}. 

Viewing $S\times D^2 \to D^2$ as a trivial Lefschetz fibration, the induced open book decomposition on the boundary $\partial (S \times D^2) \cong \#^n (S^1 \times S^2)$ has page diffeomorphic to $S$ and identity monodromy. Note that the subcritical Weinstein structure on $S \times D^2 \cong \natural^n (S^1 \times B^3)$ is obtained by digging out the $n$-many braid axis disks, given by dotted circle $1$-handles, which we may Legendrian realize. A fiber surface of the induced open book on the boundary may be identified with the symplectic ribbon neighborhood of $G_n^1$.  A sequence of successive pages of the open book is obtained by short Reeb pushoffs of $G_n^1$. The Weinstein handles described by Lemma \ref{lemma:NLF_2handle} are then concretely witnessed by applying Lemma \ref{lemma:qp_band_algo} to each band generator along these successive pages. 
\end{proof}

\subsection{Examples}\label{subsec:algo-exs} We present three examples of Weinstein structures on complements of quasipositive surfaces: the annulus filling of a positive Hopf link, the genus-$1$ filling of a trefoil, and a slice disk filling $m(8_{20})$.

\subsubsection{The Hopf link}\label{subsubsec:hopf-ex} The simplest non-trivial example is the positive Hopf link, which is the closure of the positive $2$-braid $\beta = \sigma_1^2$ and thus bounds a quasipositive annulus in $B^4$; see the top left of Figure \ref{fig:hopf-ex}. The standard method \cite[\S 6]{gompf1999manifolds} for carving out ribbon surfaces produces a smooth handlebody decomposition of the exterior as drawn below the braid closure. It is straightforward to find a Stein realization of the handlebody and has been studied before; see, for example, \cite[\S 4.4]{Casals_Murphy_affine}. We verify that our perspective gives a new route to the same diagram.

The corresponding nearly Lefschetz fibration supporting the exterior of the Hopf annulus has as smooth fiber a pair of pants, and has two exotic fibers corresponding to the two positive bands. The associated Weinstein handlebody structure is then $(S^1\times B^3) \natural (S^1 \times B^3)$ with two $2$-handle attachments, and the algorithm described in \S \ref{subsec:algo-NLF} yields the Weinstein handlebody diagram in the top right of Figure \ref{fig:hopf-ex}. We have kept the two Legendrian graphs, the skeleta of the two relevant pages of the open book $(S^1\times S^2) \# (S^1 \times S^2)$, for clarity. 

\begin{figure}[ht]
	\centering
	\begin{overpic}[width=\linewidth]{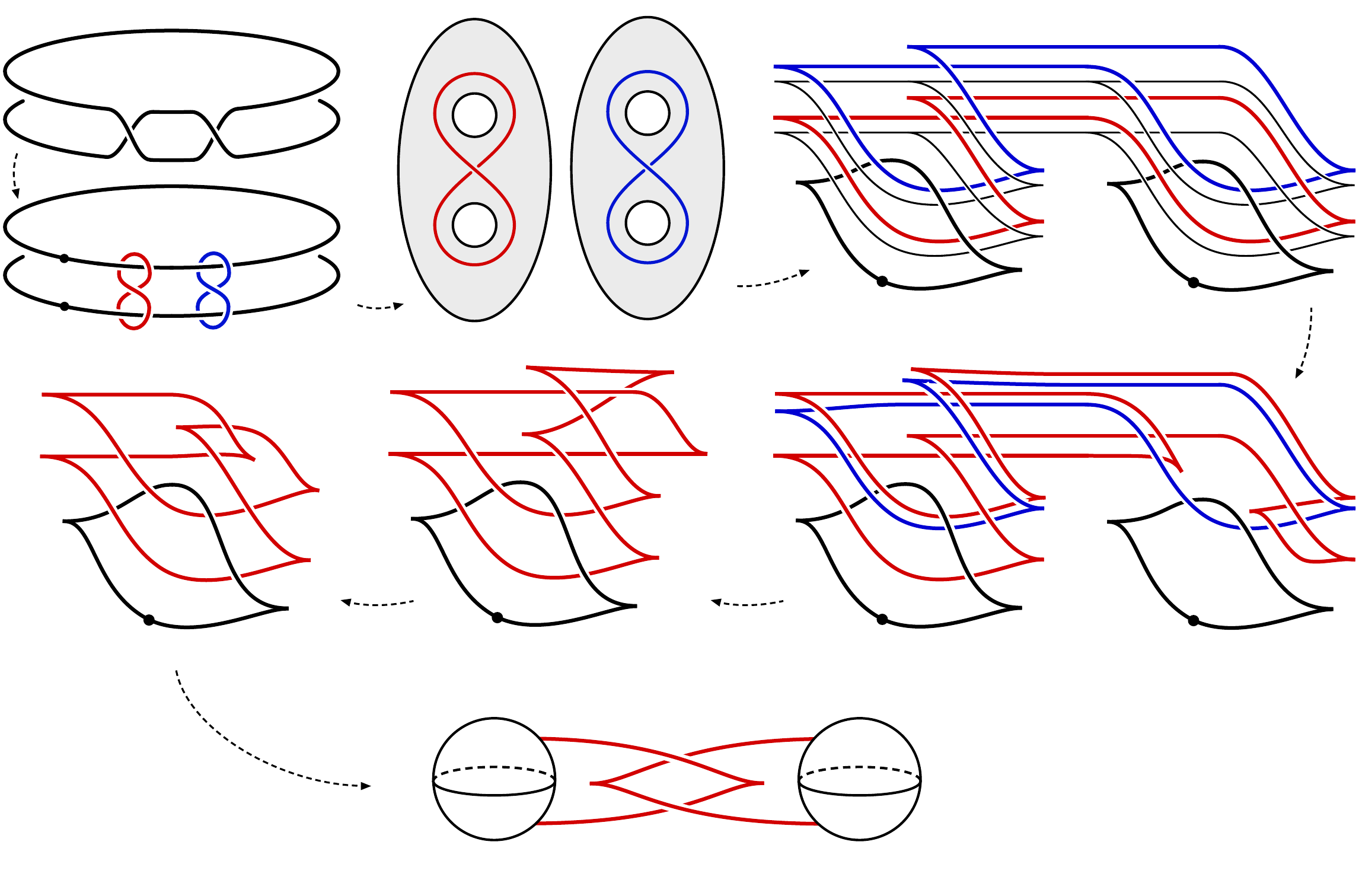}
        \put(40,205){\footnotesize \textcolor{darkred}{$0$}}
        \put(65,205){\footnotesize \textcolor{darkblue}{$0$}}

        \put(145,188){\footnotesize \textcolor{darkred}{$\Lambda_1$}}
        \put(199,188){\footnotesize \textcolor{darkblue}{$\Lambda_2$}}

        \put(410,245){\footnotesize \textcolor{darkred}{$(-1)$}}
        \put(410,255){\footnotesize \textcolor{darkblue}{$(-1)$}}

        \put(410,145){\footnotesize \textcolor{darkred}{$(-1)$}}
        \put(410,155){\footnotesize \textcolor{darkblue}{$(-1)$}}

        \put(210,120){\footnotesize \textcolor{darkred}{$(-1)$}}

        \put(80,150){\footnotesize \textcolor{darkred}{$(-1)$}}

        \put(205,50){\footnotesize \textcolor{darkred}{$(-1)$}}
	\end{overpic}
    \vskip-0.4cm
	\caption{The Weinstein structure on the complement of the Hopf annulus.}
	\label{fig:hopf-ex}
\end{figure}

In the frame below, we have erased the graphs and slid $\Lambda_1$ up across the contact-$(-1)$ surgery along $\Lambda_2$. After this, the $2$-handle attached along $\Lambda_2$ may be canceled with the right most dotted $1$-handle. After a further Legendrian isotopy and then a conversion to a standard $1$-handle picture, we recover the diagram in \cite[Figure 70]{Casals_Murphy_affine}.

\subsubsection{The trefoil}\label{subsubsec:trefoil-ex} The right-handed trefoil, the closure of the positive braid $\beta = \sigma_1^3$, is similar to the Hopf link. Figure \ref{fig:trefoil-ex} depicts the analogous implementation of the algorithm as in Figure \ref{fig:hopf-ex}, with one additional braid generator. From the top right diagram, we slide both the red and blue Legendrians across the green one near the right-most $1$-handle. Afterwards, in the middle, the green $2$-handle cancels said $1$-handle. After some further Legendrian isotopies and then conversion to a standard $1$-handle framework, we produce the handlebody diagram depicted at the botom.

\begin{figure}[ht]
	\centering
	\begin{overpic}[width=\linewidth]{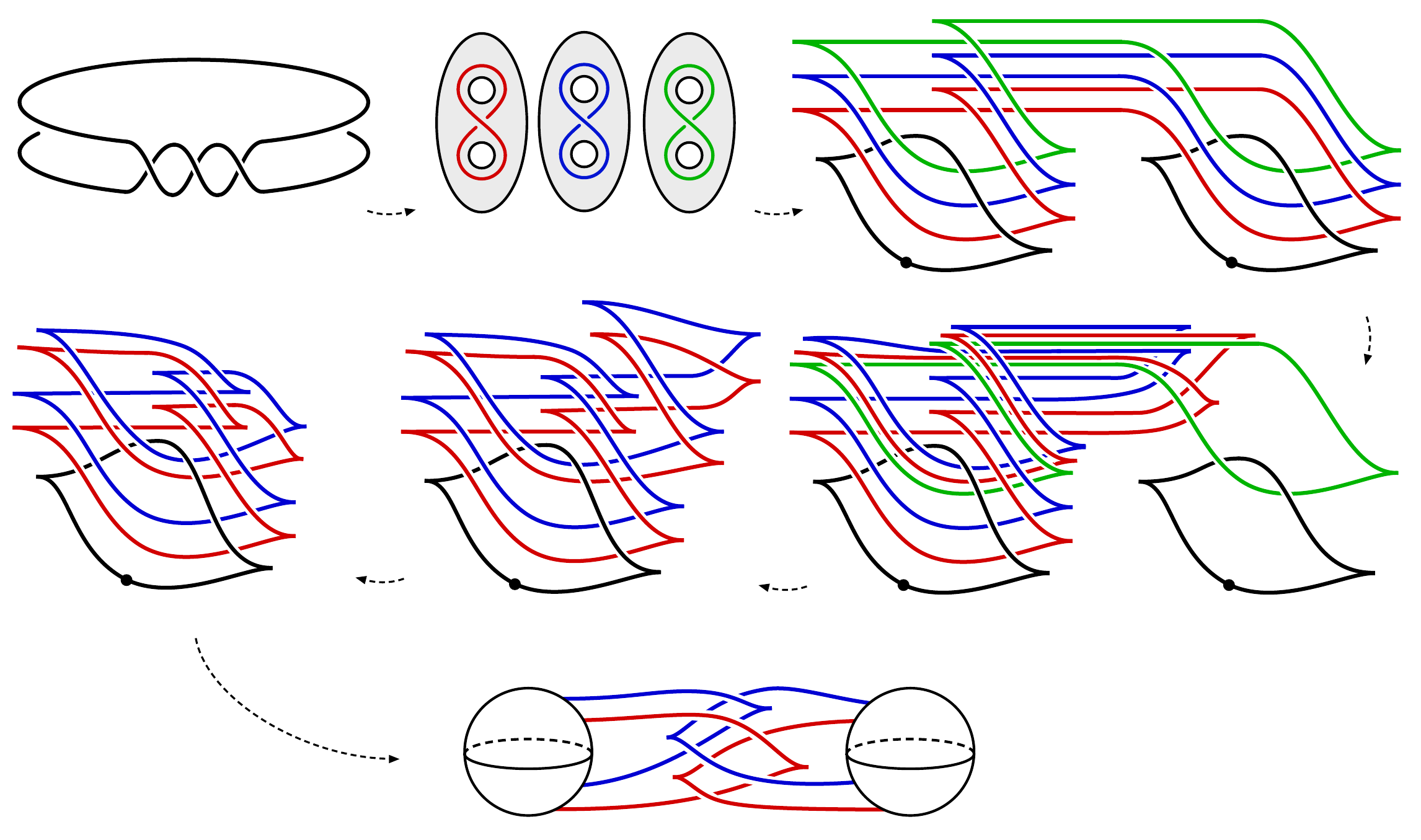}

        \put(410,225){\footnotesize \textcolor{darkred}{$(-1)$}}
        \put(410,235){\footnotesize \textcolor{darkblue}{$(-1)$}}
        \put(410,245){\footnotesize \textcolor{darkgreen}{$(-1)$}}

        \put(410,130){\footnotesize \textcolor{darkred}{$(-1)$}}
        \put(410,140){\footnotesize \textcolor{darkblue}{$(-1)$}}
        \put(410,150){\footnotesize \textcolor{darkgreen}{$(-1)$}}

        \put(210,90){\footnotesize \textcolor{darkred}{$(-1)$}}
        \put(210,100){\footnotesize \textcolor{darkblue}{$(-1)$}}

        \put(85,135){\footnotesize \textcolor{darkred}{$(-1)$}}
        \put(85,145){\footnotesize \textcolor{darkblue}{$(-1)$}}

        \put(195,50){\footnotesize \textcolor{darkred}{$(-1)$}}
        \put(220,50){\footnotesize \textcolor{darkblue}{$(-1)$}}
	\end{overpic}
	\caption{The Weinstein structure on the complement of the trefoil filling.}
	\label{fig:trefoil-ex}
\end{figure}

\subsubsection{$m(8_{20})$}
The previous two examples (the Hopf link and the trefoil) are strongly quasipositive, thus their corresponding braided surfaces are embedded in $S^3$. The simplest knot which is quasipositive but not strongly quasipositive is $m(8_{20})$, so its corresponding braided surface is an immersed ribbon surface in $S^3$. A second difference between $m(8_{20})$ and the prior examples is that the Hopf link and trefoil admit Legendrian representatives which are exact Lagrangian fillable, whereas $m(8_{20})$ does not. Indeed, $m(8_{20})$ is slice, but $\overline{\mathrm{tb}}(m(8_{20})) = -2$; any Legendrian bounding a Lagrangian disk satisfies $\mathrm{tb} = -1$. 

A handlebody diagram for the exterior of a quasipositive slice disk filling $m(8_{20})$ is summarized in Figure \ref{fig:m820}. We begin with the monotonic quasipositive braid whose closure is $m(8_{20})$, as described in Figure \ref{fig:monotonic}, and use the algorithm in \S\ref{subsec:algo-NLF} to produce the first surgery diagram. The rest of the figure is a sequence of modifications, for the sake of simplicity. First, the blue $2$-handle is slid up across the green $2$-handle near the right-most dotted $1$-handle, and said $1$-handle is canceled by the green $2$-handle. Then we slide the two blue strands passing through the remaining right-most $1$-handle across the red $2$-handle, and cancel said $1$-handle with the red $2$-handle. The rest of the figure depicts a sequence of standard Legendrian isotopies and conversion from dotted circles to traditional $1$-handles. 

\begin{figure}[ht]
	\centering
	\begin{overpic}[width=\linewidth]{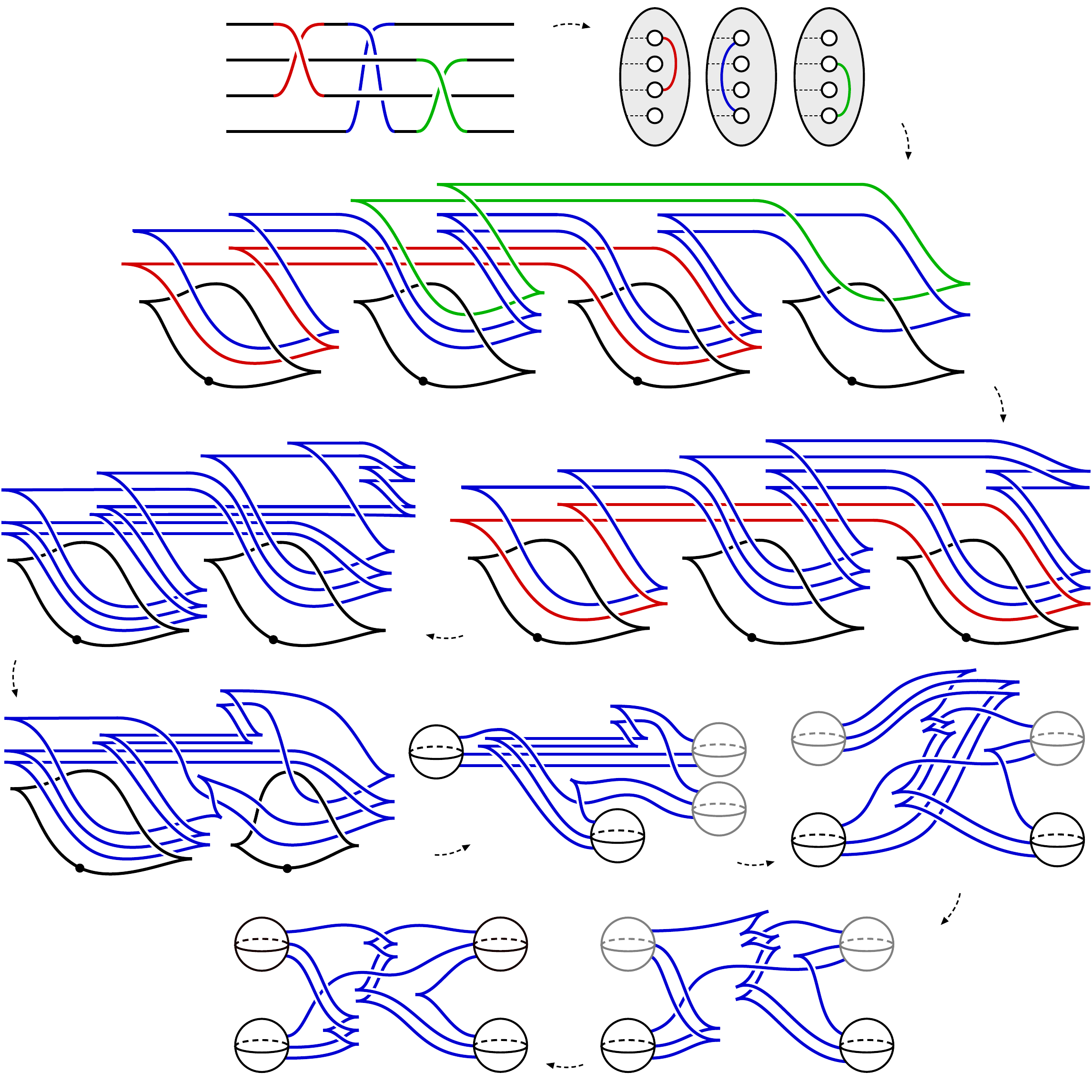}

        \put(385,300){\footnotesize \textcolor{darkred}{$(-1)$}}
        \put(385,310){\footnotesize \textcolor{darkblue}{$(-1)$}}
        \put(385,320){\footnotesize \textcolor{darkgreen}{$(-1)$}}

        \put(400,115){\footnotesize \textcolor{darkblue}{$(-1)$}}

        \put(195,100){\footnotesize \textcolor{darkblue}{$(-1)$}}

        \put(50,250){\footnotesize \textcolor{darkblue}{$(-1)$}}
        \put(60,150){\footnotesize \textcolor{darkblue}{$(-1)$}}

        \put(145,15){\footnotesize \textcolor{darkblue}{$(-1)$}}
        \put(287,15){\footnotesize \textcolor{darkblue}{$(-1)$}}

        \put(212,250){\footnotesize \textcolor{darkblue}{$(-1)$}}
        \put(230,250){\footnotesize \textcolor{darkred}{$(-1)$}}
	\end{overpic}
	\caption{The Weinstein structure on the complement of a $m(8_{20})$ quasipositive slice disk.}
	\label{fig:m820}
\end{figure}

\subsection{From PANLF to PALF, by example}\label{subsec:PANLFtoPALF}

Our algorithm gives a systematic way to produce a PALF on a multisection complement. First, produce a Weinstein surgery diagram from the quasipositive presentation as above, convert it into standard Gompf form, then use an adaptation of known techniques to identify an open book decomposition of $\natural^n (S^1 \times S^2)$ on which the $2$-handle attaching spheres are supported on a page, after which additional right-handed Dehn twists along the attaching spheres gives a PALF presentation of the Weinstein domain. For concreteness, we implement this in the case of the Hopf annulus $A$.  

Let $W:= B^4 \setminus \nu(A)$. Following Figure \ref{fig:hopf-ex}, a Weinstein handlebody diagram for $W$ is obtained by attaching a $2$-handle along the red Legendrian attaching sphere, $\Lambda$, to $S^1 \times B^3$. Using Avdek \cite{avdek2013surgery}, we next build an open book supporting $(S^1 \times S^2, \xi_{\mathrm{st}})$ in which $\Lambda$ embeds on a page, obtaining an open book for $\partial W$ by pre-composing the monodromy with an additional Dehn twist along $\Lambda$. Strictly speaking, Avdek describes how to construct an open book decomposition of $(S^3, \xi_{\mathrm{st}})$ which supports a prescribed Legendrian; we will modify the algorithm to instead work for $(S^1 \times S^2, \xi_{\mathrm{st}})$.\footnote{Alternatively, one may witness the $1$-handle as contact-$(+1)$ surgery along a dotted max-tb unknot $U$, then apply Avdek's algorithm in $S^3$ to the link $U \cup \Lambda$. This approach has the disadvantage that contact-$(+1)$ surgery along $U$ will add a negative Dehn twist to the resulting monodromy factorization.}

Avdek's algorithm for producing supporting open book decompositions from surgery diagrams is rooted in the following theorem of Giroux, stated as \cite[Theorem 2.4]{avdek2013surgery} and proven in \cite{etnyre_obd}. 

\begin{theorem}[{\cite{Giroux2002GeometrieDC}\label{thm:giroux}}]
Suppose that a closed co-oriented contact manifold $(Y,\xi)$ has a cell decomposition with the following properties. 
\begin{enumerate}
    \item The $1$-skeleton $G$ is a Legendrian graph.
    \item Each $2$-cell $D$ is a convex disk with piecewise-Legendrian boundary such that, after performing the unique Legendrian smoothing of $\partial D$, we have $\mathrm{tb}(\partial D) = -1$. 
    \item If $\Sigma$ denotes the Legendrian ribbon of $G$, then for every $2$-cell $D$, we have $|D\cap \partial \Sigma| = 2$.
    \item Every $3$-cell is tight. 
\end{enumerate}
Then $\Sigma$ is the page of a supporting open book decomposition of $(Y, \xi)$. 
\end{theorem}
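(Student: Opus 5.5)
The plan is to follow the standard proof of Giroux's criterion, as written out in \cite{etnyre_obd}. The goal is to show that $Y$ splits as a neighborhood $N(\Sigma\times[-1,1])$ of the ribbon together with a union of solid tori. On these pieces we build an open book with page $\Sigma$ and binding $\partial\Sigma$, and we check that $\xi$ is supported by it, i.e.\ that it is positively transverse to the binding and positive on the pages.

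\textbf{Step 1.} Take a standard neighborhood $N(G)$ of the Legendrian $1$-skeleton. Its boundary is a convex surface whose dividing set is a copy of $\partial\Sigma$. By construction, $N(G)\cong\Sigma\times[-1,1]$, with $\Sigma\times\{t\}$ Reeb pushoffs of the ribbon. Equivalently, we may identify a neighborhood of $\Sigma$ with a piece of a contactization $(\Sigma\times\R,\ker(dz+\lambda))$.

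\textbf{Step 2.} Look at each $2$-cell $D$ and consider $D'=D\setminus N(G)$. Hypotheses (2) and (3) say that $\mathrm{tb}(\partial D)=-1$ and that $\partial D'$ meets the dividing set of $\partial N(G)$ in exactly two points. So $D'$ is a convex disk with a single dividing arc. Thickening it, $N(G)\cup\bigcup N(D')$ is a neighborhood of the $2$-skeleton. Its complement is a union of balls, one for each $3$-cell.

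\textbf{Step 3.} Use that every $3$-cell is tight (hypothesis (4)). By Eliashberg's uniqueness of tight balls with fixed convex boundary, the contact structure on each ball is standard. Gluing the $2$-handle thickenings of the $D'$ to these balls, the complement $Y\setminus N(G)$ becomes a union of solid tori. Each of them is a standard neighborhood of a transverse knot, namely a component of $\partial\Sigma$ pushed off, and its meridian disks are the $D'$-type disks with one dividing arc.

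\textbf{Step 4.} Extend the product structure $\Sigma\times[-1,1]$ across these solid tori by the standard binding model, a disk times $S^1$ with pages meeting it in radial half-disks. Then check that the resulting contact form is a Giroux form for this open book, using the standard neighborhood models from Step 1 and Step 3.

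The main obstacle is the gluing in Step 3. One has to argue that the complement really is a union of standard solid tori whose cores are transverse and isotopic to the binding, with the correct framing. This is where the count $|D\cap\partial\Sigma|=2$ is used, and where convex surface theory (edge-rounding and uniqueness of tight balls) does the work. The argument is the one in \cite{etnyre_obd}, so here we only cite it.
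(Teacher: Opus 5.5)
The paper does not prove this statement itself; it quotes Giroux via Avdek and defers to Etnyre's notes. So the question is whether your sketch correctly summarizes that argument, and it does not. Step~3, and the decomposition announced in your first paragraph, misidentify the complement of the ribbon.

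Let $H=\overline{Y\setminus N(G)}$. Since $G$ is connected, $\partial H=\partial N(G)$ is a single closed surface of genus $g=b_1(G)=1-\chi(\Sigma)$, and $H$ is a single handlebody of that genus. Your own construction confirms this. $H$ is built from $C$ balls (the $3$-cells) and $F$ one-handles (the thickened $D'$), so it has genus $F-C+1=E-V+1=g$, because $V-E+F-C=\chi(Y)=0$.

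So $H$ is not a union of solid tori unless $g=1$. Even then it is never a disjoint union of neighborhoods of the binding components, since $\Sigma$ is then an annulus with two boundary components while $H$ is one solid torus. For instance, take $G$ a $\mathrm{tb}=-1$ Legendrian unknot in $(S^3,\xi_{\mathrm{st}})$, with one vertex, one edge, one $2$-cell and one $3$-cell. There the binding sits on $\partial H$ as its dividing set, not at its core.

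Indeed, a tight solid torus whose meridian disk has one dividing arc is the vertically invariant neighborhood $A\times[0,1]$ of an annulus. That is a product piece, not the binding model with radial pages. In an open book, the complement of a thickened page is the rest of the mapping torus plus the rest of the binding neighborhoods, i.e.\ another copy of $\Sigma\times[0,1]$. So Step~4 has nothing to act on.

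What the disks actually show is that $H$ is a product. Each $D'$ can be given Legendrian boundary on $\partial H$ meeting the dividing set $\Gamma\cong\partial\Sigma$ in two points. Hence $D'$ has a single dividing arc; closed dividing curves are ruled out by tightness of an adjacent $3$-cell. Cutting $H$ along all the $D'$ leaves the $3$-cells, and their rounded boundaries have connected dividing set by Giroux's criterion.

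Topologically, this says $(H,\Gamma)$ is a product sutured manifold $R_+\times[0,1]$ with $R_+\cong\Sigma$, and the $D'$ are product disks $a\times[0,1]$. Contact-geometrically, Eliashberg's uniqueness on the balls, together with Giroux flexibility near $\partial H$ and near the disks, identifies $\xi|_H$ with the vertically invariant structure $\ker(dt+\beta)$.

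Hence $Y=N(G)\cup H$ is a Heegaard splitting into two product contact handlebodies, glued along a convex surface whose dividing set is $\partial\Sigma$. This is exactly the Heegaard splitting of an open book with page $\Sigma$ and binding $\Gamma$. Torisu's criterion, or a direct interpolation near $\Gamma$, then shows that $\xi$ is supported, with monodromy read off from how the two product structures are glued. This is the standard argument behind the citation. The ingredient you need is the product structure on the complement, not binding solid tori.
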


A Legendrian $1$-skeleton $G$ for $(S^1\times S^2, \xi_{\mathrm{st}})$ may be obtained by attaching to $\Lambda$ three Legendrian arcs: two which bridge the crossings in the final front projection in Figure \ref{fig:hopf-ex}, and one which connects the cusps by passing through the $1$-handle. The top left of Figure \ref{fig:hopf-comp-obd} depicts these arcs in the front projection in black dots, and the diagram immediately below is a perturbed Lagrangian projection (note the twisting of the $1$-handle, c.f. \cite[Figure 72]{Casals_Murphy_affine}).

\begin{figure}[ht]
	\centering
	\begin{overpic}[width=0.9\linewidth]{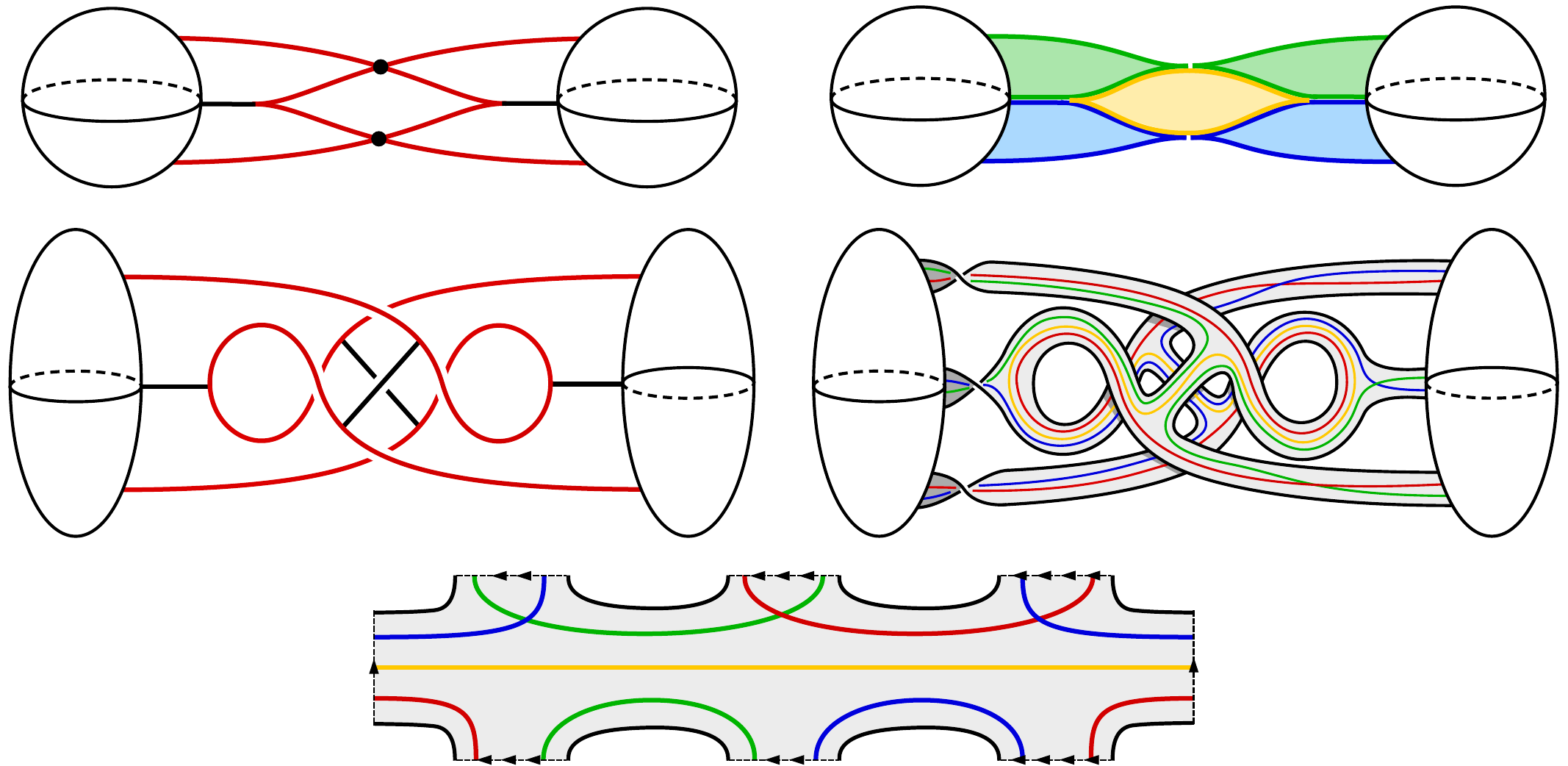}
	\end{overpic}
	\caption{A PALF for $W = B^4 \setminus \nu(A)$ with monodromy $\tau_g\circ \tau_y\circ \tau_b\circ \tau_r$.}
	\label{fig:hopf-comp-obd}
\end{figure}

The necessary set of three $2$-cells (in blue, green, and yellow) is depicted in the front projection in the top right of Figure \ref{fig:hopf-comp-obd}. Strictly speaking, this $2$-complex has a canonical neighborhood which is a standard contact handlebody $(S^1 \times D^2, \xi_{\mathrm{st}}) \cong ([-1,1]_t \times V, \ker(dt + \lambda_V))$ with two parallel dividing curves on its convex boundary, where $V$ is an annulus and $\lambda_V$ is a Liouville form on $V$. We obtain $(S^1 \times S^2, \xi_{\mathrm{st}})$ by simply identifying the annular positive region with the annular negative region by the identity map. 

The boundaries of the $2$-cells (denoted $L_b, L_g, L_y$ respectively) are drawn on the Weinstein ribbon neighborhood of the graph $G$, along with $\Lambda =: L_r$ itself in red, in the middle right of Figure \ref{fig:hopf-comp-obd}. By the proof of \cite[Theorem 4.8]{avdek2013surgery}, the monodromy of the open book of $(S^1 \times S^2, \xi_{\mathrm{st}})$ obtained from the cell decomposition in the top right of Figure \ref{fig:hopf-comp-obd} has monodromy given by right-handed Dehn twists around $L_b, L_y,L_g$, in that order, which we denote $\tau_g\circ \tau_y\circ \tau_b$. Then by \cite[Theorem 2.7]{avdek2013surgery} (see also \cite[Theorem 5.7]{etnyre_obd}), the additional contact-$(-1)$ surgery on $\Lambda$ which produces $\partial W$ results in an open book decomposition with an additional right-handed Dehn twist around $\Lambda = L_r$. We have thus proven:     

\begin{proposition}
The annulus multisection complement $W = B^4 \setminus \nu(A)$ admits the PALF given in Figure \ref{fig:hopf-comp-obd}.    
\end{proposition}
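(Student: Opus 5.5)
The plan is to read this off from the construction that precedes the statement: Avdek's method combined with Giroux's criterion (Theorem~\ref{thm:giroux}), applied to the Weinstein handlebody diagram of $W$ obtained from the algorithm in \S\ref{subsec:algo-NLF} (Figure~\ref{fig:hopf-ex}). After the cancellation carried out there, $W$ is presented as $S^1\times B^3$ together with a single Weinstein $2$-handle attached along the Legendrian $\Lambda = L_r$. A Weinstein domain given by a subcritical part plus critical handles along Legendrians on a page of a supporting open book of the subcritical boundary carries a PALF. Its fiber is that page, and its vanishing cycles are the monodromy curves of the subcritical open book followed by the attaching Legendrians. So it suffices to produce an open book of $(S^1\times S^2,\xi_{\std})$ that has $\Lambda$ on a page and whose monodromy is a product of right-handed Dehn twists which is itself realized as the boundary of a PALF on $S^1\times B^3$.

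\textbf{Step 1.} Build the Legendrian graph $G = \Lambda\cup\{\text{three arcs}\}$ exactly as drawn in Figure~\ref{fig:hopf-comp-obd}: two arcs bridge the crossings and one passes through the $1$-handle. Then verify the four hypotheses of Theorem~\ref{thm:giroux} for the three $2$-cells $D_b,D_g,D_y$.
\begin{itemize}
\item For condition (2), each smoothed boundary $\partial D$ should have $\mathrm{tb}=-1$. I would check this from the front projection by counting cusps and writhe.
\item For condition (3), $|D\cap\partial\Sigma|=2$ should hold, which I would read off from the ribbon picture.
\item For condition (4), the complement of $G\cup D_b\cup D_g\cup D_y$ should be a single tight $3$-cell. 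Here I would use the identification of a neighborhood of the $2$-complex with a standard contact handlebody $[-1,1]\times V$ whose two annular regions are glued by the identity. Since $S^1\times S^2$ is obtained from this handlebody by that gluing, the remaining complement is a ball, and it is tight because $\xi_{\std}$ is tight.
\end{itemize}
Theorem~\ref{thm:giroux} then makes the ribbon $\Sigma$ of $G$ a page, with $\Lambda\subset\Sigma$.

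\textbf{Step 2.} Identify the monodromy. Following the proof of \cite[Theorem 4.8]{avdek2013surgery}, it is $\tau_g\circ\tau_y\circ\tau_b$, the product of twists along the boundaries of the $2$-cells. All three are right-handed, so this open book is the boundary of a PALF on a Stein filling of $(S^1\times S^2,\xi_{\std})$. By Eliashberg's uniqueness of Stein fillings of $S^1\times S^2$, that filling is $S^1\times B^3$. Adding the Weinstein $2$-handle along $\Lambda$, with contact framing matching page framing, appends $\tau_r$ by \cite[Theorem 2.7]{avdek2013surgery}. The result is a PALF with fiber $\Sigma$ and monodromy $\tau_g\circ\tau_y\circ\tau_b\circ\tau_r$ whose total space is $W$.

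I expect the main obstacle to be the bookkeeping in Step 1. This means confirming the Thurston--Bennequin computations and the intersection counts with $\partial\Sigma$ for each $2$-cell, and checking that the curves $L_b,L_g,L_y,L_r$ are correctly placed on the ribbon, including the twisting through the $1$-handle. The rest of the argument is formal once those hold.
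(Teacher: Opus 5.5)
Your route is the paper's: use Avdek's method and Giroux's criterion to place $\Lambda$ on a page of an open book of $(S^1\times S^2,\xi_{\std})$ with monodromy $\tau_g\circ\tau_y\circ\tau_b$, then use Avdek's Theorem 2.7 to append $\tau_r$. Your appeal to Eliashberg's uniqueness of Stein fillings of $S^1\times S^2$ is a useful addition. It identifies the total space of the first PALF with $S^1\times B^3$, which the paper leaves implicit. Every diffeomorphism of $S^1\times S^2$ extends over $S^1\times B^3$, so identifying the boundary compatibly with $\Lambda$ causes no trouble. However, your verification of hypothesis (4) in Step 1 is wrong, and this is exactly where the paper has to modify Avdek's algorithm.

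\textbf{Hypothesis (4) fails.} The complement of $G\cup D_b\cup D_g\cup D_y$ is not a $3$-cell.
\begin{itemize}
\item \emph{Euler characteristic.} $G$ is a circle with three arcs attached, so $\chi(G)=-3$. This is consistent with $\Sigma$ having genus $2$ and one boundary component. Adding three $2$-cells gives $\chi=0=\chi(S^1\times S^2)$, which leaves no room for any $3$-cell.
\item \emph{Geometry.} The identification you cite says a neighborhood of the $2$-complex is the solid torus $[-1,1]\times V$ with $V$ an annulus. Its complement in $S^1\times S^2$ has torus boundary, so it is not a ball. Gluing the annular regions $R_\pm$ by the identity does not leave a leftover ball; it \emph{replaces} the tight-ball hypothesis.
\end{itemize}
So Giroux's criterion as stated does not apply.

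\textbf{What the paper does instead.} It runs the Giroux/Avdek argument relative to the annulus. The $2$-complex has the standard contact handlebody $([-1,1]\times V,\ker(dt+\lambda_V))$, with two parallel dividing curves, as a neighborhood. $(S^1\times S^2,\xi_{\std})$ is recovered by identifying $R_+$ with $R_-$ by the identity. This is the annulus open book with trivial monodromy, so it contributes nothing to the monodromy computed in the proof of Avdek's Theorem 4.8.

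\textbf{The alternative does not work directly.} To get an honest cell decomposition you would need a fourth $2$-cell running through the $1$-handle. Then you can no longer read off the monodromy as one positive twist per $2$-cell. By Eliashberg's uniqueness, every Stein filling of $S^1\times S^2$ has $\chi=0$, while a PALF with fiber $\Sigma$ and $n$ vanishing cycles has $\chi=-3+n$. So exactly three twists are allowed, and the extra cell would have to be handled separately.

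With Step 1 repaired along the paper's lines, the rest of your argument goes through.
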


\subsection{Questions}\label{subsec:questions}

The process described in the previous section for passing from a PANLF to a PALF (beginning with a PANLF, producing a Weinstein handlebody diagram, then using a modified Avdek algorithm to produce a PALF) is by no means efficient. We thus pose the question: 

\begin{question}
Can one convert more directly the structure of a PANLF to a PALF by a local model described near exotic fibers?  
\end{question}

\noindent Likewise, it is not clear if the output PALF from the prior subsection is necessarily itself efficient. For instance, note that the resulting open book in Figure \ref{fig:hopf-comp-obd} has genus $2$. It is a famous open question --- see \cite[Remark 1.6]{avdek2013surgery} and \cite[Problem 3.45]{K3} --- as to whether there exists a contact manifold with support genus greater than $1$. We propose a modification of the question more suited to Weinstein fillings that may be more tractable. 

\begin{definition}[Weinstein support genus] The \emph{Weinstein support genus} of a Stein fillable contact $3$-manifold $(Y^3,\xi)$ is the minimal genus of a supporting open book decomposition with a positive allowable factorization.
\end{definition}

\begin{question}
    Is there a contact $3$-manifold with Weinstein support genus two?
\end{question}
The above example of the Hopf annulus gives a natural test case for this question.

\bibliographystyle{amsalpha}
\bibliography{references}

@misc{breen2026derivativelinkscontacttopology,
      title={Derivative links in contact topology}, 
      author={Joseph Breen and Alexander Zupan},
      year={2026},
      eprint={2609.30182},
      archivePrefix={arXiv},
      primaryClass={math.SG},
      url={https://arxiv.org/abs/2609.30182}, 
      note={\href{https://arxiv.org/abs/2609.30182}{\tt arXiv:2609.30182}}
}

@article {donaldson1999lefschetz,
    AUTHOR = {Donaldson, S. K.},
     TITLE = {Lefschetz pencils on symplectic manifolds},
   JOURNAL = {J. Differential Geom.},
  FJOURNAL = {Journal of Differential Geometry},
    VOLUME = {53},
      YEAR = {1999},
    NUMBER = {2},
     PAGES = {205--236},
      ISSN = {0022-040X,1945-743X},
   MRCLASS = {53D35 (57R17)},
  MRNUMBER = {1802722},
MRREVIEWER = {Burak\ Ozbagci},
       URL = {http://projecteuclid.org/euclid.jdg/1214425535},
}

@article {apostolakis2013lefschetz,
    AUTHOR = {Apostolakis, Nikos and Piergallini, Riccardo and Zuddas,
              Daniele},
     TITLE = {Lefschetz fibrations over the disc},
   JOURNAL = {Proc. Lond. Math. Soc. (3)},
  FJOURNAL = {Proceedings of the London Mathematical Society. Third Series},
    VOLUME = {107},
      YEAR = {2013},
    NUMBER = {2},
     PAGES = {340--390},
      ISSN = {0024-6115,1460-244X},
   MRCLASS = {55R55 (57M12 57N13 57R65)},
  MRNUMBER = {3092341},
MRREVIEWER = {I.\ Pop},
       DOI = {10.1112/plms/pds078},
       URL = {https://doi.org/10.1112/plms/pds078},
}

@incollection {eliashberg2018revisited,
    AUTHOR = {Eliashberg, Yakov},
     TITLE = {Weinstein manifolds revisited},
 BOOKTITLE = {Modern geometry: a celebration of the work of {S}imon
              {D}onaldson},
    SERIES = {Proc. Sympos. Pure Math.},
    VOLUME = {99},
     PAGES = {59--82},
 PUBLISHER = {Amer. Math. Soc., Providence, RI},
      YEAR = {2018},
      ISBN = {978-1-4704-4094-7},
   MRCLASS = {53D35 (57R17)},
  MRNUMBER = {3838879},
MRREVIEWER = {Paolo\ Ghiggini},
       DOI = {10.1090/pspum/099/01737},
       URL = {https://doi.org/10.1090/pspum/099/01737},
}

@misc{stenhede2026algorithmlegendrianrealizecurve,
  author = {Stenhede, Eric},
  title = {An algorithm to Legendrian realize a curve on a ribbon surface},
  journal = {arXiv preprint},
  note = {arXiv:2604.08010},
  year = {2026}
}

@inproceedings {chen2018splitting,
    AUTHOR = {Chen, Weimin},
     TITLE = {Contact splitting of symplectic {$\mathbb{Q}$}-homology
              {$\mathbb{CP}^2$}},
 BOOKTITLE = {Proceedings of the {G}\"okova {G}eometry-{T}opology
              {C}onference 2017},
     PAGES = {53--72},
 PUBLISHER = {Int. Press, Somerville, MA},
      YEAR = {2018},
      ISBN = {978-1-57146-360-9},
   MRCLASS = {57R17},
  MRNUMBER = {3838085},
MRREVIEWER = {Burak\ Ozbagci},
}

@article {gompf2013smoothstein,
    AUTHOR = {Gompf, Robert E.},
     TITLE = {Smooth embeddings with {S}tein surface images},
   JOURNAL = {J. Topol.},
  FJOURNAL = {Journal of Topology},
    VOLUME = {6},
      YEAR = {2013},
    NUMBER = {4},
     PAGES = {915--944},
      ISSN = {1753-8416,1753-8424},
   MRCLASS = {32Q55 (32Q28 32Q40 32T15 57R17)},
  MRNUMBER = {3145144},
MRREVIEWER = {Stefan\ Nemirovski},
       DOI = {10.1112/jtopol/jtt017},
       URL = {https://doi.org/10.1112/jtopol/jtt017},
}

@article {mark2022contacttype,
    AUTHOR = {Mark, Thomas E. and Tosun, B\"ulent},
     TITLE = {On contact type hypersurfaces in 4-space},
   JOURNAL = {Invent. Math.},
  FJOURNAL = {Inventiones Mathematicae},
    VOLUME = {228},
      YEAR = {2022},
    NUMBER = {1},
     PAGES = {493--534},
      ISSN = {0020-9910,1432-1297},
   MRCLASS = {57K33 (32E20 57K43)},
  MRNUMBER = {4392461},
MRREVIEWER = {Youlin\ Li},
       DOI = {10.1007/s00222-021-01083-9},
       URL = {https://doi.org/10.1007/s00222-021-01083-9},
}

@misc{mark2025weinsteindomainssymplecticmanifolds,
  author = {Mark, Thomas and Tosun, Bulent},
  title = {On Weinstein domains in symplectic manifolds},
  journal = {arXiv preprint},
  note = {arXiv:2512.04278},
  year = {2025}
}

@article {Lisca1997Tight,
    AUTHOR = {Lisca, P. and Mati\'c, G.},
     TITLE = {Tight contact structures and {S}eiberg-{W}itten invariants},
   JOURNAL = {Invent. Math.},
  FJOURNAL = {Inventiones Mathematicae},
    VOLUME = {129},
      YEAR = {1997},
    NUMBER = {3},
     PAGES = {509--525},
      ISSN = {0020-9910,1432-1297},
   MRCLASS = {57R57 (53C15 57R15)},
  MRNUMBER = {1465333},
MRREVIEWER = {John\ B.\ Etnyre},
       DOI = {10.1007/s002220050171},
       URL = {https://doi.org/10.1007/s002220050171},
}

@article{Lin2016Caps,
  author = {Lin, Francesco},
  title = {Exact {Lagrangian} caps of {Legendrian} knots},
  journal = {J. Symplectic Geom.},
  volume = {14},
  number = {1},
  pages = {269--295},
  year = {2016}
}

@misc{Murphy2013Exact,
  author = {Murphy, Emmy},
  title = {Closed exact {Lagrangians} in the symplectization of contact manifolds},
  journal = {arXiv preprint},
  note = {arXiv:1304.6620},
  year = {2013}
}

@misc{DimitroglouRizellGolovko2024NonRegular,
  author = {Dimitroglou Rizell, Georgios and Golovko, Roman},
  title = {Instability of {Legendrian} knottedness, and non-regular {Lagrangian} concordances of knots},
  journal = {arXiv preprint},
  note = {arXiv:2409.00290},
  year = {2024}
}

@misc{DimitroglouRizellGolovko2025NonRegular,
      title={{Non-regular Lagrangian concordances between Lagrangian fillable Legendrian knots}}, 
      author={Dimitroglou Rizell, Georgios and Golovko, Roman},
      journal = {arXiv preprint},
      note = {arXiv:2509.13594},
      year = {2025}
}

@book {grauert1979steinspaces,
    AUTHOR = {Grauert, Hans and Remmert, Reinhold},
     TITLE = {Theory of {S}tein spaces},
    SERIES = {Classics in Mathematics},
      NOTE = {Translated from the German by Alan Huckleberry,
              Reprint of the 1979 translation},
 PUBLISHER = {Springer-Verlag, Berlin},
      YEAR = {1979},
     PAGES = {xxii+255},
      ISBN = {3-540-00373-8},
   MRCLASS = {32E10 (32-01 32Q28)},
  MRNUMBER = {2029201},
       DOI = {10.1007/978-3-642-18921-0},
       URL = {https://doi.org/10.1007/978-3-642-18921-0},
}

@misc{breen2024regularlysliceimpliesoncestably,
      title={Regularly slice implies once-stably decomposably slice}, 
      author={Joseph Breen},
      year={2024},
      eprint={2410.21031},
      archivePrefix={arXiv},
      primaryClass={math.SG},
      url={https://arxiv.org/abs/2410.21031},
    note={\href{https://arxiv.org/abs/2410.21031}{\tt arXiv:2410.21031}},
}

@article{EliashbergGanatraLazarev2018Regular,
  AUTHOR = {Eliashberg, Yakov and Ganatra, Sheel and Lazarev, Oleg},
     TITLE = {Flexible {L}agrangians},
   JOURNAL = {Int. Math. Res. Not. IMRN},
  FJOURNAL = {International Mathematics Research Notices. IMRN},
      YEAR = {2020},
    NUMBER = {8},
     PAGES = {2408--2435},
      ISSN = {1073-7928,1687-0247},
   MRCLASS = {53D12},
  MRNUMBER = {4090744},
MRREVIEWER = {Sheila\ Sandon},
       DOI = {10.1093/imrn/rny078},
       URL = {https://doi.org/10.1093/imrn/rny078},
}

@misc{brw1,
title = {Regular {L}agrangians in {L}efschetz Fibrations},
author = {Joseph Breen and Agniva Roy and Luya Wang},
year = 2025,
eprint = {2510.12170},
archiveprefix = {arxiv},
primaryclass = {math.SG},
note = {\href{https://arxiv.org/abs/2510.12170}{\tt arXiv:2510.12170}}
}

@misc{spinal_I,
	title        = {On symplectic fillings of spinal open book decompositions {I}: Geometric constructions},
	author       = {Samuel Lisi and Jeremy Van Horn-Morris and Chris Wendl},
	year         = 2018,
	eprint       = {1810.12017},
	archiveprefix = {arXiv},
	primaryclass = {math.SG},
note = {\href{https://arxiv.org/abs/1810.12017}{\tt arXiv:1810.12017}}
}

@article {gompf_topological_symplectic,
    AUTHOR = {Gompf, Robert E.},
     TITLE = {Toward a topological characterization of symplectic manifolds},
   JOURNAL = {J. Symplectic Geom.},
  FJOURNAL = {The Journal of Symplectic Geometry},
    VOLUME = {2},
      YEAR = {2004},
    NUMBER = {2},
     PAGES = {177--206},
      ISSN = {1527-5256,1540-2347},
   MRCLASS = {53D35 (57R17)},
  MRNUMBER = {2108373},
MRREVIEWER = {Ivan\ Smith},
       DOI = {10.4310/jsg.2004.v2.n2.a1},
       URL = {https://doi.org/10.4310/jsg.2004.v2.n2.a1},
}

@misc{spinal_II,
	title        = {On symplectic fillings of spinal open book decompositions {I}{I}: Holomorphic curves and classification},
	author       = {Samuel Lisi and Jeremy Van Horn-Morris and Chris Wendl},
	year         = 2020,
	eprint       = {2010.16330},
	archiveprefix = {arXiv},
	primaryclass = {math.SG},
note = {\href{https://arxiv.org/abs/2010.16330}{\tt arXiv:2010.16330}}
}

@article{baykurvhm,
  title={Families of contact 3-manifolds with arbitrarily large Stein fillings},
  author={Baykur, R. {\.I}nan\c{c} and Van Horn-Morris, Jeremy and Lisi, Samuel and Wendl, Chris},
  journal={Journal of Differential Geometry},
  volume={101},
  number={3},
  pages={423--465},
  year={2015},
  publisher={Lehigh University}
}

@article{baykur2016multisections,
  title={Multisections of Lefschetz fibrations and topology of symplectic 4--manifolds},
  author={Baykur, R. {\.I}nan\c{c} and Hayano, Kenta},
  journal={Geometry \& Topology},
  volume={20},
  number={4},
  pages={2335--2395},
  year={2016},
  publisher={Mathematical Sciences Publishers}
}

@incollection {eliashberg_2-knots,
    AUTHOR = {Eliashberg, Y.},
     TITLE = {Topology of {$2$}-knots in {${\bf R}^4$} and symplectic
              geometry},
 BOOKTITLE = {The {F}loer memorial volume},
    SERIES = {Progr. Math.},
    VOLUME = {133},
     PAGES = {335--353},
 PUBLISHER = {Birkh\"auser, Basel},
      YEAR = {1995},
      ISBN = {3-7643-5044-X},
   MRCLASS = {57Q45 (32D10 53C23 58F05)},
  MRNUMBER = {1362834},
MRREVIEWER = {Hansj\"org\ Geiges},
}

@article{hayden2021quasipositive,
  title={Quasipositive links and Stein surfaces},
  author={Hayden, Kyle},
  journal={Geometry \& Topology},
  volume={25},
  number={3},
  pages={1441--1477},
  year={2021},
  publisher={Mathematical Sciences Publishers}
}

@article{boileau-orevkov,
  title={Quasi-positivit{\'e} d'une courbe analytique dans une boule pseudo-convexe},
  author={Boileau, Michel and Orevkov, Stepan},
  journal={Comptes Rendus de l'Acad{\'e}mie des Sciences-Series I-Mathematics},
  volume={332},
  number={9},
  pages={825--830},
  year={2001},
  publisher={Elsevier}
}

@article {Wendl_strongly,
    AUTHOR = {Wendl, Chris},
     TITLE = {Strongly fillable contact manifolds and {$J$}-holomorphic
              foliations},
   JOURNAL = {Duke Math. J.},
  FJOURNAL = {Duke Mathematical Journal},
    VOLUME = {151},
      YEAR = {2010},
    NUMBER = {3},
     PAGES = {337--384}
}

@article {Siefring_asymptotics,
    AUTHOR = {Siefring, Richard},
     TITLE = {Relative asymptotic behavior of pseudoholomorphic
              half-cylinders},
   JOURNAL = {Comm. Pure Appl. Math.},
  FJOURNAL = {Communications on Pure and Applied Mathematics},
    VOLUME = {61},
      YEAR = {2008},
    NUMBER = {12},
     PAGES = {1631--1684}
}

@article {rudolph2004algebraic,
    AUTHOR = {Rudolph, Lee},
     TITLE = {Algebraic functions and closed braids},
   JOURNAL = {Topology},
  FJOURNAL = {Topology. An International Journal of Mathematics},
    VOLUME = {22},
      YEAR = {1983},
    NUMBER = {2},
     PAGES = {191--202}
}

@misc{etnyre_obd,
  title={Lectures on open book decompositions and contact structures},
  author={Etnyre, John B},
  year={2004},
    note = {\href{https://arxiv.org/abs/math/0409402}{\tt arXiv:0409402}}
}

@misc{wendl2016lectures,
      title={Lectures on Symplectic Field Theory}, 
      author={Chris Wendl},
      year={2016},
      eprint={1612.01009},
      archivePrefix={arXiv},
      primaryClass={math.SG},
note = {\href{https://arxiv.org/abs/1612.01009}{\tt arXiv:1612.01009}}
}

@article{p-vhm,
  title={Planar open books, monodromy factorizations and symplectic fillings},
  author={Plamenevskaya, Olga and Horn-Morris, Van and others},
  journal={Geometry \& Topology},
  volume={14},
  number={4},
  pages={2077--2101},
  year={2010},
  publisher={Mathematical Sciences Publishers}
}

@article {Akbulut_Ozbagci,
    AUTHOR = {Akbulut, Selman and Ozbagci, Burak},
     TITLE = {On the topology of compact {S}tein surfaces},
   JOURNAL = {Int. Math. Res. Not.},
  FJOURNAL = {International Mathematics Research Notices},
      YEAR = {2002},
    NUMBER = {15},
     PAGES = {769--782}
}

@article {simha,
    AUTHOR = {Simha, R. R.},
     TITLE = {On the complement of a curve on a {S}tein space of dimension
              two},
   JOURNAL = {Math. Z.},
  FJOURNAL = {Mathematische Zeitschrift},
    VOLUME = {82},
      YEAR = {1963},
     PAGES = {63--66}
}

@article {Baker_Etnyre_VHM_rational,
    AUTHOR = {Baker, Kenneth L. and Etnyre, John B. and Van Horn-Morris,
              Jeremy},
     TITLE = {Cabling, contact structures and mapping class monoids},
   JOURNAL = {J. Differential Geom.},
  FJOURNAL = {Journal of Differential Geometry},
    VOLUME = {90},
      YEAR = {2012},
    NUMBER = {1},
     PAGES = {1--80},
      ISSN = {0022-040X,1945-743X},
   MRCLASS = {53D35 (57R15)},
  MRNUMBER = {2891477},
MRREVIEWER = {Daniel\ V.\ Mathews},
       URL = {http://projecteuclid.org/euclid.jdg/1335209489},
}

@misc{spinal_exotic,
      title={Spinal open books and symplectic fillings with exotic fibers}, 
      author={Hyunki Min and Agniva Roy and Luya Wang},
      year={2024},
      eprint={2410.10697},
      archivePrefix={arXiv},
      primaryClass={math.GT},
      url={https://arxiv.org/abs/2410.10697}, 
}

@article {Casals_Murphy_affine,
    AUTHOR = {Casals, Roger and Murphy, Emmy},
     TITLE = {Legendrian fronts for affine varieties},
   JOURNAL = {Duke Math. J.},
  FJOURNAL = {Duke Mathematical Journal},
    VOLUME = {168},
      YEAR = {2019},
    NUMBER = {2},
     PAGES = {225--323},
      ISSN = {0012-7094,1547-7398},
   MRCLASS = {57R17 (53D10 53D12)},
  MRNUMBER = {3909897},
MRREVIEWER = {V.\ D.\ Sedykh},
       DOI = {10.1215/00127094-2018-0055},
       URL = {https://doi.org/10.1215/00127094-2018-0055},
}

@article{avdek2013surgery,
  author = {Avdek, Russell},
  title = {Contact surgery and supporting open books},
  journal = {Algebr. Geom. Topol.},
  volume = {13},
  number = {3},
  pages = {1613--1660},
  year = {2013},
  doi = {10.2140/agt.2013.13.1613}
}

@book{gompf1999manifolds,
  author    = {Gompf, Robert E. and Stipsicz, Andr{\'a}s I.},
  title     = {{4-Manifolds and Kirby Calculus}},
  series    = {Graduate Studies in Mathematics},
  number    = {20},
  publisher = {American Mathematical Society},
  address   = {Providence, RI},
  year      = {1999},
  pages     = {558},
  isbn      = {978-1-4704-7455-3},
  url       = {https://bookstore.ams.org/gsm-20}
}

@article {Conway_Etnyre_Tosun,
    AUTHOR = {Conway, James and Etnyre, John B. and Tosun, B\"ulent},
     TITLE = {Symplectic fillings, contact surgeries, and {L}agrangian
              disks},
   JOURNAL = {Int. Math. Res. Not. IMRN},
  FJOURNAL = {International Mathematics Research Notices. IMRN},
      YEAR = {2021},
    NUMBER = {8},
     PAGES = {6020--6050},
      ISSN = {1073-7928,1687-0247},
   MRCLASS = {57K33 (57K10)},
  MRNUMBER = {4251270},
MRREVIEWER = {Burak\ Ozbagci},
       DOI = {10.1093/imrn/rny291},
       URL = {https://doi.org/10.1093/imrn/rny291},
}

@inproceedings {Giroux2002GeometrieDC,
    AUTHOR = {Giroux, Emmanuel},
     TITLE = {G\'{e}om\'{e}trie de contact: de la dimension trois vers les
              dimensions sup\'{e}rieures},
 BOOKTITLE = {Proceedings of the {I}nternational {C}ongress of
              {M}athematicians, {V}ol. {II} ({B}eijing, 2002)},
     PAGES = {405--414},
 PUBLISHER = {Higher Ed. Press, Beijing},
      YEAR = {2002},
   MRCLASS = {53D35 (57M50 57R17)},
  MRNUMBER = {1957051},
MRREVIEWER = {Hansj\"{o}rg Geiges},
}

@book {Cieliebak_Eliashberg,
    AUTHOR = {Cieliebak, Kai and Eliashberg, Yakov},
     TITLE = {From {S}tein to {W}einstein and back},
    SERIES = {American Mathematical Society Colloquium Publications},
    VOLUME = {59},
      NOTE = {Symplectic geometry of affine complex manifolds},
 PUBLISHER = {American Mathematical Society, Providence, RI},
      YEAR = {2012},
     PAGES = {xii+364},
      ISBN = {978-0-8218-8533-8},
   MRCLASS = {53-02 (32E10 32Q65 53C15 53D05 58E05)},
  MRNUMBER = {3012475},
MRREVIEWER = {Chris\ M.\ Wendl},
       DOI = {10.1090/coll/059},
       URL = {https://doi.org/10.1090/coll/059},
}

@article {Wendl_SHS,
    AUTHOR = {Wendl, Chris},
     TITLE = {Open book decompositions and stable {H}amiltonian structures},
   JOURNAL = {Expo. Math.},
  FJOURNAL = {Expositiones Mathematicae},
    VOLUME = {28},
      YEAR = {2010},
    NUMBER = {2},
     PAGES = {187--199},
      ISSN = {0723-0869,1878-0792},
   MRCLASS = {53D35 (32Q65 57R17)},
  MRNUMBER = {2671115},
MRREVIEWER = {Michael\ L.\ Hutchings},
       DOI = {10.1016/j.exmath.2009.09.001},
       URL = {https://doi.org/10.1016/j.exmath.2009.09.001},
}

@article {wendl_hierarchy,
    AUTHOR = {Wendl, Chris},
     TITLE = {A hierarchy of local symplectic filling obstructions for
              contact 3-manifolds},
   JOURNAL = {Duke Math. J.},
  FJOURNAL = {Duke Mathematical Journal},
    VOLUME = {162},
      YEAR = {2013},
    NUMBER = {12},
     PAGES = {2197--2283},
      ISSN = {0012-7094,1547-7398},
   MRCLASS = {57R17 (32Q65 53D10 53D42)},
  MRNUMBER = {3102479},
MRREVIEWER = {Youlin\ Li},
       DOI = {10.1215/00127094-2348333},
       URL = {https://doi.org/10.1215/00127094-2348333},
}

@article {Banyaga,
    AUTHOR = {Banyaga, Augustin},
     TITLE = {Sur la structure du groupe des diff\'eomorphismes qui
              pr\'eservent une forme symplectique},
   JOURNAL = {Comment. Math. Helv.},
  FJOURNAL = {Commentarii Mathematici Helvetici},
    VOLUME = {53},
      YEAR = {1978},
    NUMBER = {2},
     PAGES = {174--227},
      ISSN = {0010-2571,1420-8946},
   MRCLASS = {58D05 (57R50 58F06 81E99)},
  MRNUMBER = {490874},
MRREVIEWER = {D.\ B. A. Epstein},
       DOI = {10.1007/BF02566074},
       URL = {https://doi.org/10.1007/BF02566074},
}

@book {mcduff_salamon_intro,
    AUTHOR = {McDuff, Dusa and Salamon, Dietmar},
     TITLE = {Introduction to symplectic topology},
    SERIES = {Oxford Graduate Texts in Mathematics},
   EDITION = {Third},
 PUBLISHER = {Oxford University Press, Oxford},
      YEAR = {2017},
     PAGES = {xi+623},
      ISBN = {978-0-19-879490-5; 978-0-19-879489-9},
   MRCLASS = {53D35 (53D40 57R17 57R57 57R58)},
  MRNUMBER = {3674984},
MRREVIEWER = {Hansj\"org\ Geiges},
       DOI = {10.1093/oso/9780198794899.001.0001},
       URL = {https://doi.org/10.1093/oso/9780198794899.001.0001},
}

@article {Cao_Gallup_Hayden_Sabloff,
    AUTHOR = {Cao, Chang and Gallup, Nathaniel and Hayden, Kyle and Sabloff,
              Joshua M.},
     TITLE = {Topologically distinct {L}agrangian and symplectic fillings},
   JOURNAL = {Math. Res. Lett.},
  FJOURNAL = {Mathematical Research Letters},
    VOLUME = {21},
      YEAR = {2014},
    NUMBER = {1},
     PAGES = {85--99},
      ISSN = {1073-2780,1945-001X},
   MRCLASS = {57R17 (53D05 53D12)},
  MRNUMBER = {3247041},
MRREVIEWER = {Georgios\ Dimitroglou Rizell},
       DOI = {10.4310/MRL.2014.v21.n1.a7},
       URL = {https://doi.org/10.4310/MRL.2014.v21.n1.a7},
}

@article{GirouxPardon2017Lefschetz,
  author = {Giroux, Emmanuel and Pardon, John},
  title = {Existence of {Lefschetz} fibrations on {Stein} and {Weinstein} domains},
  journal = {Geom. Topol.},
  volume = {21},
  pages = {963--997},
  year = {2017}
}

@article{Akbulut2001Lefschetz,
  author  = {Akbulut, Selman and Ozbagci, Burak},
  title   = {Lefschetz fibrations on compact {S}tein surfaces},
  journal = {Geom. Topol.},
  volume  = {5},
  pages   = {319--334},
  year    = {2001}
}

@article{LoiPiergallini2001Stein,
  author = {Loi, Andrea and Piergallini, Riccardo},
  title = {Compact {Stein} surfaces with boundary as branched covers of {$B^4$}},
  journal = {Invent. Math.},
  volume = {143},
  number = {2},
  pages = {325--348},
  year = {2001}
}

@article {Thurston,
    AUTHOR = {Thurston, W. P.},
     TITLE = {Some simple examples of symplectic manifolds},
   JOURNAL = {Proc. Amer. Math. Soc.},
  FJOURNAL = {Proceedings of the American Mathematical Society},
    VOLUME = {55},
      YEAR = {1976},
    NUMBER = {2},
     PAGES = {467--468},
      ISSN = {0002-9939,1088-6826},
   MRCLASS = {57D15},
  MRNUMBER = {402764},
MRREVIEWER = {Edgar\ H.\ Brown, Jr.},
       DOI = {10.2307/2041749},
       URL = {https://doi.org/10.2307/2041749},
}

@misc{hedden_quasipositive,
      title={Quasipositive braids in open books as boundaries of complex curves}, 
      author={{R. {\.I}nan\c{c} Baykur} and {John Etnyre} and {Kyle Hayden} and {Matthew Hedden} and {Keiko Kawamuro} and {Jeremy Van Horn-Morris}},
      year={},
      eprint={},
      archivePrefix={},
      primaryClass={},
      note={in preparation}, 
}

@book{K3,
  title = {K3: A New Problem List in Low-Dimensional Topology},
  editor = {Baykur, R. {\.I}nan\c{c} and Kirby, Robion C. and Ruberman, Daniel},
  series = {Mathematical Surveys and Monographs},
  number = {295},
  publisher = {American Mathematical Society},
  address = {Providence, RI},
  year = {2026},
  note = {©2026},
  pages = {xi+430},
  isbn = {978-1-4704-8433-0},
  mrnumber = {5077826}
}

@incollection{Eliashberg90b,
  address = {Cambridge},
  author = {Eliashberg, Yakov},
  booktitle = {Geometry of low-dimensional manifolds, 2 (Durham, 1989)},
  mrclass = {53C23 (32D10 53C15 58E99)},
  mrnumber = {MR1171908 (93g:53060)},
  mrreviewer = {Nikolai K. Smolentsev},
  pages = {45--67},
  publisher = {Cambridge Univ. Press},
  series = {London Math. Soc. Lecture Note Ser.},
  title = {Filling by holomorphic discs and its applications},
  volume = {151},
  year = {1990},
}

@incollection{EliashbergFraser98,
  address = {Providence, RI},
  author = {Eliashberg, Yakov and Fraser, Maia},
  booktitle = {Geometry, topology, and dynamics (Montreal, PQ, 1995)},
  mrclass = {57M25 (57N37)},
  mrnumber = {MR1619122 (99f:57007)},
  mrreviewer = {Serge L. Tabachnikov},
  pages = {17--51},
  publisher = {Amer. Math. Soc.},
  series = {CRM Proc. Lecture Notes},
  title = {Classification of topologically trivial {L}egendrian knots},
  volume = {15},
  year = {1998},
}

@article {Eliashberg_Polterovich,
    AUTHOR = {Eliashberg, Y. and Polterovich, L.},
     TITLE = {Local {L}agrangian {$2$}-knots are trivial},
   JOURNAL = {Ann. of Math. (2)},
  FJOURNAL = {Annals of Mathematics. Second Series},
    VOLUME = {144},
      YEAR = {1996},
    NUMBER = {1},
     PAGES = {61--76},
      ISSN = {0003-486X,1939-8980},
   MRCLASS = {58F05 (57Q45 57R40)},
  MRNUMBER = {1405943},
MRREVIEWER = {Serge\ L.\ Tabachnikov},
       DOI = {10.2307/2118583},
       URL = {https://doi.org/10.2307/2118583},
}

@article {Cote_DR,
    AUTHOR = {C\^ot\'e, Laurent and Dimitroglou Rizell, Georgios},
     TITLE = {Symplectic rigidity of fibers in cotangent bundles of open
              {R}iemann surfaces},
   JOURNAL = {Math. Ann.},
  FJOURNAL = {Mathematische Annalen},
    VOLUME = {386},
      YEAR = {2023},
    NUMBER = {3-4},
     PAGES = {1717--1751},
      ISSN = {0025-5831,1432-1807},
   MRCLASS = {53D12},
  MRNUMBER = {4612405},
MRREVIEWER = {Jiabin\ Yin},
       DOI = {10.1007/s00208-022-02447-z},
       URL = {https://doi.org/10.1007/s00208-022-02447-z},
}

@article {Ekholm_Smith,
    AUTHOR = {Ekholm, Tobias and Smith, Ivan},
     TITLE = {Nearby {L}agrangian fibers and {W}hitney sphere links},
   JOURNAL = {Compos. Math.},
  FJOURNAL = {Compositio Mathematica},
    VOLUME = {154},
      YEAR = {2018},
    NUMBER = {4},
     PAGES = {685--718},
      ISSN = {0010-437X,1570-5846},
   MRCLASS = {53D35 (53D42 57R17)},
  MRNUMBER = {3778191},
MRREVIEWER = {Cecilia\ Karlsson},
       DOI = {10.1112/S0010437X17007692},
       URL = {https://doi.org/10.1112/S0010437X17007692},
}

@misc{asplund2025homotopyrigiditynearbylagrangian,
      title={Homotopy rigidity of nearby Lagrangian cocores}, 
      author={Johan Asplund and Yash Deshmukh and Alex Pieloch},
      year={2025},
      eprint={2511.09548},
      archivePrefix={arXiv},
      primaryClass={math.SG},
      url={https://arxiv.org/abs/2511.09548},
      note = {\href{https://arxiv.org/abs/2511.09548}{\tt arXiv:2511.09548}},
}

@article{Abouzaid2012Nearby,
  author  = {Abouzaid, Mohammed},
  title   = {Nearby Lagrangians with Vanishing Maslov Class Are Homotopy Equivalent},
  journal = {Inventiones Mathematicae},
  volume  = {189},
  pages   = {251--313},
  year    = {2012},
  doi     = {10.1007/s00222-011-0365-0}
}

@article{Hind2012Stein,
  author  = {Hind, Richard},
  title   = {Lagrangian Unknottedness in {Stein} Surfaces},
  journal = {Asian Journal of Mathematics},
  volume  = {16},
  number  = {1},
  pages   = {1--36},
  year    = {2012}
}

@article{Kragh2013Nearby,
  author  = {Kragh, Thomas},
  title   = {Parametrized Ring-Spectra and the Nearby Lagrangian Conjecture},
  journal = {Geometry \& Topology},
  volume  = {17},
  pages   = {639--731},
  year    = {2013},
  doi     = {10.2140/gt.2013.17.639},
  note    = {With an appendix by Mohammed Abouzaid}
}

@article{DimitroglouRizellGoodmanIvrii2016,
  author  = {Dimitroglou Rizell, Georgios and Goodman, Elizabeth and Ivrii, Alexander},
  title   = {Lagrangian Isotopy of Tori in {$S^2 \times S^2$} and {${\mathbb C}P^2$}},
  journal = {Geometric and Functional Analysis},
  volume  = {26},
  pages   = {1297--1358},
  year    = {2016},
  doi     = {10.1007/s00039-016-0388-1}
}

@article{AbouzaidKragh2018Nearby,
  author  = {Abouzaid, Mohammed and Kragh, Thomas},
  title   = {Simple Homotopy Equivalence of Nearby Lagrangians},
  journal = {Acta Mathematica},
  volume  = {220},
  number  = {2},
  pages   = {207--237},
  year    = {2018},
  doi     = {10.4310/ACTA.2018.v220.n2.a1}
}
\end{document}